\newtheorem{theorem}[subsubsection]{Theorem}
\newtheorem{thm}[subsubsection]{Theorem}
\newtheorem{lemma}[subsubsection]{Lemma}
\newtheorem{lem}[subsubsection]{Lemma}
\newtheorem{cor}[subsubsection]{Corollary}
\newtheorem{prop}[subsubsection]{Proposition}
\theoremstyle{definition}
\newtheorem{defn}[subsubsection]{Definition}
\newtheorem{definition}[subsubsection]{Definition}
\newtheorem{notation}[subsubsection]{Notation}
\newtheorem{Notation}[subsubsection]{Notation}
\newtheorem{convention}[subsubsection]{Convention}
\newtheorem{remark}[subsubsection]{Remark}
\newtheorem{rem}[subsubsection]{Remark}
\def \inj {\hookrightarrow }
\def \to {\rightarrow}
\renewcommand{\projlim}{\varprojlim}
\def\Mat{\mathrm{Mat}}
\def\mat{\mathrm{Mat}}
\DeclareMathOperator{\Mod}{Mod}
\DeclareMathOperator{\Rep}{Rep}
\DeclareMathOperator{\Hom}{Hom}
\DeclareMathOperator{\Ker}{Ker}
\DeclareMathOperator{\Gal}{Gal}
\DeclareMathOperator{\gal}{Gal}
\def\inf{\mathrm{inf}}
\def\sup{\mathrm{sup}}
\DeclareMathOperator{\Fr}{Fr}
\DeclareMathOperator{\Fil}{Fil}
\def\et{\mathrm{\acute{e}t}}
\def\cris{\mathrm{cris}}
\def\crys{\mathrm{crys}}
\newcommand{\st}{\mathrm{st}}
\def\dR{\mathrm{dR}}
\def\rig{\mathrm{rig}}
\newcommand{\ur}{\mathrm{ur}}
\newcommand{\Kpinfty}{K_{p^\infty}}
\newcommand{\Kinfty}{K_{\infty}}
\newcommand{\la}{\mathrm{la}}
\newcommand{\dan}{\text{$\mbox{-}\mathrm{an}$}}
\newcommand{\dla}{\text{$\mbox{-}\mathrm{la}$}}
\newcommand{\dpa}{\text{$\mbox{-}\mathrm{pa}$}}
\newcommand{\MF}{\mathrm{MF}^{\varphi, N}_{K_0}}
\newcommand{\MFwa}{\mathrm{MF}^{\varphi, N, \mathrm{wa}}_{K_0}}
\newcommand{\Ainf}{\mathbf{A}_{\mathrm{inf}}}
\newcommand{\Acris}{\mathbf{A}_{\mathrm{cris}}}
\newcommand{\Bcris}{\mathbf{B}_{\mathrm{cris}}}
\newcommand{\bcris}{\mathbf{B}_{\mathrm{cris}}}
\newcommand{\Bst}{\mathbf{B}_{\mathrm{st}}}
\newcommand{\bdr}{\mathbf{B}_{\mathrm{dR}}}
\renewcommand{\OE}{{\mathcal{O}_{\mathcal{E}}}}
\newcommand{\oedagger}{{\mathcal{O}^\dagger_{\mathcal{E}}}}
\newcommand*{\wt}[1]{\widetilde{#1}}
\newcommand*{\wh}[1]{\widehat{#1}}
\newcommand{\Z}{\mathbb{Z}}
\newcommand{\Zp}{\mathbb{Z}_p}
\newcommand{\Qp}{\mathbb{Q}_p}
\def\upi{\underline \pi}
\newcommand{\bigM}{\mathcal{M}}
\newcommand{\cm}{\mathcal{M}}
\newcommand{\gs}{\mathfrak{S}}
\newcommand{\huaM}{\mathfrak{M}}
\newcommand{\gM}{\mathfrak{M}}
\newcommand{\gm}{\mathfrak{M}}
\def \hR {{\widehat{\mathcal R}} }
\def \hM {{\widehat{\mathfrak{M}}} }
\newcommand{\A}{ {\mathbf{A}}   }
\newcommand{\B}{  {\mathbf{B}}  }
\newcommand{\wtA}{   {\widetilde{{\mathbf{A}}}}  }
\newcommand{\wtE}{   {\widetilde{{\mathbf{E}}}}  }
\newcommand{\wta}{   {\widetilde{{\mathbf{A}}}}  }
\newcommand{\wtb}{   {\widetilde{{\mathbf{B}}}}  }
\newcommand{\wtB}{   {\widetilde{{\mathbf{B}}}}  }
\newcommand{\vece}{\vec{e}}
\newcommand{\LHS}{\text{LHS}}
\newcommand{\bst}{\mathbf{B}_{\mathrm{st}}}
\renewcommand{\phi}{\varphi}
\renewcommand{\projlim}{\varprojlim}
\newcommand{\cbf}{\mathbf{c}}
\newcommand{\kbf}{\mathbf{k}}
\def \hR {\widehat{\mathcal{R}} }
\def \Kinfty {K_{\infty}}
\def \Kpinfty {K_{p^\infty}}
\def \O {\mathcal{O}}
\DeclareSymbolFontAlphabet{\mathbb}{AMSb}
\DeclareSymbolFontAlphabet{\mathbbl}{bbold}
\newcommand{\prism}{{\mathlarger{\mathbbl{\Delta}}}}
\numberwithin{equation}{subsection}
\begin{document}

\title[]{Breuil-Kisin modules and integral $p$-adic Hodge theory}


\date{\today}

\author[]{Hui Gao\\
with  Appendix A by Yoshiyasu Ozeki, \\
and joint Appendix B with Tong Liu}
 
\address{Department of Mathematics, Southern University of Science and Technology, Shenzhen 518055, Guangdong, China}
\email{gaoh@sustech.edu.cn}

\address{Department of Mathematics, Purdue University, West Lafayette, IN 47907}
\email{tongliu@math.purdue.edu}

\address{Department of Mathematics and Physics, Faculty of Science, Kanagawa University,
  2946 Tsuchiya, Hiratsuka-shi, Kanagawa 259--1293, Japan}
\email{ozeki@kanagawa-u.ac.jp}

\subjclass[2010]{Primary  11F80, 11S20}
\keywords{Breuil-Kisin modules, integral $p$-adic Hodge theory, $(\varphi, \tau)$-modules}
\begin{abstract}
We construct a category of \emph{Breuil-Kisin $G_K$-modules} to classify  integral semi-stable Galois representations. Our theory uses Breuil-Kisin modules and  Breuil-Kisin-Fargues modules with Galois actions, and can be regarded as the algebraic avatar of the integral $p$-adic cohomology theories of Bhatt-Morrow-Scholze and Bhatt-Scholze. As a key ingredient, we classify Galois representations that are of finite $E(u)$-height.
\end{abstract}

\maketitle
\setcounter{tocdepth}{1}
\tableofcontents

\section{Introduction}\label{secintro}
\subsection{Overview and main theorems}
In this paper, we construct certain ``algebraic avatar" of some \emph{integral} $p$-adic cohomology theories. Let us first fix some notations.
\begin{notation}\label{notabegin}
Let  $p$ be a prime. Let $k$ be a perfect  field of characteristic $p$, let $W(k)$ be the ring of Witt vectors, and let $K_0 :=W(k)[1/p]$.
Let $K$ be a totally ramified finite extension of $K_0$, let $\mathcal O_K$ be the ring of integers, and let $e := [K: K_0]$.
Fix an algebraic closure $\overline {K}$ of $K$ and set $G_K:=\Gal(\overline{K}/K)$.
Let $C_p$ be the $p$-adic completion of $\overline{K}$, and let $\mathcal{O}_{C_p}$ be the ring of integers. Let $v_p$ be the valuation on $C_p$ such that $v_p(p)=1$.
\end{notation}

 The study of $p$-adic Hodge theory is roughly divided into two closely related directions: in the geometric direction, one studies $p$-adic cohomology theories and their comparisons; while in the algebraic direction, one studies (semi-)linear algebra categories and uses them to classify $p$-adic Galois representations. Let us first recall some  foundational theorems.

\begin{theorem} \cite{HK94, Tsu99, CN17}
\label{thmTsuji}
Let $\mathfrak{X}$ be a proper formal scheme over $\mathcal{O}_K$ with semi-stable reduction. Let  $R\Gamma_{\mathrm{log-crys}}$ (resp. $R\Gamma_{\dR}$,  $R\Gamma_{\et}$) denote the log-crystalline (resp. de Rham,  \'etale) cohomology theory. 
Let $\bst$ be Fontaine's semi-stable period ring.
We have
\begin{eqnarray}\label{eq01}
R\Gamma_{\mathrm{log-crys}}(\mathfrak{X}_k/W(k))[1/p]  \otimes_{K_0} K  & \simeq &  R\Gamma_{\dR}(\mathfrak{X}/\mathcal O_K)[1/p];\\
\label{eq02} R\Gamma_{\mathrm{log-crys}}(\mathfrak{X}_k/W(k))[1/p]  \otimes_{K_0} \Bst & \simeq   &
R\Gamma_{\et}(\mathfrak{X}_{\overline K}, \Zp)[1/p]  \otimes_{\Qp} \Bst .
\end{eqnarray} 
\end{theorem}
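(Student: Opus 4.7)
The plan is to treat the two isomorphisms separately: \eqref{eq01} is the Hyodo-Kato isomorphism, while \eqref{eq02} is the semi-stable comparison conjecture.

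For \eqref{eq01}, I would equip $\mathfrak{X}$ with the canonical log structure associated to its special fiber, so that $\mathfrak{X}_k$ becomes log-smooth over the log point associated to $W(k)$. Following Hyodo-Kato, after choosing a uniformizer $\pi$ of $K$, one constructs a $K_0$-linear comparison map towards $R\Gamma_{\dR}(\mathfrak{X}/\mathcal{O}_K)[1/p]$. To show it becomes an isomorphism after extending scalars to $K$, I would argue locally on a semi-stable chart, reducing to Kato's identification of the log de Rham-Witt complex of $\mathfrak{X}_k$ with an appropriate truncation of the completed log de Rham complex of $\mathfrak{X}$, and then globalize by descent along an étale cover by semi-stable charts.

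For \eqref{eq02}, I would follow the log-syntomic approach of Fontaine-Messing, Kato and Tsuji. The strategy has three steps: (i) construct log-syntomic cohomology complexes $\mathcal{S}_n(r)$ on $\mathfrak{X}$ by cutting the filtered log-crystalline complex by a Frobenius-linearity condition; (ii) produce a Fontaine-Messing-type period map from $\mathcal{S}_n(r)$ to the truncated $p$-adic nearby cycles $\tau_{\leq r} R \epsilon_{\ast} (\mathbb{Z}/p^n)(r)'$ on $\mathfrak{X}_{\overline{K}, \et}$, and prove that its cone is annihilated by an explicit power of $p$ depending only on $r$; (iii) upgrade this local comparison into a $(\varphi, N, G_K)$-equivariant global statement, tensoring with $\Bst$ to absorb the residual discrepancy and to recover the étale side from the log-crystalline side.

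The main obstacle is step (ii): the local analysis of the syntomic period map requires delicate control of $p$-adic vanishing cycles modulo $p^n$ over a semi-stable chart, which is the technical heart of Tsuji's proof and was reworked by Cesnavicius-Niziol. An alternative route, closer in spirit to the present paper, is to use $\Ainf$- or log-prismatic cohomology as an integral common refinement that specializes to both sides of \eqref{eq02}; this bypasses much of the classical syntomic formalism but requires setting up a satisfactory theory of log-prismatic cohomology with Frobenius, monodromy and Galois action.
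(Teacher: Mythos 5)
The paper does not prove Theorem~\ref{thmTsuji}; it is stated as recalled background and attributed directly to \cite{HK94, Tsu99, CN17}, with the one-sentence gloss that \eqref{eq01} is due to Hyodo--Kato, \eqref{eq02} is the $\mathrm{C}_{\st}$-conjecture proved by Tsuji in the scheme case, and the formal scheme case of both is due to Colmez--Nizio\l. There is therefore no internal argument for your proposal to be measured against.

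Your sketch is a broadly accurate survey of how the cited works proceed (log de Rham--Witt and the choice-of-uniformizer Hyodo--Kato map for \eqref{eq01}; log-syntomic complexes, the Fontaine--Messing period map to truncated $p$-adic nearby cycles with a cone killed by a controlled power of $p$, and a $(\varphi,N,G_K)$-equivariant globalization for \eqref{eq02}). Two corrections: the reworking of the syntomic comparison in the formal-scheme setting relevant here is by Colmez--Nizio\l\ \cite{CN17}, not ``Cesnavicius--Niziol'' (you are likely conflating this with \v{C}esnavi\v{c}ius--Koshikawa \cite{CK19}, who treat the semi-stable $\Ainf$-cohomology, which is a different statement); and your sketch does not address the genuinely new difficulties in passing from proper schemes to proper formal schemes (the setting of the theorem as stated), which is precisely the contribution of \cite{CN17}. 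Since the paper offers no proof, these remarks bear only on the accuracy of your summary of the literature, not on a comparison of methods.
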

Here, when $\mathfrak{X}$ is a scheme,   \eqref{eq01} is proved by Hyodo-Kato, and   \eqref{eq02} (the $\mathrm{C}_{\st}$-conjecture of Fontaine-Jannsen) is proved by Tsuji; the general case when $\mathfrak{X}$ is a formal scheme  is proved by Colmez-Nizio\l.
These comparisons imply that the log-crystalline cohomology groups equipped with their natural $(\varphi, N)$- and filtration structures give rise to \emph{admissible}  filtered $(\varphi, N)$-modules (recall admissible means ``coming from Galois representations"). The following theorem of Colmez-Fontaine, which says that ``weakly admissible implies admissible", can  be regarded as the algebraic avatar of Thm. \ref{thmTsuji}.

\begin{theorem}\cite{CF00} \label{thmcf}
The category of \emph{weakly admissible} filtered $(\varphi, N)$-modules is equivalent to the category of semi-stable representations of $G_K$. 
\end{theorem}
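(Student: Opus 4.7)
The plan is to construct quasi-inverse functors and reduce the equivalence to a single dimension count. On semi-stable representations one sets $\dst(V) := (V \otimes_{\Qp} \bst)^{G_K}$, and on filtered $(\varphi, N)$-modules one defines
$$V_{\st}(D) := \mathrm{Fil}^0(D \otimes_{K_0} \bst)^{\varphi = 1,\, N = 0}.$$
Fontaine's foundational results show that $\dst$ lands in $\MFwa$, that $\dst$ and $V_{\st}$ are adjoint, and that the natural map $V_{\st}(\dst(V)) \to V$ is injective. For any $D \in \MF$ there is an elementary bound $\dim_{\Qp} V_{\st}(D) \le \dim_{K_0} D$, so the whole theorem reduces to the assertion: \emph{if $D$ is weakly admissible, then equality holds}.

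To attack this, my plan is to follow Berger's Robba-ring strategy. First, attach to any $D \in \MF$ a $(\varphi, N)$-module $\mathcal M(D)$ over $\mathcal R[\log u]$ of the same rank, built from $D \otimes_{K_0} \mathcal R[\log u]$ with diagonal $\varphi$ and monodromy given by the Leibniz rule. Next, using the filtration on $D_K := D \otimes_{K_0} K$ together with the localisation maps $\mathcal R \to \bdrplus$ at the zeros of $\varphi^n(E(u))$, carve out an $\mathcal R$-lattice $\ndr(D) \subset \mathcal M(D)[1/t]$ on which $\Gamma$ acts naturally. This produces a $(\varphi, \Gamma)$-module over $\mathcal R$ attached functorially to $D$.

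The central input is then Kedlaya's slope filtration theorem: every $\varphi$-module over $\mathcal R$ admits a canonical slope filtration, and is \emph{\'etale} precisely when it is pure of slope $0$. The key claim is: $D$ is weakly admissible if and only if $\ndr(D)$ is \'etale. To prove this, one shows that every $\varphi$- and $\Gamma$-stable $\mathcal R$-submodule $\mathcal N \subset \ndr(D)$ descends to a filtered sub-$(\varphi, N)$-module $D' \subset D$, and that $\deg \mathcal N$ is computed from the Hodge and Newton numbers of $D'$; the inequalities defining weak admissibility then control all partial slopes, while the top exterior power pins the average slope at $0$, forcing purity. Once $\ndr(D)$ is \'etale, Cherbonnier-Colmez overconvergence descends it to an \'etale $(\varphi, \Gamma)$-module over $\mathbf A_K$, and Fontaine's classical equivalence produces a $G_K$-representation $V$, which one verifies is semi-stable with $\dst(V) \simeq D$ by running the construction in reverse.

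The main obstacle is the translation in the previous paragraph: matching $\varphi$- and $\Gamma$-stable sub-bundles of $\ndr(D)$ (analytic objects) with filtered sub-$(\varphi, N)$-modules of $D$ (algebraic objects), and correctly computing their degrees. This requires precise control of how $\ndr(D)$ interacts with the filtration on $D_K$ at every zero of $\varphi^n(E(u))$, together with a careful application of Kedlaya's theorem to actually descend $\Gamma$-invariant sub-$\varphi$-modules over $\mathcal R$ to genuine subobjects in $\MF$. All the remaining steps---the overconvergence descent and the reconstruction of $D$ from $V$---are, by comparison, formal.
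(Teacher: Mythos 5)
Theorem~\ref{thmcf} appears in the paper only as a citation to Colmez--Fontaine \cite{CF00}; there is no internal proof to compare yours against. Judged on its own, your sketch is a sound outline of Berger's proof of the theorem. The reduction to showing $\dim_{\Qp} V_{\st}(D)=\dim_{K_0}D$ for weakly admissible $D$ is the standard one, and the rest is Berger's route: form $(D\otimes_{K_0}\mathcal R[\log T])^{N=0}$, impose the filtration via the $\bdrplus$-localizations to cut out a $(\varphi,\Gamma)$-module $\ndr(D)$ over the cyclotomic Robba ring, use Kedlaya's slope filtration theorem together with the weak admissibility inequalities on subobjects to prove $\ndr(D)$ is pure of slope $0$, and descend by Cherbonnier--Colmez to a $G_K$-representation. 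One slip to flag: since you are working in the cyclotomic tower with a $\Gamma$-action, the localizations belong at the zeros of $\varphi^n(q)$ with $q=((1+T)^p-1)/T$, not $\varphi^n(E(u))$; writing $E(u)$ and $\log u$ silently switches to the Kummer-tower picture, where $\Gamma$ is no longer the right group to act.

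That slip is also exactly the seam between your route and the one this paper relies on. The paper's surrounding development is Kisin's Kummer-tower variant: $\MFwa\simeq\textnormal{Mod}_{\mathcal O}^{\varphi,N_\nabla,0}$ (recorded after Theorem~\ref{thmcomdiaa}), where the connection $N_\nabla$ stands in for the $\Gamma$-action, $\mathcal O=\B^+_{\rig,\Kinfty}$ for the cyclotomic open-disk ring, and the descent step produces a finite-height Breuil--Kisin module over $\gs$ rather than an overconvergent $(\varphi,\Gamma)$-module. Kedlaya's theorem is the shared engine; what differs is which tower one works over, and the Kummer choice is the one that plugs directly into the Breuil--Kisin $G_K$-modules that are the subject of this paper. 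Your version is a genuine alternative proof; as you observe yourself, the real work is the degree bookkeeping identifying $\varphi$- and $\Gamma$-stable sub-$\mathcal R$-modules of $\ndr(D)$ with filtered sub-$(\varphi,N)$-modules of $D$, and any complete write-up will concentrate its effort there.
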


Note that  in Thm. \ref{thmTsuji}, the integral and torsion information in all the cohomology theories are lost in the comparison theorems. Recently, Bhatt-Morrow-Scholze \cite{BMS1, BMS2} and Bhatt-Scholze \cite{BS22} defined some \emph{integral}  $p$-adic cohomology theories (in the good reduction case) which specialize to all these classical $p$-adic cohomology theories (and keep  all the integral  and torsion information). Let us first introduce some notations and definitions.

\begin{notation}\label{114}
Let $\pi \in   K$ be a \emph{fixed} uniformizer, and let $E(u) \in W(k)[u]$ be the minimal polynomial of $\pi$ over $K_0$. \emph{Fix} a sequence of elements $\pi _n \in \overline K$ inductively such that $\pi_0 = \pi$ and $(\pi_{n+1})^p = \pi_n$. Let $K_{\infty} : = \cup _{n = 1} ^{\infty} K(\pi_n)$ and let $G_\infty:= \gal(\overline K/\Kinfty)$.
Let $\gs  : =  W(k)[\![u]\!]$.

Let $R:= \mathcal{O}_{C_p}^\flat$ be the tilt of the perfectoid ring $\mathcal{O}_{C_p}$, let $\mathfrak{m}_R$ be its maximal ideal, and let $W(R)$ (also denoted as $\Ainf$) be the ring of Witt vectors. Let $\theta: W(R) \to  \mathcal{O}_{C_p}$ be the usual map; let $\xi$  be a generator of the principal ideal $\Ker \theta$.
Let $\Fr R:=C_p^\flat$ be the fractional field of $R$, and let $W(\Fr R)$ be the ring of Witt vectors.

Note that $\upi:=(\pi_n)_{n \geq 0} $ defines an element in $R$; let $[\underline \pi ]\in W(R)$ be the Teichm\"uller lift of $\upi$. Then we can define a $W(k)$-linear  embedding $\gs \inj W(R)$ via $u\mapsto [\underline \pi]$; hence $E(u)$ maps to a generator of $\Ker \theta$. The Frobenius $\varphi: R \to R$ where $x \mapsto x^p$ induces Frobenii (always denoted as $\varphi$) on $W(R)$ and $\gs$.

Fix $\mu _n \in \overline K$ inductively such that $\mu_0=1$, $\mu_1$ is a primitive $p$-th root of unity and $(\mu_{n+1})^p = \mu_n$. Let $\Kpinfty: = \cup _{n = 1} ^{\infty} K(\mu_n)$. 
Let $\underline{\varepsilon} = (1, \mu_1, \mu_{2}, \ldots) \in R$,  and let $[\underline{\varepsilon}] \in W(R)$ be the Teichm\"{u}ller lift.

\end{notation}

\begin{defn} \label{d115}
\hfill
\begin{enumerate} 
\item A \emph{Breuil-Kisin module} is a finitely generated $\gs$-module  $\gm $  equipped with an $\gs[ 1/E(u)]$-linear isomorphism 
\[\gm \otimes_{\varphi, \gs}\gs[ 1/E(u)] \to \gm[ 1/E(u)] ;\] it is called \emph{of (non-negative) finite $E(u)$-height} if under the isomorphism, the image of $\gm\otimes_{\varphi, \gs}\gs$ is contained in $\gm$.

\item Let $\wh\gm $ be a  finitely presented $W(R)$-module such that $\wh\gm [1/p]$ is finite free over $W(R)[1/p]$.
\begin{enumerate}
\item It is called a (non-$\varphi$-twisted) \emph{Breuil-Kisin-Fargues module} if it is equipped with a $W(R)[1/\xi]$-linear isomorphism 
\[\wh\gm \otimes_{\varphi, W(R)}W(R)[1/\xi]  \to \wh\gm[1/\xi].\]
\item It is called a \emph{$\varphi$-twisted Breuil-Kisin-Fargues module} if it is equipped with a $W(R)[1/\varphi(\xi)]$-linear isomorphism 
\[\wh\gm \otimes_{\varphi, W(R)}W(R)[1/\varphi(\xi)]  \to \wh\gm[1/\varphi(\xi)].\]
\end{enumerate}
\end{enumerate}
\end{defn}

\begin{rem}\label{rfirst}
\begin{enumerate}
\item Given a (non-$\varphi$-twisted) Breuil-Kisin-Fargues module $\wh\gm$, then $\wh\gm\otimes_{\varphi, W(R)} W(R)$ is a $\varphi$-twisted Breuil-Kisin-Fargues module. As $\varphi$ is an automorphism on $W(R)$, this induces an equivalence between the relevant categories. 

\item  It seems to us that both versions of ``Breuil-Kisin-Fargues modules" deserve their merits. Indeed, the $\varphi$-twisted version (which is precisely \cite[Def. 4.22]{BMS1}) is perhaps the (geometrically)  natural  version as it naturally appears in cohomology and comparison theorems, cf. Thm. \ref{bmsintro} in the following. 
However, the non-$\varphi$-twisted version (which, e.g., is also used in \cite[Def. 4.2.1]{EG23}) has the technical convenience that it is more ``parallel" to the Breuil-Kisin modules: for example, one can choose $\xi=E(u)$. In addition, in our \emph{algebraic} study of Breuil-Kisin modules, we also need to embed $\gs$ into various other rings (without $\varphi$-twisting), hence the process is more uniform if we use the non-$\varphi$-twisted version throughout: this is indeed what we do in this paper, cf. Def. \ref{defwr11} and Rem. \ref{remscholze}.
\end{enumerate} 
\end{rem}

\begin{theorem}\cite{BMS1, BMS2, BS22} \label{bmsintro}
Let $\mathfrak{X}$ be a proper smooth formal scheme over $\mathcal{O}_K$.
There exist cohomology theories $R\Gamma_{\gs}(\mathfrak{X})$ and $R\Gamma_{\Ainf}(\mathfrak{X}_{\mathcal{O}_{C_p}})$ which are equipped with morphisms $\varphi$, such that the cohomology groups are Breuil-Kisin modules and \emph{$\varphi$-twisted} Breuil-Kisin-Fargues modules respectively, and such that we have the following comparisons
\begin{eqnarray}
\label{compawr} R\Gamma_{\gs}(\mathfrak{X}) \otimes_{\varphi, \gs} \Ainf & \simeq &  R\Gamma_{\Ainf}(\mathfrak{X}_{\mathcal{O}_{C_p}}) ;\\
\label{compacris} R\Gamma_{\gs}(\mathfrak{X}) \otimes_{\varphi, \gs}^{\mathbb L} W(k) & \simeq &  R\Gamma_{\crys}(\mathfrak{X}_k/W(k));\\
\label{compadr} R\Gamma_{\gs}(\mathfrak{X}) \otimes_{\varphi, \gs}^{\mathbb L} \mathcal O_K  & \simeq &  R\Gamma_{\dR}(\mathfrak{X}/\mathcal O_K);\\
\label{compaet}R\Gamma_{\Ainf}(\mathfrak{X}_{\mathcal{O}_{C_p}}) \otimes_{\Ainf} \Ainf[  1/\mu]
& \simeq & R\Gamma_{\et}(\mathfrak{X}_{\overline K}, \Zp) \otimes_{\Zp} \Ainf[ 1/\mu].
\end{eqnarray}
Here, the (derived) tensor product in \eqref{compawr} is via the flat (cf. \cite[Lem. 4.30]{BMS1}) morphism $\gs \xrightarrow{\varphi} \gs \hookrightarrow \Ainf$ where the second map is the $W(k)$-linear embedding sending $u$ to $[\underline{\pi}]$;
the derived tensor product in \eqref{compacris} is via $\gs \xrightarrow{\varphi} \gs \to W(k)$
where the second map is the   $W(k)$-linear map sending  $u$ to $0$; the  derived  tensor product in \eqref{compadr} is via $\gs \xrightarrow{\varphi} \gs \to \mathcal{O}_K$ where the second map is the $W(k)$-linear map sending $u$ to $\pi$; the element $\mu$ in \eqref{compaet} is $[\underline{\varepsilon}]-1$.
 \end{theorem}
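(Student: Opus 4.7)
The plan is to follow the prismatic approach of Bhatt--Scholze, which uniformly subsumes the $\Ainf$- and $\gs$-cohomology constructions of \cite{BMS1, BMS2}. One first introduces the (absolute or $\mathcal{O}_K$-relative) prismatic site $\mathfrak{X}_\prism$, whose objects are bounded prisms $(A, I)$ together with a map $\Spf(A/I) \to \mathfrak{X}$; on it lives the structure sheaf $\mathcal{O}_\prism$, and one sets $R\Gamma_\prism(\mathfrak{X}) := R\Gamma(\mathfrak{X}_\prism, \mathcal{O}_\prism)$. The two target theories then arise by evaluation at distinguished prisms: $R\Gamma_{\gs}(\mathfrak{X})$ at the Breuil--Kisin prism $(\gs, E(u))$, and $R\Gamma_{\Ainf}(\mathfrak{X}_{\mathcal{O}_{C_p}})$ at the perfect prism $(\Ainf, \xi)$, after base change to the $\mathcal{O}_{C_p}$-relative site. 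Finite generation and the isogeny-isomorphism requirements in Definition \ref{d115} follow from the Hodge--Tate comparison, which identifies the mod-$I$ reduction of $\varphi^*\mathcal{O}_\prism$ with a differential-graded complex on $\mathfrak{X}$; for smooth $\mathfrak{X}$ this yields a perfect complex, and the linearized Frobenius becomes an isomorphism after inverting $E(u)$ or $\xi$.

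The four comparison isomorphisms then follow by functoriality of prismatic cohomology along morphisms of prisms. For \eqref{compawr}, the composite $\gs \xrightarrow{\varphi} \gs \hookrightarrow \Ainf$ sending $u \mapsto [\underline{\pi}]$ defines a morphism $(\gs, E(u)) \to (\Ainf, \xi)$ of prisms, flat by \cite[Lem.\ 4.30]{BMS1}, and functoriality produces the desired base-change isomorphism. For \eqref{compacris}, compose with the morphism $(\gs, E(u)) \to (W(k), p)$ given by $u \mapsto 0$ (after Frobenius twist), and invoke the identification of crystalline cohomology with prismatic cohomology at the crystalline prism. For \eqref{compadr}, use the morphism $(\gs, E(u)) \to (\mathcal{O}_K, (E(u)))$ given by $u \mapsto \pi$ (after Frobenius twist), which specializes prismatic cohomology to de Rham cohomology via the Hodge--Tate filtration. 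For \eqref{compaet}, one invokes the \'etale comparison: after inverting $\mu = [\underline{\varepsilon}] - 1$ in $\Ainf$, the $\Ainf$-cohomology recovers $R\Gamma_{\et}(\mathfrak{X}_{\overline{K}}, \Zp) \otimes_{\Zp} \Ainf[1/\mu]$.

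The main obstacle is the foundation of the prismatic (or $\Ainf$-) site itself. In the $\Ainf$-approach of \cite{BMS1}, the technical heart is the construction of $\mathcal{A}\Omega_{\mathfrak{X}} := L\eta_\mu R\nu_* \Ainf_{\mathfrak{X}}$, where $\nu$ is the projection from the pro-\'etale site of the adic generic fiber to the Zariski site of $\mathfrak{X}$, together with a local computation on small affine pieces admitting perfectoid pro-\'etale covers, controlled by the almost purity theorem. In the prismatic reformulation of \cite{BSprism} one instead establishes flat descent for prismatic cohomology and computes it on framed affines via a Koszul-type complex of a $\delta$-ring. Either way, the crucial input is the Hodge--Tate comparison, which anchors both finiteness and the specializations.

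The \'etale comparison \eqref{compaet} is a separate, deep input: essentially Scholze's primitive comparison theorem for proper smooth rigid-analytic spaces, together with its integral refinement given by the pro-\'etale sheaf $\Ainf$. I would not attempt to reprove this from scratch. My own contribution would be to organize the comparison diagrams and to verify the compatibility of Frobenius twists: the reason $\varphi$ appears on the left of \eqref{compawr}--\eqref{compadr} is that the Breuil--Kisin prism $(\gs, E(u))$ maps to the other prisms only after precomposition with Frobenius, in order to land in the correct distinguished ideal. Checking these twist compatibilities and the bifunctoriality of prismatic cohomology in morphisms of prisms is, relative to the foundations, a diagram chase.
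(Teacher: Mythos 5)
The paper does not prove Theorem \ref{bmsintro}; it is quoted from \cite{BMS1, BMS2, BSprism}, so there is no internal proof to compare against. Your outline follows the prismatic reformulation, which is in fact close to the paper's own discussion of the same material in Remark \ref{remscholze} (evaluation at the Breuil--Kisin prism $(\gs, (E(u)))$, base change to $(\Ainf)$, and further reduction to the crystalline prism $(W(k),(p))$), and the broad strokes are sound.

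Two points in your middle paragraph are imprecise, however. First, for \eqref{compawr} the composite $\gs \xrightarrow{\varphi} \gs \hookrightarrow \Ainf$ carries $E(u)$ into the ideal $(\varphi(\xi))$, not $(\xi)$, so the morphism of prisms one invokes is $(\gs,(E(u))) \to (\Ainf,(\varphi(\xi)))$, yielding $\varphi^\ast_{\Ainf} R\Gamma_\prism(\mathfrak{X}_{\mathcal{O}_{C_p}}/\Ainf)$ (that is, the $\varphi$-twisted Breuil--Kisin--Fargues complex) on the right. Your closing paragraph shows you are aware of this, but the body contradicts it. Second, and more substantively, \eqref{compadr} is not a base change along a morphism of prisms: the pair $(\mathcal{O}_K,(E(u)))$ is not a prism, since $E(\pi)=0$ in $\mathcal{O}_K$. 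The de Rham comparison is instead the statement that $\varphi^\ast$ prismatic cohomology modulo the prismatic ideal --- i.e.\ $R\Gamma_\prism(\mathfrak{X}/\gs)\otimes^{\mathbb{L}}_{\varphi,\gs}(\gs/E(u))$ with $\gs/E(u)\simeq\mathcal{O}_K$ --- recovers $R\Gamma_{\dR}(\mathfrak{X}/\mathcal{O}_K)$. This is a structural theorem of \cite{BSprism} (a consequence of the Hodge--Tate comparison together with a conjugate-filtration degeneration argument), not an instance of prism functoriality. The remaining comparisons \eqref{compacris} and \eqref{compaet} are handled correctly, the latter being the \'etale (primitive) comparison, which as you rightly say is an independent deep input.
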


All the above results are first proved by Bhatt-Morrow-Scholze in \cite{BMS1, BMS2}.
Recently, Bhatt-Scholze \cite{BS22} developed the  prismatic cohomologies and reproved all these results; in particular, \eqref{compawr} can now be regarded as a  prismatic base-change theorem.
Let us mention that it is natural  to expect the existence of a ``log"-version of the  prismatic site, and hence the  log-prismatic cohomologies (e.g., the semi-stable version of $R\Gamma_{\Ainf}$ is already constructed by \v{C}esnavi\v{c}ius-Koshikawa \cite{CK19}).

The main goal of this paper is to construct the algebraic avatar of these integral $p$-adic cohomologies (\emph{modulo $p$-power torsion}) and the comparisons amongst them, which is the following:

\begin{defn}\label{defwr11}
Let $\textnormal{Mod}_{\gs, W(R)}^{\varphi, G_K , [-\infty, +\infty]}$
be the category consisting of triples $(\mathfrak{M}, \varphi_{\gm}, G_K)$, which we call the \emph{Breuil-Kisin $G_K$-modules}, where
\begin{enumerate}
\item $(\mathfrak{M}, \varphi_\mathfrak{M})$ is a \emph{finite free} Breuil-Kisin module;
\item  $G_K$ is a continuous $\varphi_{\hM}$-commuting $W(R)$-semi-linear   $G_K$-action on the (non-$\varphi$-twisted) Breuil-Kisin-Fargues module  $\hM:= W(R)
\otimes_{ \gs} \mathfrak{M}$, such that
\begin{enumerate}
\item $\gm \subset \hM^{G_\infty}$ via the embedding $\gm \hookrightarrow \hM$;
\item  $\gm/u\gm \subset (\hM/W(\mathfrak{m}_R)\hM)^{G_K}$ via the embedding $\gm/u\gm \hookrightarrow \hM/W(\mathfrak{m}_R)\hM$.
\end{enumerate} 
\end{enumerate}
  \end{defn}

  \begin{rem}
After choosing a  basis of $\gm$, the data in the definition above can be expressed using two matrices (always so if $p>2$ and can be made so if $p=2$), one for $\varphi$ and one for $\tau$ (see Notation \ref{nota hatG} for $\tau$ and discussions about $p=2$ case).
  \end{rem}
  

 \begin{rem}\label{remscholze} 
 \begin{enumerate}
 \item The module $\wh\gm$ together with its   $G_K$-action is called a ``Breuil-Kisin-Fargues $G_K$-module" in \cite[Def. 4.2.3]{EG23}. Here, we choose the terminology ``Breuil-Kisin  $G_K$-modules"  to emphasize the role played by Breuil-Kisin modules in our theory.

\item If we change the expression ``$\hM:= W(R)
\otimes_{ \gs} \mathfrak{M}$" in Def. \ref{defwr11} to ``$\hM:= W(R)
\otimes_{\varphi, \gs} \mathfrak{M}$", then by Rem. \ref{rfirst}, we get an equivalent ``$\varphi$-twisted category". 
 
 \item (I thank Peter Scholze for useful discussions on this remark.)  
One observes that the cohomology theories $R\Gamma_{\gs}(\mathfrak{X})$ and $R\Gamma_{A_{\mathrm{inf}}}(\mathfrak{X}_{\mathcal{O}_{C_p}})$ in Thm. \ref{bmsintro} ``satisfy"   conditions in  the \emph{$\varphi$-twisted version} of Def. \ref{defwr11} (although the cohomology groups are not necessarily finite free).
Namely, there is a  $\varphi$-commuting $G_K$-action on $R\Gamma_{A_{\mathrm{inf}}}(\mathfrak{X}_{\mathcal{O}_{C_p}})$; via the isomorphism \eqref{compawr}, the image of $R\Gamma_{\gs}(\mathfrak{X})$ in $R\Gamma_{A_{\mathrm{inf}}}(\mathfrak{X}_{\mathcal{O}_{C_p}})$ is fixed by $G_\infty$, and the image of $R\Gamma_{\gs}(\mathfrak{X})\otimes_{\varphi, \gs}^{\mathbb L} W(k)$ is furthermore fixed by $G_K$. 
All these follow from the  constructions in \cite{BMS1, BMS2}, but they look most natural using the \emph{base change theorem} of prismatic cohomologies in \cite{BS22}. Indeed, we have (cf. \emph{loc. cit.} for notations about prisms and prismatic cohomology),
\begin{align} 
 R\Gamma_{A_{\mathrm{inf}}}(\mathfrak{X}_{\mathcal{O}_{C_p}}) &\simeq   \varphi^\ast_{\Ainf} R\Gamma_\prism(\mathfrak{X}_{\mathcal{O}_{C_p}}/\Ainf), \quad \text{ by \cite[\S 17]{BS22};  }  \\
R\Gamma_{\gs}(\mathfrak{X})&\simeq   R\Gamma_\prism(X/\gs), \quad \text{ by \cite[\S 15.2]{BS22}.  } 
\end{align}
(In particular, note that $R\Gamma_\prism(\mathfrak{X}_{\mathcal{O}_{C_p}}/\Ainf)$ in \cite{BS22} gives \emph{non-$\varphi$-twisted} Breuil-Kisin-Fargues modules.)
Now, the $\varphi$-commuting action of $G_K$ on $R\Gamma_\prism(\mathfrak{X}_{\mathcal{O}_{C_p}}/\Ainf)$ is induced by $G_K$-action on the prism $(\Ainf, (\xi)) \in (\mathcal{O}_{C_p})_\prism$. 
The morphism $\varphi: \gs  \to \Ainf$ induces a morphism of prisms in $(\O_K)_\prism$:
\begin{equation} \label{egsinto}
(\gs, (E(u))) \to (\Ainf, (\varphi(\xi))); 
\end{equation}
this morphism, by  base change theorem of prismatic cohomologies, induces an isomorphism 
\begin{equation}\label{egsintomod}
R\Gamma_\prism(X/\gs)\otimes_{\varphi, \gs} \Ainf \simeq 
 \varphi^\ast_{\Ainf} R\Gamma_\prism(\mathfrak{X}_{\mathcal{O}_{C_p}}/\Ainf),
\end{equation}
which then induces a $G_K$-action on the left hand side. 
Given $g \in G_\infty$,  the composite $\gs \xrightarrow{\varphi} \Ainf \xrightarrow{g} \Ainf$ induces   exactly the same morphism  in \eqref{egsinto} (since $\gs \subset (\Ainf)^{G_\infty}$) and hence exactly the same isomorphism in \eqref{egsintomod}; this implies that $R\Gamma_\prism(X/\gs)$ is fixed by $G_\infty$. 
Finally, by reduction modulo $W(\mathfrak{m}_R)$, \eqref{egsinto}  induces  a morphism of prisms in $(\O_K)_\prism$:
\begin{equation} \label{egsinto2}
(W(k), (p)) \to (W(\overline k), (p)).
\end{equation}
Since $W(k) \subset (W(\overline k))^{G_K}$, we deduce  again by prismatic base change theorem that $R\Gamma_\prism(X/\gs) \otimes^{\mathbb L}_{\varphi, \gs} W(k)$ is fixed by $G_K$.
 \end{enumerate}
\end{rem}


The following is our main theorem, which, per Rem. \ref{remscholze}, can be regarded as the algebraic avatar of   Thm. \ref{bmsintro}.

\begin{theorem}\label{thm113}
(cf. Thm. \ref{thmsimple} \footnote{In Thm. \ref{thmsimple},   we will stick with  representations of non-negative Hodge-Tate weights, as well as Breuil-Kisin modules of non-negative  $E(u)$-heights; this makes the writing easier. The general case (as stated in Thm. \ref{thm113}) can be easily deduced by twisting.})
The category of Breuil-Kisin $G_K$-modules is equivalent to the category of $G_K$-stable $\Zp$-lattices in semi-stable representations of $G_K$.
\end{theorem}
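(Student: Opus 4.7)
My plan is to construct quasi-inverse functors in both directions, combining Kisin's classification of finite free Breuil-Kisin modules (which sees only the $G_\infty$-action) with the extra structure imposed by condition~(b), and to bridge the gap via Liu's category of $(\varphi, \hat G)$-modules for lattices in semi-stable representations.

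Starting from a Breuil-Kisin $G_K$-module $(\mathfrak{M}, \varphi_\mathfrak{M}, G_K)$, I would define
\[
T(\mathfrak{M}) := \bigl(\widehat{\mathfrak{M}} \otimes_{W(R)} W(\Fr R)\bigr)^{\varphi = 1},
\]
a finite free $\Zp$-module on which $G_K$ acts continuously through its action on $\widehat{\mathfrak{M}}$. Condition~(a) ensures that restriction to $G_\infty$ recovers the $G_\infty$-representation $T_\mathfrak{S}(\mathfrak{M})$ attached by Fontaine--Kisin, so $T(\mathfrak{M})$ is a $G_K$-extension of a lattice of finite $E(u)$-height. To prove that $V := T(\mathfrak{M})[1/p]$ is actually semi-stable, I would use condition~(b) decisively: the $K_0$-vector space $\mathbf{D}_0 := (\mathfrak{M}/u\mathfrak{M})[1/p]$ is a $\varphi$-module of rank $\dim_{\Qp} V$, and (b) embeds it $\varphi$-equivariantly into the $G_K$-invariants of $(\widehat{\mathfrak{M}}/W(\mathfrak{m}_R)\widehat{\mathfrak{M}})[1/p]$. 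Combined with the Breuil-Kisin-type comparison $\widehat{\mathfrak{M}} \otimes_{W(R)} \bcrisplus \simeq V \otimes_{\Qp} \bcrisplus$ and the inclusion $W(\bar k)[1/p] \subset \bst$, this produces a $K_0$-linear, $\varphi$-compatible injection $\mathbf{D}_0 \hookrightarrow \dst(V)$; a dimension count forces equality, whence $V$ is semi-stable.

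Conversely, given a $G_K$-stable $\Zp$-lattice $T$ in a semi-stable $V$, Kisin's theorem (after a Tate twist reducing to non-negative Hodge-Tate weights) supplies a finite free Breuil-Kisin module $\mathfrak{M}$ with $T_\mathfrak{S}(\mathfrak{M}) = T|_{G_\infty}$. The $G_K$-action on $T$ induces a $\varphi$-commuting, $W(R)$-semi-linear action on $\widehat{\mathfrak{M}}$ through the embedding $\widehat{\mathfrak{M}} \hookrightarrow T \otimes_{\Zp} W(\Fr R)$ coming from the formula for $T(\mathfrak{M})$, once one shows the ambient $G_K$-action preserves $\widehat{\mathfrak{M}}$ (a statement equivalent, via Liu's theory of $(\varphi, \hat G)$-modules, to the semi-stability of $V$). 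Condition~(a) is automatic since $G_\infty$ fixes $[\underline{\pi}] \in W(R)$; condition~(b) follows from the canonical identification $\mathbf{D}_0 \simeq \dst(V)$ and the $G_K$-fixedness of $\dst(V)$ inside $V \otimes_{\Qp} \bst$.

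The main obstacle is the semi-stability claim in the first direction: a finite free Breuil-Kisin module a priori only classifies $G_\infty$-representations of finite $E(u)$-height, a class strictly larger than semi-stable (cf.\ Kisin--Ren, Ozeki), so one must show that the mild-looking condition~(b) precisely carves out the semi-stable locus. I expect the cleanest route is to descend the $G_K$-action on $\widehat{\mathfrak{M}}$ down to Liu's subring $\hR \subset W(R)$, verifying that (a), (b) force the action to stabilize $\hR \otimes_\mathfrak{S} \mathfrak{M}$ and satisfy Liu's axioms (in particular reconstructing the monodromy operator $N_\nabla$ from the infinitesimal action of $\gal(\hat L/\Kinfty)$ encoded in (b)), and then invoking Liu's equivalence with the category of lattices in semi-stable representations. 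A continuity argument ensures that conversely any such $\hR$-level action extends uniquely to $\widehat{\mathfrak{M}}$, closing the loop.
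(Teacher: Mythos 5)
Your proposal correctly identifies the hard point (showing that condition~(b) forces semi-stability rather than mere finite $E(u)$-height), but your proposed resolution of that obstacle is precisely the route the paper deliberately avoids, and the route also has a genuine gap.

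First, your forward direction invokes a ``Breuil-Kisin-type comparison $\widehat{\mathfrak{M}} \otimes_{W(R)} \Bcrisplus \simeq V \otimes_{\Qp} \Bcrisplus$'' as if it were available for free. For a Breuil-Kisin $G_K$-module one only knows $G_K$-equivariantly that $\widehat{\mathfrak{M}} \otimes_{W(R)} W(\Fr R) \simeq T \otimes_{\Zp} W(\Fr R)$; passing to $\Bcrisplus$ with compatible $G_K$-actions is essentially equivalent to the potential crystallinity/semi-stability one is trying to prove. The paper does not have this comparison available at the outset; instead, the bulk of the paper (\S3--\S6) is devoted to establishing Thm.~\ref{thmmainpst}, that any finite $E(u)$-height representation is semi-stable over $K_{m(K^{\ur})}$, via overconvergent $(\varphi,\tau)$-modules, locally analytic vectors, an intrinsically defined monodromy operator $N_\nabla$ (Thm.~\ref{thmnnabla}), its identification with Kisin's (Thm.~\ref{thmcoinc}), and a Frobenius regularization step (Prop.~\ref{propregfrob}). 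Only \emph{after} this is the de~Rham/crystalline/semi-stable comparison at hand, and only then does condition~(b), fed through the specialization map $\nu: \wtB^{[0,r_0/p]}[\ell_u] \to W(\overline k)[1/p]$, force $D_\st^E(V)$ to be $G_{K^{\ur}}$-fixed and hence $V$ semi-stable over $K$.

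Second, your proposed detour through Liu's ring $\hR$ and the category of $(\varphi,\hat G)$-modules is exactly what the paper avoids: as Rem.~\ref{1110}(2)--(3) and Appendix~\ref{secappB} explain, it is not even known whether $\hR$ is $p$-adically complete, and the paper is careful to never use $\hR$. (In fact Appendix~\ref{secappB} uses the present theory to \emph{repair} a gap in \cite{Liu07,Gao18limit} caused by this very issue.) Beyond that, ``descending the $G_K$-action on $\widehat{\mathfrak{M}}$ to $\hR \otimes_{\varphi,\gs}\gm$'' is not something that conditions (a), (b) hand you directly; showing the action stabilizes that submodule already requires substantial input (essentially, Liu's theory itself applied to the representation one does not yet know is semi-stable), so the argument is circular as stated. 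The converse direction in your proposal is closer to the paper's (Kisin gives $\gm$, and one shows $\gm\otimes_\gs W(R)$ is $G_K$-stable), but the paper establishes the $G_K$-stability via the diagram \eqref{diag714}, the intersection $\wtB^\dagger \cap \wtB^+_{\log} = W(R)[1/p]$, and a finite-height matrix argument rather than appealing to $\hR$ or to Liu's equivalence.
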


\begin{rem}\label{rem1111}
\begin{enumerate}
\item We have a crystallinity criterion to tell when a Breuil-Kisin $G_K$-module comes from a crystalline representation, cf. Prop. \ref{propcryscrit}.

\item Loosely speaking, Thm. \ref{thm113} ``implies" that the information in \eqref{compawr} and \eqref{compacris} are already \emph{enough} to ``recover" $R\Gamma_{\et}(\mathfrak{X}_{\overline K}, \Zp)$ (modulo torsion). 
Given a Breuil-Kisin $G_K$-module  as in Def. \ref{defwr11}, let $\varphi^\ast \gm:=\gs \otimes_{\varphi, \gs} \gm$ and let  $\varphi^\ast \hM: =W(R) \otimes_{\varphi, W(R)} \hM$, then we can show (cf. Prop. \ref{cor710})
\begin{equation*}
\varphi^\ast \gm/E(u)\varphi^\ast \gm \subset (\varphi^\ast\hM/E(u)\varphi^\ast\hM)^{G_K};
\end{equation*} 
this can be regarded as the algebraic avatar of the de Rham comparison \eqref{compadr}.
\end{enumerate}
\end{rem}

We now give some historical remarks about the study of  (algebraic) integral $p$-adic Hodge theory, and compare some of the theories.

\begin{rem} \label{1110}
\begin{enumerate}
\item In  algebraic  integral $p$-adic Hodge theory, we use various (semi-)linear objects to classify $\Zp$-lattices in semi-stable Galois representations. 
For example, we have   Fontaine-Laffaille  theory \cite{FL82}, 
the theory of Wach modules \cite{Wach-smf, Wach-compo} (then refined by \cite{Col99, Ber04}),
and Breuil's theory of strongly divisible $S$-lattices (conjectured in \cite{Bre02}, fully proved in \cite{Liu08, Gao17} using input from \cite{Kis06}).
However, these theories are valid only with certain restrictions on ramification of the base field or on Hodge-Tate weights, or  are valid only for certain crystalline representations. Liu's theory of $(\varphi, \hat{G})$-modules \cite{Liu10} (with input from Kisin \cite{Kis06}) is so far the only theory that works for \emph{all} integral  semi-stable representations. However, unlike our Breuil-Kisin $G_K$-modules which can be regarded as the algebraic avatar of some cohomology theories, the ring ``$\hR$" (cf. Appendix \ref{secappB}) in Liu's theory is too \emph{implicit}, and it seems hopeless to construct some cohomology theory over it. 

 
\item Indeed, the author and Tong Liu recently realized that we actually \emph{do not know} if the ring ``$\hR$" is $p$-adically complete or not: this means that there is a gap in our earlier work on limit of torsion semi-stable Galois representations  in \cite{Liu07, Gao18limit} (these results are not used in the current paper). Fortunately, this gap can now be (easily) fixed by using the Breuil-Kisin $G_K$-modules, cf. Appendix \ref{secappB}. Note that the gap arises in the \emph{application} of the theory of $(\varphi, \hat{G})$-modules (namely, the theory is inadequate for this application); the theory \emph{per se} remains valid: see also next item.

\item In \cite{GaoLAV}, we will show that the theory  of Breuil-Kisin $G_K$-modules \emph{specializes to} (and hence recovers)  the theory of $(\varphi, \hat{G})$-modules; cf. \S \ref{72} for more remarks.  Let us mention here that the proof of our main theorem is   independent of the theory of $(\varphi, \hat{G})$-modules (except some relatively easy results, e.g. Prop. \ref{propweakff}); in particular, we will not use $\hR$ anywhere. 
\end{enumerate}
\end{rem}
 

We make some speculations about the theory and its possible applications.   

\begin{rem}
\begin{enumerate}
\item Like all the integral theories listed in Rem. \ref{1110}(1), we would like to use our Breuil-Kisin $G_K$-modules to study reduction of semi-stable Galois representations as well as the relevant semi-stable Galois deformation rings (these results always play important roles in automorphy lifting theorems). 
In fact, our result can already at least simplify some of the constructions of the semi-stable sub-stack in Emerton-Gee's stack of $(\varphi, \Gamma)$-modules, cf. \S \ref{73}. We particularly would like to use our theory to investigate the explicit structures of some semi-stable Galois deformation rings.

\item In some sense, one can also regard Thm. \ref{thm113} as some sort of integral version of the Colmez-Fontaine theorem, particularly because all the modules in both theories are avatars of cohomology theories. 
It is interesting to speculate if our theory can play some \emph{integral} role in places where Colmez-Fontaine theorem is used  (e.g., in the study of $p$-adic period domains and period morphisms). We also wonder if there is any connection between our theory and the Fargues-Fontaine curve.

\item  Recently, Bhatt and Scholze \cite{BS23} established an equivalence between the category of ``prismatic $F$-crystals on $(\O_K)_\prism$" and the category of lattices in crystalline representations of $G_K$. (The semi-stable version is then proved by Du-Liu \cite{DL23} using  the log-prismatic site of Koshikawa \cite{Kos21}).  
It seems that the \emph{direct} link between their theory and ours is still unclear: in particular, can we directly construct a prismatic $F$-crystal from a   Breuil-Kisin $G_K$-module, and vice versa?   
It seems likely explorations about these question could shed new light on prismatic crystals.
\end{enumerate}
\end{rem}


Let us now sketch the main ideas in the proof of Thm. \ref{thm113}. Indeed, a key ingredient is the classification of  Galois representations which are of finite $E(u)$-height.

\begin{defn}\label{defintroht}
Let $T$ be a finite free $\Zp$-representation of $G_K$, it is called \emph{of finite $E(u)$-height} if there exists a finite free Breuil-Kisin module $\gm$ of non-negative finite $E(u)$-height such that there is a $G_\infty$-equivariant isomorphism 
\begin{equation} \label{eqftintro}
T|_{G_\infty} \simeq  (\gM \otimes_{\gs} W(\Fr R))^{\varphi=1},
\end{equation}
where the $G_\infty$-action on right hand side of \eqref{eqftintro} comes from that on $W(\Fr R)$.
\end{defn}

The following theorem in particular  answers positively \cite[Question 4.3.1(2)]{Liu10} by Tong Liu. 

\begin{theorem}\label{thmpstintr} (= Thm. \ref{thmmainpst})
Let $K^{\mathrm{ur}} \subset \overline{K}$ be the maximal unramified extension of $K$, let
\[m : =1+\max \{i \geq 1, \mu_{i} \in K^{\mathrm{ur}}\},\]
and let $K_m=K(\pi_{m-1})$.
Let $T$ be a finite free $\Zp$-representation of $G_K$, and let $V=T[1/p]$. Then $T$ is of finite $E(u)$-height  {if and only if} $V|_{G_{K_m}}$ can be extended to a semi-stable $G_K$-representation with non-negative Hodge-Tate weights.
In particular, if $T$ is of finite $E(u)$-height, then $V$ is potentially semi-stable. 
\end{theorem}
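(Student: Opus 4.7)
The \emph{if} direction is the more straightforward one. Assume $V|_{G_{K_m}}$ extends to a semi-stable $G_K$-representation $V'$ with non-negative Hodge--Tate weights. By Kisin's classification of such representations, every $G_K$-stable $\Zp$-lattice $T' \subset V'$ is of finite $E(u)$-height, i.e., $T'|_{G_\infty}$ comes from a finite $E(u)$-height Breuil-Kisin module. Since $G_\infty \subset G_{K_m}$, we have $V|_{G_\infty} \simeq V'|_{G_\infty}$ as $G_\infty$-representations, and the standard fact that finite $E(u)$-height is preserved under change of $G_\infty$-stable lattice (a change of lattice only rescales the Breuil-Kisin submodule inside the étale $\varphi$-module of $V|_{G_\infty}$) then yields that $T|_{G_\infty}$ is realized by a finite $E(u)$-height Breuil-Kisin module, so $T$ is of finite $E(u)$-height.

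For the \emph{only if} direction, I would begin with a finite free Breuil-Kisin module $\gm$ of finite $E(u)$-height realizing $T|_{G_\infty}$, and aim to construct a semi-stable $G_K$-representation $V^{\st}$ with non-negative Hodge--Tate weights that agrees with $V$ on restriction to $G_{K_m}$. I would follow Kisin's functorial construction: extend scalars along $\gs \hookrightarrow \bigO$ to the rigid-analytic ring of functions on the open unit disk over $K_0$; use the canonical connection on $M := \gm \otimes_\gs \bigO$ (which is available because $\gm$ has finite $E(u)$-height); define the $(\varphi, N)$-module $D := M/uM$ over $K_0$; and equip $D_K := D \otimes_{K_0} K$ with the Hodge filtration induced from $\gm / E(u)\gm$. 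The key technical input is the weak admissibility of this filtered $(\varphi, N)$-module --- here one exploits the Galois origin of $\gm$ through the embedding $\gm \hookrightarrow \hM := W(R) \otimes_\gs \gm$ and the $G_K$-action on $\hM$ inherited from $T$, which forces the required slope comparisons. Colmez-Fontaine (Theorem \ref{thmcf}) then produces $V^{\st}$, with $V^{\st}|_{G_\infty} \simeq V|_{G_\infty}$ by construction.

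The remaining and \emph{main obstacle} is the promotion of this $G_\infty$-isomorphism to a $G_{K_m}$-isomorphism. Fixing a $G_\infty$-equivariant identification $V \simeq V^{\st}$, the discrepancy between the two $G_K$-actions is a continuous $1$-cocycle for $G_K/G_\infty \simeq \Gal(K_\infty/K)$ valued in $\mathrm{Aut}_{\Zp[G_\infty]}(V)$. I would analyze this cocycle via the $(\varphi, \tau)$-module formalism with $\tau$ a generator of $\Gal(K_\infty K_{p^\infty}/K_{p^\infty})$: the actual $\tau$-action on the étale $\varphi$-module of $V$ differs from the one that would arise from a genuine Breuil-Kisin $G_K$-module (which per Theorem \ref{thm113} would force $V$ itself to be semi-stable) by a correction term explicitly controlled by the $p$-power roots of unity lying in $K^{\mathrm{ur}}$. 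The precise choice $m = 1 + \max\{i \geq 1 : \mu_i \in K^{\mathrm{ur}}\}$ with $K_m = K(\pi_{m-1})$ is engineered so that this cyclotomic obstruction vanishes upon restriction to $G_{K_m}$, delivering $V|_{G_{K_m}} \simeq V^{\st}|_{G_{K_m}}$. The hardest part of the argument is the explicit identification of the cocycle and the verification that $K_m$ is the precise descent field where the obstruction dies.
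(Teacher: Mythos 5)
Your \emph{if} direction is essentially the paper's: deduce finite height of $U|_{G_\infty}\simeq V|_{G_\infty}$ from the semi-stable extension $U$, then transfer to $T$ (and you correctly use the extension $U$ rather than $V|_{G_{K_m}}$, which is important since semi-stability of $V|_{G_{K_m}}$ alone would only give finite height with respect to $\{\pi_n\}_{n\geq m}$, not with respect to $\vec{\pi}$).

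There is, however, a genuine gap in your \emph{only if} direction, right at the crux of the argument. You write that one should ``use the canonical connection on $M := \gm \otimes_\gs \mathcal O$ (which is available because $\gm$ has finite $E(u)$-height).'' This connection is \emph{not} available, canonical or otherwise, merely from the finite $E(u)$-height hypothesis; constructing such an $N_\nabla$ is precisely the hard content of the paper and precisely where Caruso's earlier attempt (via ``truncated $\log$'' and $p$-adic approximation) breaks down, as explained in Rem.~\ref{remcar13} and Appendix~\ref{secapp}. The paper's construction requires two substantial and non-obvious inputs: first, define $N_\nabla$ on the \emph{rigid-overconvergent} $(\varphi,\tau)$-module $D^\dagger_{\rig,K_\infty}(V)$ via the Lie-algebra operator $\nabla_\tau$ of the locally analytic $\tau$-action, divided by $p\mathfrak t$ (Thm.~\ref{thmintmo}/\ref{thmnnabla}), which relies on the overconvergence theorem of \cite{GP} and the computation of locally analytic vectors in \S\ref{subseclav}; second, use the Frobenius regularization technique of Prop.~\ref{propregfrob} (Lem.~\ref{lemfrobreg}, Lem.~\ref{lemeu}) to descend $N_\nabla$ from $\mathbf B^\dagger_{\rig,K_\infty}$ to $\mathcal O$, which is where the finite-height condition on $\varphi$ is actually exploited. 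Your weak-admissibility remark is also tangential: once $(\gm\otimes_\gs\mathcal O,\varphi,N_\nabla)$ is constructed, it is automatically pure of slope $0$ because $\gm\otimes_\gs\mathbf B_{K_\infty}$ is étale, and Kisin's equivalence (Thm.~\ref{thmcomdiaa} and the slope-$0$ refinement) then directly produces the semi-stable $W$; no separate admissibility argument via the $G_K$-action on $\hM$ is needed or used.

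Your treatment of the descent to $G_{K_m}$ is also too vague to stand in for the paper's argument. The paper does not isolate a cocycle and show it vanishes on $G_{K_m}$. Instead it shows $N_\nabla^{\la,V}=N_\nabla^{\la,W}$ (using Thm.~\ref{thmcoinc}, the coincidence of Kisin's and the locally-analytic $N_\nabla$ for semi-stable $W$), whence the actions of $\tau^{p^a}$ on the two overconvergent $(\varphi,\tau)$-modules agree for $a\gg0$ by exponentiating the common Lie-algebra operator; this yields $V|_{G_{K_{a+2}}}\simeq W|_{G_{K_{a+2}}}$ for some large $a$. Then Ozeki's purely Galois-theoretic Lem.~\ref{lemozfix} upgrades ``semi-stable over $K_{a+2}$'' to ``semi-stable over $K_{m(K^{\ur})}$'', and the elementary Lem.~\ref{lemramres} (full faithfulness of restriction to a totally ramified extension for semi-stable representations) pins down $V|_{G_{K_m}}\simeq W|_{G_{K_m}}$. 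Your proposal identifies neither the mechanism (coincidence of Lie-algebra operators plus these two lemmas) nor why $m(K^{\ur})$ rather than $m(K)$ is the correct bound — a point on which Caruso's original statement is actually false (Rem.~\ref{remwrongstatement}).
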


Theorem \ref{thmpstintr} is the most difficult part of our paper, and indeed takes the majority of space. In fact, once Theorem \ref{thmpstintr} is proved, it is then relatively  straightforward to prove the main theorem Thm. \ref{thm113} by using Thm. \ref{thmpstintr} and   ``comparisons" among various modules.



Thus, we will dedicate the entire Subsection \S \ref{subsecstrategy} below to explain the proof of Thm. \ref{thmpstintr}; before we do so, we list some remarks about this theorem.

\begin{rem}
The notion ``of finite $E(u)$-height" indeed depends on the choices $\{\pi_n\}_{n \geq 0}$ (as does the embedding $\gs \hookrightarrow W(R)$). If $T$ is of finite height with respect to \emph{all}   such choices, then we can use Thm. \ref{thmpstintr} to show that $V$ is semi-stable; this intriguing result is due to Gee, cf. Thm. \ref{thmtwou}. Indeed, this result, as we learnt from Gee, is inspired by considerations in the construction of the  stack of (semi-stable) $(\varphi, \Gamma)$-modules in \cite{EG23}; cf. \S \ref{73} for some more comments.
\end{rem}

\begin{rem}\label{remWach}
 In \cite{Wach-smf}, Wach studied some ``finite height" $(\varphi, \Gamma)$-modules; it is shown in \cite[A.5]{Wach-smf} that these ``finite height" $(\varphi, \Gamma)$-modules give rise to de Rham (indeed, potentially crystalline) Galois representations if there is some additional condition on the Lie algebra operator associated to the $\Gamma$-action.
      Note however that the  ``finite height" condition in \emph{loc. cit.} is of a  different type than the one here. Indeed, the analogue of our  ``finite $E(u)$-height" condition in the setting of $(\varphi, \Gamma)$-modules should be the ``finite $q$-height" condition, where $q:=\frac{(1+T)^p-1}{T}$ is a polynomial; cf. e.g., \cite{Ber04, KisRendocumenta}.
      In fact, we can use similar ideas  in the current paper to study ``finite $q$-height" $(\varphi, \Gamma)$-modules; in parallel, we can also study the ``finite height" $(\varphi, \tau)$-modules (without $E(u)$ in the play) similar as in \cite{Wach-smf}. All these will be discussed elsewhere.
\end{rem}

\begin{rem}\label{remcar13}
  Caruso gave a proof of Thm. \ref{thmpstintr} in \cite[Thm. 3]{Car13} (for $p>2$), which unfortunately contains a  rather serious gap. (Indeed, even the statement in \emph{loc. cit.} contains an error, cf. Rem. \ref{remwrongstatement}.) 
  The gap is first discovered by Yoshiyasu Ozeki, and is discussed in Appendix \ref{secapp}. The gap arises when Caruso tries to define a  \emph{monodromy operator} on the  \emph{$(\varphi, \tau)$-modules} (cf. below) associated to finite $E(u)$-height Galois representations, using the ``truncated $\log$" (cf. \cite[\S 3.2.2]{Car13}); very roughly speaking, Caruso tries to define a $\log$-operator via \emph{$p$-adic approximation} technique (using $p$-adic topology on various rings). As will be explained in next subsection, our approach is \emph{completely different} and has no $p$-adic approximation technique; in particular, our method uses \emph{overconvergent} $(\varphi, \tau)$-modules which are made available only very recently.
 Indeed, we do not regard our proof as a ``fix" to Caruso's proof, cf. also Rem.  \ref{remroadmap} and  Rem. \ref{r127}.
\end{rem}

\subsection{Strategy of proof of Thm. \ref{thmpstintr}}\label{subsecstrategy}
The main tool to prove Thm. \ref{thmpstintr} is the theory of \emph{overconvergent $(\varphi, \tau)$-modules}. 
Let us first give some general remarks about the theory of $(\varphi, \tau)$-modules.
 Recall that we already defined $K_{\infty}   = \cup _{n = 1} ^{\infty} K(\pi_n)$ and $\Kpinfty  = \cup _{n = 1} ^{\infty} K(\mu_n)$. Let
   $L:= \Kinfty\Kpinfty.$

As already mentioned in the last subsection, in the algebraic study of $p$-adic Hodge theory, we use various ``linear algebra" tools to study $p$-adic representations of $G_K$.
 A key idea in $p$-adic Hodge theory is to first restrict the Galois representations to some subgroup  of $G_K$. The $(\varphi, \tau)$-modules used in this paper are constructed  by using the subgroup $G_{\infty}:= \gal (\overline K / K_{\infty})$; they are analogues of the more classical $(\varphi, \Gamma)$-modules which are constructed   using the subgroup $G_{p^\infty}:= \gal (\overline K / K_{p^\infty})$.
 Here let us only quickly mention that the ``$\Gamma$" is the group $\gal(\Kpinfty/K)$, and the ``$\tau$" is a topological generator of the group $\gal(L/\Kpinfty)$ (cf. Notation \ref{nota hatG}).


 Similar to the $(\varphi, \Gamma)$-modules, the $(\varphi, \tau)$-modules also classify \emph{all} $p$-adic representations of $G_K$. Although these two theories are equivalent, they each have their own technical advantage, and both are indispensable. The $(\varphi, \Gamma)$-modules are perhaps ``easier" in the sense that both the $\varphi$- and $\Gamma$-actions are defined over a same ring; whereas the $\tau$-action in $(\varphi, \tau)$-modules can only be defined over a much bigger ring.
 However, the $\varphi$-action in $(\varphi, \tau)$-modules stay tractable even when $K$ has ramification; in contrast,  the $\varphi$-action in $(\varphi, \Gamma)$-modules become quite implicit when $K$ has ramification.
 This dichotomy becomes much more substantial when we consider \emph{semi-stable} Galois representations: in this situation, there exist very well-behaved Breuil-Kisin modules (also called Kisin modules, or $(\varphi, \hat{G})$-modules in different contexts) which is a special type of $(\varphi, \tau)$-modules; in contrast, such special type $(\varphi, \Gamma)$-modules (called Wach modules) exist only if we consider crystalline   representations and if $K \subset \cup_{n \geq 1}K_0(\mu_n)$ (e.g., when $K=K_0$ is unramified).
 To save space, we refer the reader  to the introduction in \cite{GP21} for some discussion and comparison of the applications of these two theories in different contexts. 
 Indeed, this paper shows once again that when we consider semi-stable representations, it is fruitful to use the $(\varphi, \tau)$-modules.
 
Let us be more precise now.
First,  recall that the $\varphi$-action  of a $(\varphi, \tau)$-module is defined over the field
\begin{equation}\label{eqbkinf}
\mathbf{B}_{\Kinfty}   :=\{ \sum_{i=-\infty}^{+\infty} a_i u^i : a_i \in K_0, \lim_{i \to -\infty}v_p(a_i) =+\infty, \text{ and } \inf_{i \in \mathbb{Z}}v_p(a_i) >-\infty \}.
\end{equation}
 The $\tau$-action   is defined over a bigger filed ``$\wt{\mathbf{B}}_L$" which we do not recall here, see \S \ref{secmod}.
Indeed, roughly speaking, a  $(\varphi, \tau)$-module $D$ is a finite free $\mathbf{B}_{\Kinfty}$-vector space equipped with  certain commuting maps $\varphi: D \to D$ and $\tau: \wt{\mathbf{B}}_L\otimes_{\mathbf{B}_{\Kinfty} } D \to \wt{\mathbf{B}}_L\otimes_{\mathbf{B}_{\Kinfty} } D$.
By \cite[Thm. 1]{Car13}, the (\'etale) $(\varphi, \tau)$-modules classify all Galois representations of $G_K$. One readily observes that a Galois representation is of finite $E(u)$-height as in Def. \ref{defintroht} if and only if there exists a $\varphi$-stable ``Breuil-Kisin lattice" inside the corresponding $(\varphi, \tau)$-module; note that this property has nothing to do with the $\tau$-action. 


To prove  Thm. \ref{thmpstintr}, we need to define a natural monodromy operator on these $(\varphi, \tau)$-modules. Instead of the $p$-adic rings mentioned in Rem. \ref{remcar13}, what we propose in the current paper is that one should use certain Fr\'echet  rings (e.g.,   various ``Robba-style" rings) instead.
In fact, we can get a monodromy operator \emph{directly} (no approximation needed) using techniques of \emph{locally analytic vectors}. Furthermore, our monodromy operator will be defined for \emph{all} (rigid-overconvergent, cf. Thm. \ref{introOC} below) $(\varphi, \tau)$-modules (not just finite $E(u)$-height ones). Before we state the theorem concerning the monodromy operator, let us recall the overconvergence result of $(\varphi, \tau)$-modules.

\begin{theorem}[\cite{GL20, GP21}] \label{introOC}
The $(\varphi, \tau)$-modules (attached to  $p$-adic representations of $G_K$) are overconvergent. That is, (roughly speaking), the $\varphi$-action can be defined over the sub-field:
\begin{equation}\label{eqbkinfd}
\mathbf{B}_{K_\infty}^\dagger: =\{ \sum_{i=-\infty}^{+\infty} a_i u^i \in \mathbf{B}_{K_\infty}, \lim_{i \to -\infty}(v_p(a_i) +i\alpha)= +\infty \text{ for some } \alpha >0  \};
\end{equation}
 also, the $\tau$-action can  be defined over some sub-field $\wtB_L^\dagger \subset \wtB_L$.
\end{theorem}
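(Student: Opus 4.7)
The plan is to deduce the overconvergence of $(\varphi, \tau)$-modules from the classical overconvergence theorem of Cherbonnier-Colmez for $(\varphi, \Gamma)$-modules, using the compositum $L = \Kinfty\Kpinfty$ as a bridge. The idea is that both the $(\varphi, \Gamma)$-module and the $(\varphi, \tau)$-module attached to a $p$-adic representation $V$ of $G_K$ sit inside the same ambient module $\wtB \otimes_{\Qp} V$, and they become canonically identified after extending scalars to a common overconvergent ring over $L$.

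First, given a $\Qp$-representation $V$ of $G_K$, I would consider $D_\Gamma(V) = (\wtB \otimes_{\Qp} V)^{G_{\Kpinfty}}$ and $D_{\Kinfty}(V) = (\wtB \otimes_{\Qp} V)^{G_\infty}$; these are finite-dimensional vector spaces over $\mathbf{B}_{\Kpinfty}$ and $\mathbf{B}_{\Kinfty}$ respectively. By Cherbonnier-Colmez, $D_\Gamma(V)$ descends to an overconvergent module $D_\Gamma^\dagger(V)$ over $\mathbf{B}_{\Kpinfty}^\dagger$. My strategy is then to transport this overconvergence across the tower $L/\Kinfty/K$.

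The key steps I would carry out are: (i) extend scalars to obtain a module $D_L^\dagger(V) := \wtB_L^\dagger \otimes_{\mathbf{B}_{\Kpinfty}^\dagger} D_\Gamma^\dagger(V)$ over an overconvergent ring $\wtB_L^\dagger$ containing $\mathbf{B}_{\Kpinfty}^\dagger$, and observe that it inherits a semi-linear action of $\Gal(L/\Kinfty)$; (ii) apply a Tate-Sen style descent argument along $L/\Kinfty$ to produce a $\varphi$-stable $\mathbf{B}_{\Kinfty}^\dagger$-lattice inside $D_{\Kinfty}(V)$ whose base change to $\mathbf{B}_{\Kinfty}$ recovers the full $(\varphi, \tau)$-module; (iii) for the $\tau$-action, note that $\tau \in G_K$ stabilizes $\wtB^\dagger$ and acts continuously, so once the $\varphi$-overconvergent basis is fixed, the matrix of $\tau$ automatically has entries in $\wtB_L^\dagger$. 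Alternatively, a more recent approach would be to characterize $\mathbf{B}_{\Kinfty}^\dagger$-overconvergent elements intrinsically as those vectors of $\wtB^\dagger_{\mathrm{rig}} \otimes_{\Qp} V$ that become pro-analytic under the natural $p$-adic Lie group action (either that of $\Gamma$ or of $\Gal(L/\Kinfty)$), and then invoke Berger-type finiteness results on locally analytic vectors.

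The hard part will be step (ii): the group $\Gal(L/\Kinfty)$ is infinite and not itself a cyclotomic Galois group, so one must verify the Tate-Sen axioms afresh for the overconvergent rings $\wtB_L^\dagger$ and establish sufficiently strong cohomological decay estimates. A closely related subtlety is that $L/\Kpinfty$ is a Kummer-type tower rather than a cyclotomic one, so the quantitative estimates of Cherbonnier-Colmez require nontrivial adaptation; this is essentially the technical content of \cite{GLAMJ, GP}. The locally-analytic-vector route trades this obstacle for a careful comparison between the two $p$-adic Lie group actions on $\wtB^\dagger_{\mathrm{rig}}$, which is where the bulk of the new input must go.
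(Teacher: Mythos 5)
You should note at the outset that the paper does not prove Theorem \ref{introOC}; it cites \cite{GLAMJ, GP}, and Remark~\ref{rem2pf} tells you that the two known proofs use a ``crystalline approximation'' technique (GLAMJ) and a locally analytic vectors technique (GP). Your secondary route (characterizing $\mathbf{B}_{K_\infty}^\dagger$-elements as pro-analytic vectors and invoking Berger-type finiteness) is the correct description of the GP approach, which is the one the present paper later relies on (Theorem~\ref{thmLAVmain}, Corollary~\ref{corlamod}). So that sketch is sound.

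Your primary route (Tate--Sen descent along $L/K_\infty$) has a genuine gap at step (ii), and it is precisely the gap that made Caruso's conjecture nontrivial. Descending along the $p$-adic Lie extension $L/K_\infty$, even assuming the Tate--Sen axioms could be verified on the overconvergent rings, produces a lattice over the $\Gal(L/K_\infty)$-invariants $\big(\wtB_L^\dagger\big)^{\Gal(L/K_\infty)}=\wtB_{K_\infty}^\dagger$, i.e.\ over the \emph{perfect} (tilde) ring, and not over the imperfect ring $\mathbf{B}_{K_\infty}^\dagger$ that the theorem requires. The passage from the tilde ring to the imperfect ring is not a Galois descent at all: $\mathbf{B}_{K_\infty}^\dagger$ is not the fixed ring of any subgroup of $\hat{G}$ acting on $\wtB_L^\dagger$. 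In Cherbonnier--Colmez this final step is handled by a delicate Frobenius regularization/approximation argument built on the cyclotomic tower, and it does not transport to the Kummer tower $K_\infty/K$ in any straightforward way (this is why both \cite{GLAMJ} and \cite{GP} had to invent new machinery). The locally analytic vector computation $(\wt{\mathbf{A}}^I_L)^{\tau\text{-la},\,\gamma=1}=\cup_{m\ge 0}\varphi^{-m}(\mathbf{A}_{K_\infty}^{p^mI})$ from Theorem~\ref{thmLAVmain} is exactly the replacement for the missing descent: it identifies the imperfect overconvergent ring inside the perfect one by an analyticity condition rather than a Galois-fixed-point condition. Your proposal as written conflates these two rings, so step (ii) would not produce the desired lattice.
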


\begin{rem}\label{rem2pf}
\begin{enumerate}
\item Thm. \ref{introOC} is first conjectured by Caruso in \cite{Car13} (as an analogue of the classical overconvergence theorem for $(\varphi, \Gamma)$-modules by Cherbonnier-Colmez in \cite{CC98}). A first proof (which only works for $K/\Qp$ a finite extension) is given in a joint work with Liu \cite{GL20}, using a certain ``crystalline approximation" technique; later a second proof (which works for all $K$) is given in a joint work with Poyeton in \cite{GP21}, using the idea of locally analytic vectors.
\item Let us mention that it is the second proof in \cite{GP21} that will be useful in the current paper. Not only because it works for all $K$ (which is a minor issue), but also more importantly, the idea of locally analytic vectors will be very critically used in the current paper to define the monodromy operator.
\end{enumerate}
\end{rem}

Let us introduce the following Robba ring (which contains $\mathbf{B}_{K_\infty}^\dagger$),
\begin{equation}\label{eqbrig}
\begin{split}
\mathbf{B}_{\rig, K_\infty}^\dagger: =\{f(u)= \sum_{i=-\infty}^{+\infty} a_i u^i, a_i \in K_0,   f(u) \text{ converges } \\
 \text{ for all } u \in \overline{K} \text{ with } 0<v_p(u)<\rho(f) \text{ for some } \rho(f)>0\}.
\end{split}
\end{equation}
Let $V$ be a $p$-adic Galois representation of $G_K$, and let $D^\dagger_{\Kinfty}(V)$ be the overconvergent $(\varphi, \tau)$-module associated to $V$ by Thm. \ref{introOC}.
Define
\[D^\dagger_{\rig, \Kinfty}(V):=\mathbf{B}_{\rig, K_\infty}^\dagger\otimes_{\mathbf{B}_{K_\infty}^\dagger}D^\dagger_{\Kinfty}(V),\]
which we call the \emph{rigid-overconvergent $(\varphi, \tau)$-module} associated to $V$; as we will see in the following, it is the natural space where the monodromy operator lives in.

\begin{theorem} \label{thmintmo}
(= Thm. \ref{thmnnabla}) \footnote{L\'eo Poyeton informed the author that he also obtained Thm. \ref{thmintmo} independently. }
 Let
 $\nabla_\tau:= (\log \tau^{p^n})/{p^n}$ for $n\gg 0$ be the Lie-algebra operator with respect to the $\tau$-action, and define
 $N_\nabla:=\frac{1}{p\mathfrak{t}}\cdot \nabla_\tau$
 where $\mathfrak{t}$ is a certain  ``normalizing"  element (cf. \S \ref{secmono}). (Note that there might be   some modification  in certain cases when $p=2$). Then
\[N_\nabla(D^\dagger_{\rig, \Kinfty}(V)) \subset D^\dagger_{\rig, \Kinfty}(V).\]
Namely, $N_\nabla$ is a well-defined  monodromy operator on $D^\dagger_{\rig, \Kinfty}(V)$.
\end{theorem}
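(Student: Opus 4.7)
The plan is to exploit the formalism of locally analytic vectors developed in \cite{GP} within the Fr\'echet Robba rings $\wt{\mathbf{B}}^\dagger_{\rig, L}$. The first step is to embed $D^\dagger_{\rig, K_\infty}(V)$ into the larger object $\wt{\mathbf{D}}^\dagger_{\rig, L}(V) := \wt{\mathbf{B}}^\dagger_{\rig, L} \otimes_{\mathbf{B}^\dagger_{K_\infty}} D^\dagger_{K_\infty}(V)$, on which the whole group $\hat{G} = \Gal(L/K)$ acts semi-linearly and, crucially, pro-analytically (a property I would extract from a slight strengthening of the overconvergence argument of \cite{GP}). I would then verify that $D^\dagger_{\rig, K_\infty}(V)$, equivalently the subring $\mathbf{B}^\dagger_{\rig, K_\infty} \subset \wt{\mathbf{B}}^\dagger_{\rig, L}$ itself, consists of $\tau$-pro-analytic vectors. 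Once this is in place, $\nabla_\tau = \lim_n \log(\tau^{p^n})/p^n$ becomes an honest differential operator on $D^\dagger_{\rig, K_\infty}(V)$ with values \emph{a priori} only in $\wt{\mathbf{D}}^\dagger_{\rig, L}(V)$.

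Next I would compute $\nabla_\tau$ as explicitly as possible. On the ring, the key relation $\tau([\underline{\pi}]) = [\underline{\varepsilon}][\underline{\pi}]$ yields $\nabla_\tau(u) = t \cdot u$ with $t = \log[\underline{\varepsilon}]$, so $\nabla_\tau$ acts on $\mathbf{B}^\dagger_{\rig, K_\infty}$ as $t u \cdot d/du$. The normalizing element $\mathfrak{t}$ is to be chosen so that $t/(p\mathfrak{t})$ is a unit, which gives that the rescaled operator $N_\nabla$ already acts as a genuine derivation preserving $\mathbf{B}^\dagger_{\rig, K_\infty}$. For the module, pick a basis and let $G_\tau \in M_d(\wt{\mathbf{B}}^\dagger_{\rig, L})$ be the matrix of $\tau$; pro-analyticity then identifies the matrix $G_{\nabla_\tau}$ of $\nabla_\tau$ as the limit of $(G_{\tau^{p^n}} - I)/p^n$. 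The claim reduces to showing that the entries of $G_{\nabla_\tau}$ lie in $p\mathfrak{t} \cdot \mathbf{B}^\dagger_{\rig, K_\infty}$. Divisibility by $p\mathfrak{t}$ in $\wt{\mathbf{B}}^\dagger_{\rig, L}$ should come from the infinitesimal nature of $\nabla_\tau$ together with the normalizing identity $\nabla_\tau(\mathfrak{t}) \in p\mathfrak{t} \cdot (\wt{\mathbf{B}}^\dagger_{\rig, L})^\times$.

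The main obstacle will be the \emph{descent step}: showing that $G_{\nabla_\tau}/(p\mathfrak{t})$ actually has entries in the smaller ring $\mathbf{B}^\dagger_{\rig, K_\infty}$, not merely in $\wt{\mathbf{B}}^\dagger_{\rig, L}$. My strategy is to characterize $\mathbf{B}^\dagger_{\rig, K_\infty}$ as the intersection of the $\tau$-pro-analytic vectors with the $G_{K_\infty}$-invariants inside $\wt{\mathbf{B}}^\dagger_{\rig, L}$ (a Tate--Sen-type identification that should be available via \cite{GP}). Then, since $G_{K_\infty}$ commutes with $\nabla_\tau$ up to a scalar coming from the semi-direct structure of $\hat{G}$, the entries of $N_\nabla$ applied to a basis remain $G_{K_\infty}$-invariant, and they inherit pro-analyticity from $\tau$-pro-analyticity of the basis, thereby landing in $\mathbf{B}^\dagger_{\rig, K_\infty}$ as required. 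A minor adjustment will be needed when $p = 2$ (where $\tau$ itself may not sit inside the analytic neighborhood and one must instead work with $\tau^2$ or a power of $\tau$), but this should be handled by the standard device of passing to a small enough open pro-$p$ subgroup of $\gal(L/K_{p^\infty})$ throughout.
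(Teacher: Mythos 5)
Your framework is correct in outline: you embed $D^\dagger_{\rig, K_\infty}(V)$ into the $\wt{\mathbf{B}}^\dagger_{\rig, L}$-module, use pro-analyticity of the basis (this is exactly what Cor.~\ref{corlamod} provides), note that $g N_\nabla = N_\nabla g$ for $g \in \gal(L/K_\infty)$ forces $N_\nabla$ of the basis to be $\gamma$-invariant, and then try to descend. However, there is a concrete gap in your descent step: you assert that the intersection of the $\tau$-pro-analytic vectors with the $\gal(L/K_\infty)$-invariants inside $\wt{\mathbf{B}}^\dagger_{\rig, L}$ is $\mathbf{B}^\dagger_{\rig, K_\infty}$, but the actual identification (Cor.~\ref{cor rig la}) is
\[
(\wt{\mathbf{B}}^\dagger_{\rig, L})^{\tau\dpa, \gamma=1} \;=\; \bigcup_{m\geq 0}\varphi^{-m}(\mathbf{B}^\dagger_{\rig, K_\infty}),
\]
a strictly larger ring. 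So at the point where you conclude, all you can legitimately say is that $N_\nabla(\vec{e})$ has entries in $\varphi^{-m}(\mathbf{B}^\dagger_{\rig, K_\infty})$ for some (a priori large) $m$, and you still owe an argument to get rid of the $\varphi^{-m}$.

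The step you are missing is a Frobenius regularization. From $\varphi(\mathfrak{t}) = \frac{pE(u)}{E(0)}\mathfrak{t}$ and $\varphi\tau = \tau\varphi$ one gets
\[
N_\nabla \varphi \;=\; \frac{pE(u)}{E(0)}\,\varphi N_\nabla.
\]
Applying this $m$ times and using that the matrix entries of $N_\nabla(\vec{e})$ lie in $\varphi^{-m}(\mathbf{B}^\dagger_{\rig, K_\infty})$, one finds that $N_\nabla(\varphi^m(\vec{e}))$ has entries in $\mathbf{B}^\dagger_{\rig, K_\infty}$. Since $\varphi^m(\vec{e})$ is again a $\mathbf{B}^\dagger_{\rig, K_\infty}$-basis of $D^\dagger_{\rig, K_\infty}(V)$ (the \'etale condition and $\mathcal{O}_{\mathcal{E}}^\dagger \cap (\mathcal{O}_{\mathcal{E}})^\times = (\mathcal{O}_{\mathcal{E}}^\dagger)^\times$ guarantee the change-of-basis matrix is invertible), the conclusion $N_\nabla(D^\dagger_{\rig, K_\infty}(V)) \subset D^\dagger_{\rig, K_\infty}(V)$ follows. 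Without this, your argument does not close. One minor additional point: your claim that ``$\mathfrak{t}$ is chosen so that $t/(p\mathfrak{t})$ is a unit'' is not correct --- one has $t/(p\mathfrak{t}) = \lambda = \prod_{n\geq 0}\varphi^n(E(u)/E(0))$, which lies in $\mathbf{B}^+_{\rig, K_\infty}$ but has zeros on the open disc and hence is not a unit; what one actually needs (and what holds) is only that $\lambda \in \mathbf{B}^\dagger_{\rig, K_\infty}$, so that $N_\nabla = \lambda\, u\, d/du$ preserves the ring.
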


\begin{rem}
\begin{enumerate}
\item In comparison, if we use $D^\dagger_{\rig, \Kpinfty}(V)$ (denoted as ``$D^\dagger_{\rig, K}(V)$" in \cite{Ber02}) to denote the rigid-overconvergent $(\varphi, \Gamma)$-module associated to $V$ (which exists by \cite{CC98}), then one can \emph{easily} define a monodromy operator
\[\nabla_V: D^\dagger_{\rig, \Kpinfty}(V) \to D^\dagger_{\rig, \Kpinfty}(V) \]
as in \cite[\S 5.1]{Ber02}. Here ``$\nabla_V$" (notation of \cite{Ber02}) is precisely the Lie-algebra operator associated to $\Gamma$-action.


\item The difficulty in defining $N_\nabla$ for $(\varphi, \tau)$-modules is that $\tau$ (hence $\nabla_\tau$) does not act on  $D^\dagger_{\rig, \Kinfty}(V)$ itself (whereas $\Gamma$ acts directly on $D^\dagger_{\rig, \Kpinfty}(V)$); the action is  defined only when we base change $D^\dagger_{\rig, \Kinfty}(V)$ over a much bigger ring ``$\wt{\mathbf{B}}_{  \rig, L}^{\dagger}$" (cf. Def. \ref{defrigring}). Fortunately, after  dividing $\nabla_\tau$ by $p\mathfrak{t}$, and using ideas of locally analytic vectors, one gets back to the level of $D^\dagger_{\rig, \Kinfty}(V)$.
\end{enumerate}
\end{rem}

Now, to prove Thm. \ref{thmpstintr}, via results of Kisin (and some consideration of locally analytic vectors), it suffices to show the following ``monodromy descent" result, which we achieve via a ``Frobenius regularization" technique.

\begin{prop}\label{p1}
(= Prop. \ref{propregfrob})
Let $\gm$ be the finite height Breuil-Kisin lattice inside a  $(\varphi, \tau)$-module corresponding to a $G_K$-representation of finite $E(u)$-height. Then
\[N_\nabla(\gm) \subset \mathcal{O}\otimes_{\gs} \gm.\]
Here $\mathcal{O} \subset \mathbf{B}_{\rig, K_\infty}^\dagger$ is the subring consisting of $f(u)$ that converges for all $u \in \overline{K}$ such that $0<v_p(u) \leq +\infty$.
\end{prop}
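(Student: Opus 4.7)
My plan is to turn the commutation $N_\nabla \varphi = p\varphi N_\nabla$ on $D^\dagger_{\rig,\Kinfty}(V)$ --- which follows from $\varphi\tau = \tau\varphi$ on $W(R)$ together with the normalization $\varphi(\mathfrak{t}) = p\mathfrak{t}$ --- into a matrix recursion on $\gm$ and then to Frobenius-iterate it to force the matrix of $N_\nabla$ to extend to a rigid-analytic matrix on the whole open unit disc $\{v_p(u) > 0\}$.

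Concretely, fix an $\gs$-basis $e_1,\ldots,e_d$ of $\gm$; let $A \in \mat_d(\gs)$ be the matrix of $\varphi$ in this basis (so $\det A$ divides a power of $E(u)$ up to a unit, because $\gm$ has finite $E(u)$-height), and let $F \in \mat_d(\mathbf{B}^\dagger_{\rig,\Kinfty})$ be the matrix of $N_\nabla$, available by Thm.\,\ref{thmintmo}. Since $\nabla_\tau(u) = u\log[\underline{\varepsilon}]$ and $\mathfrak{t}$ is chosen proportional to $\log[\underline{\varepsilon}]$, the operator $N_\nabla$ restricts to a derivation $\partial$ on $\gs$ preserving $\gs$. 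Expanding $N_\nabla(\varphi(e_j)) = p\varphi(N_\nabla(e_j))$ in the basis $(e_i)$ yields
\[
\partial(A) + FA \;=\; pA\varphi(F), \qquad \text{equivalently,} \qquad F \;=\; pA\varphi(F)A^{-1} - \partial(A)A^{-1}.
\]
Iterating this identity $n$ times, with $A_k := A\,\varphi(A)\cdots\varphi^{k-1}(A)$, produces
\[
F \;=\; p^n A_n\varphi^n(F) A_n^{-1} \;-\; \sum_{k=0}^{n-1} p^k A_k\, \varphi^k\!\bigl(\partial(A)A^{-1}\bigr)\, A_k^{-1}.
\]
The $k$-th summand has potential poles only at roots of $\varphi^j(E(u))$ for $0\leq j\leq k$, i.e.\ at points of $p$-adic valuation $1/(ep^j)$, which migrate toward the boundary $v_p(u)=0$ as $k\to\infty$.

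I would then pass to $n\to\infty$ and recognize $F$ as the convergent series $-\sum_{k\geq 0} p^kA_k\varphi^k(\partial(A)A^{-1})A_k^{-1}$ in $\mat_d(\mathcal{O})$, i.e.\ in the Fr\'echet topology of uniform convergence on each compact annulus $\{v_p(u)\geq r > 0\}$. Two points to check: (i) the remainder $p^n A_n\varphi^n(F) A_n^{-1}$ vanishes on such an annulus as $n\to\infty$, since the poles of $\varphi^n(F)$ lie at $v_p(u) = v_p(\text{pole of }F)/p^n$ and those of $A_n^{-1}$ at $v_p(u) \leq 1/(ep^{n-1})$, both below $r$ for $n$ large, while the $p^n$-prefactor provides $p$-adic decay; (ii) only finitely many summands have singularities inside $\{v_p(u)\geq r\}$, and the $p^k$-factor makes the tail absolutely summable. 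The main obstacle is the cancellation of the low-$k$ apparent singularities at roots of $\varphi^j(E(u))$: each such summand individually lies in $\mat_d(\gs[1/\!\prod_{j\leq k}\varphi^j(E(u))])$, but the limit procedure (where the remainder is regular on each compact annulus for large $n$) forces the total sum to land in $\mat_d(\mathcal{O})$. Making this rigorous --- combining the $p$-adic $p^k$-decay, the finite-height bounds on $A$ and $A^{-1}$, and a uniform Fr\'echet estimate on Robba-style rings --- is the technical heart of the argument; once established, one concludes $F\in\mat_d(\mathcal{O})$, equivalently $N_\nabla(\gm)\subset\mathcal{O}\otimes_\gs\gm$.
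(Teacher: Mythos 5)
The proposal starts from the wrong commutation relation. The identity in the paper (cf.\ Eqn.~\eqref{eqrelnphi}) is $N_\nabla\varphi=\frac{pE(u)}{E(0)}\varphi N_\nabla$, not $N_\nabla\varphi=p\varphi N_\nabla$: the normalization is $\varphi(\mathfrak{t})=\frac{pE(u)}{E(0)}\mathfrak{t}$, so the factor of $E(u)$ cannot be dropped. This is not a cosmetic issue: the extra $E(u)$ is precisely the source of the poles that make the proposition non-trivial. Relatedly, $N_\nabla$ does \emph{not} preserve $\gs$: we have $N_\nabla(u)=\lambda u$ with $\lambda=\prod_{n\ge0}\varphi^n(E(u)/E(0))\in\mathcal{O}\setminus\gs$, so the claimed derivation $\partial$ of $\gs$ into itself does not exist. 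The recursion you wrote down must therefore be replaced by $MA+N_\nabla(A)=\frac{p}{E(0)}E(u)A\varphi(M)$, and the paper immediately clears denominators with $B$ satisfying $AB=E(u)^h\cdot\mathrm{Id}$ rather than conjugating by $A^{-1}$.

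Even setting those aside, the pole bookkeeping in the iteration is incorrect, and the part you flag as the ``technical heart'' is not filled in. With $A_n=A\varphi(A)\cdots\varphi^{n-1}(A)$, the matrix $A_n^{-1}$ has denominators at the zeros of $\varphi^j(E(u))$ for $j=0,\dots,n-1$, which sit at $v_p(u)=\tfrac1{ep^j}$; in particular there is always a pole at the \emph{fixed} radius $v_p(u)=1/e$, which does not migrate to the boundary as $n\to\infty$, contrary to your claim that the poles of $A_n^{-1}$ lie at $v_p(u)\le1/(ep^{n-1})$. So the remainder $p^nA_n\varphi^n(F)A_n^{-1}$ is not manifestly regular on a compact annulus, and the ``total sum lands in $\mat_d(\mathcal{O})$ by cancellation'' is exactly the assertion that needs a proof, not a heuristic. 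Finally, the $p$-adic decay you invoke is not what the paper uses: Step~1 of the paper establishes only a \emph{linear} lower bound $W^{[r/p^n,r/p^n]}(\wt M)\ge -c-n(h+1)$ and then applies Lem.~\ref{lemfrobreg}~\eqref{eqcap4}, which says precisely that such linear growth of the negative valuation is compatible with $\mathcal{O}$; there is no geometric decay to exploit.

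The paper's actual argument is structured differently and avoids computing $F$ as a convergent series. It multiplies $M$ by an auxiliary polynomial $D_N^{h+1}$ with $D_N=\prod_{k=1}^{N}\varphi^{-k}(E(u))$, chosen so that $\varphi^{-N}(E(u))\notin K_0[u^p]$; this is the hypothesis required by Lem.~\ref{lemeu}, the ``Frobenius regularization'' lemma that is used iteratively to show $\wt M:=D_N^{h+1}M\in\bigcap_n\mat\bigl(\mathbf{B}^{[r/p^n,+\infty)}_{\Kinfty}\bigr)$. Then a valuation estimate (Lem.~\ref{lemsseu}, using the multiplicativity of $W^{[s,s]}$, the bound $W^{[s,s]}(E(u))\le1$, and the lower bound on $W^{[s,s]}(\lambda)$) produces the linear bound needed for Lem.~\ref{lemfrobreg}, giving $\wt M\in\mat(\mathcal{O})$. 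A separate Step~2 then removes the $D_N^{h+1}$ factor by coprimality of $E$ and $\varphi^{-N}(E)$. None of these ingredients appear in your sketch, and they are what makes the argument work; as written, the proposal has genuine gaps that a direct series expansion does not resolve.
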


\begin{rem}\label{remroadmap}
Indeed, the ``road map" of our proof of Thm. \ref{thmpstintr} is roughly the same as in \cite{Car13}. Namely, one first defines a certain monodromy operator, then one shows that (in the finite $E(u)$-height case) the operator can be   defined over the smaller ring $\mathcal{O}$.
However, even as Thm. \ref{thmintmo}   provides a correct alternative  in defining the monodromy operator, the technical details in the latter half of our argument (Prop. \ref{p1}, proved in \S \ref{subsfrobreg}) are also completely different from that of Caruso.
Indeed, Caruso's argument uses several newly-defined rings (all with $p$-adic topology), cf. ``the upper half" of \cite[p. 2583, Figure 2 ]{Car13}; as far as we know, these rings have not been used elsewhere in the literature.
In comparison, all the rings we use in \S \ref{subsfrobreg} are already studied in \cite{GP21}; in particular, they are all natural analogues of the rings used in $(\varphi, \Gamma)$-module theory, which have been substantially studied since their introduction in e.g., \cite{Ber02}. Indeed, it seems that our argument is much easier  and   more natural.
\end{rem}

\begin{rem}\label{r127}
As discussed in Rem. \ref{remroadmap}, we do not know if we can actually \emph{fix} the gap in Caruso's work. That is, we do not know if we can use $p$-adic approximation technique to fix \cite[Prop. 3.7]{Car13} (cf. Appendix \ref{secapp}); we further do not know if we can use the $p$-adic argument as in \cite{Car13} to prove Thm. \ref{thmpstintr}.
\end{rem}

 \begin{rem}\label{remindep}
As a final remark, let us mention that  the current paper is (almost completely) independent of  \cite{Car13}.
The only exception is that we do use Caruso's category of \'etale $(\varphi, \tau)$-modules and its equivalence with the category of $p$-adic Galois representations (i.e., the content of  \cite[Thm. 1]{Car13}); but these are easy consequences of the theory of field of norms (with respect to the field $\Kinfty$), which e.g., was already partially developed in \cite[\S 2]{Bre99comp}.
We refer to Rem. \ref{remend} for some more comments regarding the relation between the current paper and \cite{Car13}.
   \end{rem}

\subsection{Structure of the paper}\label{subsecstr}
In \S \ref{secring}, we review many period rings in $p$-adic Hodge theory; in particular we compute locally analytic vectors in some rings.
In \S \ref{secmod}, we review the theory of $(\varphi, \tau)$-modules and the overconvergence theorem. 
In \S \ref{secmono}, we define the monodromy operator on the rigid-overconvergent $(\varphi, \tau)$-modules.
In \S \ref{secsemist}, we review  Kisin's theory of $\mathcal{O}$-modules (for semi-stable representations) and show that the monodromy operator there \emph{coincides} with ours in \S \ref{secmono}.
In \S \ref{secfinht}, when the $(\varphi, \tau)$-module is of finite $E(u)$-height, we use a Frobenius regularization technique to descend the monodromy operator to $\mathcal{O}$; this implies that the attached representation is potentially semi-stable.
In \S \ref{secsimp}, we  construct the Breuil-Kisin $G_K$-modules and prove our main theorem; we also compare our theory with some results of  Gee and Liu. 


\subsection{Some notations and conventions}
\begin{notation} \label{nota fields}
Recall that   we already defined:
\[K_{\infty}   = \cup _{n = 1} ^{\infty} K(\pi_n), \quad K_{p^\infty}=  \cup _{n=1}^\infty
K(\mu_{n}), \quad L =  \cup_{n = 1} ^{\infty} K(\pi_n, \mu_n).\]
Let \[G_{\infty}:= \gal (\overline K / K_{\infty}), \quad G_{p^\infty}:= \gal (\overline K / K_{p^\infty}), \quad G_L: =\gal(\overline K/L), \quad \hat G: =\gal (L/K) .\]
When $Y$ is a ring with a $G_K$-action, $X \subset \overline{K}$ is a subfield, we use $Y_X$ to denote the $\gal(\overline{K}/X)$-invariants of  $Y$; we will use the cases when $X=L, K_\infty$.
\end{notation}

\subsubsection{Locally analytic vectors}
Let us very quickly recall the theory of locally analytic vectors, see \cite[\S 2.1]{BC16} and \cite[\S 2]{Ber16} for more details. Indeed, almost all  the explicit calculations of locally analytic vectors used in this paper are already carried out in \cite{GP21}, hence the reader can refer to \emph{loc. cit.} for more details.

Recall that a $\Qp$-Banach space $W$ is a $\Qp$-vector space with a complete non-Archimedean  norm $\|\cdot\|$ such that $\|aw\|=\|a\|_p\|w\|, \forall a \in \Qp, w \in W$, where $\|a\|_p$ is the   $p$-adic norm on $\Qp$.
Recall the multi-index notations: if $\cbf = (c_1, \hdots,c_d)$ and $\kbf = (k_1,\hdots,k_d) \in \mathbb{N}^d$ (here $\mathbb{N}=\mathbb{Z}^{\geq 0}$), then we let $\cbf^\kbf = c_1^{k_1} \cdot \ldots \cdot c_d^{k_d}$.

Let $G$ be a $p$-adic Lie group, and let $(W, \|\cdot \|)$ be a $\Qp$-Banach representation of $G$.
Let $H$ be an open subgroup of $G$ such that there exist coordinates $c_1,\hdots,c_d : H \to \Zp$ giving rise to an analytic bijection $\cbf : H \to \Zp^d$.
 We say that an element $w \in W$ is an $H$-analytic vector if there exists a sequence $\{w_\kbf\}_{\kbf \in \mathbb{N}^d}$ with $w_\kbf \to 0$ in $W$, such that \[g(w) = \sum_{\kbf \in \mathbb{N}^d} \cbf(g)^\kbf w_\kbf, \quad \forall g \in H.\]
Let $W^{H\dan}$ denote the space of $H$-analytic vectors.
We say that a vector $w \in W$ is \emph{locally analytic} if there exists an open subgroup $H$ as above such that $w \in W^{H\dan}$. Let $W^{G\dla}$ denote the space of such vectors. We have $W^{G\dla} = \cup_{H} W^{H\dan}$ where $H$ runs through  open subgroups of $G$.
We can naturally extend these definitions to the case when $W$ is a Fr\'echet- or LF- representation of $G$, and use $W^{G\dpa}$ to denote the \emph{pro-analytic} vectors, cf. \cite[\S 2]{Ber16}.



\begin{Notation} \label{nota hatG}
Let $\hat{G}=\gal(L/K)$ be as in Notation \ref{nota fields}, which is a $p$-adic Lie group of dimension 2. We recall the structure of this group in the following.
\begin{enumerate}
\item Recall that:
\begin{itemize}
\item if $K_{\infty} \cap K_{p^\infty}=K$ (always valid when $p>2$, cf. \cite[Lem. 5.1.2]{Liu08}), then $\gal(L/K_{p^\infty})$ and $\gal(L/K_{\infty})$ generate $\hat{G}$;
\item if $K_{\infty} \cap K_{p^\infty} \supsetneq K$, then necessarily $p=2$, and $K_{\infty} \cap K_{p^\infty}=K(\pi_1)$ (cf. \cite[Prop. 4.1.5]{Liu10}) and $\pm i \notin K(\pi_1)$, and hence $\gal(L/K_{p^\infty})$ and $\gal(L/K_{\infty})$   generate an open subgroup  of $\hat{G}$ of index $2$.
\end{itemize}
Let us mention already that when $\Kinfty \cap \Kpinfty=K(\pi_1)$, some modifications might be needed in some of our argument,  notably with respect to the $\tau$-operator (cf. \eqref{eq1tau} below), and to the $N_\nabla$-operator (cf. \S \ref{secmono}). (As a side-note, when $p=2$, by \cite[Lem. 2.1]{Wangxiyuan}  we can always choose \emph{some} $\{\pi_n\}_{n \geq 0}$ so that $K_{\infty} \cap K_{p^\infty}=K$.)

\item Note that:
\begin{itemize}
\item $\gal(L/K_{p^\infty}) \simeq \Zp$, and let
$\tau \in \gal(L/K_{p^\infty})$ be \emph{the} topological generator such that
\begin{equation} \label{eq1tau}
\begin{cases} 
\tau(\pi_i)=\pi_i\mu_i, \forall i \geq 1, &  \text{if }  \Kinfty \cap \Kpinfty=K; \\
\tau(\pi_i)=\pi_i\mu_{i-1}=\pi_i\mu_{i}^2, \forall i \geq 2, & \text{if }  \Kinfty \cap \Kpinfty=K(\pi_1).
\end{cases}
\end{equation}
 
\item $\gal(L/K_{\infty})$ ($\subset \gal(K_{p^\infty}/K) \subset \Zp^\times$) is not necessarily pro-cyclic when $p=2$; however, this issue will \emph{never} bother us in this paper.
\end{itemize}
\end{enumerate}
\end{Notation}




\begin{Notation} \label{notataula}
We set up some notations with respect to representations of $\hat{G}$.
\begin{enumerate}
\item Given a $\hat{G}$-representation $W$, we use
\[W^{\tau=1}, \quad W^{\gamma=1}\]
to mean \[ W^{\gal(L/K_{p^\infty})=1}, \quad
W^{\gal(L/K_{\infty})=1}.\]
And we use
\[
W^{\tau\dla}, \quad W^{\tau\dan}, \quad W^{\tau_n\dan} (\text{ for } n \geq 1), \quad  W^{\gamma\dla} \]
to mean
\[
W^{\gal(L/K_{p^\infty})\dla}, \quad
W^{\gal(L/K_{p^\infty})\dan}, \quad
W^{<\tau^{p^n}>\dan}, \quad
W^{\gal(L/K_{\infty})\dla},  \]
where $<\tau^{p^n}> \subset \gal(L/K_{p^\infty})$ is the subgroup topologically generated by $\tau^{p^n}$.


\item  Let
$W^{\tau\dla, \gamma=1}:= W^{\tau\dla} \cap W^{\gamma=1},$
then by \cite[Lem. 3.2.4]{GP21}
\[ W^{\tau\dla, \gamma=1} \subset  W^{\hat{G}\dla}. \]
\end{enumerate}
\end{Notation}



\begin{rem}
Note that we never define $\gamma$ to be an element of $\gal(L/K_\infty)$; although when $p>2$ (or in general, when $\gal(L/K_\infty)$ is pro-cyclic), we could have defined it as a topological generator of $\gal(L/K_\infty)$. In particular, although ``$\gamma=1$" might be slightly ambiguous (but only when $p=2$), we use the notation for brevity.
\end{rem}

\subsubsection{Co-variant functors, Hodge-Tate weights, Breuil-Kisin heights, and minus signs} \label{subsubHTwts}
\begin{itemize}
\item In this paper we will use many categories of modules and the functors relating them; we will always use \emph{co-variant} functors. This makes the comparisons amongst them easier (i.e., using tensor products, rather than $\Hom$'s).
\item For example, our $D_\st(V)$ is defined as $(V\otimes_{\Qp} \bst)^{G_K}$, and hence the Hodge-Tate weight of the cyclotomic character $\chi_p$ is $-1$.
\item  Indeed, in the main argument of the paper, we will focus on representations with \emph{non-negative} Hodge-Tate weights and Breuil-Kisin modules with \emph{non-negative} $E(u)$-heights. For example, the Breuil-Kisin module associated to $\chi_p^{-1}$  has $E(u)$-height $1$.
\item We will define several \emph{differential operators} in this paper, and we always remove minus signs (for our convenience)  in our choices: see in particular Rem. \ref{remNlu} for the $N$-operator and Rem. \ref{compaminus} for the $N_\nabla$-operator.
\end{itemize}



\subsubsection{Some other notations}
Throughout this paper, we reserve $\varphi$ to denote Frobenius operator. We sometimes add subscripts to indicate on which object Frobenius is defined. For example, $\varphi_\huaM$ is the Frobenius defined on $\huaM$. We always drop these subscripts if no confusion arises.  Given a homomorphism of rings $\varphi: A \to A$ and given an $A$-module $M$, denote $\varphi^{\ast}M: =A\otimes_{\varphi, A} M$.
We use $\Mat(A)$ to denote the set of matrices  with entries in $A$ (the size of the matrix is always obvious from context).
Let $\gamma_i(x):=\frac{x^i}{i!}$ be the usual divided power.

\subsection*{Acknowledgement}
First and foremost, the author thanks Yoshiyasu Ozeki for informing him of his extremely meticulous discovery (in early 2015) and for agreeing to write an appendix; his discovery serves as impetus and inspiration for a major part of this work.
The author thanks Tong Liu and L\'eo Poyeton for the useful collaborations in \cite{GL20, GP21}. 
The author thanks Matthew Emerton and Toby Gee for informing him that the results are related with their work on moduli stack of Galois representations.
In addition to the aforementioned mathematicians, the author further thanks Laurent Berger, Xavier Caruso, Heng Du, Peter Scholze, Xin Wan and Liang Xiao for useful discussions and correspondences.
Some part of work are carried out during stays in University of Helsinki, MPIM in Bonn, BICMR and MCM in Beijing, and SCMS in Shanghai; the author thanks these institutions for excellent working conditions. 
Finally, the author thanks the anonymous referees for some  useful suggestions and comments. 
This work was partially supported by the National Natural Science Foundation of China under agreement No. NSFC-12071201.

\section{Rings and locally analytic vectors}\label{secring}
In this section, we review some period rings in $p$-adic Hodge theory. In particular, we compute the locally analytic vectors in some rings. In \S \ref{subsecwte}, we review some basic period rings;  in \S \ref{22}, we discuss   variations of these rings with respect to extension of fields.
In \S \ref{subswtB} and \S \ref{subsec BI}, we define the rings $\wt{\mathbf{B}}^I$, ${\mathbf{B}}^I$ and study their $G_\infty$-invariants; in  \S \ref{subseclav}, we study the relation of these rings via  locally analytic vectors.
In \S \ref{subsecfrakt}, we   study  a locally analytic element $\mathfrak{t}$, which plays a useful role in the definition of our monodromy operator.
In \S \ref{sslog}, we review some log-rings.


\subsection{Some basic period rings}\label{subsecwte}
Let $\wt{\mathbf{E}}^+$ be the tilt of $\mathcal{O}_{C_p}$ (denoted as $R$ in Notation \ref{114}), and let $\wt{\mathbf{E}}:=\mathrm{Fr} \widetilde{\mathbf{E}}^+$ be the tilt of $C_p$.
 An element of $\wt{\mathbf{E}}$ can be uniquely represented by $(x^{(n)})_{n \geq 0}$ where $x^{(n)} \in C_p$ and $(x^{(n+1)})^{p}=(x^{(n)})$; let $v_{\wt{\mathbf{E}}}$ be the usual valuation where $v_{\wt{\mathbf{E}}}(x):=v_p(x^{(0)})$.
Let
\[\wt{\mathbf{A}}^+:= W(\wt{\mathbf{E}}^+), \quad  \wt{\mathbf{A}}:= W(\wt{\mathbf{E}}), \quad \wt{\mathbf{B}}^+:= \wt{\mathbf{A}}^+[1/p], \quad \wt{\mathbf{B}}:= \wt{\mathbf{A}}[1/p],\]
where $W(\cdot)$ means the ring of Witt vectors.
There is a natural Frobenius operator $x \mapsto x^p$ on $\wt{\mathbf{E}}$, which induces  natural  Frobenius operators (always denoted by $\varphi$) on all the rings defined above (and below); there are also natural $G_K$-actions on the rings defined above induced from that on $\wt{\mathbf{E}}$. 
Note that the $G_K$-action on $\wt{\mathbf{E}}$ is continuous with respect to the $v_{\wt{\mathbf{E}}}$-topology (but not the discrete topology); the action on $\wt{\mathbf{B}}$ is continuous with respect to the weak topology (but not  the strong $p$-adic topology).

Let $\upi:=\{\pi_n\}_{n \geq 0} \in \wt{\mathbf{E}}^+$.
Let $\mathbf{E}^+_{K_\infty} :=k[\![\upi]\!]$, $\mathbf{E}_{K_\infty} :=k((\upi))$, and let $\mathbf{E}$ be the separable closure of $\mathbf{E}_{K_\infty}$ in $\wt{\mathbf{E}}$.

Let $[\underline \pi ]\in \wt{\mathbf{A}}^+$ be the Teichm\"uller lift of $\upi$.
Let $\mathbf{A}^+_{K_\infty} : = W(k)[\![u]\!]$ with Frobenius $\varphi$ extending the arithmetic Frobenius on $W(k)$ and $\varphi (u ) = u ^p$.
There is a $W(k)$-linear Frobenius-equivariant embedding $\mathbf{A}^+_{K_\infty} \inj \wt{\mathbf{A}}^+$ via $u\mapsto [\underline \pi]$.
Let $\mathbf{A}_{K_\infty}$ be the $p$-adic completion of $\mathbf{A}^+_{K_\infty}[1/u]$.
Our fixed embedding $\mathbf{A}^+_{K_\infty}\hookrightarrow \wt{\mathbf{A}}^+$ determined by  $\upi$
uniquely extends to a $\varphi$-equivariant embedding $\mathbf{A}_{K_\infty} \hookrightarrow \wt{\mathbf{A}}$, and we identify $\mathbf{A}_{K_\infty}$ with its image in $\wt{\mathbf{A}}$.
We note that $\mathbf{A}_{K_\infty}$ is a complete discrete valuation ring with uniformizer $p$ and residue field
$\mathbf{E}_{K_\infty}$.

Let $\mathbf{B}_{K_\infty}:=\mathbf{A}_{K_\infty}[1/p]$ (which is precisely the field  in \eqref{eqbkinf}). 
Let $\mathbf{B}$ be the completion for the $p$-adic norm of the maximal unramified extension of $\mathbf{B}_{K_\infty}$ inside $\wt{\mathbf{B}}$, and let $\mathbf{A} \subset \mathbf{B}$ be the ring of integers.
Let $\mathbf{A}^+ : = \wt{\mathbf{A}}^+ \cap \mathbf{A}$. Then we have:
\[ (\mathbf{A})^{G_\infty} = \mathbf{A}_{K_\infty}, \quad (\mathbf{B})^{G_\infty} = \mathbf{B}_{K_\infty}, \quad
(\mathbf{A}^+)^{G_\infty} = \mathbf{A}^+_{K_\infty}.  \]

\subsection{Rings with respect to  field extensions}\label{22}
 (The discussions in this subsection will not be used until \S \ref{subsecmodE}.)
Note that rings $\wtE^+, \wtA^+, \wtA$ (and the ``$\B$-variants") depend only on   $C_p$, in the sense that if we let $E$ be another complete discrete valuation field with perfect residue field where $K \subset E \subset C_p$, then we get exactly the same $\wtE^+, \wtA^+, \wtA$ as if we started with $K$. However, the rings $\mathbf{E}, \A, \B$, although without subscripts, indeed \emph{depend} on $K$ and $K_\infty$. 
For example, let $K \subset E \subset C_p$ be as aforementioned, and choose some $E_\infty$ (analogue of $\Kinfty$), then in general we cannot   compare the newly constructed ``$\mathbf{E}, \A, \B$" with the ones constructed using $K$ and $\Kinfty$ (as we cannot even compare $\Kinfty$ and $E_\infty$; in general we do not even have $\Kinfty \subset E_\infty$). 

This is very different from the $(\varphi, \Gamma)$-module setting, where once we fix $\mu_n$ as in Notation \ref{nota fields}, then we can always make some comparison since we always have $\Kpinfty \subset E_{p^\infty}$ (and then we can apply the theory of field of norms). This is indeed used in e.g. \cite[\S II.4]{CC98}.

Fortunately, for our purpose in this paper, we only need to work with certain special case of $K \subset E \subset C_p$, where we can easily make some comparisons.

\begin{notation}\label{notaEK}
Let $K'/K$ be a (not necessarily finite) unramified extension contained in $\overline K$, and let $m\geq 0$. Let $E$ be the $p$-adic completion of $K'(\pi_m)$, and let $E_\infty: =\cup_{n \geq m}E(\pi_n)$.
Let $\underline{\pi_E}: = \{ \pi_n\}_{n \geq m} \in \wt{\mathbf E}^+$, and let $u_E \in \wt{\mathbf{A}}^+$ be its the Teichm\"uller lift.
Then we can analogously construct the
$\mathbf{E}_{E_\infty}, \mathbf{A}^+_{E_\infty}, \mathbf{A}_{E_\infty}$, and $\mathbf{E}(E), \A^+(E), \A(E)$, as well as the $\B$-variants of these rings. (Here we only use the notation $\mathbf{E}^+(E)$ etc. instead of $\mathbf{E}(E, E_\infty)$ etc. for brevity). Then indeed, 
\[\mathbf{E} \hookrightarrow \mathbf{E}(E);\] and the theory of Cohen rings then induces a map $\A \hookrightarrow \A(E)$.
Furthermore, we   have a natural embedding
\[\mathbf{A}_{K_\infty} \hookrightarrow \mathbf{A}_{E_\infty}\]
 using the embedding $W(k) \hookrightarrow W(k')$ (where $k'$ is the residue field of $K'$) and using $u \mapsto u_E^{p^m}$.
\end{notation}

\subsection{The rings $\wt{\mathbf{B}}^{I}$  and their $G_\infty$-invariants} \label{subswtB}
Recall that we defined the element $\underline{\varepsilon} = (1, \mu_1, \mu_{2}, \ldots) \in \wt{\mathbf{E}}^+$ in Notation \ref{114}.
Let $\overline \pi =\underline{\varepsilon} -1 \in \wt{\mathbf E}^+$ (this is not $\underline \pi$), and let $[\overline \pi] \in \wt{\mathbf{A}}^+$ be its Teichm\"uller lift.
When $ A$ is a $p$-adic complete ring, we use $  A\{X, Y\}$ to denote the $p$-adic completion of  $ A[X, Y]$.

\begin{defn}
For $r \in \mathbb{Z}^{\geq 0}[1/p]$, let $ \wt{\mathbf{A}}^{[r, +\infty]}: = \wt{\mathbf{A}}^+ \{\frac{p}{[\overline \pi]^r} \}$, which is a subring of $\wt{\mathbf{A}}$.
Here, to be rigorous, $\wt{\mathbf{A}}^+ \{ {p}/{[\overline \pi]^r}\}$ is defined as $\wt{\mathbf{A}}^+ \{X  \}/([\overline \pi]^rX-p)$, and similarly for  other similar occurrences later; see \cite[\S 2]{Ber02} for more details.
 Let $\wt{\mathbf{B}}^{[r, +\infty]}:=\wt{\mathbf{A}}^{[r, +\infty]}[1/p] \subset \wt{\mathbf{B}} $.
\end{defn}

\begin{defn} \label{defnew}
Suppose $r \in \mathbb{Z}^{\geq 0}[1/p]$, and let $x= \sum_{i \geq i_0} p^i[x_i] \in \wt{\mathbf{B}}^{[r, +\infty]}$ ($\subset \wt{\mathbf{B}} $). Denote $w_k(x) := \inf_{i \leq k} \{v_{\wt{\mathbf E}}(x_i)\}$ .
For $s\geq r$ and $s>0$, let
\[W^{[s, s]}(x) :=\inf_{k \geq i_0} \{k+\frac{p-1}{ps}\cdot v_{\wt{\mathbf E}}(x_k)\} =   \inf_{k \geq i_0} \{ k+\frac{p-1}{ps}\cdot w_k(x)\};\]
this is a well-defined valuation (cf. \cite[Prop. 5.4]{Col08}).
For $I \subset [r, +\infty)$ a non-empty closed interval such that $I \neq [0, 0]$, let
\[W^{I}(x) := \inf_{\alpha \in I, \alpha \neq 0} \{W^{[\alpha, \alpha]}(x) \}.\]
\end{defn}

\begin{rem}\label{rem donot defn}
We do not define ``$W^{[0, 0]}$", cf. \cite[Rem. 2.1.9]{GP21}.
\end{rem}

\begin{lemma} \label{lem W}
Suppose $r \leq s \in \mathbb{Z}^{\geq 0}[1/p]$ and $s>0$, then the following holds.
\begin{enumerate}


\item \label{item comp}When $r>0$, $\wt{\mathbf{A}}^{[r, +\infty]}$ and $\wt{\mathbf{A}}^{[r, +\infty]}[1/[\overline \pi]]$ are complete with respect to $W^{[r, r]}$.
\item  \label{item mult} $W^{[s, s]}(xy)=W^{[s, s]}(x) +W^{[s, s]}(y), \forall x, y \in \wt{\mathbf{B}}^{[r, +\infty]}$; namely, $W^{[s, s]}$ is multiplicative.

\item  \label{item max} Let $x \in \wt{\mathbf{B}}^{[r, +\infty]}$.
\begin{enumerate}
\item When $r>0$, $W^{[r, s]}(x) = \inf \{ W^{[r, r]}(x), W^{[s, s]}(x)\}.$ In particular, this implies that $W^{[r, s]}$ is sub-multiplicative.
\item When $r=0$, $W^{[r, s]}(x) (=W^{[0, s]}(x)) =   W^{[s, s]}(x).$
\end{enumerate}

\end{enumerate}
\end{lemma}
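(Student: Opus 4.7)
The strategy is to handle the analytic statement~(3) first via a concavity trick, then address the multiplicativity in~(2), and finally deduce the completeness~(1) by comparing $W^{[r,r]}$ with the $p$-adic topology on $\wt{\mathbf{A}}^{[r,+\infty]}$.

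For~(3), the key observation is the change of variable $t := 1/\alpha$. Fixing a Teichm\"uller expansion $x = \sum_{k \geq i_0} p^k[x_k]$, the function
\[F(t) \;=\; \inf_{k \geq i_0}\bigl\{k + \tfrac{p-1}{p}\,w_k(x)\,t\bigr\}\]
satisfies $F(1/\alpha) = W^{[\alpha,\alpha]}(x)$. As the infimum of a family of affine functions of $t$, $F$ is concave on $(0,+\infty)$; hence on the compact interval $t \in [1/s,\, 1/r]$ coming from $\alpha \in [r,s]$ with $r > 0$, its minimum is attained at an endpoint, giving~(3)(a). The sub-multiplicativity of $W^{[r,s]}$ is then immediate from that of $W^{[r,r]}$ and $W^{[s,s]}$. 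For~(3)(b), when $r = 0$ the element $x$ lies in $\wt{\mathbf{B}}^+ = \wt{\mathbf{A}}^+[1/p]$, so the Teichm\"uller coefficients $x_k$ belong to $\wt{\mathbf{E}}^+$ and thus $w_k(x) \geq 0$ for every $k$. Each $g_k(t) := k + \tfrac{p-1}{p}w_k(x)\,t$ is then non-decreasing in $t$, so $F$ is non-decreasing, and therefore $\inf_{\alpha \in (0,s]} F(1/\alpha) = F(1/s) = W^{[s,s]}(x)$.

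For~(2), I would invoke the result of Colmez already cited in Def.~\ref{defnew}: \cite[Prop.~5.4]{Col08} asserts precisely that $W^{[s,s]}$ is a valuation on $\wt{\mathbf{B}}^{[r,+\infty]}$, which includes multiplicativity. (Alternatively one argues directly: sub-multiplicativity is clear from the infimum presentation, while the reverse inequality follows from a Newton-polygon analysis of the Teichm\"uller expansion of a product, using that Witt-vector multiplication respects the leading Teichm\"uller term modulo strictly higher-valuation corrections.) For~(1), I would first observe that $W^{[r,r]}\bigl(p/[\overline{\pi}]^r\bigr) = 0$ and that $W^{[r,r]}(a) \geq 0$ for every $a \in \wt{\mathbf{A}}^+$, since each summand $p^i[a_i]$ contributes a non-negative amount when $v_{\wt{\mathbf{E}}}(a_i) \geq 0$. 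Consequently $W^{[r,r]}(a) \geq v_p(a)$ on $\wt{\mathbf{A}}^{[r,+\infty]}$, so $p$-adic Cauchy sequences are $W^{[r,r]}$-Cauchy; conversely any $W^{[r,r]}$-Cauchy sequence is easily seen to be $p$-adically Cauchy. Since $\wt{\mathbf{A}}^{[r,+\infty]}$ is $p$-adically complete by construction, completeness for $W^{[r,r]}$ follows. The same reasoning applies to $\wt{\mathbf{A}}^{[r,+\infty]}[1/[\overline{\pi}]]$ after inverting $[\overline{\pi}]$, on which $W^{[r,r]}([\overline{\pi}]) = 1/r$ is a finite positive number.

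\textbf{Main obstacle.} The only real subtlety is the multiplicativity in~(2): the infimum defining $W^{[s,s]}$ is \emph{a priori} only sub-multiplicative, and equality requires a careful analysis of how Witt-vector multiplication interacts with the Teichm\"uller-indexed filtration. This is exactly what is supplied by Colmez's result and forms the conceptual heart of the lemma; the rest of the argument is bookkeeping and the elementary concavity analysis above.
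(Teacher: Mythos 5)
Your concavity argument for part (3) is clean and correct: after the change of variable $t = 1/\alpha$, the map $\alpha \mapsto W^{[\alpha,\alpha]}(x)$ becomes the infimum of a family of affine functions of $t$, hence concave, and the endpoint analysis gives both (3)(a) and (3)(b). The appeal to Colmez for part~(2) is also appropriate; this is precisely the reference the paper attaches to Definition~\ref{defnew}.

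The argument you propose for part~(1), however, has a genuine gap. You claim that ``any $W^{[r,r]}$-Cauchy sequence is easily seen to be $p$-adically Cauchy,'' but this is false: the $W^{[r,r]}$-topology on $\wt{\mathbf{A}}^{[r,+\infty]}$ is strictly coarser than the $p$-adic one. Concretely, take $x_N := [\overline{\pi}]^{rN}$. Then $W^{[r,r]}(x_N) = N \to +\infty$, so $x_N \to 0$ for $W^{[r,r]}$ and $(x_N)$ is $W^{[r,r]}$-Cauchy. On the other hand, for $N>M$ the difference $x_N - x_M$ reduces modulo $p$ to $\overline{\pi}^{rM}\bigl(\overline{\pi}^{r(N-M)}-1\bigr)$, a nonzero element of $\wt{\mathbf{E}}^+[X]/(X\overline{\pi}^r) \cong \wt{\mathbf{A}}^{[r,+\infty]}/p$; hence $v_p(x_N-x_M)=0$ for all $N\neq M$ and $(x_N)$ is not $p$-adically Cauchy. (There is no contradiction with completeness: the partial sums $\sum_{n\leq N}[\overline{\pi}]^{rn}$ do converge, to the $\wt{\mathbf{A}}^+$-unit $(1-[\overline{\pi}]^r)^{-1}$, but the convergence holds for $W^{[r,r]}$ and not $p$-adically.) So the reduction of $W^{[r,r]}$-completeness to $p$-adic completeness does not go through. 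Completeness here is a genuine theorem; one should either cite \cite[Prop.~5.4]{Col08} directly or carry out the explicit analysis of Teichm\"uller coefficients used there, and the same remark applies to $\wt{\mathbf{A}}^{[r,+\infty]}[1/[\overline{\pi}]]$.
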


\begin{defn} \label{defnewi}
Let $r \in \mathbb{Z}^{\geq 0}[1/p]$.
\begin{enumerate}
\item Suppose $I=[r, s] \subset [r, +\infty)$ is a non-empty closed interval such that $I \neq [0, 0]$. Let $ \wt{\mathbf{A}}^I$ be the completion of $ \wt{\mathbf{A}}^{[r, +\infty]}$ with respect to $W^I$. Let $ \wt{\mathbf{B}}^I:=\wt{\mathbf{A}}^I[1/p]$.
\item Let
\[\wt{\mathbf{B}}^{[r, +\infty)}: = \cap_{n \geq 0} \wt{\mathbf{B}}^{[r, s_n]}\]
where $s_n  \in \mathbb{Z}^{>0}[1/p]$ is any sequence increasing to $+\infty$.  We equip $\wt{\mathbf{B}}^{[r, +\infty)}$ with its natural Fr\'echet  topology.
\end{enumerate}
\end{defn}

\begin{lem} \label{lemcompa} \cite[Lem. 2.1.10(4)]{GP21}
Let $I=[r, s]$ be a closed interval as above, let $V^I$ be the $p$-adic topology on $\wtb^I$ defined using $\wta^I$ as ring of integers. Then for any $x \in \wtb^I$, we have
$V^I(x) =\lfloor W^I(x) \rfloor.$
\end{lem}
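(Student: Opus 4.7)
The plan is to reduce the asserted equality $V^I(x) = \lfloor W^I(x) \rfloor$ on $\wtb^I$ to the ring-theoretic identity
\[\wta^I = \{x \in \wtb^I : W^I(x) \geq 0\}. \qquad (\ast)\]
Indeed, $W^I$ extends to $\wtb^I = \wta^I[1/p]$ by $W^I(p^{-k}y) := W^I(y) - k$, and a direct check with Def. \ref{defnew} gives $W^{[\alpha,\alpha]}(p) = 1$ for every $\alpha$, so $W^I(p) = 1$. Multiplication by $p$ therefore shifts both $V^I$ and $W^I$ uniformly by $1$, and granting $(\ast)$ one obtains $p^n \wta^I = \{x : W^I(x) \geq n\}$ for every integer $n$; the equality $V^I = \lfloor W^I \rfloor$ then follows immediately from the definition of $V^I$ via the filtration $\{p^n \wta^I\}_{n \in \mathbb Z}$.

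The inclusion ``$\subseteq$'' in $(\ast)$ is the easy direction. The two types of generators of $\wta^{[r,+\infty]} = \wta^+\{p/[\overline\pi]^r\}$ satisfy $W^I \geq 0$: for $x \in \wta^+$ all Teichm\"uller coefficients have $v_{\wtE} \geq 0$, while using $v_{\wtE}(\overline\pi) = p/(p-1)$ one computes $W^{[\alpha,\alpha]}(p/[\overline\pi]^r) = 1 - r/\alpha \geq 0$ for every $\alpha \in [r,s]$. Sub-multiplicativity of $W^I$ (Lem. \ref{lem W}(\ref{item max})) spreads the bound to all of $\wta^{[r,+\infty]}$, and continuity of $W^I$ under its own completion carries it to $\wta^I$.

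For the reverse inclusion ``$\supseteq$'' in $(\ast)$, the strategy is to write any $x \in \wtb^I$ with $W^I(x) \geq 0$ as $x = p^{-k}y$ with $y \in \wta^I$, note that $W^I(y) \geq k$, and argue by induction on $k$ that $y \in p^k \wta^I$. The inductive step reduces to the key divisibility claim: \emph{if $y \in \wta^I$ satisfies $W^I(y) \geq 1$, then $y \in p\wta^I$}. I would prove this by identifying $\wta^I/p$ with the completion of $\wtE^{[r,+\infty]}$ under the valuation $w^I$ obtained from $W^I$ by reducing Teichm\"uller leading coefficients modulo $p$; concretely, this identification is built from the surjection $\wta^{[r,+\infty]} \twoheadrightarrow \wta^{[r,+\infty]}/p \hookrightarrow \wtE^{[r,+\infty]}$ and then completed on both sides. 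Under this identification the condition $W^I(y) \geq 1$ translates into $\bar y = 0$ in $\wta^I/p$, which gives $y \in p\wta^I$ without circularity.

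The main obstacle is precisely the explicit identification of $\wta^I / p$ with the $w^I$-completion of $\wtE^{[r,+\infty]}$, since it requires one to run through the Teichm\"uller-expansion structure of elements in the $W^I$-completion $\wta^I$ and verify compatibility of reduction modulo $p$ with the completion process. This structure theory is established in \cite[\S 2]{Ber02} and \cite[\S 5]{Col08} in closely parallel settings (there $W^I$ is the Frobenius-twisted Berger-style Gauss valuation on $(\varphi,\Gamma)$-module rings), and the same arguments adapt verbatim to our $\wta^I$.
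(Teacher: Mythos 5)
Your overall strategy is a reasonable outline: reduce $V^I=\lfloor W^I\rfloor$ to the ring identity $\wta^I=\{x\in\wtb^I:W^I(x)\geq0\}$, prove the easy inclusion via generators and sub-multiplicativity, and handle the hard direction by an induction on $p$-divisibility. The first two steps are fine. (For the record, this paper gives no proof of the lemma; it is cited from \cite[Lem.~2.1.10(4)]{GP}, so there is no in-paper argument to compare against.) The gap is in the proof of the key divisibility claim, and it is a step that fails as sketched, not merely one left unfinished.

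The mechanism you propose --- identify $\wta^I/p$ with the $w^I$-completion of a subring of $\wtE$ via a composite $\wta^{[r,+\infty]}\twoheadrightarrow\wta^{[r,+\infty]}/p\hookrightarrow\wtE$ and then read off $\bar y=0$ from $W^I(y)\geq 1$ --- has two concrete problems. First, the reduction map $\wta^{[r,+\infty]}/p\to\wtE$ is \emph{not} injective when $r>0$. The element $p/[\overline\pi]^{r/2}=[\overline\pi]^{r/2}\cdot(p/[\overline\pi]^{r})$ lies in $\wta^{[r,+\infty]}\cap p\wta$, so it dies in $\wtE$, yet it is not in $p\wta^I$: one computes $W^I(p/[\overline\pi]^{r/2})=1/2<1$, while your easy inclusion already forces $p\wta^I\subseteq\{W^I\geq1\}$. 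Moreover its square equals $p\cdot(p/[\overline\pi]^{r})\in p\wta^{[r,+\infty]}$, so $\wta^I/p$ contains nonzero nilpotents and cannot embed into a valuation-completion of any subring of the field $\wtE$. Second, and more fundamentally, the proposed ``translation'' is circular: the assertion $\bar y=0$ in $\wta^I/p$ \emph{means} $y\in p\wta^I$, which is exactly the claim being proved, and $W^I(y)\geq1$ does not hand it to you. For $y\in\wta^{[r,+\infty]}$ with $W^I(y)\geq1$, the Teichm\"uller leading coefficient $y_0$ merely satisfies $v_{\wt{\mathbf E}}(y_0)\geq\frac{ps}{p-1}$, not $y_0=0$; whether $[y_0]/p$ then lands in the completion $\wta^I$ is precisely the open question. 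What one actually needs is a direct structural statement --- that elements of $\wta^I$ admit a controlled Teichm\"uller-type expansion $\sum_{i\geq i_0}p^i[x_i]$, with $i_0$ possibly negative, from which $W^I$ is computed and from which membership in $\wta^I$ can be detected; this is essentially the content of \cite[Prop.~5.4]{Col08} and of the lemma itself. The detour through $\wta^I/p$ does not let you bypass that input; it presupposes it.
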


 \begin{remark} \label{rem Ic}
  For  our purposes (indeed, also in other literature concerning these rings), it is only necessary to study (the explicit structure of) these rings when
\[ \inf I, \sup I \in \{ 0, +\infty, (p-1)p^{\mathbb{Z}} \}.\]
  Furthermore, for any interval $I$ such that $\wt{\mathbf{A}}^{I}$ and $\wt{\mathbf{B}}^{I}$ are defined, there is a natural bijection (called Frobenius) $\varphi: \wt{\mathbf{A}}^{I} \simeq \wt{\mathbf{A}}^{pI}$ which is  valuation-preserving. Hence in practice, it would suffice if we can determine the explicit structure of these rings for
 \[I_c: =\{[r_\ell, r_k], [r_\ell, +\infty], [0, r_k], [0, +\infty] \}, \text{ with } \ell \leq k \in \mathbb{Z}^{\geq 0},\]
 where $r_n: =(p-1)p^{n-1}$.
 The cases for $I$ a general closed interval can be deduced using Frobenius operation; the cases for $I=[r, +\infty)$ can be deduced by taking Fr\'echet completion.
\end{remark}

\begin{convention}
Throughout the paper, \emph{all} the intervals $I$ (over ``$\wtB$-rings", ``$\mathbf{B}$-rings," ``$D$-modules", etc.) satisfy
\begin{equation*}\label{eqninfI}
\inf I, \sup I \in \{ 0, +\infty, (p-1)p^{\mathbb{Z}} \}.
\end{equation*}
If they are not closed, then they are of the form $[0, +\infty)$ or $[r, +\infty)$.
$I$ is never allowed to be $[0, 0]$ (or ``$[+\infty, +\infty]$").
\end{convention}



\begin{lemma}\label{lemwta}
We have
\begin{eqnarray*}
 \wt{\mathbf{A}}^{[0, r_k]} &=& \wt{\mathbf{A}}^+ \{\frac{u^{ep^k}}{p}\}, \\
 \wt{\mathbf{A}}^{[r_\ell, +\infty]} &=& \wt{\mathbf{A}}^+ \{ \frac{p}{u^{ep^\ell}}\}, \\
 \wt{\mathbf{A}}^{[r_\ell, r_k]} &=& \wt{\mathbf{A}}^+ \{ \frac{p}{u^{ep^\ell}} , \frac{u^{ep^k}}{p}\}.
\end{eqnarray*}
\end{lemma}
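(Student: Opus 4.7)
The key comparison is between the two elements $[\overline\pi]^{r_\ell}$ and $u^{ep^\ell}$ of $\wta^+$, which are the generators of the two principal ideals used in the two descriptions. First I would record the numerical coincidence
\[v_{\wtE}\bigl([\overline\pi]^{r_\ell}\bigr)=\tfrac{p}{p-1}\cdot(p-1)p^{\ell-1}=p^\ell=\tfrac{1}{e}\cdot ep^\ell=v_{\wtE}\bigl(u^{ep^\ell}\bigr).\]
Both elements are in fact Teichm\"uller lifts: by multiplicativity of $[\cdot]$ we have $[\overline\pi]^{r_\ell}=[\overline\pi^{r_\ell}]$, and $u^{ep^\ell}=[\underline\pi^{ep^\ell}]$ by definition of $u$. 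Since $\wtE^+$ is a valuation ring, the ratio $\overline\pi^{r_\ell}/\underline\pi^{ep^\ell}$ is a unit in $\wtE^+$, and its Teichm\"uller lift $[\overline\pi]^{r_\ell}/u^{ep^\ell}$ is then a unit in $\wta^+$. Hence the two principal ideals of $\wta^+$ coincide, which immediately yields the second equality using the definition $\wta^{[r_\ell,+\infty]}=\wta^+\{p/[\overline\pi]^{r_\ell}\}$.

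For the first equality, $\wta^{[0,r_k]}$ is by definition the completion of $\wta^{[0,+\infty]}=\wta^+\{p/[\overline\pi]^0\}=\wta^+$ with respect to $W^{[0,r_k]}=W^{[r_k,r_k]}$, the last equality by Lemma \ref{lem W}(3)(b). A direct computation from Definition \ref{defnew}, using the Teichm\"uller description $u^{ep^k}=[\underline\pi^{ep^k}]$, gives $W^{[r_k,r_k]}(u^{ep^k})=\tfrac{p-1}{pr_k}\cdot p^k=1=W^{[r_k,r_k]}(p)$. Thus $u^{ep^k}/p$ is a valid element of $\wta^{[0,r_k]}$ of $W$-valuation zero, and defines a map $\wta^+\{u^{ep^k}/p\}\to\wta^{[0,r_k]}$. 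To show this map is an isomorphism I would invoke Lemma \ref{lemcompa}, which identifies the $p$-adic topology with the $W^{[r_k,r_k]}$-topology on $\wtb^{[0,r_k]}$, and combine it with a density argument: every element of $\wta^{[0,r_k]}$ is a $W^{[r_k,r_k]}$-Cauchy limit of elements of $\wta^+$, which can in turn be rewritten as a $p$-adically convergent power series in $u^{ep^k}/p$ over $\wta^+$ by collecting terms according to their Witt-coordinate valuations.

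For the third equality, by the second equality $\wta^{[r_\ell,r_k]}$ is the $W^{[r_\ell,r_k]}$-completion of $\wta^+\{p/u^{ep^\ell}\}$. Lemma \ref{lem W}(3)(a) gives $W^{[r_\ell,r_k]}=\inf\{W^{[r_\ell,r_\ell]},W^{[r_k,r_k]}\}$, so this completion simultaneously enforces $W^{[r_k,r_k]}(u^{ep^k}/p)=0$ (as in the first equality) and keeps $p/u^{ep^\ell}$ integral, yielding $\wta^+\{p/u^{ep^\ell},u^{ep^k}/p\}$. The main obstacle I anticipate is the density/rewriting step in the first equality: one must verify that an element of $\wta^{[0,r_k]}$ of $W$-valuation $\geq 0$ admits a representation as a $p$-adic power series in $u^{ep^k}/p$ with coefficients in $\wta^+$ rather than merely in $\wta^+[1/u]$; this is a careful Witt-vector bookkeeping argument, analogous to the standard calculations carried out in \cite{Ber02}.
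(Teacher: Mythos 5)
The paper's own proof is a one-line reference: these formulas are used as \emph{definitions} in \cite[Def.~2.1.1]{GP}, and the equivalence with Def.~\ref{defnewi} is delegated to Lem.~\ref{lemcompa} and to the computations in \cite[\S2.1]{GP}. Your proposal is a genuine attempt at a direct proof and is therefore a different route. Your argument for the second displayed equality is correct and is exactly the basic observation underlying the whole lemma: $v_{\wtE}(\overline\pi^{r_\ell}) = r_\ell\cdot\tfrac{p}{p-1} = p^\ell = v_{\wtE}(\underline\pi^{ep^\ell})$, both sides are Teichm\"uller lifts, so they differ by a unit of $\wta^+$ and generate the same ideal; this identifies $\wta^+\{p/[\overline\pi]^{r_\ell}\}$ with $\wta^+\{p/u^{ep^\ell}\}$ with no completion involved.

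The density argument you propose for the first and third equalities, however, does not work, and the obstacle is sharper than ``careful Witt-vector bookkeeping.'' You claim that every element of $\wta^{[0,r_k]}$ is a $W^{[r_k,r_k]}$-Cauchy limit of elements of $\wta^+$, but $\wta^+$ is \emph{not} $W^{[r_k,r_k]}$-dense in $\wta^+\{u^{ep^k}/p\}$. Indeed, for any $a=\sum_{i\geq0}p^i[a_i]\in\wta^+$, the $0$-th Teichm\"uller coordinate of $u^{ep^k}-pa$ is $\underline\pi^{ep^k}$, so $W^{[r_k,r_k]}(u^{ep^k}-pa)\leq \tfrac{p-1}{pr_k}\cdot p^k=1$; by multiplicativity, $W^{[r_k,r_k]}(u^{ep^k}/p-a)\leq 0$ for \emph{every} $a\in\wta^+$, so the element $u^{ep^k}/p$ cannot be approximated at all. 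Read literally, the separated $W^{[r_k,r_k]}$-completion of $\wta^+$ is therefore a strictly smaller ring than $\wta^+\{u^{ep^k}/p\}$. The way to repair this is to pass to the $\wtb$-level (which is what Lem.~\ref{lemcompa} is really for): $\wtb^{[0,+\infty]}=\wta^+[1/p]$ \emph{is} $W^{[r_k,r_k]}$-dense in $\wtb^{[0,r_k]}:=\wta^+\{u^{ep^k}/p\}[1/p]$ --- for instance, the tails $\sum_{n>N}a_n(u^{ep^k}/p)^n$ have $W$-valuation $\geq \inf_{n>N}v_p(a_n)\to\infty$ --- and one then proves that the ring of $W^{[r_k,r_k]}$-integral elements of $\wtb^{[0,r_k]}$ is exactly $\wta^+\{u^{ep^k}/p\}$, by the Witt-coordinate computation from \cite[\S2.1]{GP} (this is also where Def.~\ref{defnewi} should really be read as ``integral elements in the $W^I$-completion of $\wtb^{[r,+\infty]}$''). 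As written, your ``combine it with a density argument'' step starts from a false premise; the rewriting into a $p$-adic series is the genuine content, not a consequence of density over $\wta^+$.
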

\begin{proof}
Indeed, these equations are used as definitions in \cite[Def. 2.1.1]{GP21}; these definitions are equivalent to our current Def. \ref{defnewi} by Lem. \ref{lemcompa}. See \cite[\S 2.1]{GP21} for more details.
\end{proof}



\begin{prop}\label{lem decomp 3} \cite[Prop. 2.1.14]{GP21} Recall that the subscript  $\Kinfty$ signifies $G_\infty$-invariants. We have
\begin{eqnarray*}
 \wt{\mathbf{B}}_{K_\infty}^{[0, r_k]}   &=&  \wt{\mathbf{A}}^+_{K_\infty}\{ \frac{u^{ep^k}}{p} \}[\frac 1 p] \\
 \wt{\mathbf{B}}_{K_\infty}^{[r_\ell, +\infty]}  &=&   \wt{\mathbf{A}}^+_{K_\infty}\{ \frac{p}{u^{ep^\ell}}    \}[\frac 1 p] \\
 \wt{\mathbf{B}}_{K_\infty}^{[r_\ell, r_k]}   &=&   \wt{\mathbf{A}}^+_{K_\infty} \{ \frac{p}{u^{ep^\ell}}, \frac{u^{ep^k}}{p}  \}[\frac{1}{p}]
\end{eqnarray*}
\end{prop}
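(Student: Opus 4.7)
The plan is to prove the three identities in parallel; I focus on the first, $\wt{\mathbf{B}}_{K_\infty}^{[0,r_k]} = \wt{\mathbf{A}}^+_{K_\infty}\{u^{ep^k}/p\}[1/p]$, the other two being entirely analogous. For the \emph{forward inclusion}, by Lemma \ref{lemwta} the element $u^{ep^k}/p$ lies in $\wt{\mathbf{A}}^{[0,r_k]}$, and since $u=[\underline\pi]$ with $\underline\pi \in \wt{\mathbf{E}}^+_{K_\infty}$, this element is $G_\infty$-fixed. Combining with $\wt{\mathbf{A}}^+_{K_\infty}\subset \wt{\mathbf{A}}^+ \subset \wt{\mathbf{A}}^{[0,r_k]}$, the subring $\wt{\mathbf{A}}^+_{K_\infty}\{u^{ep^k}/p\}$ embeds $G_\infty$-equivariantly into $\wt{\mathbf{A}}^{[0,r_k]}$ with $G_\infty$ acting trivially, giving the forward direction after inverting $p$.

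For the \emph{reverse inclusion}, the key inputs are the classical Fontaine--Wintenberger identity $(\wt{\mathbf{A}}^+)^{G_\infty} = \wt{\mathbf{A}}^+_{K_\infty}$ (equivalently $(\wt{\mathbf{E}}^+)^{G_\infty} = \wt{\mathbf{E}}^+_{K_\infty}$) together with a $p$-adic approximation argument. Given $x \in (\wt{\mathbf{B}}^{[0,r_k]})^{G_\infty}$, clearing a power of $p$ reduces to $x \in (\wt{\mathbf{A}}^{[0,r_k]})^{G_\infty}$. One first checks that $pT - u^{ep^k}$ is a regular element of $\wt{\mathbf{A}}^+[T]$ (since $u^{ep^k}$ is a non-zero-divisor modulo $p$ in $\wt{\mathbf{E}}^+$), hence $\wt{\mathbf{A}}^{[0,r_k]}$ is $p$-torsion-free and its reduction modulo $p$ has the explicit form $(\wt{\mathbf{E}}^+/\underline\pi^{ep^k})[T]$, where $T$ is the image of $u^{ep^k}/p$; the $G_\infty$-action is diagonal on coefficients.

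The reverse inclusion is then proved by the following induction. Construct $y_0,y_1,\ldots \in \wt{\mathbf{A}}^+_{K_\infty}[u^{ep^k}/p]$ with $x - (y_0 + py_1 + \cdots + p^{n-1}y_{n-1}) \in p^n\wt{\mathbf{A}}^{[0,r_k]}$: at each step reduce the remainder modulo $p$, apply $(\wt{\mathbf{E}}^+)^{G_\infty} = \wt{\mathbf{E}}^+_{K_\infty}$ coefficient-wise to lift to a polynomial over $\wt{\mathbf{A}}^+_{K_\infty}$, and divide by $p$ using $p$-torsion-freeness of $\wt{\mathbf{A}}^{[0,r_k]}$ together with the $G_\infty$-invariance of $p$ to continue the induction. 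Since $\wt{\mathbf{A}}^+_{K_\infty}\{u^{ep^k}/p\}$ is $p$-adically complete, the partial sums converge inside it to $x$, finishing the reverse inclusion; inverting $p$ yields the first identity. The cases $[r_\ell,+\infty]$ and $[r_\ell,r_k]$ are handled the same way, the only change being the shape of the mod-$p$ quotient (respectively $\wt{\mathbf{E}}^+[T]/(\underline\pi^{ep^\ell}T)$ and a two-variable analogue). The main obstacle is the interplay between taking $G_\infty$-invariants and $p$-adic completion; it is precisely the fact that the generators $u^{ep^k}/p$ and $p/u^{ep^\ell}$ are themselves $G_\infty$-fixed that makes the lifting step manageable.
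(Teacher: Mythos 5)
Your forward inclusion is fine, and your structural observations about $\wt{\mathbf{A}}^{[0,r_k]}$ — it is $p$-torsion-free, and its mod-$p$ reduction is $(\wt{\mathbf{E}}^+/\underline{\pi}^{ep^k})[T]$ with $G_\infty$ acting diagonally on coefficients — are also correct. The problem is in the lifting step of your induction. You write that after reducing the remainder modulo $p$ one can ``apply $(\wt{\mathbf{E}}^+)^{G_\infty}=\wt{\mathbf{E}}^+_{K_\infty}$ coefficient-wise.'' But the coefficients of the reduced remainder live in $\wt{\mathbf{E}}^+/\underline{\pi}^{ep^k}$, not in $\wt{\mathbf{E}}^+$, so what you actually need is the surjectivity of
\[
\wt{\mathbf{E}}^+_{K_\infty}\longrightarrow \bigl(\wt{\mathbf{E}}^+/\underline{\pi}^{ep^k}\bigr)^{G_\infty},
\]
equivalently the vanishing of the connecting map into $H^1(G_\infty,\underline{\pi}^{ep^k}\wt{\mathbf{E}}^+)\cong H^1(G_\infty,\wt{\mathbf{E}}^+)$. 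That is a genuine $H^1$-type input; the $H^0$ identity $(\wt{\mathbf{E}}^+)^{G_\infty}=\wt{\mathbf{E}}^+_{K_\infty}$ that you cite does not give it. A $G_\infty$-invariant class in $\wt{\mathbf{E}}^+/\underline{\pi}^{ep^k}$ admits a lift $y\in\wt{\mathbf{E}}^+$ with $(g-1)y\in\underline{\pi}^{ep^k}\wt{\mathbf{E}}^+$ for all $g$, and you must still trivialize the resulting continuous cocycle $g\mapsto(g-1)y/\underline{\pi}^{ep^k}$ to correct $y$ into $\wt{\mathbf{E}}^+_{K_\infty}$.

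This is not a cosmetic omission: for the perfectoid field $\widehat{K}_\infty$ the standard almost-purity input gives only that $H^1(G_\infty,\wt{\mathbf{E}}^+)$ is \emph{almost} zero, and the sharpening to an actual vanishing (or to surjectivity of the map above) is precisely the non-formal content of statements like this one. Your argument is also not saved by the final inversion of $p$, since the relevant obstruction group is of characteristic $p$ and is not killed by inverting $p$. The results cited by the paper of this general shape (here \cite[Prop.\ 2.1.14]{GP}) are proved by genuinely quantitative methods — valuation estimates and Tate--Sen / normalized-trace style decompositions of $\wt{\mathbf{A}}^I$ as a topological $G_\infty$-module — rather than by a formal $p$-adic d\'evissage from $H^0$ alone. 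To repair your proof you would need either to establish $H^1(G_\infty,\wt{\mathbf{E}}^+)=0$ (with a proof, not an appeal to almost mathematics), or to replace the coefficient-wise lifting by a controlled approximate lifting together with valuation bookkeeping that ensures the error can be absorbed into the next stage of the $p$-adic induction.
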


\subsection{The rings ${\mathbf B}^{I}$ and their $G_\infty$-invariants} \label{subsec BI}

\begin{definition}
Let $r \in \mathbb{Z}^{\geq 0}[1/p]$.
\begin{enumerate}
\item Let
\[\mathbf{A}^{[r, +\infty]} := \mathbf A \cap \wt{\mathbf{A}}^{[r, +\infty]}, \quad \mathbf{B}^{[r, +\infty]} :=\mathbf B \cap \wt{\mathbf{B}}^{[r, +\infty]} .\]
\item Suppose $ [r, s] \subset [r, +\infty)$ is a non-empty closed interval such that $s \neq 0$. Let $\mathbf{B}^{[r, s ]}$ be the closure of $\mathbf{B}^{[r, +\infty]}$  in $\wt{\mathbf{B}}^{[r, s]}$ with respect to $W^{[r, s]}$.
 Let $\mathbf{A}^{[r, s ]}: =\mathbf{B}^{[r, s ]} \cap \wt{\mathbf{A}}^{[r, s]}$, which is the ring of integers in $\mathbf{B}^{[r, s ]}$.
 \item Let
\[\mathbf{B}^{[r, +\infty)} : =\cap_{n \geq 0} \mathbf{B}^{[r, s_n]}\]
where $s_n  \in \mathbb{Z}^{>0}[1/p]$ is any sequence increasing to $+\infty$.
\end{enumerate}
\end{definition}

\begin{definition}
\begin{enumerate}
\item  For $r \in \mathbb{Z}^{\geq 0}[1/p]$, let $\mathcal{A}^{[r, +\infty]}(K_0)$ be the ring consisting of infinite series $f=\sum_{k \in \mathbb Z} a_kT^k$ where $a_k \in W(k)$ such that $f$ is a holomorphic function on the annulus defined by
\[v_p(T) \in  (0,  \quad     \frac{p-1}{ep}\cdot \frac{1}{r} ].\]
(Note that when $r=0$, it implies that $a_k=0, \forall k<0$).
Let $\mathcal{B}^{[r, +\infty]}(K_0): =\mathcal{A}^{[r, +\infty]}(K_0)[1/p] $.

\item Suppose  $f=\sum_{k \in \mathbb Z} a_kT^k \in \mathcal{B}^{[r, +\infty]}(K_0)$.
\begin{enumerate}
\item  When $s \geq r$, $s>0$, let
\[
\mathcal W^{[s, s]}(f) : =  \inf_{k \in \Z}  \{ v_p(a_k)  + \frac{p-1}{ps}\cdot   \frac{k}{e}     \}. \]

\item For $I  \subset [r, +\infty)$ a non-empty   closed  interval,
let
\[
\mathcal W^{I}(f) := \inf_{\alpha \in I, \alpha \neq 0}\mathcal W^{[\alpha, \alpha]}(f).
\]
\end{enumerate}

\item For $r\leq s \in \mathbb{Z}^{\geq 0}[1/p], s \neq 0$, let $\mathcal{B}^{[r, s]}(K_0)$ be the completion of $\mathcal{B}^{[r, +\infty]}(K_0)$ with respect to $\mathcal W^{[r, s]}$. Let $\mathcal{A}^{[r, s]}(K_0)$ be the ring of integers in $\mathcal{B}^{[r, s]}(K_0)$ with respect to $\mathcal W^{[r, s]}$.
\end{enumerate}

\end{definition}

\begin{lemma}
 \label{lem laurent series}
\begin{enumerate}
\item For $r>0$, $\mathcal{B}^{[r, +\infty]}(K_0)$ is complete with respect to $\mathcal W^{[r, r]}$, and $\mathcal{A}^{[r, +\infty]}(K_0)$ is the ring of integers with respect to this valuation.

\item  For $s>0$, we have $\mathcal W^{[0, s ]}(x) =  \mathcal W^{[s, s]}(x)$. Furthermore, $\mathcal{B}^{[0, s]}(K_0)$ is  the ring consisting of infinite series $f=\sum_{k \in \mathbb Z} a_kT^k$ where $a_k \in K_0$ such that $f$ is a holomorphic function on the closed disk defined by
\[v_p(T) \in  [ \frac{p-1}{ep}\cdot \frac{1}{s}, \quad +\infty ].\]

\item For $I=[r, s] \subset (0, +\infty)$, we have $\mathcal W^{I}(x) = \inf \{\mathcal W^{[r, r]}(x), \mathcal W^{[s, s]}(x)\}$. Furthermore, $\mathcal{B}^{[r, s]}(K_0)$ is  the ring consisting of infinite series $f=\sum_{k \in \mathbb Z} a_kT^k$ where $a_k \in K_0$ such that $f$ is a holomorphic function on the annulus defined by
\[v_p(T) \in  [ \frac{p-1}{ep}\cdot \frac{1}{s},  \quad \frac{p-1}{ep}\cdot \frac{1}{r} ].\]
\end{enumerate}
\end{lemma}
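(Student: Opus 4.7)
The plan rests on a single monotonicity observation: for a monomial $a_k T^k$ with $a_k \in K_0$ and $k\in \mathbb Z$ fixed, the function
\[ \alpha \;\longmapsto\; \mathcal W^{[\alpha,\alpha]}(a_k T^k) \;=\; v_p(a_k)+\frac{p-1}{p\alpha}\cdot \frac{k}{e} \]
is (strictly) decreasing in $\alpha>0$ when $k>0$ and (strictly) increasing when $k<0$. Geometrically, $\mathcal W^{[s,s]}$ is the Gauss-type norm of a Laurent series at the circle $v_p(T)=\tfrac{p-1}{eps}$; this is the bridge between the combinatorial valuations $\mathcal W^I$ and the rigid-analytic rings of functions on disks/annuli.

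For part (2), on $\mathcal B^{[0,+\infty]}(K_0)$ only indices $k\geq 0$ appear, so by the monotonicity the infimum $\inf_{\alpha\in(0,s]}\mathcal W^{[\alpha,\alpha]}$ is attained at $\alpha=s$, yielding the equality $\mathcal W^{[0,s]}=\mathcal W^{[s,s]}$. The identification of $\mathcal B^{[0,s]}(K_0)$ with holomorphic functions on the closed disk $v_p(T)\geq \tfrac{p-1}{eps}$ then follows from the standard fact that the completion of the polynomial ring under a Gauss norm is the corresponding Tate algebra. For part (3), splitting $f=\sum_{k\geq 0}a_k T^k+\sum_{k<0}a_k T^k$ and applying the monotonicity separately to each half shows that over $\alpha\in[r,s]$ the infimum of the $k\geq 0$ part is attained at $\alpha=s$ and that of the $k<0$ part at $\alpha=r$; putting these together gives $\mathcal W^{[r,s]}(f)=\inf\{\mathcal W^{[r,r]}(f),\mathcal W^{[s,s]}(f)\}$. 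The identification of $\mathcal B^{[r,s]}(K_0)$ with holomorphic functions on the closed annulus $v_p(T)\in[\tfrac{p-1}{eps},\tfrac{p-1}{epr}]$ is again a standard Tate-algebra statement, the two Gauss norms at the endpoints determining the sup-norm on the annulus.

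For part (1), I would prove completeness of $\mathcal B^{[r,+\infty]}(K_0)$ under $\mathcal W^{[r,r]}$ by taking a $\mathcal W^{[r,r]}$-Cauchy sequence, extracting a coefficient-wise limit with uniform control, and verifying that the limit still satisfies the required holomorphy conditions on the half-open annulus $v_p(T)\in(0,\tfrac{p-1}{epr}]$; this uses that $\mathcal W^{[r,r]}$ already controls the Gauss norm at the outer boundary, while the open-end behavior at $v_p(T)\to 0^+$ is preserved because the coefficient bounds imposed by the Cauchy condition propagate to the limit. The ring-of-integers description of $\mathcal A^{[r,+\infty]}(K_0)$ then reduces to checking that, in the presence of the holomorphy hypothesis on the half-open annulus, $\mathcal W^{[r,r]}(f)\geq 0$ holds if and only if all coefficients lie in $W(k)$; here again the monotonicity observation together with the open-end growth is the decisive tool, since it lets one transfer the bound at $\alpha=r$ to the integrality of all coefficients.

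I expect the main obstacle to be the precise verification in part (1): the interplay between the $\mathcal W^{[r,r]}$ bound, which only sees the outer boundary, and the open behavior at $v_p(T)\to 0^+$ needs to be carefully unpacked both for completeness and for the integrality description. Parts (2) and (3) should then be essentially bookkeeping on top of the monotonicity principle and the standard completion of polynomials / Laurent polynomials into Tate algebras of closed disks and closed annuli.
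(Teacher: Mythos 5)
Your monotonicity observation is correct, and so are the derivations of the valuation formulas in parts (2) and (3): interchanging the infima over $\alpha$ and over $k$, and using that $v_p(a_k)+\frac{p-1}{p\alpha}\cdot\frac{k}{e}$ is decreasing in $\alpha$ for $k>0$ and increasing for $k<0$, yields $\mathcal W^{[0,s]}=\mathcal W^{[s,s]}$ and $\mathcal W^{[r,s]}=\inf\{\mathcal W^{[r,r]},\mathcal W^{[s,s]}\}$. Note that the paper's proof of this lemma is a one-line citation to \cite[Lem. 2.2.5]{GP} (which states the result there only for intervals $[r_\ell,r_k]$), so your computation is a genuinely different, self-contained re-derivation of the valuation formulas.

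The sketch for part (1), however, has a real gap. The claim that ``the coefficient bounds imposed by the Cauchy condition propagate to the limit'' is unjustified, and in fact $\mathcal W^{[r,r]}$-Cauchyness alone does \emph{not} give a uniform lower bound on the $v_p(a_k)$: the outer-boundary Gauss norm only constrains $v_p(a_k)+\frac{p-1}{epr}\cdot k$, which for $k\to+\infty$ lets $v_p(a_k)$ drift to $-\infty$. Concretely, for an integer $c>\frac{epr}{p-1}$ the Laurent polynomials $f_n=\sum_{k=1}^n p^{-k}T^{ck}$ lie in $\mathcal B^{[r,+\infty]}(K_0)$ and form a $\mathcal W^{[r,r]}$-Cauchy sequence, since $\mathcal W^{[r,r]}(p^{-k}T^{ck})=k\bigl(\frac{(p-1)c}{epr}-1\bigr)\to+\infty$; yet the coefficientwise limit $\sum_{k\geq 1}p^{-k}T^{ck}$ has $v_p(a_{ck})=-k\to-\infty$ and does not converge on the whole half-open annulus $v_p(T)\in(0,\frac{p-1}{epr}]$. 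So the desired completeness cannot be read off from the Gauss norm at the outer boundary alone; one must carry along the stronger integrality built into $\mathcal A^{[r,+\infty]}(K_0)$, e.g.\ via the identification with $\mathbf{A}^{[r,+\infty]}_{K_\infty}[1/u]$ from Lemma \ref{lem inj}, where the $p$-adic completeness is structural. The ``ring of integers'' equivalence you propose is also false as stated: $1/T\in\mathcal A^{[r,+\infty]}(K_0)$ has $\mathcal W^{[r,r]}(1/T)=-\frac{p-1}{epr}<0$, while $p^{-1}T^c$ (for $c$ large) has $\mathcal W^{[r,r]}\geq 0$ but a coefficient outside $W(k)$. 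Part (1) is thus subtler than pure Gauss-norm bookkeeping, and this is the step your proposal does not actually close.
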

 \begin{proof}
In \cite[Lem. 2.2.5]{GP21}, we stated the results with $[r, s]=[r_\ell, r_k]$; but they are   true for general intervals.
 \end{proof}

\begin{lemma} \cite[Lem. 2.2.7]{GP21}
\label{lem inj}  Let $\mathbf A_{\Kinfty}^I$ be the $G_\infty$-invariants of $\mathbf A^I$.
The map $f(T) \mapsto f(u)$ induces isometric isomorphisms
\begin{eqnarray*}
 \mathcal{A}^{[0, +\infty]}(K_0)  &\simeq& \mathbf{A}^{[0,+\infty]}_{K_\infty}; \\
 \mathcal{A}^{[r, +\infty]}(K_0) &\simeq &  \mathbf{A}^{[r,+\infty]}_{K_\infty}[1/u], \text{ when } r>0;\\
  \mathcal{A}^{I}(K_0)  &\simeq &  \mathbf{A}^{I}_{K_\infty}, \text{ when } I \subset [0, +\infty) \text { is a  closed interval}. \\
\end{eqnarray*}
\end{lemma}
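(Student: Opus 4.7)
The strategy is to verify the three isomorphisms in turn, in each case checking (i) that $f(T)\mapsto f(u)$ is well-defined, (ii) that it is isometric, and (iii) that it is surjective. Well-definedness and injectivity are essentially formal: the embedding $W(k)[\![u]\!]\hookrightarrow \wt{\mathbf A}^+$ via $u\mapsto[\underline\pi]$ extends to Laurent series (using that $u$ becomes a unit in $\wt{\mathbf B}^{[r,+\infty]}$ once $r>0$), and the coefficients $a_k$ of a convergent $f(T)$ can be recovered from $f(u)$ by the uniqueness of the Laurent expansion in $u$, so injectivity is automatic.

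For the isometry, I would first compute on monomials: since $v_{\wt{\mathbf E}}(\underline\pi)=1/e$, the definitions of $W^{[s,s]}$ and $\mathcal W^{[s,s]}$ both give
\[
W^{[s,s]}(p^k u^j) \;=\; k+\tfrac{p-1}{eps}\,j \;=\; \mathcal W^{[s,s]}(p^k T^j),\qquad k\in\mathbb Z,\ j\in\mathbb Z.
\]
By Lem.~\ref{lem laurent series}(2)--(3) and the analogous statements in Lem.~\ref{lem W} for $W^I$, the valuations $\mathcal W^I$ and $W^I$ are both infima (or sub-inf) over the endpoints, so matching on monomials extends to matching on finite Laurent polynomials $W(k)[u,u^{-1}]$. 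Passing to the respective completions (resp.\ Fr\'echet completions for $I=[r,+\infty]$) yields the isometric embedding in each case.

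Surjectivity is the main content and the primary obstacle. For the first statement $\mathcal A^{[0,+\infty]}(K_0)\simeq\mathbf A^+_{K_\infty}=W(k)[\![u]\!]$, this is tautological. For the closed-interval case, I would use Prop.~\ref{lem decomp 3}: it is enough to treat $I=[r_\ell,r_k]$, in which case $\wt{\mathbf B}^I_{K_\infty}=\wt{\mathbf A}^+_{K_\infty}\{p/u^{ep^\ell},\,u^{ep^k}/p\}[1/p]$ and intersecting with $\wt{\mathbf A}^I_{K_\infty}$ gives $\mathbf A^I_{K_\infty}$. The generators $p/u^{ep^\ell}$ and $u^{ep^k}/p$ are visibly images of Laurent polynomials in $T$ that lie in $\mathcal A^I(K_0)$, so by density (of $W(k)[u,u^{-1}]$ in $\mathbf A^I_{K_\infty}$ for $W^I$, deduced from the explicit presentation) every element of $\mathbf A^I_{K_\infty}$ is the $W^I$-limit of a sequence of Laurent polynomials; the isometry proved above transports this Cauchy sequence to a Cauchy sequence in $\mathcal A^I(K_0)$ whose limit maps to it. The case $\mathcal A^{[r,+\infty]}(K_0)\simeq\mathbf A^{[r,+\infty]}_{K_\infty}[1/u]$ is analogous, using the description $\wt{\mathbf B}^{[r_\ell,+\infty]}_{K_\infty}=\wt{\mathbf A}^+_{K_\infty}\{p/u^{ep^\ell}\}[1/p]$ and noting that inverting $u$ on the right matches allowing the full annulus (with an open end at $v_p(T)=0$) on the left.

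The delicate point I expect to wrestle with is the Laurent expansion on the integral level: a priori an element $x\in\mathbf A^I_{K_\infty}$ is only produced as a Cauchy limit in $\wt{\mathbf A}^I$, and one must argue that its unique Teichm\"uller-type expansion in $\wt{\mathbf A}$ collapses to a series $\sum_{k\in\mathbb Z}a_k u^k$ with $a_k\in W(k)$ whose coefficient growth matches the convergence condition defining $\mathcal A^I(K_0)$. This is handled by comparing the $p$-adic valuation $V^I$ with $W^I$ via Lem.~\ref{lemcompa}, truncating the Teichm\"uller expansion mod $p^n$, using that the $W(k)$-coefficients are forced by the $G_\infty$-invariance (equivalently, by the fact that $\mathbf E_{K_\infty}=k(\!(\underline\pi)\!)$), and then invoking the convergence bounds from the $W^I$-finiteness to recognize the resulting Laurent series as an element of $\mathcal A^I(K_0)$.
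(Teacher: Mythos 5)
The paper does not give its own proof of this lemma; it is cited verbatim from \cite[Lem.~2.2.7]{GP}. So I assess your proposal on its own terms rather than against a proof in this paper.

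Your overall outline (well-definedness and injectivity via Laurent expansion; isometry on monomials; surjectivity via density of $W(k)[u,u^{-1}]$) is the right shape, and the isometry computation $W^{[s,s]}(p^ku^j)=k+\tfrac{(p-1)j}{eps}=\mathcal W^{[s,s]}(p^kT^j)$ is correct. But the surjectivity step, which is the real content, has a genuine gap: you invoke Prop.~\ref{lem decomp 3}, which describes $\wt{\mathbf B}^I_{K_\infty}=\wt{\mathbf A}^+_{K_\infty}\{p/u^{ep^\ell},u^{ep^k}/p\}[1/p]$, and then assert that intersecting with $\wt{\mathbf A}^I_{K_\infty}$ yields $\mathbf A^I_{K_\infty}$. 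That intersection is just $\wt{\mathbf A}^I_{K_\infty}$, which is \emph{strictly larger} than $\mathbf A^I_{K_\infty}$. The tilde ring is built from the perfect coefficient ring $\wt{\mathbf A}^+_{K_\infty}=(W(R))^{G_\infty}$, which contains, e.g., $[\underline\pi^{1/p}]$ (since $\underline\pi^{1/p}=(\pi_1,\pi_2,\dots)$ is $G_\infty$-invariant), whereas $\mathbf A^+_{K_\infty}=W(k)[\![u]\!]$ does not. The density of $W(k)[u,u^{-1}]$ that you need therefore does not follow from Prop.~\ref{lem decomp 3}; elements of $\wt{\mathbf A}^+_{K_\infty}\{p/u^{ep^\ell},u^{ep^k}/p\}$ are not in general $W^I$-limits of Laurent polynomials with $W(k)$-coefficients. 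Your argument, as written, would incorrectly "prove" that $\mathcal A^I(K_0)$ surjects onto $\wt{\mathbf A}^I_{K_\infty}$, which is false.

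The correct explicit presentation is exactly Prop.~\ref{corArlrk} ($\mathbf A^I_{K_\infty}=\mathbf A^+_{K_\infty}\{p/u^{ep^\ell},u^{ep^k}/p\}$, with $\mathbf A^+_{K_\infty}$, not $\wt{\mathbf A}^+_{K_\infty}$). But in \cite{GP} that is Prop.~2.2.10 and comes \emph{after} Lem.~2.2.7, so using it here is almost certainly circular. What one must actually do is start from $\mathbf A^{[r,+\infty]}_{K_\infty}=\mathbf A_{K_\infty}\cap\wt{\mathbf A}^{[r,+\infty]}$, where $\mathbf A_{K_\infty}$ is concretely the $p$-adic completion of $W(k)(\!(u)\!)$, so elements genuinely have a unique Laurent expansion $\sum_{k\in\mathbb Z}a_ku^k$ with $a_k\in W(k)$ and $a_k\to 0$; then show the constraint $W^{[r,r]}\geq 0$ translates precisely into the convergence condition defining $\mathcal A^{[r,+\infty]}(K_0)$; and finally verify that the $W^I$-closure of $\mathbf B^{[r,+\infty]}_{K_\infty}$ (not of $\wt{\mathbf B}^{[r,+\infty]}_{K_\infty}$) recovers $\mathbf B^I_{K_\infty}$. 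You correctly flag this as "the delicate point," but the argument you then sketch is routed through the wrong (tilde) rings and so does not address it.
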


We record an easy corollary of the above explicit description of the rings $\mathbf{B}^{I}_{K_\infty}$.

\begin{cor}\label{corIpI}
Let  $I \subset [0, +\infty]$ be an interval.
Suppose $x \in  \mathbf{B}^{I}_{K_\infty}$ such that $\varphi(x) \in \mathbf{B}^{I}_{K_\infty}$, then we have $x \in  \mathbf{B}^{ I/p}_{K_\infty}$.
\end{cor}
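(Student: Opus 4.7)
The plan is to reduce to an explicit computation with Laurent series using the description of $\mathbf{B}^{I}_{K_\infty}$ from Lemma \ref{lem inj} together with Lemma \ref{lem laurent series}.

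First I would use the isometric isomorphism of Lemma \ref{lem inj} to identify $x \in \mathbf{B}^I_{K_\infty}$ with a Laurent series $f(u) = \sum_{k \in \mathbb Z} a_k u^k$, where the $a_k \in K_0$ and $f$ is a holomorphic function on the annulus $J_I$ determined by $I$ via Lemma \ref{lem laurent series}; explicitly, the endpoints of $J_I$ are $\frac{p-1}{ep}\cdot\frac{1}{\sup I}$ and $\frac{p-1}{ep}\cdot\frac{1}{\inf I}$ (with the usual conventions when $\inf I = 0$ or $\sup I = +\infty$). Under this identification, the Frobenius acts by $\varphi(x) = \sum_k \varphi(a_k) u^{pk}$, where $\varphi$ on the coefficients is the Witt-vector Frobenius on $W(k)$, which is a $p$-adic isometry and so preserves $v_p$.

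Next, viewing $\varphi(x)$ as a Laurent series in the variable $v := u^p$, we get $\varphi(x) = \sum_k \varphi(a_k) v^k$. The hypothesis $\varphi(x) \in \mathbf{B}^I_{K_\infty}$ says this series converges for $v_p(u) \in J_I$, i.e., for $v_p(v) \in p \cdot J_I$. Since the $\varphi$-twist on the coefficients does not change $v_p$, this is equivalent to saying that the original series $\sum_k a_k v^k$ converges on the annulus $p \cdot J_I$. In other words, the Laurent series representing $x$ converges on the annulus $p \cdot J_I$.

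Finally I would verify the numerical identity $p \cdot J_I = J_{I/p}$: since the endpoints of $J_I$ are $\frac{p-1}{ep}\cdot\frac{1}{\sup I}$ and $\frac{p-1}{ep}\cdot\frac{1}{\inf I}$, multiplying by $p$ replaces $\sup I$ and $\inf I$ by $\sup I/p$ and $\inf I/p$, which is exactly the annulus associated to $I/p$. Thus the Laurent series $f(u)$ lies in $\mathcal{B}^{I/p}(K_0)$, and transporting back via Lemma \ref{lem inj} gives $x \in \mathbf{B}^{I/p}_{K_\infty}$. The only ``obstacle'' is bookkeeping at the boundary cases $\inf I = 0$ or $\sup I = +\infty$, but these are handled uniformly by the conventions $p\cdot[0,\alpha] = [0,p\alpha]$ and $p\cdot[\alpha,+\infty] = [p\alpha,+\infty]$, so no separate argument is required.
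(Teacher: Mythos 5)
Your proposal is correct and follows essentially the same route as the paper's (very brief) proof: identify $x$ with a Laurent series via Lemma \ref{lem inj}, observe that $\varphi$ is the coefficient-wise Witt-Frobenius (a $v_p$-isometry) composed with $u\mapsto u^p$, and match convergence annuli via Lemma \ref{lem laurent series}. The paper leaves the bookkeeping to the reader as ``easy to see''; your writeup just spells out the substitution $v=u^p$ and the identity $p\cdot J_I=J_{I/p}$, which is exactly the computation being alluded to.
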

\begin{proof}
Indeed, by Lem. \ref{lem inj}, $x =\sum_{i \in \mathbb Z}a_iu^i$ with $a_i \in K_0$ satisfying certain convergence conditions related with the interval $I$ as described in Lem. \ref{lem laurent series}. We have $\varphi(x) =\sum_{i \in \mathbb Z} \varphi(a_i)u^{pi}$, hence using the explicit  convergence condition, it is easy to see that $\varphi(x) \in \mathbf{B}^{I}_{K_\infty}$ if and only if $x \in  \mathbf{B}^{I/p}_{K_\infty}$.
\end{proof}

Finally, we write out the explicit structures of some of these rings.

 \begin{prop} \cite[Prop. 2.2.10]{GP21} \label{corArlrk}
We have
\begin{eqnarray*}
 \mathbf{A}^{[0, +\infty]}_{K_\infty} &=& \mathbf{A}^+_{K_\infty}, \\
 \mathbf{A}_{K_\infty}^{[0, r_k]} &=& \mathbf{A}^+_{K_\infty} \{\frac{u^{ep^k}}{p}\}, \\
 \mathbf{A}_{K_\infty}^{[r_\ell, +\infty]} &=& \mathbf{A}^+_{K_\infty} \{ \frac{p}{u^{ep^\ell}}\}, \\
  \mathbf{A}_{K_\infty}^{[r_\ell, r_k]} &=& \mathbf{A}^+_{K_\infty} \{ \frac{p}{u^{ep^\ell}} , \frac{u^{ep^k}}{p}\}.
\end{eqnarray*}
\end{prop}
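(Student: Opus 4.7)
The plan is to combine the explicit description of the tilted rings $\wt{\mathbf{A}}^I$ from Lem.~\ref{lemwta} with the identification of $G_\infty$-invariants as Laurent-series rings from Lem.~\ref{lem inj}, and then verify each claimed equality by a direct series decomposition. The first equality $\mathbf{A}^{[0, +\infty]}_{K_\infty} = \mathbf{A}^+_{K_\infty}$ is immediate: by definition $\mathbf{A}^{[0, +\infty]} = \mathbf{A} \cap \wt{\mathbf{A}}^+$, so taking $G_\infty$-invariants gives $\mathbf{A}_{K_\infty} \cap \wt{\mathbf{A}}^+_{K_\infty} = \mathbf{A}^+_{K_\infty}$ via the embedding $u \mapsto [\underline{\pi}]$.

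In each of the remaining three cases, the containment $\supseteq$ is essentially automatic: each proposed RHS is $p$-adically complete, $G_\infty$-fixed, sits inside $\wt{\mathbf{A}}^I$ by Lem.~\ref{lemwta}, and lies in $\mathbf{A}$ because $\mathbf{A}$ is $p$-adically complete and contains both $\mathbf{A}^+_{K_\infty}$ and the adjoined elements $u^{ep^k}/p$, $p/u^{ep^\ell}$.

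For $\subseteq$, take first $I = [0, r_k]$. Via Lem.~\ref{lem inj} an element corresponds to a series $f(T) = \sum_{j \geq 0} a_j T^j$ with $a_j \in K_0$ satisfying $v_p(a_j) + j/(ep^k) \geq 0$ and tending to $+\infty$. Writing $j = ep^k n + s$ with $0 \leq s < ep^k$, we regroup
\[
f(u) = \sum_{n \geq 0} \Bigl(\sum_{s=0}^{ep^k-1} p^n a_{ep^k n + s}\, u^s\Bigr) \cdot (u^{ep^k}/p)^n.
\]
The estimate $v_p(a_{ep^k n + s}) \geq -n - s/(ep^k) > -(n+1)$ together with the fact that $v_p$ is integer-valued on $K_0$ forces $p^n a_{ep^k n + s} \in W(k)$, so the inner bracket lies in $\mathbf{A}^+_{K_\infty}$; holomorphy then yields its $p$-adic decay as $n \to \infty$. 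The cases $I = [r_\ell, +\infty]$ and $I = [r_\ell, r_k]$ are treated analogously by decomposing negative indices $J = ep^\ell n - s$ to extract factors of $(p/u^{ep^\ell})^n$, combining both expansions in the $[r_\ell, r_k]$ case.

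The main technical care arises for $I = [r_\ell, +\infty]$, since Lem.~\ref{lem inj} only identifies $\mathcal{A}^{[r_\ell,+\infty]}(K_0)$ with the \emph{localised} ring $\mathbf{A}^{[r_\ell, +\infty]}_{K_\infty}[1/u]$, so one must argue that a genuine element of $\mathbf{A}^{[r_\ell, +\infty]}_{K_\infty}$ admits a decomposition whose inner coefficients already lie in $\mathbf{A}^+_{K_\infty}$ without inverting $u$. This is handled by the same integer-rounding argument applied to negative $j$: the integrality constraint $v_p(a_{-J}) \geq J/(ep^\ell)$ upgrades to $v_p(a_{-J}) \geq \lceil J/(ep^\ell)\rceil = n$, exactly cancelling the $p^{-n}$ produced by writing $u^{-J} = u^s p^{-n} (p/u^{ep^\ell})^n$.
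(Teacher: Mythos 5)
The paper provides no proof of this proposition, deferring entirely to \cite[Prop.~2.2.10]{GP}, so there is no internal proof to compare against; I will assess your argument on its own terms.

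Your $\subseteq$ direction is correct and well-organized: the reduction to the Laurent-series picture of Lem.~\ref{lem inj}, the block regrouping in residue classes modulo $ep^k$ (resp.\ $ep^\ell$), the integer-rounding of $v_p$ on $K_0$, and the decay from holomorphy all go through, and you correctly flag and handle the subtlety for $[r_\ell,+\infty]$ where Lem.~\ref{lem inj} only produces the $u$-localized ring. The $\supseteq$ direction, however, contains a genuine error. The element $u^{ep^k}/p$ has $p$-adic valuation $-1$ in the complete discretely valued field $\mathbf{B}$ (recall $u$ is a unit of $\mathbf{A}$), so $u^{ep^k}/p \notin \mathbf{A}$ and the claim that the right-hand sides for $[0,r_k]$ and $[r_\ell,r_k]$ ``lie in $\mathbf{A}$'' is false. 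More importantly, membership in $\mathbf{A}$ is not even the relevant criterion for closed intervals: by definition $\mathbf{A}^{[r,s]} = \mathbf{B}^{[r,s]} \cap \wt{\mathbf{A}}^{[r,s]}$ with $\mathbf{B}^{[r,s]}$ the $W^{[r,s]}$-closure of $\mathbf{B}^{[r,+\infty]}$ inside $\wt{\mathbf{B}}^{[r,s]}$, whereas the ``intersect with $\mathbf{A}$'' description is reserved for the half-open case $\mathbf{A}^{[r,+\infty]} := \mathbf{A} \cap \wt{\mathbf{A}}^{[r,+\infty]}$.

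The repair is routine but should be made explicit. For $[0,r_k]$: observe $u^{ep^k}/p \in \mathbf{B}_{K_\infty} \cap \wt{\mathbf{B}}^+ = \mathbf{B}^{[0,+\infty]}_{K_\infty}$, so $\mathbf{A}^+_{K_\infty}[u^{ep^k}/p] \subset \mathbf{B}^{[0,+\infty]}$; passing to the $p$-adic completion --- which by Lem.~\ref{lemcompa} agrees with the $W^{[0,r_k]}$-adic one --- lands $\mathbf{A}^+_{K_\infty}\{u^{ep^k}/p\}$ inside $\mathbf{B}^{[0,r_k]}$, and intersecting with $\wt{\mathbf{A}}^{[0,r_k]} = \wt{\mathbf{A}}^+\{u^{ep^k}/p\}$ (Lem.~\ref{lemwta}) and taking $G_\infty$-invariants gives the containment in $\mathbf{A}^{[0,r_k]}_{K_\infty}$. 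The case $[r_\ell,r_k]$ is identical. Only for $[r_\ell,+\infty]$ is your argument correct as written, since there $p/u^{ep^\ell}$ does lie in $\mathbf{A}$ and $\mathbf{A}^{[r_\ell,+\infty]}$ is by definition $\mathbf{A}\cap\wt{\mathbf{A}}^{[r_\ell,+\infty]}$.
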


 \begin{rem}
 Note that  $\varphi: \wt{\mathbf{B}}^I \to \wt{\mathbf{B}}^{pI}$ is always a bijection; however the map $\varphi:  {\mathbf{B}}^I \to  {\mathbf{B}}^{pI}$ is only an injection. Indeed, $\mathbf{B}/\varphi(\mathbf{B})$ is a degree $p$ field extension. However, one can always find explicit expressions for $\mathbf B_{\Kinfty}^I$ using Lem \ref{lem laurent series}.
 \end{rem}

\subsection{Locally analytic vectors in rings}\label{subseclav}

Recall that we use the subscript $L$ to indicate the $\gal(\overline{K}/L)$-invariants. Recall for a $\hat{G}=\gal(L/K)$-representation $W$, we denote $W^{\tau\dla, \gamma=1}:= W^{\tau\dla} \cap W^{\gamma=1}$. The following theorem in \cite{GP21} is the main result concerning calculation of locally analytic vectors in period rings.

\begin{theorem} \cite[Lem. 3.4.2, Thm. 3.4.4]{GP21}
 \label{thmLAVmain}
\begin{enumerate}
\item \label{item-varphiu}
 For $I=[r_\ell, r_k]$ or $[0, r_k]$, and for each $n \geq 0$,  $\varphi^{-n}(u) \in (\wt{\mathbf{B}}^{I}_L)^{\tau_{n+k}\dan}$ (cf. Notation \ref{notataula}). So in particular, \[u \in (\wtB_L^{[0, r_0]})^{\tau\dan}.\]
  
\item \label{itemkinf}
For $I=[r_\ell, r_k]$ or $[0, r_k]$, we have $(\wt{\mathbf{A}}^I_{ L})^{\tau\dla, \gamma=1} = \cup_{m\geq 0} \varphi^{-m}(\mathbf{A}^{p^mI}_{K_\infty}).$
\item For any $r \geq 0$,
$(\wt{\mathbf{B}}_{L}^{[r, +\infty)})^{\tau\dpa, \gamma=1} =    \cup_{m\geq 0} \varphi^{-m}(\mathbf{B}_{ K_\infty}^{[p^mr, +\infty)}).$
\end{enumerate}
\end{theorem}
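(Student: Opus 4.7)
The three items are naturally proved in order: (1) is a concrete calculation for a single element, (2) is the main descent statement and absorbs most of the work, and (3) is essentially formal once (2) is in hand. The common engine is the explicit formula $\tau([\underline{\pi}]) = [\underline{\pi}][\underline{\varepsilon}]$ combined with the binomial series for $(1+x)^s$ and the valuations $W^I$ from Def.~\ref{defnewi}.

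\textbf{Proof sketch of (1).} Since $\tau(\pi_i)=\pi_i\mu_i$ for all $i$ (assuming $\Kinfty\cap\Kpinfty=K$; the other case only differs by harmless indexing), passing to tilts gives $\tau(\underline{\pi}^{1/p^n})=\underline{\pi}^{1/p^n}\underline{\varepsilon}^{1/p^n}$ and hence
\[
\tau^{p^{n+k}s}\bigl(\varphi^{-n}(u)\bigr)
\;=\;\varphi^{-n}(u)\cdot[\underline{\varepsilon}]^{p^k s}
\;=\;\varphi^{-n}(u)\sum_{j\geq 0}\binom{s}{j}\bigl([\underline{\varepsilon}]^{p^k}-1\bigr)^j,\qquad s\in\Zp.
\]
The right-hand side is an honest Mahler-series expansion, so analyticity in $\wt{\mathbf{B}}^I_L$ reduces to checking that $\bigl([\underline{\varepsilon}]^{p^k}-1\bigr)^j$ tends to zero in $W^I$. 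Since the leading Witt digit of $[\underline{\varepsilon}]^{p^k}-1$ has $v_{\wtE}$-valuation $p^{k+1}/(p-1)$ and $I$ extends no further than $s=r_k=(p-1)p^{k-1}$, the definition of $W^{[s,s]}$ yields $W^I([\underline{\varepsilon}]^{p^k}-1)\geq p/(p-1)>1/(p-1)$, which suffices by sub-multiplicativity of $W^I$ (Lem.~\ref{lem W}).

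\textbf{Proof sketch of (2).} For the inclusion $\supset$, Prop.~\ref{corArlrk} writes $\mathbf{A}^{p^m I}_{K_\infty}$ as a $W^{p^m I}$-completion of $\mathbf A^+_{K_\infty}=W(k)\llbracket u\rrbracket$ with two generators of the form $p/u^{\bullet}$ and $u^{\bullet}/p$; applying $\varphi^{-m}$ replaces $u$ by $\varphi^{-m}(u)$, which is $\tau_{m+k}$-analytic by (1). Since multiplication by fixed $\tau$-analytic vectors and $W^I$-completion preserve local analyticity, the whole of $\varphi^{-m}(\mathbf A^{p^m I}_{K_\infty})$ lies in $(\wt{\mathbf{A}}^I_L)^{\tau\dla}$; it is automatically $\gamma$-invariant since it is already fixed by all of $G_\infty$.

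For the inclusion $\subset$, start from $x\in(\wtA^I_L)^{\tau\dla,\gamma=1}$. The hypothesis $\gamma=1$ places $x\in\wtA^I_{K_\infty}$, and Prop.~\ref{lem decomp 3} lets me expand $x$ via Witt coordinates on $\wtE^+_{K_\infty}$, whose digits are power series in fractional powers $u^{1/p^m}$. The formula $\tau(u^{1/p^m})=u^{1/p^m}\cdot\varphi^{-m}([\underline{\varepsilon}])$ shows that $\tau^{p^N}$ admits a convergent binomial expansion on $u^{1/p^m}$ in $\wtB^I_L$ only for $m\leq N$: for $m>N$ the factor $\varphi^{-m}([\underline{\varepsilon}])^{p^N}-1$ does not have large enough $W^I$-valuation to produce convergence, which one checks by the same estimate as in (1) run in reverse. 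Hence if $x$ is $\tau^{p^N}$-analytic, only Witt digits lying in $\varphi^{-N}(\mathbf E_{K_\infty})$ can contribute, forcing $x\in\varphi^{-N}(\mathbf{A}^{p^N I}_{K_\infty})$.

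\textbf{Proof sketch of (3).} Since $\wtB^{[r,+\infty)}_L=\varprojlim_{s}\wtB^{[r,s]}_L$ as a Fréchet space, a pro-analytic vector is exactly a compatible family of locally analytic vectors in each slice. Applying (2) to each closed $[r,s]$ yields $x\in\varphi^{-m(s)}(\mathbf B^{p^{m(s)}[r,s]}_{K_\infty})$, and the nested inclusions $\mathbf B^{p^m[r,s_1]}_{K_\infty}\subset\mathbf B^{p^m[r,s_2]}_{K_\infty}$ for $s_1\leq s_2$ together with the pro-analyticity hypothesis let one take a common $m$, delivering $\subset$. The reverse inclusion is again immediate from (1).

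\textbf{Main obstacle.} The subtle step is the $\subset$ direction in (2): converting the \emph{abstract} condition ``$\tau$-locally analytic'' into the \emph{concrete} ramification bound ``bounded by $\varphi^{-N}(\mathbf A^{p^N I}_{K_\infty})$''. What makes this delicate is that $\wtA^+_{K_\infty}$ genuinely accommodates infinite ramification in $u$, so one must show the binomial expansion of $\tau^{p^N s}$ acting on $u^{1/p^m}$ actually fails to converge once $m>N$; conversely one must verify it does converge uniformly on the nose when $m\leq N$. All of this is controlled by the same $W^I$-estimate on $[\underline{\varepsilon}]^{p^{\bullet}}-1$ that drove item~(1), but now applied quantitatively in both directions.
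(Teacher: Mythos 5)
The paper cites this theorem from \cite[Lem. 3.4.2, Thm. 3.4.4]{GP} without reproducing the proof (the remark following it only corrects some minor indexing errors in the cited proof), so there is no argument in the present paper against which your proposal can be matched. Read on its own, your sketch has real gaps precisely at the point you flag as the main obstacle.

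Item (1) and the $\supset$ inclusions are sound in spirit. But the $\subset$ direction of (2) is not actually closed by what you write. The Witt digits of $x\in\wt{\mathbf{A}}^I_{K_\infty}$ live in $R^{G_\infty}=\mathcal O_{\widehat{K_\infty}^\flat}$, the completion of $\bigcup_m\varphi^{-m}(k[\![\upi]\!])$; they are not stratified by a single ramification level, and a general element is a convergent sum with nonzero components at every level $m$. Your step ``if $x$ is $\tau^{p^N}$-analytic, only Witt digits lying in $\varphi^{-N}(\mathbf{E}_{K_\infty})$ can contribute'' is precisely the content of the theorem and is asserted rather than proved: nothing in the sketch rules out that a suitably chosen infinite tail of high-level terms might combine, by cancellation in the series expansion of the orbit, to yield an analytic orbit even though each monomial $u^{1/p^m}$ with $m$ large individually fails to. Closing this requires a genuine linear-independence or decomposition mechanism --- a Tate--Sen normalized trace, or a spectral analysis of $\tau-1$ on a complement of $\wt{\mathbf{A}}^I_{K_\infty}$ inside $\wt{\mathbf{A}}^I_L$ --- not just the single-monomial estimate of (1) run in reverse.

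Item (3) has a separate issue: pro-analyticity means locally analytic in each Fr\'echet slice $[r,s]$, but does not by itself bound the analyticity radius uniformly in $s$; applying (2) slice by slice gives an $m(s)$ that may grow with $s$, and the ``take a common $m$'' step is precisely where the work lies (one typically uses the $\varphi$-structure to regularize). Also, the inclusion you invoke, $\mathbf{B}^{p^m[r,s_1]}_{K_\infty}\subset\mathbf{B}^{p^m[r,s_2]}_{K_\infty}$ for $s_1\le s_2$, is reversed: enlarging the interval strengthens the valuation condition $W^I$, so the ring shrinks, not grows.
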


\begin{rem}
Let us point out some (fortunately very minor) errors in the proof of the theorem above, all relating to the ``$\tau$-issue" in Notation \ref{nota hatG}.
\begin{enumerate}
\item Firstly, in \cite[Notation 3.2.1]{GP21}, we should always \emph{fix} the $\tau$ as we now do in Notation \ref{nota hatG}; we are implicitly using the same $\tau$ in \cite{GP21}, but only when $\Kinfty \cap \Kpinfty=K$. 

\item The problem with this $\tau$-issue in concrete computations is that we have
\begin{equation} 
\begin{cases} 
\tau(u)=u[\underline \varepsilon], &  \text{if }  \Kinfty \cap \Kpinfty=K; \\
\tau(u)=u[\underline \varepsilon]^2, & \text{if }  \Kinfty \cap \Kpinfty=K(\pi_1).
\end{cases}
\end{equation}

\item In \cite[Lem. 3.4.2]{GP21} and \cite[Thm. 3.4.4]{GP21}, if $\Kinfty \cap \Kpinfty=K(\pi_1)$, then we should change \emph{some} of the ``$a$" there to ``$2a$", in the equation above (3.4.2),  in (3.4.3), and in the equation below (3.4.8); this is effectively because we now have $\tau(u)=u(1+v)^2$. The changes to ``$2a$" only make the relevant convergence even easier, hence do not change the final results.

\end{enumerate}
\end{rem}

\begin{defn} \label{defrigring}
\begin{enumerate}
\item Define the following rings (which are LB spaces):
\[
\wt{\mathbf{B}}^{\dagger}: = \cup_{r \geq 0} \wt{\mathbf{B}}^{[r, +\infty]},
\quad  \mathbf{B}^{\dagger}: = \cup_{r \geq 0} \mathbf{B}^{[r, +\infty]},
\quad \mathbf{B}_{  K_\infty}^{\dagger}: = \cup_{r \geq 0} \mathbf{B}_{K_\infty}^{[r, +\infty]}.
\]

\item Define the following rings (which are LF spaces):
\[ \wt{\mathbf{B}}_{  \rig}^{\dagger}: = \cup_{r \geq 0} \wt{\mathbf{B}}^{[r, +\infty)},
\quad  \mathbf{B}_{  \rig}^{\dagger}: = \cup_{r \geq 0} \mathbf{B}^{[r, +\infty)},
\quad \mathbf{B}_{\rig, K_\infty}^{\dagger}: = \cup_{r \geq 0} \mathbf{B}_{K_\infty}^{[r, +\infty)}.
\]

\item Define the following notations:
\[ \wtB^+_\rig:=\wtB^{[0, +\infty)} , \quad \B^+_{\rig, \Kinfty}:=\B_{ \Kinfty}^{[0,+\infty)}. \]
\end{enumerate}
\end{defn}
Note that $\mathbf{B}_{  K_\infty}^{\dagger}$ (resp. $\mathbf{B}_{\rig, K_\infty}^{\dagger}$) is precisely the explicitly defined ring in \eqref{eqbkinfd} (resp. \eqref{eqbrig}).

\begin{cor} \label{cor rig la} 
We have
\begin{eqnarray*}
(\wt{\mathbf{B}}_{  \rig, L}^{\dagger})^{\tau\dpa, \gamma=1} &=& \cup_{m\geq 0} \varphi^{-m}({\mathbf{B}}_{  \rig, K_\infty}^{\dagger});\\
(\wt{\mathbf{B}}_{  \rig, L}^{+})^{\tau\dpa, \gamma=1} &=& \cup_{m\geq 0} \varphi^{-m}({\mathbf{B}}_{  \rig, K_\infty}^{+}).
\end{eqnarray*}
\end{cor}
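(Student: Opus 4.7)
The plan is to deduce this from Theorem \ref{thmLAVmain}(3) essentially by unwinding definitions and taking unions.

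\textbf{First equation.} By Def. \ref{defrigring}, $\wt{\mathbf{B}}_{\rig, L}^{\dagger} = \cup_{r \geq 0} \wt{\mathbf{B}}_L^{[r, +\infty)}$ as an LF space (where the subscript $L$ denotes $G_L$-invariants, which commute with the union). A vector in this LF space is pro-analytic for the $\tau$-action precisely when it lies in some $\wt{\mathbf{B}}_L^{[r, +\infty)}$ and is pro-analytic there for the Fr\'echet $\tau$-action. Hence
\[
(\wt{\mathbf{B}}_{\rig, L}^{\dagger})^{\tau\dpa, \gamma=1} \;=\; \bigcup_{r \geq 0} (\wt{\mathbf{B}}_L^{[r, +\infty)})^{\tau\dpa, \gamma=1}.
\]
Applying Thm.~\ref{thmLAVmain}(3) to each $r$, the right-hand side becomes
\[
\bigcup_{r \geq 0}\ \bigcup_{m \geq 0} \varphi^{-m}\bigl(\mathbf{B}_{K_\infty}^{[p^m r, +\infty)}\bigr) \;=\; \bigcup_{m \geq 0} \varphi^{-m}\Bigl(\bigcup_{r \geq 0} \mathbf{B}_{K_\infty}^{[p^m r, +\infty)}\Bigr) \;=\; \bigcup_{m \geq 0} \varphi^{-m}\bigl(\mathbf{B}_{\rig, K_\infty}^{\dagger}\bigr),
\]
where in the interchange of unions we use that the sets $\varphi^{-m}(\mathbf{B}_{K_\infty}^{[p^m r, +\infty)})$ are increasing in $r$ for each fixed $m$, and that as $r$ ranges over $[0,+\infty)$ so does $p^m r$.

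\textbf{Second equation.} Recall $\wt{\mathbf{B}}_{\rig}^{+} := \wt{\mathbf{B}}^{[0, +\infty)}$ and $\mathbf{B}_{\rig, K_\infty}^{+} := \mathbf{B}_{K_\infty}^{[0, +\infty)}$. Thus this is the single case $r = 0$ of Thm.~\ref{thmLAVmain}(3):
\[
(\wt{\mathbf{B}}_{\rig, L}^{+})^{\tau\dpa, \gamma=1} \;=\; (\wt{\mathbf{B}}_{L}^{[0, +\infty)})^{\tau\dpa, \gamma=1} \;=\; \bigcup_{m \geq 0} \varphi^{-m}\bigl(\mathbf{B}_{K_\infty}^{[0, +\infty)}\bigr) \;=\; \bigcup_{m \geq 0} \varphi^{-m}\bigl(\mathbf{B}_{\rig, K_\infty}^{+}\bigr).
\]

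\textbf{Expected obstacles.} There is no real obstacle: the corollary is a packaging of Thm.~\ref{thmLAVmain}(3). The only tiny point of care is that on the LF space $\wt{\mathbf{B}}_{\rig, L}^{\dagger}$ we are interpreting ``pro-analytic'' in the sense of \cite[\S 2]{Ber16}, so that $W^{\tau\dpa}$ commutes with the directed union $W = \cup_r W_r$ of Fr\'echet pieces; this is exactly what lets us pull the $\cup_{r}$ out in the first equation.
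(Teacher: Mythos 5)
Your proof is correct and is precisely the argument the paper leaves implicit when stating this as a corollary of Theorem~\ref{thmLAVmain}(3): one passes to pro-analytic vectors over the LF-space by taking the union over the Fr\'echet pieces (following \cite[\S 2]{Ber16}), applies Thm.~\ref{thmLAVmain}(3) to each piece, and interchanges the two unions. The second equation is indeed just the $r=0$ case of Thm.~\ref{thmLAVmain}(3) after unwinding Def.~\ref{defrigring}.
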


\subsection{The element $\mathfrak{t}$} \label{subsecfrakt}
In this subsection, we   study  a locally analytic element $\mathfrak{t}$, which plays a useful role in the definition of our monodromy operators.

Recall we defined the element $[\underline \varepsilon] \in \wt{\mathbf{A}}^+$ in Notation \ref{114}.
Let $t=\log([\underline \varepsilon])  \in \Bcris^+$ be the usual element.
Define the element
\[
\lambda :=\prod_{n \geq 0} (\varphi^n(\frac{E(u)}{E(0)})) \in \mathbf{B}_{K_\infty}^{[0,+\infty)} \subset \Bcris^+.\]
The equation $\varphi(x)=\frac{pE(u)}{E(0)} \cdot x$ over $\wt{\mathbf{A}}^+$ has solutions in $\wt{\mathbf{A}}^+ \backslash p\wt{\mathbf{A}}^+$, which is unique up to units in $\Zp$ (cf. the paragraph above \cite[Thm. 3.2.2]{Liu07}).
By the discussion in \cite[Example 5.3.3]{Liu07}, there exists a \emph{unique} solution $\mathfrak{t} \in \wt{\mathbf{A}}^+$ such that
\begin{equation}\label{eqfrakt}
{p\lambda} \mathfrak{t} = {t},
\end{equation}
which  holds as an equation in $\Bcris^+.$
Since $\mathfrak{t} \in \wt{\mathbf{A}}^+ \subset \wt{\mathbf{B}}_L^\dagger$, and since $\wt{\mathbf{B}}_L^\dagger$ is a field (\cite[Prop. 5.12]{Col08}), there exists some $r(\mathfrak{t})>0$ such that $1/\mathfrak{t} \in \wt{\mathbf{B}}_L^{[r(\mathfrak{t}), +\infty]}$.


\begin{lemma} \label{lem b}
\cite[Lem. 5.1.1]{GP21}
We have $\mathfrak{t}, 1/\mathfrak{t} \in (\wt{\mathbf{B}}^{[r(\mathfrak{t}), +\infty)}_{ L})^{\hat{G}\dpa}$.
\end{lemma}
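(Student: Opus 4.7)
The plan is to leverage the defining relation $p\lambda \mathfrak{t} = t$ together with known pro-analyticity of the cyclotomic element $t=\log[\underline\varepsilon]$ and that of the infinite product $\lambda$. First I would verify pro-analyticity of $t$. The element $[\underline\varepsilon]$ is fixed by $\tau\in \gal(L/K_{p^\infty})$, while for $\gamma$ in a small open subgroup of $\gal(L/K_\infty)$ one has the binomial expansion
\[
\gamma([\underline\varepsilon]) \;=\; (1+v)^{\chi_p(\gamma)} \;=\; \sum_{k \geq 0}\binom{\chi_p(\gamma)-1}{k} v^{k},\qquad v:=[\underline\varepsilon]-1,
\]
which converges in $\wtA^+\subset \wtB^{[r,s]}_L$ because $v\in W(\mathfrak{m}_R)$. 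This shows $[\underline\varepsilon]$ is $\hat{G}$-analytic on a small subgroup. Applying the logarithm series (which converges in $\wtB^{[r,+\infty)}_L$ for $r$ large enough) then gives $t\in(\wtB^{[r,+\infty)}_L)^{\hat G\text{-pa}}$.

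Second, I would show that $\lambda=\prod_{n\geq 0}\varphi^n(E(u)/E(0))$ is pro-analytic. Since $\lambda$ already lies in $\mathbf{B}_{K_\infty}^{[0,+\infty)}$, it is fixed by $G_\infty$, and in particular by $\gal(L/K_\infty)\subset \hat G$, so only the $\tau$-direction needs to be checked. By Theorem 2.5.1(1), $u\in (\wtB_L^{[0,r_0]})^{\tau\text{-an}}$, and more generally $\varphi^{-n}(u)$ is analytic on the subgroup generated by $\tau^{p^{n+k}}$ in $\wtB_L^I$ for $I=[0,r_k]$. Each partial product $\lambda_N:=\prod_{n=0}^{N-1}\varphi^n(E(u)/E(0))$ is therefore a polynomial in locally analytic elements and so is analytic on some open subgroup; on each Banach piece $\wtB^{[r,s]}_L$, the tail $\lambda/\lambda_N$ tends to $1$ in the valuation $W^{[r,s]}$, so passing to the Fréchet limit yields $\lambda\in(\wtB^{[0,+\infty)}_L)^{\hat G\text{-pa}}$.

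Finally, combining the two, for any $g\in\hat G$ we compute
\[
g(\mathfrak{t}) \;=\; \frac{g(t)}{p\,g(\lambda)} \;=\; \chi_p(g)\,\frac{\lambda}{g(\lambda)}\,\mathfrak{t},
\]
which expresses $g(\mathfrak{t})$ as a unit multiple of $\mathfrak{t}$. In the ring $\wtB^{[r(\mathfrak{t}),+\infty)}_L$ the element $\lambda$ is a unit (since by construction both $\mathfrak{t}$ and $t$ are), and the map $g\mapsto \lambda/g(\lambda)$ is pro-analytic because pro-analyticity is preserved by inversion of units (it is given by the convergent geometric series $1/(1+(g(\lambda)-\lambda)/\lambda)$ once $g$ is close enough to the identity). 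Hence $\mathfrak{t}\in(\wtB^{[r(\mathfrak{t}),+\infty)}_L)^{\hat G\text{-pa}}$, and a completely symmetric computation with $g(1/\mathfrak{t})=g(\lambda)/(\chi_p(g)\lambda)\cdot(1/\mathfrak{t})$ yields the same conclusion for $1/\mathfrak{t}$. The main technical obstacle I expect is the middle step: tracking the shrinking subgroups $H_N$ on which the partial products $\lambda_N$ are analytic and checking that the convergence $\lambda_N\to\lambda$ is uniform enough in each Banach piece to survive in the pro-analytic category, a check that ultimately relies on the elementary estimate $W^{[r,s]}(\varphi^n(E(u)/E(0))-1)\to\infty$ together with sub-multiplicativity of $W^{[r,s]}$ from Lemma 2.3.3.
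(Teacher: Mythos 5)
The paper itself gives no proof here (it cites \cite[Lem.~5.1.1]{GP}), so I evaluate your attempt on its own terms. Your first two steps are fine in outline — and step one can be shortened considerably, since $g(t)=\chi_p(g)\,t$ directly shows $t$ spans a one-dimensional locally analytic subrepresentation, with no need for binomial series or the logarithm map — but the final step, which carries the real content, breaks down. You assert that $t$ and $\lambda$ are units in $\wt{\mathbf{B}}^{[r(\mathfrak{t}),+\infty)}_L$. They are not. For any $n$ with $r_n\geq r(\mathfrak{t})$, the de Rham specialization $\theta\circ\iota_n$ is defined on the Banach piece $\wt{\mathbf{B}}^{[r(\mathfrak{t}),s]}_L$ once $s\geq r_n$, and it kills both $\lambda$ (via the factor $\varphi^n(E(u))$) and $t$ (since $\theta(\varphi^{-n}(t))=\theta(t/p^n)=0$). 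This is precisely the reason $\mathfrak{t}$ is introduced: dividing $t$ by $p\lambda$ cancels these common zeros and yields a unit, whereas $t$ and $\lambda$ separately fail to be units in any Fr\'echet ring $\wt{\mathbf{B}}^{[r,+\infty)}_L$ that sees even one $\iota_n$. In particular $1/\lambda\notin\wt{\mathbf{B}}^{[r(\mathfrak{t}),+\infty)}_L$, so the geometric series for $1/(1+(g(\lambda)-\lambda)/\lambda)$ is not available.

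Beyond the invertibility issue, the step is circular. Since $1/\mathfrak{t}$ does lie in $\wt{\mathbf{B}}^{[r(\mathfrak{t}),+\infty)}_L$, the identity $\lambda/g(\lambda)=g(\mathfrak{t})/(\chi_p(g)\,\mathfrak{t})$ holds there, and hence proving that $g\mapsto \lambda/g(\lambda)$ is a pro-analytic orbit map is, up to multiplication by the constant unit $1/\mathfrak{t}$ and the analytic character $\chi_p$, exactly the statement to be proved. Pro-analyticity of $t$ and of $\lambda$ cannot be formally combined to give pro-analyticity of $\mathfrak{t}=t/(p\lambda)$, because the division is by a non-unit; the quotient erases zeros, and pro-analyticity is not preserved by such an operation. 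Any correct argument must control the orbit map of $\mathfrak{t}$ (equivalently of $1/\mathfrak{t}$) directly, taking the cancellation of the de Rham zeros into account; that is the missing idea in your proposal.
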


\subsection{Some ``log"-rings} \label{sslog}
In this subsection, we introduce some log rings (corresponding to the ``rig"-rings in Def. \ref{defrigring}). We first introduce a convention often used here.

\begin{convention}\label{convtop}
Let $A$ be a topological ring, and let $Y$ be a variable. Then we always equip $A[Y]$ with the inductive topology using $A[Y]: =\cup_{n \geq 0} (\oplus_{i=0}^n A\cdot Y^i)$ where each $A\cdot Y^i$ has the topology induced from that on $A$.
\end{convention}

Choose some $\underline{p}:=(p_0, p_1, \cdots, p_n, \cdots) \in \wt{\mathbf E}^+$ where $p_0=p$ and $p_{n+1}^p=p_n$ for all $n \geq 0$.
Let $X$ be a formal variable, and define
\[\wtb_{\log}^\dagger: = \wt{\mathbf{B}}^{ \dagger }_{\rig}[X].\]
Extend the $\varphi$-operator and $G_K$-action on $\wt{\mathbf{B}}_\rig^{ \dagger }$ to $\wtb_{\log}^\dagger$  such that $\varphi(X)=pX$ and $g(X)=X+c(g)t$ where $c(g)$ is the co-cycle such that $g(\underline p)=\underline p \cdot \underline \varepsilon^{c(g)}$; define a  $\wt{\mathbf{B}}_\rig^{ \dagger }$-derivation $N$ on $\wtb_{\log}^\dagger$ such that $N(X)=e$ (cf. Rem. \ref{remNlu} for this convention). 
Let
\[\wtb_{\log}^+: = \wt{\mathbf{B}}^+_{\rig}[X];\]
it is a subring of $\wtb_{\log}^\dagger$ which is $(\varphi, G_K, N)$-stable.


\begin{prop} \cite[Prop. 5.15]{Col08} \label{proplog}
With respect to the choice $\underline{p}$,
there exists a   $\varphi$- and $G_K$-equivariant map 
\[\log: (\wt{\mathbf{B}}^{\dagger } )^\ast \to \wt{\mathbf{B}}^{ \dagger }_{\rig}[X]\]
which is uniquely determined by the following properties:
\begin{enumerate}
\item $\log xy=\log x +\log y;$
\item $\log x =\sum_{n=1}^{+\infty} \frac{-(1-x)^n}{n}$ if the series converges;
\item $\log[a]=0$ if $a \in \overline k$;
\item $\log p=0$ and $\log[\underline p]=X.$
\end{enumerate}
For this log map, if $x =\sum_{k=k_0}^{+\infty} p^k[x_k]$ with $x_{k_0}\neq 0$, then 
\begin{equation} \label{eqnlogx}
N(\log x)=e\cdot v_{\wtE}(x_{k_0}).
\end{equation}
\end{prop}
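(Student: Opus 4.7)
The plan is to construct $\log$ by decomposing an arbitrary unit into pieces on which the four listed normalizations force a unique value; the uniqueness assertion then becomes essentially tautological, so the task is really one of existence, well-definedness, and verification of equivariance and the derivative formula.

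First I would set up a canonical multiplicative decomposition of a unit $x \in (\wtb^{\dagger})^*$. After extracting $p^{v_p(x)}$, we may assume $x \in \wta^{\dagger}$ with $v_p(x) = 0$. Writing $x = \sum_{k \geq 0} p^k[x_k]$, the leading term $x_0 \in \wt{\mathbf E}^*$ admits the factorization $x_0 = \underline{p}^{\alpha}\cdot \overline{c} \cdot (1+m)$ with $\alpha = v_{\wtE}(x_0)$ (normalized so $v_{\wtE}(\underline p) = 1$), $\overline{c} \in \overline{k}^*$, and $m \in \mathfrak{m}_{\wt{\mathbf E}^+}$; using that $\wt{\mathbf E}$ is perfect, fractional powers of $[\underline p]$ are accommodated by passing to $[\underline p^{1/p^n}]$. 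This yields a factorization
\[
x \;=\; p^{v_p(x)} \cdot [\underline p]^{\alpha} \cdot [\overline c] \cdot u,
\]
with $u \equiv 1$ modulo the ideal generated by $p$ and $[m]$. The axioms then compel
\[
\log x \;:=\; \alpha\, X \;+\; \sum_{n \geq 1} \frac{-(1-u)^n}{n},
\]
and the crucial analytic input is that, although the series need not converge in $\wtb^{\dagger}$, it does converge in $\wtb_{\rig}^{\dagger}$ once one controls $u$ in the valuations $W^{[r,s]}$; this is where the passage from $\wtb^{\dagger}$ to $\wtb_{\rig}^{\dagger}[X]$ becomes essential.

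Next I would verify that the definition is independent of the auxiliary choices (different factorizations differing by Teichmüller lifts of $\overline k^*$ and by units already accounted for in the series), and that the formula is multiplicative: this reduces to checking compatibility on each of the three factor types, with the series-defined piece being automatic. Equivariance is then checked on generators. For $\varphi$: from $\varphi([\underline p]) = [\underline p]^p$ and $\varphi(X) = pX$ one gets $\varphi(\log[\underline p]) = pX = \log \varphi([\underline p])$, and the series-defined part is $\varphi$-equivariant by continuity. For $G_K$: from $g(\underline p) = \underline p \cdot \underline{\varepsilon}^{c(g)}$ we have $\log g([\underline p]) = X + c(g)\log[\underline{\varepsilon}]$; since $[\underline{\varepsilon}] = 1 + ([\underline{\varepsilon}] - 1)$ falls into the ``close to $1$'' regime, its log is computed by the convergent series and equals $t$, matching $g(X) = X + c(g)t$.

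For the derivative formula \eqref{eqnlogx}, note that $N$ vanishes on $\wtb_{\rig}^{\dagger}$ (it is a $\wtb_{\rig}^{\dagger}$-derivation with $N(X) = e$), so $N$ annihilates both $\log[\overline c]$ (which is $0$) and the series $\log u$ (which lies in $\wtb_{\rig}^{\dagger}$); the only surviving contribution comes from the term $\alpha X$, giving $N(\log x) = \alpha e = e\cdot v_{\wtE}(x_{k_0})$ after shifting indices when $k_0 = v_p(x) \neq 0$. The main obstacle I expect is the analytic step: verifying convergence of $\log u$ in the Fr\'echet topology on $\wtb_{\rig}^{\dagger}$ (rather than just in $\wtb^{\dagger}$) using the valuations $W^{[r,s]}$ of Definition~\ref{defnew}, and ensuring the decomposition can be carried out so that $u$ is genuinely ``close to $1$'' in a uniform way across all the annuli $[r,s]$; once this is in hand, the rest of the verification is a formal bookkeeping on the three factor types.
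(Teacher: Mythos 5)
Your proposal matches the paper's own sketch, which constructs $\log$ via the same factorization $x=p^{v_p(x)}[\underline p^\alpha][a]y$ with $v_{\wt{\mathbf E}}(\overline y -1)>0$, sets $\log x = \alpha X + \log y$, invokes Colmez's \cite[Lem.~5.14(2)]{Col08} for the convergence of $\log y$ in $\wt{\mathbf B}^\dagger_{\rig}$, and reads the derivative formula off the coefficient of $X$ (the paper treats everything but the derivative formula as a direct citation of \cite[Prop.~5.15]{Col08}, the formula being a sign/normalization bookkeeping since $N(X)=e$ here versus $N(X)=-1$ in Colmez). The only cosmetic difference is that the paper first reduces, via property (1), to the case $\alpha\in\mathbb{Z}^{\geq 0}$ rather than invoking fractional Teichm\"uller powers of $[\underline p]$ as you do.
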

\begin{proof}
This is exactly the same as \cite[Prop. 5.15]{Col08} except the equation in \eqref{eqnlogx}: in \cite{Col08}, it is ``$N(\log x)=-v_{\wtE}(x_{k_0}).$" Our Eqn.  \eqref{eqnlogx} matches with the choice $N(X)=e$ (see Rem. \ref{remNlu} for the reason of this choice):  in \cite{Col08}, it is ``$N(X)=-1.$".
 
Here, let us sketch the construction of this log map.
By (4), it suffices to consider $x \in (\wt{\mathbf{B}}^{\dagger } )^\ast$ such that $v_p(x)=0$. 
Then by (1), it suffices to consider the case when  $v_{\wtE}(\overline x) \in \mathbb Z^{\geq 0}$; in this case, it can be uniquely written as
\begin{equation}\label{eqxp}
x=[\underline{p}^\alpha][a]y, \quad \text {where } \alpha \in \mathbb  Z^{\geq 0}, a \in \overline k, y \in \wt{\mathbf A} \cap \wt{\B}^\dagger, \text{ such that } v_{\wt{\mathbf E}}(\overline{y}-1)>0;
\end{equation}    then we can define
 \[\log (x)=\alpha X +\log (y)\] where we have $\log (y) \in \wt{\B}^\dagger_\rig$ by  \cite[Lem. 5.14(2)]{Col08}.
\end{proof}

\begin{rem} \label{remNlu}
Note that the $N$ operator here equals to ``$-e \cdot N$" in \cite{Col08}. We make this choice so that we have $N(\log u)=1$. This choice is the same as that in \cite{Kis06}; in particular, we can remove minus signs everywhere for this $N$-operator. This is the ``good" choice for us since $\log u$  is important in the study of $(\varphi, \tau)$-modules; e.g., cf. Lem \ref{lem266} below.
See also Rem. \ref{compaminus} later where we make some choice to remove minus signs for the $N_\nabla$-operator.
\end{rem}

\begin{lemma}
\cite[Lem. 5.14(1)]{Col08} \label{lem514}
Suppose $x =\sum_{k=0}^{+\infty} p^k[x_k]$ is a unit in $\wtA^+$  such that $v_{\wt{\mathbf E}}(x_0 -1)>0$, then $\sum_{n=1}^{+\infty} \frac{-(1-x)^n}{n}$ converges inside $ \wt{\mathbf{B}}^{+}_{\rig}$.
\end{lemma}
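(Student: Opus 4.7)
The plan is to reduce the convergence of $\sum_{n \geq 1} -(1-x)^n/n$ in $\wtB^+_{\rig}$ to convergence in each piece $\wtB^{[0,s]}$ of the Fr\'echet decomposition from Def.~\ref{defnewi}(2). Since $\wtB^{[0,s]}$ is by definition complete for the valuation $W^{[0,s]} = W^{[s,s]}$ (using Lem.~\ref{lem W}(3)(b) to identify the two), and since the multiplicativity of $W^{[s,s]}$ (Lem.~\ref{lem W}(2)) gives
\[
W^{[s,s]}\!\left(\tfrac{-(1-x)^n}{n}\right) \;=\; n\cdot W^{[s,s]}(1-x) - v_p(n),
\]
everything reduces to showing $W^{[s,s]}(1-x) > 0$ for every $s \in \mathbb{Z}^{>0}[1/p]$.

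For the positivity estimate, I would apply the explicit formula from Def.~\ref{defnew}: writing $1-x = \sum_{k \geq 0} p^k [y_k]$ as a Teichm\"uller expansion in $\wtA^+$,
\[
W^{[s,s]}(1-x) \;=\; \inf_{k \geq 0}\Bigl\{ k + \tfrac{p-1}{ps}\,v_{\wtE}(y_k)\Bigr\}.
\]
Reducing modulo $p$, the $0$-th Teichm\"uller coefficient of $1-x$ is $y_0 = 1 - x_0$, so $v_{\wtE}(y_0) > 0$ by hypothesis. All higher terms satisfy $v_{\wtE}(y_k)\geq 0$ (since $y_k \in \wtE^+$), so they contribute at least $k \geq 1$. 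Hence
\[
W^{[s,s]}(1-x) \;\geq\; \min\!\Bigl(\tfrac{p-1}{ps}\,v_{\wtE}(1-x_0),\ 1\Bigr) \;>\; 0.
\]

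Combining these two steps, the general term of the log-series has $W^{[s,s]}$-valuation tending to $+\infty$ with $n$, so the series converges in $\wtB^{[0,s]}$. Since this holds for every $s$ in a cofinal sequence, the series converges in the Fr\'echet ring $\wtB^+_{\rig} = \cap_s \wtB^{[0,s]}$, as desired.

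The only real content is the valuation estimate $W^{[s,s]}(1-x) > 0$, and the main subtlety to watch is conflating ``$y_0 = $ reduction of $1-x$ mod $p$'' with the Witt-coordinate picture; once one recognizes that the Teichm\"uller coefficient $y_0$ is simply $1 - x_0$ with positive $v_{\wtE}$-valuation, the rest is a direct application of the formulas from \S\ref{subswtB}. I would not expect a genuine obstacle here; the argument is essentially the standard convergence proof for $\log(1+z)$ in a Fr\'echet ring of analytic functions, transported to the valuation framework of Def.~\ref{defnew}.
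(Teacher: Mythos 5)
Lemma~\ref{lem514} is quoted directly from \cite[Lem.~5.14(1)]{Col08} and the paper supplies no proof of its own, so there is nothing internal to compare against; your argument is, however, correct and is the standard valuation estimate one would reconstruct from \S\ref{subswtB}. The three ingredients you use are exactly the right ones: the identification $W^{[0,s]}=W^{[s,s]}$ from Lem.~\ref{lem W}(3)(b), the multiplicativity of $W^{[s,s]}$, and the observation that the zeroth Teichm\"uller coordinate $y_0$ of $1-x$ is $1-x_0$ (because reduction mod $p$ is a ring homomorphism onto $\wt{\mathbf{E}}^+$), giving $W^{[s,s]}(1-x)\geq \min\bigl(\tfrac{p-1}{ps}\,v_{\wt{\mathbf{E}}}(1-x_0),\,1\bigr)>0$; since $v_p(n)$ grows only logarithmically, $W^{[s,s]}\bigl(-(1-x)^n/n\bigr)\to+\infty$, and you get convergence in each $\wt{\mathbf{B}}^{[0,s]}$ and hence in the Fr\'echet ring $\wt{\mathbf{B}}^+_{\rig}=\bigcap_s\wt{\mathbf{B}}^{[0,s]}$.

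One small imprecision worth tightening: you say $\wt{\mathbf{B}}^{[0,s]}$ is ``by definition complete,'' but Def.~\ref{defnewi}(1) only declares $\wt{\mathbf{A}}^{[0,s]}$ to be complete; $\wt{\mathbf{B}}^{[0,s]}=\wt{\mathbf{A}}^{[0,s]}[1/p]$ is a union of shifts $p^{-N}\wt{\mathbf{A}}^{[0,s]}$. For your series this is harmless --- once $W^{[s,s]}$ of the general term tends to $+\infty$, the terms (and hence partial sums) are bounded below in valuation, so by Lem.~\ref{lemcompa} they all live in a single complete shift $p^{-N}\wt{\mathbf{A}}^{[0,s]}$, where the Cauchy argument closes --- but this is precisely the step your phrasing elides, and it deserves a sentence. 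With that fixed, the proof is complete and, as far as one can tell without the source in hand, is essentially Colmez's own.
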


\begin{lemma} \label{lem274}
Let  $\beta   \in \wtE^+$ such that $\alpha:=v_{\wt{\mathbf E}}(\beta) \neq 0$,  and let $x:=[\beta] \in \wtA^+$ be the Teichm\"uller lift, then we have
\begin{eqnarray*}
\wt{\mathbf{B}}^{ \dagger }_{\log}&=&\wt{\mathbf{B}}^{ \dagger }_{\rig}[\log x];  \\
\wt{\mathbf{B}}^{+}_{\log}&=&\wt{\mathbf{B}}^{ +}_{\rig}[\log x].
\end{eqnarray*}
\end{lemma}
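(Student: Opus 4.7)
The plan is to reduce to a single generation statement: since $\wtb_{\log}^\dagger=\wtb_\rig^\dagger[X]$ (resp.\ $\wtb_{\log}^+=\wtb_\rig^+[X]$) by definition and $\log x\in\wtb_{\log}^+\subset\wtb_{\log}^\dagger$ by construction, one inclusion is immediate, and the task is to exhibit $X$ as a polynomial in $\log x$ with coefficients in the appropriate rig-ring. The key identity to produce is
\begin{equation*}
\log x \;=\; \alpha X + z, \qquad z\in\wtb_\rig^+,
\end{equation*}
after which, since $\alpha\in\mathbb{Q}^{\times}\subset\mathbb{Q}_p^{\times}\subset\wtb_\rig^+$ is a unit, I can solve $X=\alpha^{-1}(\log x-z)$ and conclude $\wtb_\rig^+[X]\subset\wtb_\rig^+[\log x]$; base-changing along $\wtb_\rig^+\hookrightarrow\wtb_\rig^\dagger$ handles the $\dagger$-version simultaneously.

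To produce that identity, I would follow the decomposition used in the proof of Prop.~\ref{proplog} (here cast as Prop.~2.7.1 above). Since $\wtE^+$ is an algebraically closed perfectoid valuation ring (the tilt of $\mathcal O_{C_p}$), I can write $\beta=\underline p^{\alpha}\cdot a\cdot y'$ with $a\in\overline k^\times$ (lifted to a Teichm\"uller representative in $\wtE^+$) and $y'\in\wtE^+$ satisfying $v_{\wtE}(y'-1)>0$; the existence of $\underline p^{\alpha}$ for $\alpha\in\mathbb Q^{>0}$ is guaranteed by the perfectness and algebraic closedness of $\wtE^+$. Multiplicativity of the Teichm\"uller map and additivity of $\log$ give
\begin{equation*}
\log[\beta] \;=\; \alpha\log[\underline p] + \log[a] + \log[y'] \;=\; \alpha X + 0 + \log[y'],
\end{equation*}
using properties (1), (3), (4) of Prop.~\ref{proplog}. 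Finally, $[y']\in\wtA^+$ is a unit with $v_{\wtE}(\overline{[y']}-1)>0$, so by Lem.~\ref{lem514}, $\log[y']=\sum_{n\ge 1}-(1-[y'])^n/n$ converges in $\wtb_\rig^+$. Setting $z:=\log[y']$ gives the identity.

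As an independent sanity check (which I might include for robustness), one can verify the identity via the monodromy operator: since $N$ is a $\wtb_\rig^\dagger$-derivation with $N(X)=e$, the kernel of $N$ on $\wtb_\rig^\dagger[X]$ is exactly $\wtb_\rig^\dagger$; the formula $N(\log x)=e\cdot v_{\wtE}(\beta)=e\alpha$ from equation \eqref{eqnlogx} then forces $\log x-\alpha X\in \wtb_\rig^\dagger$. This gives the $\dagger$-version immediately, and confirms that the explicit decomposition above is consistent.

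I expect the main (minor) obstacle to be formal rather than conceptual: justifying the factor $\underline p^\alpha$ for non-integer rational $\alpha$ and verifying that Colmez's construction of $\log[\beta]$ lands in $\wtb_\rig^+$ (not merely $\wtb_\rig^\dagger$) when $\beta\in\wtE^+$. The first is handled by the algebraically closed structure of $\wtE$ together with the multiplicativity of $\log$; the second follows directly from Lem.~\ref{lem514}, which is precisely designed to give convergence in the plus-rig ring.
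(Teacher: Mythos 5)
Your proof is correct and follows essentially the same approach as the paper: decompose $x=[\underline p^\alpha][a]y$ as in the construction of the $\log$ map, apply Lemma~\ref{lem514} to get $\log y \in \wt{\mathbf B}^+_{\rig}$, and invert the unit $\alpha \in \mathbb Q^\times \subset \wt{\mathbf B}^+_{\rig}$ to solve for $X$. The added remark about $\underline p^\alpha$ for rational $\alpha$ correctly fills in a point the paper glosses over, and the $N$-operator cross-check is a clean alternative for the $\dagger$-version (though it does not by itself give the $+$-version, so the explicit decomposition remains essential).
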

\begin{proof}
Write $x=[\underline{p}^\alpha][a]y$ as in Eqn. \eqref{eqxp}, then $y$ satisfies the condition in Lem. \ref{lem514}, and hence $\log(x)=\alpha X +\log (y)$ with $\log y \in  \wt{\mathbf{B}}^{+}_{\rig}$.
\end{proof}

\begin{defn} \label{defellu}
Let $\ell_u:=\log(u)=\log([\upi])$, and define
\begin{eqnarray*}
 {\mathbf{B}}^{ \dagger }_{\log, \Kinfty}&:=& {\mathbf{B}}^{ \dagger }_{\rig, \Kinfty}[\ell_u] \subset \wt{\mathbf{B}}^{ \dagger }_{\log};  \\
  {\mathbf{B}}^{ + }_{\log, \Kinfty}&:=& {\mathbf{B}}^{ + }_{\rig, \Kinfty}[\ell_u] \subset \wt{\mathbf{B}}^{+}_{\log}.
\end{eqnarray*}
  (The containments follow from Lem. \ref{lem274}.)
\end{defn}
 

\begin{lemma}\label{lem266}
Let $I \subset [0, +\infty)$ be a closed interval, and let $k \geq 1$, then
\begin{equation*}
 \ell_u^k \in \left(\oplus_{i=0}^k  \wtB_L^I \cdot\ell_u^i \right)  ^{\tau\dan, \gamma=1}
\end{equation*}
where $\oplus_{i=0}^k  \wtB_L^I \cdot\ell_u^i$ is regarded as a Banach space.
Namely, $\ell_u$ is always a $\tau$-\emph{analytic} vector (not just locally analytic).
\end{lemma}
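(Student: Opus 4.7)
The plan is a direct computation from the explicit action of $\hat G$ on $u$ combined with the functorial properties of $\log$ given in Prop.~\ref{proplog}. Write $W:=\oplus_{i=0}^k \wtB_L^I\cdot \ell_u^i$ for the target Banach space. The $\gamma=1$ condition is immediate: any $g\in\gal(L/\Kinfty)$ fixes each $\pi_n$, hence fixes $\underline\pi\in\wtE^+$ and $u=[\underline\pi]\in\wtA^+$, and therefore fixes $\ell_u=\log u$ (by Galois-equivariance of the log map) and $\ell_u^k$.

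For the $\tau$-part, in the generic case $\Kinfty\cap\Kpinfty=K$ we have $\tau(u)=u\cdot[\underline{\varepsilon}]$ and hence by continuity $\tau^a(u)=u\cdot[\underline{\varepsilon}^a]$ for all $a\in\Zp$. Applying $\log$ and using the identification $\log[\underline\varepsilon]=t$ (the series from Prop.~\ref{proplog}(2) for $\log[\underline\varepsilon]$ converges in $\wtB^+_{\rig}$ by Lem.~\ref{lem514} and agrees with the classical $t$), we obtain $\tau^a(\ell_u)=\ell_u+at$. The binomial theorem then yields
$$\tau^a(\ell_u^k)\;=\;(\ell_u+at)^k\;=\;\sum_{j=0}^{k}\binom{k}{j}\,t^j\,\ell_u^{k-j}\cdot a^j,$$
a polynomial of degree $k$ in $a$. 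In the exceptional $p=2$ situation $\Kinfty\cap\Kpinfty=K(\pi_1)$, the same argument runs with $t$ replaced by $2t$.

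To finish, one checks that each coefficient $\binom{k}{j}\,t^j\,\ell_u^{k-j}$ lies in $W$. Since $I\subset[0,+\infty)$ is closed with $\sup I<\infty$, we have $t\in\wtB^+_{\rig}=\wtB^{[0,+\infty)}\subset\wtB^I$, and $G_L\subset G_{\Kpinfty}$ fixes $t$ (the cyclotomic character being trivial on $G_{\Kpinfty}$), so $t^j\in\wtB_L^I$ and the $j$-th coefficient belongs to $\wtB_L^I\cdot\ell_u^{k-j}\subset W$. Because only finitely many coefficients are nonzero, the convergence condition ``$w_\bullet\to 0$'' in the definition of an analytic vector is vacuous, and the expansion holds on all of $\Zp$; thus $\ell_u^k\in W^{\tau\dan}$, not merely $W^{\tau\dla}$. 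There is no real obstacle---everything is forced by the explicit formula $\tau^a(\ell_u)=\ell_u+at$---the only points requiring a moment's care being the identification $\log[\underline{\varepsilon}]=t$ in the relevant log ring, the verification that $t^j\in\wtB_L^I$ for the given interval, and the factor-of-$2$ modification in the exceptional $p=2$ case.
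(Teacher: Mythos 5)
Your proposal is correct and follows essentially the same route as the paper: both hinge on the cocycle formula $g(\ell_u) = \ell_u + \sigma(g)t$ (the paper's Eqn.~\eqref{eq277new}), from which analyticity of $\ell_u^k$ follows. The paper compresses the remainder into ``it is then easy to deduce\dots (e.g., using \cite[Lem.~3.1.8]{GP})'', whereas you make the deduction explicit via the binomial expansion $\tau^a(\ell_u^k)=\sum_{j=0}^k\binom{k}{j}t^j\ell_u^{k-j}a^j$ and check directly that each coefficient lies in $\wtB_L^I\cdot\ell_u^{k-j}$; since the expansion is a finite polynomial in $a$, convergence on all of $\Zp$ (hence genuine $\tau$-analyticity, not just local analyticity) is immediate. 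The factor-of-$2$ adjustment you note for the $p=2$, $\Kinfty\cap\Kpinfty=K(\pi_1)$ case is also consistent with Notation~\ref{nota hatG}.
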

\begin{proof}
Note that 
\begin{equation}\label{eq277new}
g(\ell_u) =\ell_u + \sigma(g)t, \quad \forall g\in G_K,
\end{equation}
where $\sigma(g) \in \Zp^\times$ (is the co-cycle) such that $g([\upi])=[\upi][\underline \varepsilon]^{\sigma(g)}$; it is then easy to deduce that $\ell_u$ is a $\tau$-analytic vector (e.g., using \cite[Lem. 3.1.8]{GP21}). 
\end{proof}

\begin{cor}\label{cor277}
\begin{eqnarray*}
(\wt{\mathbf{B}}^{ \dagger }_{\log, L})^{\tau\dpa, \gamma=1} & =& \cup_{m \geq 0} \varphi^{-m}( {\mathbf{B}}^{ \dagger }_{\log, \Kinfty});\\
(\wt{\mathbf{B}}^{ + }_{\log, L})^{\tau\dpa, \gamma=1} & =& \cup_{m \geq 0} \varphi^{-m}( {\mathbf{B}}^{ +}_{\log, \Kinfty}).
\end{eqnarray*}
\end{cor}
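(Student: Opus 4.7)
The plan is to reduce both identities to Corollary \ref{cor rig la} via the polynomial descriptions of the log-rings. Applying Lemma \ref{lem274} with $\beta = \upi$ gives $\wtb_{\log}^\dagger = \wtb^\dagger_\rig[\ell_u]$ and $\wtb_{\log}^+ = \wtb^+_\rig[\ell_u]$. Since $g[\upi] = [\upi]$ for $g \in G_\infty$, we have $g(\ell_u) = \ell_u$; in particular $\ell_u$ is $G_L$-invariant, hence $\gamma$-invariant, and by Lemma \ref{lem266} it is $\tau$-analytic. Taking $G_L$-invariants of the polynomial presentations yields $\wtb^\dagger_{\log, L} = \wtb^\dagger_{\rig, L}[\ell_u]$ and $\wtb^+_{\log, L} = \wtb^+_{\rig, L}[\ell_u]$ (by uniqueness of polynomial coefficients). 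Since $\varphi(\ell_u) = p\ell_u$, we also have $\varphi^{-m}(\B^\dagger_{\log, \Kinfty}) = \varphi^{-m}(\B^\dagger_{\rig, \Kinfty})[\ell_u]$ and likewise for the $+$ version.

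The inclusion $\supseteq$ is then immediate: $\tau$-pro-analytic $\gamma$-invariant vectors form a subring of $\wtb^\dagger_{\log, L}$ (resp. $\wtb^+_{\log, L}$); this subring contains $\cup_m \varphi^{-m}(\B^\dagger_{\rig, \Kinfty})$ (resp. $\cup_m \varphi^{-m}(\B^+_{\rig, \Kinfty})$) by Corollary \ref{cor rig la}, and it contains $\ell_u$ by the above, so it contains every polynomial in $\ell_u$ with such coefficients.

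For $\subseteq$, let $f = \sum_{i=0}^n a_i \ell_u^i \in \wtb^\dagger_{\rig, L}[\ell_u]$ be $\tau$-pro-analytic and $\gamma$-invariant. Uniqueness of polynomial expression together with $\gamma(\ell_u) = \ell_u$ forces $\gamma(a_i) = a_i$ for each $i$. To extract $\tau$-pro-analyticity of each $a_i$, I would induct on $n$: fix $N$ so that $f$ is $\tau^{p^N}$-analytic and parameterize $g = \tau^{p^N c}$ by $c \in \Zp$. Since $\tau$ fixes $t$ and $g[\upi] = [\upi][\ueps]^{p^N c \sigma(\tau)}$, equation \eqref{eq277new} gives $g(\ell_u) = \ell_u + p^N c \sigma(\tau) t$. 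Expanding
\[g(f) = \sum_{i=0}^n g(a_i)\bigl(\ell_u + p^N c\,\sigma(\tau)\,t\bigr)^i\]
and comparing the $\ell_u^n$-coefficient with the analytic expansion $g(f) = \sum_k c^k w_k$ shows that $g(a_n)$ is itself analytic in $c$, i.e. $a_n$ is $\tau^{p^N}$-analytic. Replacing $f$ by $f - a_n \ell_u^n$ and inducting on the $\ell_u$-degree shows every $a_i$ is $\tau$-pro-analytic. Corollary \ref{cor rig la} applied to each $a_i$ then places $f \in \cup_m \varphi^{-m}(\B^\dagger_{\log, \Kinfty})$. The $+$ version is identical using the second part of Corollary \ref{cor rig la}.

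The hard part is the final coefficient extraction: $\ell_u$ is not $\tau$-invariant (only $\tau$-analytic), so the descent of pro-analyticity from $f$ to its $\ell_u$-coefficients crucially uses the linear form $g(\ell_u) - \ell_u = \sigma(g)\,t$ in the group parameter.
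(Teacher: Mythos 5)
Your proof is correct and follows essentially the same route as the paper: both sides reduce to the identity $\wtB^\dagger_{\log,L}=\wtB^\dagger_{\rig,L}[\ell_u]$ (via Lem.~\ref{lem274} and $G_L$-invariance of $\ell_u$), then combine the $\tau$-analyticity of $\ell_u$ from Lem.~\ref{lem266} with Cor.~\ref{cor rig la} for the coefficients. The only difference is that the paper invokes \cite[Prop.~3.1.6]{GP} as a black box to pass pro-analyticity from a polynomial to its coefficients, whereas you prove that step directly by the top-coefficient induction using $g(\ell_u)-\ell_u=\sigma(g)t$; your argument is a correct unpacking of that cited lemma, modulo the routine bookkeeping of running it over the Banach sub-pieces of the Fr\'echet/LF structure rather than fixing a single uniform $N$ once and for all.
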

 \begin{proof}
 It follows from Lem. \ref{lem266} and \cite[Prop. 3.1.6]{GP21} (as all $\ell_u^i$ are analytic vectors).
 \end{proof}

\section{Modules and locally analytic vectors}\label{secmod}

In this section, we recall the theory of \'etale $(\varphi, \tau)$-modules and their overconvergence property. In particular, we discuss the relation between locally analytic vectors and the overconvergence property.

In this section and in \S \ref{secsemist}, we will introduce several categories of \emph{modules with structures}. We will always omit the definition of morphisms for these categories, which are always obvious (i.e., module homomorphisms compatible with various structures). 


\subsection{\'Etale $(\varphi, \tau)$-modules}
\begin{defn} \label{defetphi}
Objects in the following are called \emph{\'etale $\varphi$-modules}.
\begin{enumerate}
\item Let $\text{Mod}_{\mathbf{A}_{K_\infty}}^\varphi$ denote the category of finite free $\mathbf{A}_{K_\infty}$-modules $M $  equipped with a $\varphi_{\mathbf{A}_{K_\infty}}$-semi-linear endomorphism $\varphi _M : M\to M$ such that $1 \otimes \varphi : \varphi ^*M \to M $ is an isomorphism.

\item  Let $\textnormal{Mod}_{\mathbf{B}_{K_\infty}}^\varphi$ denote the category of finite free $\mathbf{B}_{K_\infty}$-modules (indeed, vector spaces) $D$  equipped with a $\varphi_{\mathbf{B}_{K_\infty}}$-semi-linear endomorphism $\varphi _D : D\to D$ such that there exists a finite free $\mathbf{A}_{K_\infty}$-lattice $M$ such that $M[1/p]=D$, $\varphi_D(M)\subset M$, and $(M, \varphi_D|_M) \in  \textnormal{Mod}_{\mathbf{A}_{K_\infty}}^\varphi$.
\end{enumerate}
\end{defn}

\begin{defn} \label{defphitaumod}
Objects in the following are called \emph{\'etale $(\varphi, \tau)$-modules}.
\begin{enumerate}
\item Let $\textnormal{Mod}_{\mathbf{A}_{K_\infty}, \wt{\mathbf{A}}_L}^{\varphi, \hat{G}}$ denote the category consisting of triples $(M, \varphi_M, \hat G)$ where
\begin{itemize}
\item $(M , \varphi_M) \in \text{Mod}_{\mathbf{A}_{K_\infty}}^\varphi$;
\item $\hat G$ is a  continuous  $\varphi_{\hat M}$-commuting  $\wt{\mathbf{A}}_L$-semi-linear $\hat G$-action on $\hat M : =\wt{\mathbf{A}}_L \otimes_{\mathbf{A}_{K_\infty}} M$ (here, continuity is with respect to  topology induced by the weak topology on $\wt{\mathbf{A}}$);
\item regarding $M$ as an $\mathbf{A}_{K_\infty} $-submodule in $ \hat M $, then $M
\subset \hat M ^{\gal(L/K_\infty)}$.
\end{itemize}

\item Let $\textnormal{Mod}_{\mathbf{B}_{K_\infty}, \wt{\mathbf{B}}_L}^{\varphi, \hat{G}}$ denote the category consisting of triples $(D, \varphi_D, \hat G)$ which contains a lattice (in the obvious fashion) $(M, \varphi_M, \hat G) \in \textnormal{Mod}_{\mathbf{A}_{K_\infty}, \wt{\mathbf{A}}_L}^{\varphi, \hat{G}}$.
\end{enumerate}
\end{defn}

\subsubsection{} Let $\textnormal{Rep}_{\Qp}(G_{\infty}) $ (resp. $\textnormal{Rep}_{\Qp}(G_{K}) $ ) denote the category of finite dimensional $\Qp$-vector spaces $V$   with  continuous $\Qp$-linear $G_{\infty}$ (resp. $G_K$)-actions.
\begin{itemize}[leftmargin=0cm]
\item For $D \in \textnormal{Mod}_{\mathbf{B}_{K_\infty}}^\varphi$, let
\[ V(D):= ( \wt{\mathbf{B}} \otimes_{\mathbf{B}_{K_\infty}} D) ^{\varphi =1},\]
then $V(D) \in \textnormal{Rep}_{\Qp}(G_{\infty})$.
If furthermore $(D, \varphi_D, \hat G) \in \textnormal{Mod}_{\mathbf{B}_{K_\infty}, \wt{\mathbf{B}}_L}^{\varphi, \hat{G}}$, then $ V(D) \in \textnormal{Rep}_{\Qp}(G_{K})$.

\item For $V \in \textnormal{Rep}_{\Qp}(G_{\infty}) $, let
\[     D_{K_\infty}(V):= (\mathbf B \otimes_{\Qp} V) ^{G_\infty}, \]
then $ D_{K_\infty}(V) \in \textnormal{Mod}_{\mathbf{B}_{K_\infty}}^\varphi$.
If furthermore $V \in \textnormal{Rep}_{\Qp}(G_K) $, let
\[     \wt{D}_L(V):= ( \wt{\mathbf{B}} \otimes_{\Qp} V) ^{G_L}, \]
then $\wt{D}_L(V) = \wt{\mathbf{B}}_L \otimes_{\mathbf{B}_{K_\infty}}  D_{K_\infty}(V)$ has a $\hat{G}$-action, making $(D_{K_\infty}(V), \varphi, \hat G)$ an \'etale $(\varphi, \tau)$-module.
\end{itemize}

As we already mentioned in Rem. \ref{remindep},
Thm. \ref{thmphitau} below is the only place we \emph{use} results from \cite{Car13}.
\begin{thm}  \label{thmphitau}
\hfill
\begin{enumerate}
\item \cite[Prop. A 1.2.6]{Fon90}  The functors $V$ and $D_{K_\infty}$ induce  an exact tensor equivalence between the categories $\textnormal{Mod}_{\mathbf{B}_{K_\infty}}^\varphi$ and $\textnormal{Rep}_{\Qp}(G_{\infty}) $.
\item \cite[Thm. 1]{Car13} The functors $V$ and $(D_{K_\infty}, \wt{D}_L)$ induce  an exact tensor equivalence between the categories $\textnormal{Mod}_{\mathbf{B}_{K_\infty}, \wt{\mathbf{B}}_L}^{\varphi, \hat{G}}$ and $\textnormal{Rep}_{\Qp}(G_K) $.
\end{enumerate}
\end{thm}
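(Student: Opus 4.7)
The plan is to prove the two parts in sequence, reducing both to Fontaine-Wintenberger field-of-norms theory together with standard descent arguments for étale $\varphi$-modules.

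For part (1), I would first work modulo $p$. The ring $\mathbf{E}_{K_\infty} = k(\!(\underline{\pi})\!)$ is the norm field of the arithmetically profinite extension $K_\infty/K$, so by Fontaine-Wintenberger its absolute Galois group is canonically identified with $G_\infty$, and the separable closure $\mathbf{E}$ of $\mathbf{E}_{K_\infty}$ inside $\wt{\mathbf{E}}$ realizes this identification. A standard Artin-Schreier argument shows $\mathbf{E}^{\varphi=1} = \mathbb{F}_p$, and combined with Hilbert~90 (vanishing of $H^1_{\et}$ of the additive group) this gives the equivalence between finite-dimensional $\mathbb{F}_p$-representations of $G_\infty$ and étale $\varphi$-modules over $\mathbf{E}_{K_\infty}$. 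I would then lift this equivalence $p$-adically using the Cohen ring $\mathbf{A}_{K_\infty}$ of $\mathbf{E}_{K_\infty}$ together with its strict Henselization realized inside $\mathbf{A}$, arguing by successive approximation modulo $p^n$ to show that every étale $\varphi$-module over $\mathbf{A}_{K_\infty}$ becomes trivial over $\mathbf{A}$. Inverting $p$ yields part (1). Quasi-inverseness reduces to the two identities $V(D_{K_\infty}(V)) \cong V$ and $D_{K_\infty}(V(D)) \cong D$, which come respectively from $\mathbf{B}^{\varphi=1} = \mathbb{Q}_p$ and $\mathbf{B}^{G_\infty} = \mathbf{B}_{K_\infty}$ once $\mathbf{B} \otimes_{\mathbf{B}_{K_\infty}} D$ has been trivialized as a $\varphi$-module.

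For part (2), I would use part (1) and concentrate on matching the $G_K$-action with the $\hat G$-structure. Given $V \in \mathrm{Rep}_{\mathbb{Q}_p}(G_K)$, set $D = D_{K_\infty}(V)$; I would verify
\[
\wt D_L(V) := (\wt{\mathbf{B}} \otimes_{\mathbb{Q}_p} V)^{G_L} \;=\; \wt{\mathbf{B}}_L \otimes_{\mathbf{B}_{K_\infty}} D
\]
by trivializing $V$ over $\mathbf{B}$ and then taking $G_L$-invariants, using that $\mathbf{B}$ is free over $\mathbf{B}_{K_\infty}$ in a $G_L$-equivariant way. This endows $\hat M := \wt{\mathbf{B}}_L \otimes_{\mathbf{B}_{K_\infty}} D$ with a $\wt{\mathbf{B}}_L$-semilinear, $\varphi$-commuting, weakly continuous $\hat G$-action, with $D \subset \hat M^{\mathrm{Gal}(L/K_\infty)}$. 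Conversely, from $(D,\varphi,\hat G)$, I would set $V(D) = (\wt{\mathbf{B}} \otimes_{\mathbf{B}_{K_\infty}} D)^{\varphi=1}$ and identify $\wt{\mathbf{B}} \otimes_{\mathbf{B}_{K_\infty}} D = \wt{\mathbf{B}} \otimes_{\wt{\mathbf{B}}_L} \hat M$; the $G_K$-action on $V(D)$ is then the diagonal of the $G_K$-action on $\wt{\mathbf{B}}$ (which restricts to $G_\infty$ on $\mathbf{B}_{K_\infty}$) and the lift of $\hat G = G_K/G_L$ to $\hat M$. The compatibility $D \subset \hat M^{\mathrm{Gal}(L/K_\infty)}$ is exactly what is needed to glue these into a genuine $G_K$-action rather than a mere cocycle.

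The main obstacle will be checking that the two constructions are mutually quasi-inverse at the $\hat G$-level: namely, that starting from $V$, passing to $(D,\hat G)$, and returning to $V(D)$ recovers $V$ with its original $G_K$-action, and similarly in the opposite direction. This is essentially bookkeeping, but it crucially uses the identities $(\wt{\mathbf{B}})^{G_L}\text{-things}$ derived from $\wt{\mathbf{B}}_L \hookrightarrow \wt{\mathbf{B}}$ being a Galois cover with group $G_L$, and the fact that $G_L \cdot \hat G = G_K$ topologically. A subtlety, flagged already in Notation~\ref{nota hatG}, appears when $p=2$ and $K_\infty \cap K_{p^\infty} = K(\pi_1)$: the product of $\mathrm{Gal}(L/K_\infty)$ and $\mathrm{Gal}(L/K_{p^\infty})$ generates only an index-$2$ subgroup of $\hat G$, so one must check separately that extending by the residual generator still matches the $G_K$-action. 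In practice, one can invoke Fontaine's original equivalence \cite[Prop.~A 1.2.6]{Fon90} for part (1) wholesale and just run the $\hat G$-bookkeeping above for part (2).
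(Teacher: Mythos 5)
The paper does not actually prove this theorem: part (1) is cited outright to Fontaine and part (2) to Caruso, and the only content of the paper's ``proof'' is the observation that Caruso stated his result only for $p>2$, and that Definition~\ref{defphitaumod}---which carries a genuine semilinear $\hat G$-action on $\hat M$ rather than Caruso's $\tau$-action alone---is the formulation that extends to all $p$. Your proposal instead sketches the underlying argument from scratch: the field of norms for the arithmetically profinite extension $K_\infty/K$, Artin--Schreier plus Hilbert~90 to get the mod-$p$ equivalence, d\'evissage along $p$-power torsion to lift to $\mathbf{A}_{K_\infty}$ and then $\mathbf{B}_{K_\infty}$, and finally $G_L$-descent to recover $\wt D_L$. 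That is indeed the correct skeleton, and it matches the content of the cited references rather than the two-line remark the paper offers.

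Two small points are worth tightening. First, in part (2) the phrase ``$\mathbf{B}$ is free over $\mathbf{B}_{K_\infty}$ in a $G_L$-equivariant way'' is not what is needed (and is not true as stated); the actual input is that $D$ is a free $\mathbf{B}_{K_\infty}$-module with trivial $G_L$-action, so that choosing a basis gives $\bigl(\wt{\mathbf{B}}\otimes_{\mathbf{B}_{K_\infty}} D\bigr)^{G_L} = \wt{\mathbf{B}}_L\otimes_{\mathbf{B}_{K_\infty}} D$ by taking invariants coordinatewise. Second, the $p=2$ subtlety you flag at the end is exactly the gap between Caruso's formulation and the paper's: once the datum is a full semilinear $\hat G$-action on $\hat M$ (as in Definition~\ref{defphitaumod}), there is no ``residual generator'' to check against --- reconstructing the $G_K$-action on $\wt{\mathbf{B}}\otimes_{\wt{\mathbf{B}}_L}\hat M$ from the $\hat G$-action on $\hat M$ together with the $G_K$-action on $\wt{\mathbf{B}}$ goes through uniformly for all $p$. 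The issue you describe arises only if one tries to \emph{generate} $\hat G$ from $\tau$ and $\gal(L/K_\infty)$, which is precisely the point the paper is making in its remark about extending Caruso's statement. With those clarifications your sketch is complete.
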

\begin{proof}
Note that Item (2) was written  only for $p>2$ in \cite{Car13}.
Our Def. \ref{defphitaumod} is slightly different from Caruso's definition (although the underlying idea is the same),  cf. the discussion in \cite[Rem. 2.1.6]{GL20}. In particular, Item (2) above is valid for all $p$.
\end{proof}

\subsection{Overconvergence and locally analytic vectors}

Let  $V\in \textnormal{Rep}_{\Qp}(G_K) $.
Given $I \subset [0, +\infty]$, let
\begin{eqnarray*}
D_{K_\infty}^I(V) &:=& ( \mathbf{B}^I \otimes_{\Qp} V) ^{G_\infty}, \\
 \wt{D}_L^I(V)&:=&( \wt{\mathbf{B}}^I \otimes_{\Qp} V) ^{G_L}.
\end{eqnarray*}

\begin{defn} \label{def oc}
Let $V\in \textnormal{Rep}_{\Qp}(G_K) $, and let $\hat{D}= (D_{K_\infty}(V), \varphi, \hat G)$ be the \'etale $(\varphi, \tau)$-module associated to it.
We say that $\hat{D}$ is \emph{overconvergent} if there exists $r>0$, such that for $I'=[r, +\infty]$,
\begin{enumerate}
\item $D_{K_\infty}^{I'}(V)$ is finite free over  $\mathbf{B}^{I'}_{K_\infty}$, and
 $ \mathbf{B}_{K_\infty} \otimes_{\mathbf{B}^{I'}_{K_\infty}} D_{K_\infty}^{I'}(V) = D_{K_\infty}(V);$
\item $\wt{D}_L^{I'}(V)$ is finite free over  $\wt{\mathbf{B}}^{I'}_L$ and
 \[   \wt{\mathbf{B}}_L \otimes_{\wt{\mathbf{B}}^{I'}_L}  \wt{D}_L^{I'}(V) = \wt{D}_L(V).\]
\end{enumerate}
\end{defn}

\begin{theorem}\label{thm final} \cite{GL20, GP21}
For any $V\in \textnormal{Rep}_{\Qp}(G_K) $, its associated  \'etale $(\varphi, \tau)$-module is overconvergent.
\end{theorem}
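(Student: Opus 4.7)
The plan is to prove overconvergence via a two-stage descent, following the locally analytic vectors strategy. First, I would establish overconvergence on the larger ring $\wt{\mathbf{B}}_L$, i.e., find a finite free $\wt{\mathbf{B}}^{[r,+\infty]}_L$-lattice inside $\wt{D}_L(V)$ for some $r > 0$. This step can be approached via a Cherbonnier--Colmez type input: since $G_L \subset G_{p^\infty}$, the restriction $V|_{G_{K_{p^\infty}}}$ admits a classical overconvergent cyclotomic $(\varphi, \Gamma)$-module $D^\dagger_{K_{p^\infty}}(V)$ that is finite free over $\mathbf{B}^\dagger_{K_{p^\infty}}$, and using the compatibility of the embeddings $\mathbf{B}^\dagger_{K_{p^\infty}} \hookrightarrow \wt{\mathbf{B}}^\dagger$ and the $G_L$-invariance of the base change, one extracts an overconvergent $\wt{\mathbf{B}}^{[r,+\infty]}_L$-lattice $\wt{D}^{[r,+\infty]}_L(V)$. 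This verifies condition (2) of Definition \ref{def oc}.

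In the second stage, I would descend from $\wt{\mathbf{B}}^{[r,+\infty]}_L$ down to $\mathbf{B}^{[r', +\infty]}_{K_\infty}$ by extracting locally analytic vectors. Passing to the Fr\'echet completion $\wt{D}^{[r,+\infty)}_L(V) := \wt{\mathbf{B}}^{[r,+\infty)}_L \otimes_{\wt{\mathbf{B}}^{[r,+\infty]}_L} \wt{D}^{[r,+\infty]}_L(V)$, which carries a continuous $\hat{G}$-action, I would consider the subspace of $\tau$-pro-analytic, $\gamma$-fixed vectors. By Theorem \ref{thmLAVmain}(3), the corresponding ring of pro-analytic, $\gamma$-invariant scalars in $\wt{\mathbf{B}}^{[r,+\infty)}_L$ is exactly $\cup_{m \geq 0} \varphi^{-m}(\mathbf{B}^{[p^m r, +\infty)}_{K_\infty})$. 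After applying a sufficiently high power of Frobenius to absorb the $\varphi^{-m}$'s, one obtains a finite free $\mathbf{B}^{[r', +\infty)}_{K_\infty}$-module, and then argues via explicit analysis of the Laurent expansions (using Lemma \ref{lem laurent series}) that it descends to the Banach ring $\mathbf{B}^{[r', +\infty]}_{K_\infty}$, yielding the desired $D^{[r',+\infty]}_{K_\infty}(V)$ and verifying condition (1).

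The main obstacle will be showing that the module $\big(\wt{D}^{[r,+\infty)}_L(V)\big)^{\tau\text{-pa}, \gamma=1}$ of pro-analytic, $\gamma$-invariant vectors is free of rank equal to $\dim_{\Qp} V$, i.e., that its base change back to $\wt{\mathbf{B}}^{[r,+\infty)}_L$ recovers the whole $\wt{D}^{[r,+\infty)}_L(V)$. This is a Sen-style decompletion statement: writing the Lie algebra action of $\hat{G}$ on $\wt{D}^{[r,+\infty)}_L(V)$ as a matrix against a chosen basis, one must produce enough pro-analytic sections by a contraction-mapping / successive approximation argument, using the density of pro-analytic vectors in the Fr\'echet topology and the explicit identifications of Theorem \ref{thmLAVmain}. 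A secondary technical point arises when $p=2$ and $K_\infty \cap K_{p^\infty} = K(\pi_1)$, where $\gal(L/K_\infty)$ fails to be pro-cyclic and some of the $\tau$-formulas must be modified as in Notation \ref{nota hatG}; this is handled by passing to an open subgroup of finite index and does not alter the structure of the argument.
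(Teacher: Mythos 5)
Your plan reproduces the strategy of \cite{GP}, which the paper cites for this result and does not itself re-prove: establish ``tilde'' overconvergence of $\wt{D}_L(V)$, identify the ring of $\tau$-locally analytic $\gamma$-invariant scalars via the computations in Theorem \ref{thmLAVmain}, and decomplete by a Tate--Sen/Berger--Colmez style approximation argument, and you correctly isolate the decompletion as the real content. The one step worth flagging is the final descent from a Fr\'echet module over $\mathbf{B}^{[r',+\infty)}_{K_\infty}$ to the Banach module over $\mathbf{B}^{[r',+\infty]}_{K_\infty}$ demanded by Definition \ref{def oc}(1): this is not a matter of inspecting Laurent expansions alone, and it is cleaner to run the Tate--Sen argument over the closed bounded intervals $[r_\ell, r_k]$ using Theorem \ref{thmLAVmain}(2) rather than (3), so that the Banach-level statement comes out directly after the Frobenius twist.
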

\begin{rem}
As we already mentioned in Rem. \ref{rem2pf}, a first proof of Thm. \ref{thm final} is given in \cite{GL20} (which only works for $K/\Qp$ a finite extension), and a second proof is given in \cite{GP21}; it is the second proof that will be useful for the current paper, e.g., see Cor. \ref{corlamod} below.
\end{rem}

Let $V\in \textnormal{Rep}_{\Qp}(G_K) $ of dimension $d$,  and let  $D_{K_\infty}^{I'}(V)$ be as in Def. \ref{def oc} for $I'=[r(V), +\infty]$ (which exists by Thm. \ref{thm final}). Let
\begin{eqnarray}
D^\dagger_{ \Kinfty}(V) &:=& D_{K_\infty}^{I'}(V) \otimes_{\mathbf{B}_{K_\infty}^{I'}} \mathbf{B}^\dagger_{  \Kinfty};\\
\label{eqringmod} D^\dagger_{\rig, \Kinfty}(V) &:=& D_{K_\infty}^{I'}(V) \otimes_{\mathbf{B}_{K_\infty}^{I'}} \mathbf{B}^\dagger_{\rig, \Kinfty}.
\end{eqnarray}  
We call $D^\dagger_{ \Kinfty}(V)$ resp. $D^\dagger_{\rig, \Kinfty}(V)$ the \emph{overconvergent $(\varphi, \tau)$-module} resp.  the \emph{rigid-overconvergent $(\varphi, \tau)$-module} associated to $V$.

\begin{cor} \label{corlamod}
The subset $D^\dagger_{\rig, \Kinfty}(V)$ generates the $(\wt{\mathbf{B}}^\dagger_{\rig, L})^{\tau\dpa, \gamma=1}$-module $((\wt{\mathbf{B}}^\dagger_{\rig } \otimes_{\Qp} V  )^{G_L})^{\tau\dpa, \gamma=1}$. Indeed,
\[((\wt{\mathbf{B}}^\dagger_{\rig } \otimes_{\Qp} V  )^{G_L})^{\tau\dpa, \gamma=1}
=D^\dagger_{\rig, \Kinfty}(V)\otimes_{\mathbf{B}^\dagger_{\rig, \Kinfty}} (\wt{\mathbf{B}}^\dagger_{\rig, L})^{\tau\dpa, \gamma=1}.\]
\end{cor}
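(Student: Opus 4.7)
\medskip

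\noindent\emph{Proof plan.}  The strategy is to reduce the identity to the computation of locally analytic vectors in the coefficient ring (Cor.~\ref{cor rig la}) by means of a suitable $\wtB^\dagger_{\rig,L}$-basis of $(\wtB^\dagger_{\rig}\otimes_{\Qp}V)^{G_L}$ that sits entirely inside $D^\dagger_{\rig,\Kinfty}(V)$ and enjoys good regularity properties with respect to the $\hat G$-action.

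First, I would unpack the overconvergence theorem.  By Thm.~\ref{thm final} applied to $V$ and by flat base change along $\mathbf{B}_{\Kinfty}^{I'}\hookrightarrow \mathbf{B}^\dagger_{\rig,\Kinfty}$ and $\wtB_L^{I'}\hookrightarrow \wtB^\dagger_{\rig,L}$, one obtains
\[
(\wtB^\dagger_{\rig}\otimes_{\Qp}V)^{G_L} \;=\; \wtB^\dagger_{\rig,L}\otimes_{\mathbf{B}^\dagger_{\rig,\Kinfty}} D^\dagger_{\rig,\Kinfty}(V),
\]
a free $\wtB^\dagger_{\rig,L}$-module of rank $d=\dim_{\Qp}V$.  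Fix a $\mathbf{B}^\dagger_{\rig,\Kinfty}$-basis $e_1,\dots,e_d$ of $D^\dagger_{\rig,\Kinfty}(V)$; it is simultaneously a $\wtB^\dagger_{\rig,L}$-basis of the left-hand side above.

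The technical heart of the argument is to verify that each $e_i$ lies in $\bigl((\wtB^\dagger_{\rig}\otimes V)^{G_L}\bigr)^{\tau\dpa,\gamma=1}$.  The $\gamma=1$ condition is automatic because $D^\dagger_{\rig,\Kinfty}(V)\subset (\wtB^\dagger_{\rig}\otimes V)^{G_\infty}$, and $\gal(L/\Kinfty)=G_\infty/G_L$ acts trivially on any $G_\infty$-invariant element.  For the $\tau$-pro-analyticity, I would appeal directly to the locally-analytic-vector proof of overconvergence given in \cite{GP} (cf.\ Rem.~\ref{rem2pf}): there, the overconvergent lattice $D^\dagger_{\Kinfty}(V)$ is constructed precisely as (the $\gamma$-fixed part of) $\tau$-pro-analytic vectors in $(\wtB\otimes V)^{G_L}$, which after passing to the Robba-style completion shows the $e_i$ to be pro-analytic in the Fr\'echet sense.

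With this input the two inclusions follow formally.  On one hand, since $\mathbf{B}^\dagger_{\rig,\Kinfty}\subset (\wtB^\dagger_{\rig,L})^{\tau\dpa,\gamma=1}$ by Cor.~\ref{cor rig la}, and the $e_i$ are pro-analytic and $\gamma$-fixed, one has
\[
D^\dagger_{\rig,\Kinfty}(V)\otimes_{\mathbf{B}^\dagger_{\rig,\Kinfty}}(\wtB^\dagger_{\rig,L})^{\tau\dpa,\gamma=1}\;\subset\; \bigl((\wtB^\dagger_{\rig}\otimes V)^{G_L}\bigr)^{\tau\dpa,\gamma=1}.
\]
For the reverse inclusion, write $x=\sum_i a_i e_i$ with $a_i\in \wtB^\dagger_{\rig,L}$.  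Since $\gamma(e_i)=e_i$ and $\gamma(x)=x$, uniqueness of coordinates forces $\gamma(a_i)=a_i$.  The matrix of $\tau$ in the basis $\{e_i\}$ has entries in $\wtB^\dagger_{\rig,L}$ which are pro-analytic (because $\tau(e_i)$ is pro-analytic and $\{e_i\}$ is a basis), and combining this with the pro-analyticity of $x$ one deduces pro-analyticity of each coefficient $a_i$ by a straightforward Banach/Fr\'echet linear-algebra argument on each analyticity radius.  Hence $a_i\in (\wtB^\dagger_{\rig,L})^{\tau\dpa,\gamma=1}$, proving the claimed equality.

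The main obstacle is the pro-analyticity of the basis vectors $e_i$; everything else is essentially bookkeeping.  Fortunately, this is exactly what the construction in \cite{GP} delivers, so in practice the corollary should be short once the notations are aligned with that reference.
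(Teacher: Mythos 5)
Your proposal takes the same route as the paper: both reduce the claim to the locally-analytic-vector computations carried out in [GP]'s proof of overconvergence (the paper proves this with a one-line citation of [GP, Eqn.\ (6.2.5)]; your argument is essentially a faithful unpacking of what that equation encodes). The coordinate-extraction step you describe as a ``straightforward Banach/Fr\'echet linear-algebra argument'' is precisely the content of [GP, Prop.\ 3.1.6], which the paper invokes elsewhere (e.g.\ in the proof of Cor.~\ref{cor277}).

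One imprecision worth fixing: you assert that ``$D^\dagger_{\Kinfty}(V)$ is constructed precisely as (the $\gamma$-fixed part of) $\tau$-pro-analytic vectors in $(\wtB\otimes V)^{G_L}$.'' Read literally this is false and slightly circular: the full set of $\gamma$-fixed $\tau$-pro-analytic vectors is a module over $(\wtB^\dagger_{\rig,L})^{\tau\dpa,\gamma=1}$, not over $\mathbf{B}^\dagger_{\Kinfty}$, so that assertion is essentially the corollary you are trying to prove. What [GP] actually delivers --- and what your argument needs --- is the weaker statement that one can choose a $\mathbf{B}^\dagger_{\rig,\Kinfty}$-basis of $D^\dagger_{\rig,\Kinfty}(V)$ each of whose elements is a $\tau$-pro-analytic, $\gamma$-fixed vector. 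Once the input from [GP] is phrased this way, the two inclusions you give (forward by closure of pro-analytic vectors under multiplication by pro-analytic scalars; reverse by coordinate extraction via [GP, Prop.\ 3.1.6]) are correct and complete the proof.
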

\begin{proof}
This is extracted from the proof of Thm. \ref{thm final} in \cite[Thm. 6.2.6]{GP21}; indeed, it easily follows from \cite[Eqn. (6.2.5)]{GP21}.
\end{proof}

\subsection{Modules with respect to  field extensions} \label{subsecmodE}
(The discussion here is  continuation of \S \ref{22}.)
Let $V\in \textnormal{Rep}_{\Qp}(G_K) $. 
 Let $E$ be  as in Notation \ref{notaEK}. Then with respect to the $G_E$-representation $V|_{G_E}$, we can also constrcut the corresponding ``$(\varphi, \tau)$-module"  and its overconvergent version; denote them as 
\[ D_{E_\infty}(V|_{G_E}), \quad D_{E_\infty}^\dagger(V|_{G_E}),  \quad D_{\rig, E_\infty}^\dagger(V|_{G_E}).\]
These are finite free modules over the rings $\B_{E_\infty}, \B^\dagger_{E_\infty}, \B^\dagger_{\rig, E_\infty} $ respectively, which are constructed analogously as in Notation \ref{notaEK}.

\begin{lemma}\label{lemidEK}
We have   $\varphi$-equivariant isomorphisms
\begin{eqnarray}
D_{K_\infty}(V)\otimes_{\B_{\Kinfty}}  \B_{E_\infty} &\simeq &  D_{E_\infty}(V|_{G_E}); \\
D_{K_\infty}^\dagger(V)\otimes_{\B^\dagger_{\Kinfty}}  \B^\dagger_{E_\infty} &\simeq &   D^\dagger_{E_\infty}(V|_{G_E}); \\
D^\dagger_{\rig, K_\infty}(V)\otimes_{\B^\dagger_{\rig,\Kinfty}}  \B^\dagger_{\rig, E_\infty} &\simeq &   D^\dagger_{\rig, E_\infty}(V|_{G_E}). 
\end{eqnarray}
\end{lemma}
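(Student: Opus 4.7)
The three isomorphisms all arise as natural scalar-extension maps, and I would prove them in order, leveraging the equivalence of categories (Theorem \ref{thmphitau}) and the overconvergence theorem (Theorem \ref{thm final}). First, observe that by Notation \ref{notaEK} we have inclusions $\mathbf{B}_{\Kinfty} \hookrightarrow \mathbf{B}_{E_\infty}$ and $\mathbf{B} \hookrightarrow \mathbf{B}(E)$ inside $\widetilde{\mathbf{B}}$, and a containment $\gal(\overline{K}/E_\infty) \subset G_\infty$ (since $\Kinfty \subset E_\infty$). These inclusions induce a natural $\varphi$-equivariant inclusion
\[
D_{K_\infty}(V) = (\mathbf{B}\otimes_{\Qp} V)^{G_\infty} \;\hookrightarrow\; (\mathbf{B}(E)\otimes_{\Qp}V)^{\gal(\overline{K}/E_\infty)} = D_{E_\infty}(V|_{G_E}),
\]
which extends by $\mathbf{B}_{E_\infty}$-linearity to the desired map.

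The first isomorphism is then proved by faithfully flat descent. Both sides are finite free modules of rank $\dim_{\Qp} V$ over $\mathbf{B}_{E_\infty}$ (for the source, this uses freeness of $D_{K_\infty}(V)$ over $\mathbf{B}_{\Kinfty}$; for the target, the equivalence of Theorem \ref{thmphitau} applied to $G_E$). After base change along the field extension $\mathbf{B}_{E_\infty} \hookrightarrow \mathbf{B}(E)$ (which is automatically faithfully flat), both sides become canonically isomorphic to $\mathbf{B}(E)\otimes_{\Qp} V$ via the tautological comparison isomorphism $\mathbf{B}(E)\otimes_{\mathbf{B}_{\Kinfty}}D_{K_\infty}(V) \simeq \mathbf{B}(E)\otimes_{\Qp} V$ (which follows from $\mathbf{B}\otimes_{\mathbf{B}_{\Kinfty}}D_{K_\infty}(V) \simeq \mathbf{B}\otimes_{\Qp} V$ and the inclusion $\mathbf{B} \hookrightarrow \mathbf{B}(E)$), and similarly for the target. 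Hence the map is an isomorphism.

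For the overconvergent version, pick $r$ large enough so that both $D_{\Kinfty}^{[r,+\infty]}(V)$ (cf.\ Theorem \ref{thm final} for $V$) and $D_{E_\infty}^{[r,+\infty]}(V|_{G_E})$ (Theorem \ref{thm final} applied to $V|_{G_E}$) exist and are finite free of rank $\dim V$ over $\mathbf{B}_{\Kinfty}^{[r,+\infty]}$ and $\mathbf{B}_{E_\infty}^{[r,+\infty]}$ respectively. The natural inclusion $\widetilde{\mathbf{B}}^{[r,+\infty]} \hookrightarrow \widetilde{\mathbf{B}}^{[r,+\infty]}(E)$ (coming from $\mathbf{A} \hookrightarrow \mathbf{A}(E)$) gives a map $D_{\Kinfty}^{[r,+\infty]}(V) \to D_{E_\infty}^{[r,+\infty]}(V|_{G_E})$, and extending $\mathbf{B}_{E_\infty}^{[r,+\infty]}$-linearly yields a map which, after base change to $\widetilde{\mathbf{B}}^{[r,+\infty]}(E)$, again becomes the tautological isomorphism with $\widetilde{\mathbf{B}}^{[r,+\infty]}(E)\otimes_{\Qp} V$ on both sides (by the defining property of overconvergent modules). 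Passing to direct limits over $r$ yields the isomorphism for $D^\dagger_{\Kinfty}(V)$, and tensoring further with $\mathbf{B}^\dagger_{\rig, E_\infty}$ yields the one for $D^\dagger_{\rig, \Kinfty}(V)$ (using that the ring maps $\mathbf{B}^\dagger_{\Kinfty} \hookrightarrow \mathbf{B}^\dagger_{\rig, \Kinfty} \hookrightarrow \mathbf{B}^\dagger_{\rig, E_\infty}$ form a commutative diagram with $\mathbf{B}^\dagger_{E_\infty}$).

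The main obstacle I anticipate is a bookkeeping one rather than a conceptual one: one must verify that the inclusion $\mathbf{A}_{\Kinfty}^{[r,+\infty]} \hookrightarrow \mathbf{A}_{E_\infty}^{[r,+\infty]}$ is well-behaved (in particular that the slope conventions match under $u \mapsto u_E^{p^m}$), so that the overconvergent modules for $V$ and $V|_{G_E}$ are defined over compatible rings. This is essentially a computation with the explicit descriptions in Proposition \ref{corArlrk} together with the unramified extension $W(k) \hookrightarrow W(k')$; once this compatibility is checked, the descent argument proceeds uniformly in all three cases.
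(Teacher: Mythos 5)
Your argument for the first isomorphism is essentially the same as the paper's: set up the natural map, observe both sides are free of rank $\dim V$ over $\mathbf{B}_{E_\infty}$, and conclude by descent along a faithfully flat (field) extension. The paper's proof of that first isomorphism is exactly ``obvious since $\mathbf{B}_{E_\infty}$ is a field.''

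For the second isomorphism, however, you take a detour that introduces a genuine gap. You work at the level of $[r,+\infty]$-rings and claim that the natural map $D_{K_\infty}^{[r,+\infty]}(V)\otimes_{\mathbf{B}_{K_\infty}^{[r,+\infty]}}\mathbf{B}_{E_\infty}^{[r,+\infty]} \to D_{E_\infty}^{[r,+\infty]}(V|_{G_E})$ is an isomorphism because it becomes one after base change to $\widetilde{\mathbf{B}}^{[r,+\infty]}$, then pass to direct limits. To conclude an isomorphism from base change you need $\mathbf{B}_{E_\infty}^{[r,+\infty]}\to\widetilde{\mathbf{B}}^{[r,+\infty]}$ to be faithfully flat; these rings are \emph{not} fields, this flatness is not obvious, and you neither prove nor cite it. You also quietly invoke a comparison isomorphism $D_{K_\infty}^{[r,+\infty]}(V)\otimes\widetilde{\mathbf{B}}^{[r,+\infty]}\simeq V\otimes\widetilde{\mathbf{B}}^{[r,+\infty]}$ which is not what Definition \ref{def oc} states directly (it speaks of $\mathbf{B}_{K_\infty}$ and $\widetilde{\mathbf{B}}_L$, not $\widetilde{\mathbf{B}}^{I'}$). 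The paper avoids all of this by noting that $\mathbf{B}^\dagger_{E_\infty}$ is itself a field: the natural map $D^\dagger_{K_\infty}(V)\otimes_{\mathbf{B}^\dagger_{K_\infty}}\mathbf{B}^\dagger_{E_\infty}\to D^\dagger_{E_\infty}(V|_{G_E})$ is then a map of finite-dimensional $\mathbf{B}^\dagger_{E_\infty}$-vector spaces of equal dimension, injective because it embeds (after extending scalars along the flat field extension $\mathbf{B}^\dagger_{E_\infty}\hookrightarrow\mathbf{B}_{E_\infty}$ and using overconvergence) into $D_{E_\infty}(V|_{G_E})$, hence an isomorphism by dimension count. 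This also bypasses the ``bookkeeping'' compatibility under $u\mapsto u_E^{p^m}$ you flagged; that compatibility only matters if you insist on matching $[r,+\infty]$-levels, which the field argument never needs. Your derivation of the third isomorphism from the second by tensoring is the same as the paper's.
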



\begin{proof}
The first two isomorphisms are obvious since both $\B_{E_\infty}$ and $ \B^\dagger_{E_\infty}$ are fields (cf. \cite[Prop. II.1.6(1)]{CC98} in the $(\varphi, \Gamma)$-module setting); the third isomorphism then follows. 
\end{proof}


\section{Monodromy operator for $(\varphi, \tau)$-modules}\label{secmono}
In this section, we   define a natural monodromy operator on the rigid-overconvergent $(\varphi, \tau)$-modules.

  Let $\log(\tau^{p^n})$ denote  the (formally written) series $(-1)\cdot \sum_{k \geq 1} (1-\tau^{p^n})^k/k$, then $\nabla_\tau :=\frac{\log(\tau^{p^n})}{p^n}$ for $n \gg 0$ is a well-defined Lie-algebra operator acting on $\hat{G}$-locally analytic representations.

\subsection{Monodromy operator over rings}\label{subsecnring} 
Recall that by Cor. \ref{cor277},
\[(\wt{\mathbf{B}}_{  \log, L}^{\dagger})^{\tau\dpa, \gamma=1} = \cup_{m\geq 0} \varphi^{-m}({\mathbf{B}}_{  \log, K_\infty}^{\dagger}).\]
Hence $\nabla_\tau$ induces a map:
\[\nabla_\tau: {\mathbf{B}}_{  \log, K_\infty}^{\dagger} \to  (\wt{\mathbf{B}}_{  \log, L}^{\dagger})^{\hat{G}\dpa}.\]
Recall by Lem. \ref{lem b}, we have $1/\mathfrak{t} \in (\wt{\mathbf{B}}^{[r(\mathfrak{t}), +\infty)}_{ L})^{\hat{G}\dpa}$.
We can define the following operator:
\begin{equation} \label{eqnbladef}
N_\nabla: {\mathbf{B}}_{  \log, K_\infty}^{\dagger} \to   (\wt{\mathbf{B}}_{  \log, L}^{\dagger})^{\hat{G}\dpa},
\end{equation}
by setting
\begin{equation}\label{eqnnring}
{N_\nabla:=}
\begin{cases} 
\frac{1}{p\mathfrak{t}}\cdot \nabla_\tau, &  \text{if }  \Kinfty \cap \Kpinfty=K; \\
& \\
\frac{1}{p^2\mathfrak{t}}\cdot \nabla_\tau=\frac{1}{4\mathfrak{t}}\cdot \nabla_\tau, & \text{if }  \Kinfty \cap \Kpinfty=K(\pi_1), \text{ cf. Notation \ref{nota hatG}. }
\end{cases}
\end{equation}

  
\begin{rem}\label{remcompaKis}
The   $p$ (resp. $p^2$) in the denominator of \eqref{eqnnring} makes  our monodromy operator compatible with earlier theory of Kisin in \cite{Kis06}, but \emph{up to a minus sign}, see Rem. \ref{compaminus} below.
\end{rem}

\begin{lemma} \label{lemnnring}
The image of $N_\nabla$ in \eqref{eqnbladef} falls inside ${\mathbf{B}}_{  \log, K_\infty}^{\dagger}$, and hence  induces
\begin{equation}\label{eqringdes}
N_\nabla: {\mathbf{B}}_{  \log, K_\infty}^{\dagger} \to   {\mathbf{B}}_{  \log, K_\infty}^{\dagger}.
\end{equation}
Explicitly, the differential map $N_\nabla$ sends 
  $x \in {\mathbf{B}}_{  \rig, K_\infty}^{\dagger}$ to $\lambda\cdot u\frac{d}{du}(x)$, and $N_\nabla(\ell_u)= \lambda$. Furthermore, the rings ${\mathbf{B}}_{  \log, K_\infty}^{+}$, ${\mathbf{B}}_{  \rig, K_\infty}^{\dagger}$, and ${\mathbf{B}}_{ K_\infty}^{I}$ (for any $I \subset [0, +\infty)$) are all stable under this $N_\nabla$ map.
\end{lemma}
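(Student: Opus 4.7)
The plan is to exploit the fact that $\nabla_\tau$ is a continuous $\Qp$-linear derivation on $(\wt{\mathbf{B}}_{\log, L}^{\dagger})^{\tau\dpa, \gamma=1}$ (since $\tau$ acts by ring automorphisms, the Lie-algebra element $\nabla_\tau = \log(\tau^{p^n})/p^n$ inherits the Leibniz rule) and that it kills the $\tau$-invariant subring $K_0$. This reduces the computation to evaluating $\nabla_\tau$ on the topological generators $u$ and $\ell_u$, after which the explicit formula $N_\nabla(x) = \lambda \cdot u\frac{d}{du}(x)$ on ${\mathbf{B}}_{\rig, K_\infty}^{\dagger}$ follows by extending from Laurent polynomials to convergent Laurent series by continuity, and the statement on ${\mathbf{B}}_{\log, K_\infty}^{\dagger}$ follows by a further Leibniz expansion in $\ell_u$.

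The core calculation is the action on $u$ and $\ell_u$. When $\Kinfty \cap \Kpinfty = K$, we have $\tau(u) = u[\underline{\varepsilon}]$ with $\tau$ fixing $[\underline{\varepsilon}]$, so the one-parameter orbit $\tau^s(u) = u\exp(st)$ gives $\nabla_\tau(u) = ut$. For $\ell_u$, the identity $\tau^{p^n}(\ell_u) = \ell_u + p^n t$ holds, and since $t$ is $\tau$-fixed the operator $(1 - \tau^{p^n})$ squares to zero on $\ell_u$, yielding $\nabla_\tau(\ell_u) = t$. Invoking $t = p\lambda\mathfrak{t}$ from \eqref{eqfrakt} converts these to $N_\nabla(u) = \lambda u$ and $N_\nabla(\ell_u) = \lambda$. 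In the exceptional case $\Kinfty \cap \Kpinfty = K(\pi_1)$, where $p=2$ and $\tau(u) = u[\underline{\varepsilon}]^2$, the same computation produces an extra factor of $2 = p$ throughout, which is exactly absorbed by the modified denominator $p^2\mathfrak{t}$ in \eqref{eqnnring}, so the final formulas are unchanged.

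Once the formula $N_\nabla(x) = \lambda \cdot u\frac{d}{du}(x)$ is established on ${\mathbf{B}}_{\rig, K_\infty}^{\dagger}$, the stability claims become immediate from the explicit descriptions in Lemma \ref{lem laurent series} and Lemma \ref{lem inj}: the operator $u\frac{d}{du}$ preserves each of ${\mathbf{B}}_{K_\infty}^{I}$, ${\mathbf{B}}_{\rig, K_\infty}^{\dagger}$, and ${\mathbf{B}}_{\rig, K_\infty}^{+}$ (termwise differentiation of a convergent Laurent series converges on the same annulus), and $\lambda \in {\mathbf{B}}_{\rig, K_\infty}^{+}$ so multiplication by $\lambda$ is harmless. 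Finally, since ${\mathbf{B}}_{\log, K_\infty}^{\dagger} = {\mathbf{B}}_{\rig, K_\infty}^{\dagger}[\ell_u]$ and $N_\nabla(\ell_u) = \lambda \in {\mathbf{B}}_{\rig, K_\infty}^{+}$, a Leibniz expansion in powers of $\ell_u$ yields both the containment of the image in ${\mathbf{B}}_{\log, K_\infty}^{\dagger}$ and the stability of ${\mathbf{B}}_{\log, K_\infty}^{+}$.

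The main technical point---and I expect it to be the only real obstacle, though still routine---is justifying the continuous extension of the Leibniz-based formula from Laurent polynomials to convergent Laurent series. Concretely, one must check that $\nabla_\tau$ is continuous on each ${\mathbf{B}}_{K_\infty}^{[r,s]}$ and that its action on $\sum a_i u^i$ is genuinely the termwise derivative $t \sum i a_i u^i$. This should follow from the framework of $\tau$-analytic vectors: by Theorem \ref{thmLAVmain}\eqref{item-varphiu}, $u$ is $\tau_n$-analytic on the appropriate Banach pieces for $n \gg 0$, which provides the quantitative continuity estimate needed for termwise differentiation of the $\tau$-orbit series to converge to the expected sum. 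Care must be taken to match the radii of convergence across the various intervals $I$, but no new technical input is needed.
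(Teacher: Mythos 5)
Your proposal is correct and takes essentially the same route as the paper: the paper's proof is a one-line "explicit calculation" which expands $\log(\tau^{p^n})(u)$ via the power series, obtains $\nabla_\tau(u) = ut$, divides by $p\mathfrak{t}$ and invokes $t = p\lambda\mathfrak{t}$ to get $N_\nabla(u)=\lambda u$, then treats $\ell_u$ and the $p=2$ modification similarly. Your version makes the derivation property, the $(1-\tau^{p^n})$-nilpotency on $\ell_u$, and the continuity/termwise-differentiation step explicit, but these are exactly the points the paper is silently relying on, so the arguments coincide.
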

\begin{proof}
Everything follows from easy explicit calculations.
For example, when $\Kinfty \cap \Kpinfty=K$, then  $\tau(u)=u[\underline{\varepsilon}]$, hence we have (using any $n \gg 0$ in the following)
\[N_\nabla(u) =\frac{1}{p\mathfrak{t}} \cdot \frac{-1}{p^n} \cdot \sum_{k \geq 1} \frac{u(1- [\underline{\varepsilon}]^{p^n})^k}{k} = \frac{1}{p\mathfrak{t}} \cdot \frac{1}{p^n} \cdot u \cdot (p^n t) =\frac{ut}{p\mathfrak{t}}=\lambda \cdot u.\]
 The fact that $N_\nabla(\ell_u)= \lambda$ follows  from similar computation using \eqref{eq277new}.
\end{proof}


 \begin{rem}\label{compaminus}  Our $N_\nabla$  equals to ``$-N_\nabla$" in \cite[\S 1.1.1]{Kis06}. 
 Certainly, this sign change makes no difference for the results in \cite{Kis06} (which we will use later). (Alternatively, we could have added minus signs in Eqn. \eqref{eqnnring} so that everything is strictly compatible with the conventions in \cite{Kis06}; but we prefer to remove the minus signs everywhere, cf. also the choice we made in Rem. \ref{remNlu}).\end{rem}

\subsection{Monodromy operator over modules}

By Cor. \ref{corlamod}, we have
\begin{equation}
((\wt{\mathbf{B}}^\dagger_{\rig } \otimes_{\Qp} V  )^{G_L})^{\hat{G}\dpa}
=D^\dagger_{\rig, \Kinfty}(V)\otimes_{\mathbf{B}^\dagger_{\rig, \Kinfty}} (\wt{\mathbf{B}}^\dagger_{\rig, L})^{\hat{G}\dpa}.
\end{equation}
Hence similarly as in   \S \ref{subsecnring}, we have a map (using exactly the same formulae as in \eqref{eqnnring}):
\begin{equation}\label{eqnnbig}
N_\nabla:    D^\dagger_{\rig, \Kinfty}(V) \to D^\dagger_{\rig, \Kinfty}(V)\otimes_{\mathbf{B}^\dagger_{\rig, \Kinfty}} (\wt{\mathbf{B}}^\dagger_{\rig, L})^{\hat{G}\dpa}.
\end{equation}

\begin{theorem} \label{thmnnabla}
The map \eqref{eqnnbig} induces a map
\begin{equation}\label{eqnnrig}
N_\nabla: D^\dagger_{\rig, \Kinfty}(V) \to D^\dagger_{\rig, \Kinfty}(V).
\end{equation}
Namely, $N_\nabla$ is a well-defined operator on $D^\dagger_{\rig, \Kinfty}(V)$.
Furthermore, there exists some $r' \geq r(V)$ (cf. the notation above \eqref{eqringmod})  such that if $I  \subset [r', +\infty)$ , then
\begin{equation}\label{eqnnI}
N_\nabla: D^I_{\Kinfty}(V) \to D^I_{ \Kinfty}(V),
\end{equation}
where recall $D^I_{\Kinfty}(V) =
 D_{K_\infty}^{[r(V), +\infty]}(V) \otimes_{\mathbf{B}_{K_\infty}^{[r(V), +\infty]}} \mathbf{B}^I_{\Kinfty}$.
\end{theorem}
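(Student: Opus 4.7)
My plan is to split the argument into two parts: first show that $N_\nabla(x)$ lies in $((\wt\B^\dagger_{\rig} \otimes V)^{G_L})^{\tau\dpa,\gamma=1}$, which by Cor.~\ref{corlamod} places it in $D^\dagger_{\rig,\Kinfty}(V) \otimes_{\B^\dagger_{\rig,\Kinfty}} (\wt\B^\dagger_{\rig,L})^{\tau\dpa,\gamma=1}$; second, descend the coefficient ring from $(\wt\B^\dagger_{\rig,L})^{\tau\dpa,\gamma=1} = \cup_m \varphi^{-m}(\B^\dagger_{\rig,\Kinfty})$ down to $\B^\dagger_{\rig,\Kinfty}$ itself. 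The interval refinement \eqref{eqnnI} will fall out of the same analysis by tracking convergence radii.

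For the first part, note that by Cor.~\ref{corlamod} we already have $D^\dagger_{\rig,\Kinfty}(V) \subset ((\wt\B^\dagger_{\rig} \otimes V)^{G_L})^{\tau\dpa,\gamma=1}$, so $\nabla_\tau(x)$ is a well-defined $\tau$-pro-analytic element. The key computation is the covariance under $\gamma \in \gal(L/\Kinfty)$. From $\gamma\tau\gamma^{-1} = \tau^{\chi(\gamma)}$ (valid on the appropriate open subgroup in the $p=2$, $\Kinfty \cap \Kpinfty = K(\pi_1)$ case, which is the reason for the modification $\frac{1}{p^2\mathfrak{t}}$ in \eqref{eqnnring}), one deduces $\gamma \circ \nabla_\tau = \chi(\gamma)\,\nabla_\tau\circ\gamma$, hence $\gamma(\nabla_\tau(x)) = \chi(\gamma)\nabla_\tau(x)$ since $x$ is $\gamma$-fixed. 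On the other hand, the identity $p\lambda\mathfrak{t} = t$ combined with $\gamma(t) = \chi(\gamma)t$ and $\gamma(\lambda) = \lambda$ (since $\lambda \in \B^+_{\rig,\Kinfty}$) yields $\gamma(\mathfrak{t}) = \chi(\gamma)\mathfrak{t}$. The two $\chi(\gamma)$ factors cancel, so $N_\nabla(x)$ is $\gamma$-invariant and still $\tau$-pro-analytic.

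For the descent part, I would fix a $\B^\dagger_{\rig,\Kinfty}$-basis $\{e_j\}$ of $D^\dagger_{\rig,\Kinfty}(V)$, write $N_\nabla(x) = \sum_j b_j e_j$ with $b_j \in \cup_m \varphi^{-m}(\B^\dagger_{\rig,\Kinfty})$, and show $b_j \in \B^\dagger_{\rig,\Kinfty}$. The main tool here is the twisted Frobenius commutation
\[ N_\nabla \circ \varphi = \frac{pE(u)}{E(0)}\cdot \varphi \circ N_\nabla, \]
which follows from $\varphi\circ\nabla_\tau = \nabla_\tau\circ\varphi$ and $\varphi(\mathfrak{t}) = \frac{pE(u)}{E(0)}\mathfrak{t}$. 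By passing to closed intervals $I = [r,s] \subset [r',+\infty)$ for a suitable $r' \geq r(V)$, one uses the finite-interval incarnation of Cor.~\ref{corlamod} (which in turn rests on Thm.~\ref{thmLAVmain}) to realize $b_j \in \cup_m \varphi^{-m}(\B^{p^m I}_{\Kinfty})$; combined with $\varphi$-semilinearity of the $\varphi$-action on $D^I_{\Kinfty}(V)$ (which maps into $D^{I/p}_{\Kinfty}(V)$) and Cor.~\ref{corIpI}, one iteratively reduces $m$ down to $0$. The colimit over such $I$ then yields $N_\nabla(D^\dagger_{\rig,\Kinfty}(V)) \subset D^\dagger_{\rig,\Kinfty}(V)$, and the closed-interval version \eqref{eqnnI} is a direct byproduct.

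The main obstacle is the descent in the second part: the shifts of intervals under $\tau$-analyticity and the shifts under Frobenius must be synchronized precisely, and this is where the restriction $I \subset [r',+\infty)$ enters, ensuring that repeated application of $\varphi$ remains in the overconvergent regime where all the rings $\B^I_\Kinfty$ and $\wt\B^I_L$ are defined and the analytic estimates from \S\ref{subseclav} apply uniformly.
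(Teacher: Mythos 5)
Your first half — showing $N_\nabla(x)$ is $\gamma$-fixed by combining $\gamma\nabla_\tau=\chi_p(\gamma)\nabla_\tau\gamma$ (for $\gamma\in\gal(L/\Kinfty)$) with $\gamma(\mathfrak{t})=\chi_p(\gamma)\mathfrak{t}$, and then invoking Cor.~\ref{corlamod} together with $(\wt\B^\dagger_{\rig,L})^{\gamma=1,\tau\dpa}=\cup_m\varphi^{-m}(\B^\dagger_{\rig,\Kinfty})$ — is exactly what the paper does; no issues there.

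The descent half is where your write-up drifts. Two concrete problems. First, the direction of Frobenius on intervals is inverted: $\varphi$ maps $\B^I_{\Kinfty}$ (and hence $D^I_{\Kinfty}(V)$) into the \emph{larger} range $\B^{pI}_{\Kinfty}$, not into $\B^{I/p}_{\Kinfty}$ (see Rem.~\ref{rem Ic}: $\varphi:\wt\A^I\simeq\wt\A^{pI}$). Second, Cor.~\ref{corIpI} is not the right tool here. That corollary applies to an element $x$ already lying in $\B^I_{\Kinfty}$ with $\varphi(x)\in\B^I_{\Kinfty}$, and then improves the interval. But your coefficients $b_j$ a priori lie in $\varphi^{-m}(\B^{p^mI}_{\Kinfty})\subset\wt\B^I_L$, i.e.\ \emph{outside} $\B^I_{\Kinfty}$; what you need is to kill the $\varphi^{-m}$-twist, and Cor.~\ref{corIpI} says nothing about that. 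The mechanism that actually does the job is the twisted commutation $N_\nabla\varphi=\tfrac{pE(u)}{E(0)}\varphi N_\nabla$ by itself: the paper applies $N_\nabla$ to the new basis $\varphi^m(\vec e)$, obtaining $N_\nabla(\varphi^m(\vec e))=\bigl(\tfrac{p}{E(0)}\bigr)^m\!\left(\prod_{i=0}^{m-1}\varphi^i(E(u))\right)\varphi^m(\vec e)\,\varphi^m(M)$ with $\varphi^m(M)\in\Mat(\B^\dagger_{\rig,\Kinfty})$, and then uses that $\varphi^m(\vec e)$ is again a $\B^\dagger_{\rig,\Kinfty}$-basis (this is the step where \'etaleness of $\varphi$ on $D^\dagger_{\rig,\Kinfty}(V)$ is invoked, and which your write-up leaves implicit). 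Equivalently, if you insist on keeping the original basis, you can solve $M=\tfrac{pE(u)}{E(0)}A\,\varphi(M)\,A^{-1}-N_\nabla(A)A^{-1}$ and induct on $m$; either way the argument runs entirely through the commutation relation and the invertibility of $A$, with no input from Cor.~\ref{corIpI}. The closed-interval version~\eqref{eqnnI} then falls out by taking $r'=p^m r(V)$, which is the book-keeping you anticipated.
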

\begin{proof}

Note that we have the relation
\begin{equation*}
g \tau g^{-1}=\tau^{\chi_p(g)}, \text{ for } g\in \gal(L/\Kinfty),
\end{equation*}
where $\chi_p$ is the cyclotomic character. Note also $g(\mathfrak{t})=\chi_p(g)\mathfrak{t}$ for $g\in \gal(L/\Kinfty)$. Hence we have the relation
\begin{equation}\label{eq426}
gN_\nabla=N_\nabla g \text{ for } g\in \gal(L/\Kinfty).
\end{equation}
Since $\gal(L/\Kinfty)$ acts trivially on $D^\dagger_{\rig, \Kinfty}(V)$, hence  $\gal(L/\Kinfty)$ also acts trivially on $N_\nabla(D^\dagger_{\rig, \Kinfty}(V))$ using \eqref{eq426}. Thus, we have
\begin{eqnarray*}
N_\nabla(D^\dagger_{\rig, \Kinfty}(V)) &\subset & \left(D^\dagger_{\rig, \Kinfty}(V)\otimes_{B^\dagger_{\rig, \Kinfty}} (\wt{\mathbf{B}}^\dagger_{\rig, L})^{\hat{G}\dpa}\right)^{\gamma=1}\\
&=& D^\dagger_{\rig, \Kinfty}(V)\otimes_{\mathbf{B}^\dagger_{\rig, \Kinfty}} (\wt{\mathbf{B}}^\dagger_{\rig, L})^{\gamma=1, \tau\dpa}\\
&=& D^\dagger_{\rig, \Kinfty}(V)\otimes_{\mathbf{B}^\dagger_{\rig, \Kinfty}} (\cup_{m\geq 0}\varphi^{-m}(\mathbf{B}^\dagger_{\rig, \Kinfty})), \text{ by Cor. \ref{cor rig la}.}
\end{eqnarray*}
Choose a basis $\vec{e}$ of $D^\dagger_{\rig, \Kinfty}(V)$, then \[N_\nabla(\vec{e}) \subset D^\dagger_{\rig, \Kinfty}(V)\otimes_{\mathbf{B}^\dagger_{\rig, \Kinfty}} \varphi^{-m}(\mathbf{B}^\dagger_{\rig, \Kinfty}) \text{ for some } m \gg 0.\]

Recall $\varphi(\mathfrak{t})=\frac{pE(u)}{E(0)}\mathfrak{t}$.
Note that $\varphi \tau=\tau\varphi$ over $D^\dagger_{\rig, \Kinfty}(V)\otimes_{B^\dagger_{\rig, \Kinfty}} (\wt{\mathbf{B}}^\dagger_{\rig, L})^{\hat{G}\dpa}$,
hence we have the relation
\begin{equation}\label{eqrelnphi}
N_\nabla \varphi =\frac{pE(u)}{E(0)}\varphi N_\nabla.
\end{equation}
Using \eqref{eqrelnphi}, we can easily deduce that
\[N_\nabla(\varphi^m(\vec{e})) \subset D^\dagger_{\rig, \Kinfty}(V). \] 
Note that $\varphi^m(\vec{e})$ is also a basis of $D^\dagger_{\rig, \Kinfty}(V)$, hence we can conclude the proof of \eqref{eqnnrig}.
Finally, we can use $r'=p^mr(V)$ to derive \eqref{eqnnI}.
\end{proof}

\section{Monodromy operator for semi-stable representations}\label{secsemist}
In this section, we will recall Kisin's definition of a monodromy operator (up to a minus sign, by Rem. \ref{compaminus}) on certain modules associated to semi-stable  representations.
The main aim of this section is to show that Kisin's monodromy operator \emph{coincides} with the one   defined by us in \S \ref{secmono} (in the semi-stable case).

In \S \ref{subsecChe}, we review a result of Cherbonnier on maximal overconvergent submodules and use it to study finite height modules. 
In \S \ref{subsecKis}, we review Kisin's construction of $\mathcal O$-modules starting from   Fontaine's filtered $(\varphi,N)$-modules.
In \S \ref{subseccoin}, we prove the main result of this section, namely the coincidence of monodromy operators.

We first recall some ring notations commonly used in    \cite{Che} and \cite{Kis06}. These notations, unlike the systematic ``$\mathbf{A}, \B$-" notations in \S \ref{secring}, are \emph{ad hoc}; but they are convenient for writing.

\begin{notation}\label{501}
\hfill
\begin{enumerate}
\item Let $\gs:= \mathbf{A}^+_{K_\infty}$.
\item Let
${\mathcal{O}_{\mathcal{E}}}: =\mathbf{A}_{\Kinfty},
\quad \oedagger: =\mathbf{A}_{\Kinfty}^\dagger :=\mathbf{A}_{\Kinfty} \cap \mathbf{B}_{\Kinfty}^\dagger.$
 \item Let $\mathcal{O}:=\mathbf{B}_{\Kinfty}^{[0, +\infty)}$ (denoted as $\B^+_{\rig, \Kinfty}$ in Def. \ref{defrigring}; also denoted as $\mathcal{O}^{[0, 1)}$ in \cite{Kis06}). Explicitly,
\[
\mathcal{O} = \{f(u)= \sum_{i=0}^{+\infty} a_i u^i, a_i \in K_0 \mid   f(u) \text{ converges }, \forall u \in \mathfrak{m}_{\mathcal{O}_{\overline{K}}}   \},
\]
where $\mathfrak{m}_{\mathcal{O}_{\overline{K}}}  $ is the maximal ideal in $\mathcal{O}_{\overline{K}}$;
i.e., it consists of series that converge on the entire open unit disk. 
Recall that $N_\nabla$ is the operator $u\lambda\frac{d}{du}$ on $\mathcal{O}$.

\item Let $\mathcal{R}$ be the Robba ring as in \cite[\S 1.3]{Kis06}, which is precisely $\mathbf{B}_{\rig,  K_\infty}^{\dagger} $ in our Def. \ref{defrigring}; i.e., it consists of series that converge near the boundary of the open unit disk. 
Note that
\[\mathcal{O}= \mathbf{B}_{ K_\infty}^{[0, +\infty)} \subset \mathbf{B}_{\rig,  K_\infty}^{\dagger}=\mathcal{R}.\]
\end{enumerate}
\end{notation}

\begin{convention}
From now on, we focus on modules of \emph{non-negative}  heights, and  Galois representations of \emph{non-negative} (Hodge-Tate) weights (as we use co-variant functors throughout this paper). Hence, although the notations such as $\Mod_{\gs}^{\varphi, \geq 0}$ and $\mathrm{MF}^{\varphi, N, \geq 0}_{K_0}$ etc. in the following might be more rigorous, we use $\Mod_{\gs}^{\varphi}$ and $\MF$ etc. for short.
\end{convention}





\subsection{Finite height modules and overconvergent modules}\label{subsecChe}
In this subsection, we review a result of Cherbonnier, which says that a  finite free \'etale $\varphi$-modules (cf. Def. \ref{defetphi}) always contains a  maximal finite free ``overconvergent submodule" (possibly of a smaller rank). If the \'etale $\varphi$-module is of finite height, we show that the $\gs$-module inside it generates the maximal overconvergent submodule.

Recall $\Mod_{{\mathcal{O}_{\mathcal{E}}}}^\varphi$ is the category of finite free \'etale $\varphi$-modules (cf. Def. \ref{defetphi}). Define $\Mod_{\oedagger}^\varphi$  analogously; indeed, it consists of finite free  $\oedagger$-modules $M$ equipped with $\varphi_{\oedagger}$-semi-linear $\varphi: M \to M$ such that $1\otimes \varphi: \varphi^\ast M\to M$ is an isomorphism.


\begin{defn}\label{defleftadj}
\begin{enumerate}
\item Let \[j^{\dagger \ast}:  \Mod_{\oedagger}^\varphi \to  \Mod_{{\mathcal{O}_{\mathcal{E}}}}^\varphi\] denote the functor where $\mathcal N \mapsto \mathcal N\otimes_{\oedagger} {\mathcal{O}_{\mathcal{E}}}$.

\item Let $\mathcal{M} \in \Mod_{{\mathcal{O}_{\mathcal{E}}}}^\varphi$. Let $ F_{\dagger}(\mathcal{M})$ be be the set consisting of $\oedagger$-submodules $\mathcal N \subset \mathcal{M}$ of finite type such that $\varphi(\mathcal N) \subset \mathcal{N}.$
Let $j^{\dagger}_{\ast}(\mathcal{M}) $ be the union of elements in $ F_{\dagger}(\mathcal{M})$.
\end{enumerate}
\end{defn}

\begin{prop} \label{propmaxoc}
Let $\mathcal{M}\in \Mod_{{\mathcal{O}_{\mathcal{E}}}}^\varphi$.
\begin{enumerate}
\item  We have $j^{\dagger}_{\ast}(\mathcal{M}) \in \Mod_{\oedagger}^{\varphi}$, and hence $j^{\dagger}_{\ast}$ defines a functor
\[j^{\dagger}_{\ast}:  \Mod_{{\mathcal{O}_{\mathcal{E}}}}^\varphi \to  \Mod_{\oedagger}^\varphi. \]
Furthermore, we have
\[\mathrm{rk}_{\oedagger} j^{\dagger}_{\ast}(\mathcal{M})    \leq \mathrm{rk}_{{\mathcal{O}_{\mathcal{E}}}} \mathcal{M}.  \]
In addition, we have $ j^{\dagger}_{\ast} \circ j^{\dagger \ast} \simeq \mathrm{id}$.

\item The functor $j^{\dagger}_{\ast}$ is  a right adjoint of $j^{\dagger \ast}$, namely, if $\mathcal N_1 \in \Mod_{\oedagger}^\varphi$, $\mathcal N_2 \in \Mod_{{\mathcal{O}_{\mathcal{E}}}}^\varphi$, then
\[\Hom(\mathcal N_1, j^{\dagger}_{\ast}(\mathcal N_2)) = \Hom(j^{\dagger \ast}(\mathcal N_1), \mathcal N_2),  \]
where $\Hom$ denotes the set of morphisms in each category.
\end{enumerate}
\end{prop}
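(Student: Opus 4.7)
The plan is to exploit a classical structural fact: $\oedagger=\mathbf{A}^\dagger_{\Kinfty}$ is a henselian discrete valuation ring whose $p$-adic completion is $\OE$ (proved analogously to the cyclotomic case treated by Matsuda and Cherbonnier--Colmez). In particular $\oedagger$ is Noetherian and a PID, every finitely generated torsion-free $\oedagger$-module is free, and $\OE$ is a faithfully flat $\oedagger$-algebra. Given this, for any $\mathcal{N}\in F_\dagger(\mathcal{M})$ the module $\mathcal{N}$ is torsion-free (as a submodule of $\mathcal{M}$) and finitely generated, hence free; tensoring with $\OE$ gives an injection $\mathcal{N}\otimes_{\oedagger}\OE\hookrightarrow \mathcal{M}$, yielding the rank bound $\mathrm{rk}_{\oedagger}(\mathcal{N})\leq d:=\mathrm{rk}_{\OE}(\mathcal{M})$.

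The main obstacle, where I expect the real work to lie, is showing that $j^{\dagger}_{\ast}(\mathcal{M})$ is itself finitely generated over $\oedagger$, so that it belongs to $F_\dagger(\mathcal{M})$ and is the unique maximum. First observe that $F_\dagger(\mathcal{M})$ is directed: if $\mathcal{N}_1,\mathcal{N}_2\in F_\dagger(\mathcal{M})$ then $\mathcal{N}_1+\mathcal{N}_2$ is again finitely generated and $\varphi$-stable. Fix $\mathcal{N}_0\in F_\dagger(\mathcal{M})$ of maximal rank $r\leq d$; then for every $\mathcal{N}\in F_\dagger(\mathcal{M})$ the sum $\mathcal{N}+\mathcal{N}_0$ still has rank $r$, so $\mathcal{N}\otimes_{\oedagger}\mathcal{E}^\dagger \subset \mathcal{N}_0\otimes_{\oedagger}\mathcal{E}^\dagger$ inside $\mathcal{M}[1/p]$. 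Consequently every element of $j^{\dagger}_{\ast}(\mathcal{M})$ lies in the saturation $\widetilde{\mathcal{N}_0}:=(\mathcal{N}_0\otimes_{\oedagger}\mathcal{E}^\dagger)\cap \mathcal{M}$. The technical crux is then to establish that $\widetilde{\mathcal{N}_0}$ is itself finitely generated over $\oedagger$; I would approach this via an elementary-divisor analysis of $\mathcal{N}_0\otimes_{\oedagger}\OE$ inside the free rank-$d$ $\OE$-module $\mathcal{M}$, bounding the $p$-power ``distance'' between $\mathcal{N}_0\otimes_{\oedagger}\OE$ and its $\OE$-saturation by a fixed power of $p$, and then invoking faithfully flat descent along $\oedagger\to \OE$ to transfer finite generation from the completion back to $\oedagger$. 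Once $\widetilde{\mathcal{N}_0}$ is known to be finitely generated, Noetherianity of $\oedagger$ forces any submodule, in particular $j^{\dagger}_{\ast}(\mathcal{M})$, to be finitely generated as well, and it is visibly $\varphi$-stable.

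Once $j^{\dagger}_{\ast}(\mathcal{M})$ is realized as a finite free $\oedagger$-module, \'etaleness of its $\varphi$-structure is automatic: the map $1\otimes\varphi$ on $j^{\dagger}_{\ast}(\mathcal{M})$ becomes an isomorphism after the faithfully flat base change $\oedagger\to \OE$ (since $\mathcal{M}$ is \'etale), so it is already an isomorphism. For the identity $j^{\dagger}_{\ast}\circ j^{\dagger\ast}\simeq \mathrm{id}$, given $\mathcal{N}\in \Mod^{\varphi}_{\oedagger}$ the tautological inclusion $\mathcal{N}\hookrightarrow j^{\dagger}_{\ast}(j^{\dagger\ast}\mathcal{N})$ induces after $-\otimes_{\oedagger}\OE$ a composition $j^{\dagger\ast}\mathcal{N}\hookrightarrow j^{\dagger\ast}j^{\dagger}_{\ast}(j^{\dagger\ast}\mathcal{N})\hookrightarrow j^{\dagger\ast}\mathcal{N}$ which is the identity; hence both arrows are isomorphisms, and faithful flatness upgrades the original inclusion to an isomorphism as well.

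Finally, the adjointness in (2) is a formal verification. A morphism $f:j^{\dagger\ast}\mathcal{N}_1\to \mathcal{N}_2$ restricts to a $\varphi$-equivariant $\oedagger$-linear map $\mathcal{N}_1\to \mathcal{N}_2$ whose image is finitely generated and $\varphi$-stable, hence factors through $j^{\dagger}_{\ast}(\mathcal{N}_2)$; conversely, any $g:\mathcal{N}_1\to j^{\dagger}_{\ast}(\mathcal{N}_2)\subset \mathcal{N}_2$ extends by base change to $j^{\dagger\ast}\mathcal{N}_1\to \mathcal{N}_2$. These two assignments are mutually inverse, giving the required natural bijection of Hom-sets.
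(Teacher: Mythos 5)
The paper's ``proof'' of this proposition is essentially a citation: it refers everything except the identity $j^\dagger_\ast\circ j^{\dagger\ast}\simeq\mathrm{id}$ to Cherbonnier's thesis (\cite[\S 3.2, Prop.\ 2]{Che}), adding only the observation that that extra identity follows from faithful flatness of $\oedagger\to\OE$. Your proposal instead tries to reconstruct Cherbonnier's argument from scratch, which is a legitimate and more informative route, and your treatments of the adjunction in (2) and of $j^\dagger_\ast\circ j^{\dagger\ast}\simeq\mathrm{id}$ are fine (the latter is a cleaner phrasing of the paper's faithful-flatness remark). However, the self-contained route leaves the hard core of the proposition unaddressed, and in one place you quote a false principle as if it were automatic.

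The genuine gap is in the very first step, where you write that ``tensoring with $\OE$ gives an injection $\mathcal{N}\otimes_{\oedagger}\OE\hookrightarrow\mathcal{M}$, yielding the rank bound.'' Flatness of $\oedagger\to\OE$ only gives an injection $\mathcal{N}\otimes_{\oedagger}\OE\hookrightarrow\mathcal{M}\otimes_{\oedagger}\OE$, and the latter is not $\mathcal{M}$: the multiplication map $\mathcal{M}\otimes_{\oedagger}\OE\to\mathcal{M}$ is far from injective. What you actually need is that an $\oedagger$-basis of a finite $\varphi$-stable submodule $\mathcal{N}\subset\mathcal{M}$ remains $\OE$-linearly independent inside $\mathcal{M}$; this is \emph{exactly} the nontrivial content of Cherbonnier's proposition (it is what makes the rank bound $\mathrm{rk}_{\oedagger}\,j^\dagger_\ast(\mathcal{M})\le\mathrm{rk}_{\OE}\mathcal{M}$ true and nonformal), and it cannot be asserted as a formal consequence of flatness. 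Everything downstream in your sketch — maximality of $\mathcal{N}_0$, the elementary-divisor bound on the saturation $\widetilde{\mathcal{N}_0}$, and the conclusion that $j^\dagger_\ast(\mathcal{M})$ is finitely generated — silently uses this injectivity. A second, smaller issue: the claim that ``\'etaleness is automatic after the faithfully flat base change $\oedagger\to\OE$ since $\mathcal{M}$ is \'etale'' compresses a real step; it is \emph{true} that any finite free $\varphi$-stable $\OE$-submodule of an \'etale $\varphi$-module is again \'etale (one sees this by a Smith-normal-form computation exploiting $\varphi(p)=p$, or by reducing mod $p$ and using injectivity of $\varphi$ on $\mathcal{M}/p$), but this does require an argument and is not a base-change formality.
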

\begin{proof}
All are contained in \cite[\S 3.2, Prop. 2]{Che} except the claim that $ j^{\dagger}_{\ast} \circ j^{\dagger \ast} \simeq \mathrm{id}$: the claim follows from the fact that $\oedagger \to \OE$ is faithfully flat (as noted in beginning of \cite[\S 3.2]{Che}).
Let us mention that the ``$\varphi$-operator" in \cite[\S 3.2, Prop. 2]{Che} can be \emph{any lift} of the Frobenius $x\mapsto x^p$ on the ring $\mathcal{O}_{\mathcal{E}}/p\mathcal{O}_{\mathcal{E}}$ (cf. the beginning of \cite[\S 2.2]{Che}); hence \cite[\S 3.2, Prop. 2]{Che} applies (to the different $\varphi$-actions) in both the $(\varphi, \Gamma)$-module setting and $(\varphi, \tau)$-module setting.
\end{proof}

\begin{defn}\label{d513}
Let $\text{Mod}_{\gs}^{\varphi}$ be the category consisting of $(\gm, \varphi)$ where $\gm$ is a finite free $\gs$-module, and $\varphi: \gm \to \gm$ is a $\varphi_\gs$-semi-linear map such that the $\gs$-linear span of $\varphi(\gm)$ contains $E(u)^h\gm$ for some $h\geq 0$. We say that $\gm$ is of $E(u)$-height $\leq h$. When $\gm$ is of rank $d$, then \[T_{\gs}(\gm):=(\gm \otimes_\gs \wtA)^{\varphi=1}\]
is a finite free $\Zp$-representation of $G_\infty$ of rank $d$.
\end{defn}

\begin{defn} \label{deffinht}
Let  $T \in \textnormal{Rep}_{\Zp}(G_{\infty})$. We say $T$ is \emph{of finite $E(u)$-height with respect to $\vec{\pi}=\{\pi_n\}_{n \geq 0}$} if there exists some (hence by \cite[Prop. 2.1.12]{Kis06}, unique up to isomorphism) $\gm \in \text{Mod}_{\gs}^{\varphi}$ such that $T_\gs (\gm) \simeq T$.
We say that $V \in \textnormal{Rep}_{\Qp}(G_{\infty})$ is of finite $E(u)$-height with respect to $\vec{\pi}$ if there exist some $G_{\infty}$-stable $\Zp$-lattice $T$ (equivalently, any  $G_{\infty}$-stable lattice by  \cite[Lem. 2.1.15]{Kis06})  which is so. We say that $V \in \textnormal{Rep}_{\Qp}(G_{K})$ is of finite $E(u)$-height with respect to $\vec{\pi}$ if $V|_{G_{\infty}}$ is so.
Throughout the paper, when $E(u)$ and  $\vec{\pi}$ are unambiguous, we just say  \emph{of finite height} for short.
\end{defn}

\begin{rem}\label{remfinht}
Let  $T \in \textnormal{Rep}_{\Zp}(G_{\infty})$, and let $\cm \in \Mod_{{\mathcal{O}_{\mathcal{E}}}}^\varphi$ be its associated \'etale $\varphi$-module. Then $T$ is of finite height if and only if there is a  $\gm \in \text{Mod}_{\gs}^{\varphi}$  such that there is a $\varphi$-equivariant isomorphism $\mathbf{A}_{K_\infty} \otimes_{\gs}\gm \simeq \cm$.
\end{rem}


\begin{prop}\label{propfinoc}
Suppose $T \in \Rep_{\Zp}(G_\infty)$ is of finite height, and let $\cm, \gm$ be as in Rem. \ref{remfinht}. Then
$\mathfrak M \otimes_{\gs}\mathcal{O}_{\mathcal E}^\dagger \simeq j^\dagger_{\ast}(\bigM).$
\end{prop}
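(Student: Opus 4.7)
The plan is to deduce the result directly from the identity $j^{\dagger}_{\ast} \circ j^{\dagger \ast} \simeq \mathrm{id}$ stated in Proposition \ref{propmaxoc}(1), applied to the module $\mathfrak{M} \otimes_{\mathfrak{S}} \mathcal{O}_{\mathcal{E}}^\dagger$. For this to be meaningful, I first need to verify that $\mathfrak{M} \otimes_{\mathfrak{S}} \mathcal{O}_{\mathcal{E}}^\dagger$ actually lies in $\Mod_{\mathcal{O}_{\mathcal{E}}^\dagger}^{\varphi}$, and to identify its image under $j^{\dagger\ast}$ with $\mathcal{M}$. The latter is immediate from Remark \ref{remfinht}:
$$j^{\dagger\ast}\bigl(\mathfrak{M} \otimes_{\mathfrak{S}} \mathcal{O}_{\mathcal{E}}^\dagger\bigr) = \mathfrak{M} \otimes_{\mathfrak{S}} \mathcal{O}_{\mathcal{E}}^\dagger \otimes_{\mathcal{O}_{\mathcal{E}}^\dagger} \mathcal{O}_{\mathcal{E}} = \mathfrak{M} \otimes_{\mathfrak{S}} \mathcal{O}_{\mathcal{E}} = \mathcal{M}.$$

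For the former, I need to check that $1 \otimes \varphi : \varphi^{\ast}(\mathfrak{M} \otimes_{\mathfrak{S}} \mathcal{O}_{\mathcal{E}}^\dagger) \to \mathfrak{M} \otimes_{\mathfrak{S}} \mathcal{O}_{\mathcal{E}}^\dagger$ is an isomorphism. Since $\mathfrak{M}$ is finite free over $\mathfrak{S}$, $\mathfrak{M} \otimes_{\mathfrak{S}} \mathcal{O}_{\mathcal{E}}^\dagger$ is finite free over $\mathcal{O}_{\mathcal{E}}^\dagger$, and if $B$ is the matrix of $\varphi_{\mathfrak{M}}$ in some $\mathfrak{S}$-basis, then $\det(B) = c \cdot E(u)^h$ for some unit $c \in \mathfrak{S}^{\times}$ (by the finite $E(u)$-height condition in Definition \ref{d513}). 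Thus the claim reduces to showing $E(u) \in (\mathcal{O}_{\mathcal{E}}^\dagger)^{\times}$. Since $E(u) \equiv u^{e} \pmod{p}$ and $u^{-1} \in \mathcal{O}_{\mathcal{E}}^\dagger$ (a single overconvergent term), the reduction of $E(u)$ modulo $p$ is a unit in $\mathbf{E}_{K_\infty} = \mathcal{O}_{\mathcal{E}}^\dagger / p$. Because $\mathcal{O}_{\mathcal{E}}^\dagger$ is a Henselian discrete valuation ring with uniformizer $p$, this forces $E(u)$ to be a unit in $\mathcal{O}_{\mathcal{E}}^\dagger$ itself.

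With both facts in hand, applying $j^{\dagger}_{\ast}$ to $j^{\dagger\ast}(\mathfrak{M} \otimes_{\mathfrak{S}} \mathcal{O}_{\mathcal{E}}^\dagger) = \mathcal{M}$ and using the adjunction identity from Proposition \ref{propmaxoc}(1) gives
$$\mathfrak{M} \otimes_{\mathfrak{S}} \mathcal{O}_{\mathcal{E}}^\dagger \; \simeq \; j^{\dagger}_{\ast}\bigl(j^{\dagger\ast}(\mathfrak{M} \otimes_{\mathfrak{S}} \mathcal{O}_{\mathcal{E}}^\dagger)\bigr) \; = \; j^{\dagger}_{\ast}(\mathcal{M}).$$
The natural transformation $\mathcal{N} \to j^{\dagger}_{\ast}(j^{\dagger\ast}(\mathcal{N}))$ is the inclusion $n \mapsto n \otimes 1$, which matches the natural inclusion $\mathfrak{M} \otimes_{\mathfrak{S}} \mathcal{O}_{\mathcal{E}}^\dagger \hookrightarrow \mathcal{M}$ (landing inside $j^{\dagger}_{\ast}(\mathcal{M})$ because $\mathfrak{M} \otimes_{\mathfrak{S}} \mathcal{O}_{\mathcal{E}}^\dagger \in F_{\dagger}(\mathcal{M})$). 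Hence the abstract isomorphism is an actual equality of submodules of $\mathcal{M}$.

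I do not expect a serious obstacle: the argument is essentially a formal consequence of adjunction, and the only technical input is the invertibility of $E(u)$ in $\mathcal{O}_{\mathcal{E}}^\dagger$, which is straightforward. The somewhat delicate point worth articulating carefully is the compatibility between the formal adjunction isomorphism and the concrete inclusion of submodules inside $\mathcal{M}$, so that the statement really says the two submodules coincide and not merely that they are abstractly isomorphic.
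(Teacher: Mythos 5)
Your proof is correct, and the input it needs is the same Prop.~\ref{propmaxoc}, but you invoke a different part of it than the paper does. The paper applies the adjunction Prop.~\ref{propmaxoc}(2) with $\mathcal N_1 = \gm\otimes_{\gs}\oedagger$, $\mathcal N_2=\bigM$, obtains a map $\gm\otimes_{\gs}\oedagger\to j^\dagger_\ast(\bigM)$ corresponding to the isomorphism $\gm\otimes_{\gs}\OE\simeq\bigM$, and then shows this map is an isomorphism using the fact $\oedagger\cap(\OE)^\times=(\oedagger)^\times$ (a determinant argument). You instead appeal to $j^\dagger_\ast\circ j^{\dagger\ast}\simeq\mathrm{id}$ from Prop.~\ref{propmaxoc}(1) applied to $\gm\otimes_{\gs}\oedagger$, and then separately check that the abstract isomorphism really is the inclusion of submodules inside $\bigM$. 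Since $j^\dagger_\ast\circ j^{\dagger\ast}\simeq\mathrm{id}$ and $\oedagger\cap(\OE)^\times=(\oedagger)^\times$ are both consequences of faithful flatness of $\oedagger\to\OE$, the two arguments are genuinely equivalent at the level of content; yours packages the determinant/unit step inside Cherbonnier's result, whereas the paper redoes it explicitly.

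Two small points worth noting. First, your explicit verification that $\gm\otimes_{\gs}\oedagger$ lands in $\Mod_{\oedagger}^\varphi$ is a step the paper also tacitly needs (the adjunction in Prop.~\ref{propmaxoc}(2) is stated for objects of that category) but does not spell out, so making it explicit is a genuine improvement in readability. Second, the claim ``$\det(B)=c\,E(u)^h$ with $c\in\gs^\times$'' is true (by irreducibility of $E(u)$ in the UFD $\gs$), but it is slightly stronger than what you use: it suffices to note that $\det(B)$ divides $E(u)^{hd}$ in $\gs$, hence becomes a unit in $\oedagger$ once $E(u)$ is. Relatedly, you invoke Henselianness of $\oedagger$ to promote ``$E(u)\bmod p$ is a unit'' to ``$E(u)$ is a unit,'' but this only needs $\oedagger$ to be local with maximal ideal $(p)$, which is immediate from it being a valuation ring for the $p$-adic valuation.
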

\begin{proof}
Clearly $\gm \subset j^\dagger_{\ast}(\bigM)$ by Def. \ref{defleftadj}.
By Prop. \ref{propmaxoc}(2),
\begin{equation}\label{eqhomhom}
\Hom(\gm \otimes_{\gs}\mathcal{O}_{\mathcal E}^\dagger, j^\dagger_{\ast}(\bigM)) =\Hom(\gm \otimes_{\gs}\mathcal{O}_{\mathcal E}, \bigM).
\end{equation}
Using the fact that
\begin{equation}\label{eqcapoe}
\mathcal{O}_{\mathcal{E}}^\dagger \cap (\mathcal{O}_{\mathcal{E}})^\times =(\mathcal{O}_{\mathcal{E}}^\dagger)^\times,
\end{equation}
one can check that the morphism on the left hand side of \eqref{eqhomhom} corresponding to the isomorphism on the right hand side has to be an isomorphism itself.
(See e.g. the paragraph above  \cite[\S 3.1, Def. 5]{Che} for a proof of \eqref{eqcapoe}).
\end{proof}

\subsection{Filtered $(\varphi, N)$-modules and Kisin's  $\mathcal{O}$-modules} \label{subsecKis}

\begin{defn}\label{deffilnmod}
Let $\MF$ be the category of
 \emph{(effective) filtered $(\varphi,N)$-modules over $K_0$} which consists of  finite dimensional $K_0$-vector spaces $D$ equipped with
\begin{enumerate}
\item a Frobenius $\varphi: D \to D$ such that $\varphi(ax)=\varphi(a)\varphi(x)$ for all $a \in K_0, x \in D$;
\item a monodromy $N: D \to D$, which is a $K_{0}$-linear map such that $N\varphi=p\varphi N$;
\item a filtration $(\Fil^{i}D_{K})_{i\in\mathbb{Z}}$ on $D_{K}=D\otimes_{K_0} K$, by decreasing $K$-vector subspaces such that $\Fil^{0}D_{K}= D_K$  and $\Fil^{i}D_{K}=0$ for $i \gg 0$.
\end{enumerate}

Let $\MFwa$ denote the usual subcategory of $\MF$ consisting of \emph{weakly admissible} objects.
\end{defn}

\begin{defn}
 Let $\textnormal{Mod}_{ \mathcal{O}}^{\varphi, N_\nabla }$ be the category   consisting of finite free $\mathcal{O}$-modules $M$ equipped with
\begin{enumerate}
\item a $\varphi_{\mathcal O}$-semi-linear morphism $\varphi : M \to M$ such that   the cokernel of $1 \otimes \varphi : \varphi ^*M \to M $ is killed by $E(u)^h$ for some $h \in \mathbb{Z}^{\geq 0}$;
\item $N_\nabla: M \to M$ is a  map such that $N_\nabla(fm)=N_\nabla(f)m+fN_\nabla(m)$ for all $f\in \mathcal O$ and $m \in M$, and $N_\nabla\varphi=\frac{pE(u)}{E(0)} \varphi N_\nabla$.
\end{enumerate}
\end{defn}

\subsubsection{} \label{subsubMD}
For $D\in \MF$, we can associate an object $M \in \textnormal{Mod}_{ \mathcal{O}}^{\varphi, N_\nabla}$ by \cite{Kis06}. The construction is rather complicated, and we only give a very brief sketch. (We want  to give a sketch here, since we care about the construction of the $N_\nabla$-operator in this section).

For $n \geq 0$, let $K_{n+1}=K(\pi_{n})$ (hence $K_1=K$), and let $\wh{\gs}_n$ be the completion of $K_{n+1} \otimes_{W(k)} \gs$ at the maximal ideal $(u-\pi_n)$; $\wh{\gs}_n$  is equipped with its $(u-\pi_n)$-adic filtration, which extends to a filtration on the quotient field  $\wh{\gs}_n[1/(u-\pi_n)]$.

There is a natural $K_0$-linear map $\mathcal O \to \wh{\gs}_n$ simply by sending $u$ to $u$. Recall that we have maps $\varphi: \gs \to \gs$ and $\varphi: \mathcal O \to \mathcal O$ which extends the absolute Frobenius on $W(k)$ and sends $u$ to $u^p$. Denote $\varphi_W: \gs \to \gs$ and $\varphi_W: \mathcal O \to \mathcal O$ which only acts as absolute Frobenius on $W(k)$  and sends $u$ to $u$.

Let $\ell_u=\log u$ as in Def. \ref{defellu}. We can extend the map  $\mathcal O \to \wh{\gs}_n$ to  $\mathcal{O}[\ell_u] \to \wh{\gs}_n$ which sends $\ell_u$ to
\[ \sum_{i=1}^\infty (-1)^{i-1}i^{-1}(\frac{u-\pi_n}{\pi_n})^i \in   \wh{\gs}_n.\]
Note that $\mathcal{O}[\ell_u]$ is precisely the $\B_{\log, \Kinfty}^+$ in Def. \ref{defellu}; we use the explicit notation $\mathcal{O}[\ell_u]$ for brevity and for easier comparison with Kisin's exposition.
By the constructions in \S \ref{sslog} and Lem. \ref{lemnnring}, we can  naturally extend $\varphi$ to  $\mathcal{O}[\ell_u]$ by setting $\varphi(\ell_u)=p\ell_u$, and extend $N_\nabla$ to  $\mathcal{O}[\ell_u]$ by setting $N_\nabla(\ell_u)= \lambda$ (which, recall by Rem. \ref{compaminus}, differs from Kisin's convention by a minus sign).   
 Finally, write $N$ for the derivation on $\mathcal{O}[\ell_u]$ which acts as $\mathcal O$-derivation with respect to the formal variable $\ell_u$, i.e., $N(\ell_u)=1$, cf. Rem. \ref{remNlu} for this convention.

 
Given $D \in \MF$, write $\iota_n$ for the following composite map:
\begin{equation}
\mathcal{O}[\ell_u] \otimes_{K_0} D \xrightarrow{\varphi_W^{-n}\otimes \varphi^{-n}} \mathcal{O}[\ell_u]\otimes_{K_0} D \rightarrow \wh{\gs}_n\otimes_{K_0} D = \wh{\gs}_n\otimes_{K} D_K
\end{equation}
where the second map is induced from the map  $\mathcal{O}[\ell_u] \to \wh{\gs}_n$. The composite map extends to
\begin{equation}
\iota_n: \mathcal{O}[\ell_u, 1/\lambda] \otimes_{K_0} D \rightarrow \wh{\gs}_n[1/(u-\pi_n)]\otimes_{K} D_K
\end{equation}
Now, set
\begin{equation}\label{eqdefmd}
M(D): = \{ x\in (\mathcal{O}[\ell_u, 1/\lambda] \otimes_{K_0} D )^{N=0}: \iota_n(x)\in \Fil^0  \left( \wh{\gs}_n[1/(u-\pi_n)]\otimes_{K} D_K\right), \forall n\geq 1  \},
\end{equation}
where the $\Fil^0$ in \eqref{eqdefmd} comes from tensor product of two filtrations.
Then Kisin shows that $M(D)$ is in fact a finite free $\mathcal O$-module. 
The map $\varphi \otimes \varphi$ on $\mathcal{O}[\ell_u, 1/\lambda] \otimes_{K_0} D$ induces a map $\varphi$ on  $M(D)$; 
the map  $N_\nabla\otimes 1$ on $\mathcal{O}[\ell_u, 1/\lambda] \otimes_{K_0} D$ induces a map  $N_\nabla$ on $M(D)$.
 Kisin shows that this makes $M(D)$ into an object in  $\textnormal{Mod}_{ \mathcal{O}}^{\varphi, N_\nabla}$.


Conversely, let $M \in \textnormal{Mod}_{ \mathcal{O}}^{\varphi, N_\nabla}$, then one can define $D(M):=M/uM$ with the induced $\varphi, N$-structures (where $N:= N_\nabla/u N_\nabla$); using a certain \emph{unique} $\varphi$-equivariant \emph{section}   $\xi: D(M) \to M$ as in \cite[Lem. 1.2.6]{Kis06}, one can also define a filtration on $D(M)\otimes_{K_0} K$. This gives rise to an object in $\MF$.

\begin{thm}\label{thmcomdiaa}  \cite[Thm. 1.2.15]{Kis06}
The constructions in \S \ref{subsubMD} induce  an  equivalence  between  $\MF$ and $
\textnormal{Mod}_{ \mathcal{O}}^{\varphi, N_\nabla}$.
\end{thm}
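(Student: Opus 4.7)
The plan is to verify that the two assignments $D \mapsto M(D)$ and $M \mapsto D(M)$ described in \S\ref{subsubMD} are mutually quasi-inverse equivalences. First I would construct the functor $D \mapsto M(D)$ carefully: given $D \in \MF$, the object $M(D)$ defined by \eqref{eqdefmd} sits inside the $N=0$ subspace of $\mathcal{O}[\ell_u,1/\lambda]\otimes_{K_0} D$, which is naturally identified with a single copy of $\mathcal{O}[1/\lambda]\otimes_{K_0} D$ by the isomorphism $\sum_{i}\ell_u^i\otimes d_i \mapsto \sum_i (\text{combinatorial shift})$ solving $Nx=0$. So set-theoretically $M(D)$ is a submodule of $\mathcal{O}[1/\lambda]\otimes_{K_0} D$ cut out by a \emph{countable} family of filtration conditions indexed by $n\geq 1$. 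The key finiteness/freeness claim—that $M(D)$ is finite free over $\mathcal{O}$ of rank $\dim_{K_0} D$—is the main technical point and would be proved exactly as in \cite[Lem.~1.2.2]{Kis06}: start with a $K_0$-basis of $D$, view it as a meromorphic frame over $\mathcal{O}[1/\lambda]$, and correct by elements of the form $\lambda^{-k}\otimes x$ inductively to satisfy the filtration conditions at each $\iota_n$, using that $\lambda$ vanishes to first order at each $u=\pi_n$ and $\Fil^i D_K=0$ for $i\gg 0$.

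Next I would verify the structures. The Frobenius $\varphi$ preserves $M(D)$ because $\varphi$ commutes with each $\iota_n$ up to shifting the index by one (via the $\varphi_W^{-n}$ twist), so the filtration conditions at level $n$ for $x$ become the conditions at level $n+1$ for $\varphi(x)$—which are weaker; effectivity of $E(u)^h$-height on the cokernel of $1\otimes\varphi$ follows from $\varphi(\lambda)/\lambda = \tfrac{E(u)}{E(0)}$ and the bounded-filtration hypothesis. The operator $N_\nabla \otimes 1$ preserves $M(D)$ because (i) $[N_\nabla,N]=0$ on $\mathcal{O}[\ell_u,1/\lambda]$ using $N_\nabla(\ell_u)=\lambda$ and $N(\ell_u)=1$ together with $N(\lambda)=0$, so the $N=0$ condition is preserved; and (ii) at each $\iota_n$ the derivation $N_\nabla = u\lambda\tfrac{d}{du}$ shifts filtrations downward by at most one thanks to $\lambda\in (u-\pi_n)\wh{\gs}_n$. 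The relation $N_\nabla\varphi=\tfrac{pE(u)}{E(0)}\varphi N_\nabla$ is inherited directly from Lem.~\ref{lemnnring} applied to $\mathcal{O}[\ell_u]$.

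The functor in the reverse direction sends $M$ to $D(M):=M/uM$ with Frobenius induced from $\varphi_M$ and monodromy $N$ induced from $N_\nabla$ (well defined because $N_\nabla(u)=u\lambda\in u\mathcal{O}$, so $N_\nabla$ descends modulo $u$, and modulo $u$ the relation $N_\nabla\varphi=\tfrac{pE(u)}{E(0)}\varphi N_\nabla$ degenerates to $N\varphi=p\varphi N$ since $E(u)/E(0)\equiv 1\pmod{u}$). To define the filtration on $D(M)_K$, invoke the unique $\varphi$-equivariant $\mathcal{O}$-linear section of $M \twoheadrightarrow M/uM\otimes_{K_0}\mathcal{O}$ after inverting appropriate elements, as in \cite[Lem.~1.2.6]{Kis06}; transporting $M$ along this section, evaluating at $u=\pi$ and using the $E(u)$-adic filtration on $\varphi^*M/E(u)^i\varphi^*M$ gives the desired filtration on $D(M)_K$.

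Finally, to check that the two functors are mutually quasi-inverse, one constructs natural maps $D\to D(M(D))$ and $M(D(M))\to M$ and shows each is an isomorphism. For the first, evaluating an element of $M(D)\subset \mathcal{O}[\ell_u,1/\lambda]\otimes_{K_0} D$ at $u=0$ (which kills $\ell_u$ after subtracting the $N$-solution correction) produces the map $M(D)/uM(D)\simeq D$, respecting $\varphi$, $N$, and filtrations. For the second, the unique section $\xi:D(M)\to M$ extends $\mathcal{O}[\ell_u,1/\lambda]$-linearly to a map from the ambient space of $M(D(M))$ into $M\otimes_{\mathcal{O}}\mathcal{O}[\ell_u,1/\lambda]$, and one checks it lands back in $M$ and is an isomorphism using the $E(u)$-height bound and finite-rank comparison. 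The hard part of the entire argument is the freeness of $M(D)$ together with showing $M(D(M))\simeq M$, both of which rely crucially on the uniqueness/existence of the $\varphi$-equivariant section $\xi$ of \cite[Lem.~1.2.6]{Kis06}; this is where the rigid-analytic structure of $\mathcal{O}$ (as opposed to a purely algebraic $W(k)\llbracket u\rrbracket$-theory) is essential, since one genuinely needs convergence on the whole open unit disk to make sense of evaluating $M$ at the infinitely many points $u=\pi_n$.
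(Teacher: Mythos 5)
The paper does not actually prove this theorem: it is cited verbatim from \cite[Thm.~1.2.15]{Kis06}, with \S\ref{subsubMD} providing only a sketch of the two constructions and explicitly deferring to Kisin for the proof that they give an equivalence. Your proposal reasonably reconstructs the outline of Kisin's argument — freeness of $M(D)$ by inductive correction with powers of $\lambda$ (Kisin's Lem.~1.2.2), verification of the $\varphi$- and $N_\nabla$-stability, the inverse functor via $M/uM$ together with the unique $\varphi$-equivariant section of \cite[Lem.~1.2.6]{Kis06}, and the quasi-inverse checks — so the route is essentially the same as the cited one.

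One point of imprecision worth flagging: your claim that ``the filtration conditions at level $n$ for $x$ become the conditions at level $n+1$ for $\varphi(x)$ --- which are weaker'' has the shift going the wrong way and elides a genuine boundary case. Since $\varphi_W^{-n}\varphi = \varphi_W^{-(n-1)}\sigma_u$ where $\sigma_u(u)=u^p$, and $\pi_n^p=\pi_{n-1}$, one finds that $\iota_n(\varphi(x))$ is the image of $\iota_{n-1}(x)$ under the filtration-preserving ring map $\wh{\gs}_{n-1}\to\wh{\gs}_n$, $u\mapsto u^p$. So the conditions on $\varphi(x)$ at all levels $n\geq 2$ follow from the given conditions on $x$ at levels $\geq 1$, but the lowest-level condition on $\varphi(x)$ requires information about $x$ at one level below where the hypotheses apply; Kisin handles this with a separate argument, and it is not a matter of the conditions simply being weaker. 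The rest of your sketch (the $\varphi(\lambda)/\lambda=E(u)/E(0)$ identity for effectivity, the $N_\nabla$-stability via the order of vanishing of $\lambda$ at each $\pi_n$, the descent of $N_\nabla$ modulo $u$ and degeneration of the twisted Leibniz rule to $N\varphi=p\varphi N$) is accurate.
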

 
Let $\textnormal{Mod}_{ \mathcal{O}}^{\varphi, N_\nabla, 0}$ be the subcategory of $\textnormal{Mod}_{ \mathcal{O}}^{\varphi, N_\nabla}$ consisting of objects $M$ such that $\mathcal{R}\otimes_{\mathcal O}M$ is \emph{pure of slope $0$} in the sense of Kedlaya (cf. \cite{Ked04, Ked05} or \cite[\S 1.3]{Kis06}).

\begin{thm} \cite[Thm. 1.3.8]{Kis06}
The equivalence in Thm. \ref{thmcomdiaa}  induces  an equivalence between 
$\MFwa$ and $
\textnormal{Mod}_{ \mathcal{O}}^{\varphi, N_\nabla, 0}.$
\end{thm}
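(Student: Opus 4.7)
The plan is to characterize weak admissibility in terms of Kedlaya's slope theory on $\mathcal{R}\otimes_{\mathcal{O}}M(D)$. Recall that $D\in\MF$ is weakly admissible iff $t_N(D)=t_H(D)$ and $t_N(D')\le t_H(D')$ for every $(\varphi,N)$-stable $K_0$-subspace $D'\subset D$ (equipped with the induced filtration on $D'_K$), while purity of slope $0$ is the Kedlaya condition that every Frobenius sub-isocrystal of $\mathcal{R}\otimes M(D)$ has slope $0$. My strategy is to reduce the equivalence to (a) a determinant/rank-one calculation matching $t_N-t_H$ with the total Kedlaya slope, and (b) a bijection between $(\varphi,N)$-stable subspaces $D'\subset D$ and $(\varphi,N_\nabla)$-stable sub-$\mathcal{R}$-modules of $\mathcal{R}\otimes M(D)$ that are saturated (equivalently, sub-isocrystals in the sense of Kedlaya).

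First I would settle the rank-one case by direct calculation: if $\dim_{K_0}D=1$ with Frobenius eigenvalue $\alpha$ and filtration jump at $h$, then by the defining condition \eqref{eqdefmd} the module $M(D)$ is generated over $\mathcal{O}$ by $\lambda^{h}\otimes d$, so $\varphi$ acts on the generator by $\alpha\cdot(E(u)/E(0))^{h}$. After base change to $\mathcal{R}$ one unwinds that $(E(u)/E(0))$ is a unit times a uniformizer on an appropriate annulus, and a clean computation shows that the Kedlaya slope equals $v_p(\alpha)-h=t_N(D)-t_H(D)$. Taking top exterior powers, this immediately yields $t_N(D)=t_H(D)\iff\det(\mathcal{R}\otimes M(D))$ is pure of slope $0$, which handles the equality half of the weak admissibility condition.

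For the inequality half I would build the subobject correspondence. Given $D'\subset D$ an admissible subobject, the functoriality of Kisin's construction gives a saturated inclusion $M(D')\hookrightarrow M(D)$ of $\mathcal{O}$-modules stable under $\varphi,N_\nabla$; base changing to $\mathcal{R}$ produces a sub-$\varphi$-module whose slope equals $t_N(D')-t_H(D')$ by the rank-one determinant case applied to $\det D'$. Conversely, given a $\varphi$-stable sub-$\mathcal{R}$-module $M'$ of $\mathcal{R}\otimes M(D)$, I would use the inverse functor from Thm. \ref{thmcomdiaa}: set $D':=M'/uM'\hookrightarrow D$, endow it with the Frobenius, monodromy and filtration pulled back via Kisin's canonical section $\xi$, and verify that this produces a sub-object in $\MF$ with $t_N(D')-t_H(D')$ equal to the Kedlaya slope of $\det M'$. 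The Harder–Narasimhan/Kedlaya slope filtration of $\mathcal{R}\otimes M(D)$ is $\varphi$-stable and automatically stable under $N_\nabla$ (using the commutation relation $N_\nabla\varphi=\frac{pE(u)}{E(0)}\varphi N_\nabla$, which shifts slopes by $0$), so it descends to a filtration by sub-$(\varphi,N)$-modules. With the correspondence and the slope-versus-$(t_N-t_H)$ matching in hand, weak admissibility and purity of slope $0$ translate into each other.

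The main obstacle will be the converse half of the subobject correspondence: showing that an arbitrary saturated $\varphi$-stable sub-$\mathcal{R}$-module of $\mathcal{R}\otimes M(D)$ actually descends to a sub-$\mathcal{O}$-module of $M(D)$ and thence to a sub-filtered $(\varphi,N)$-module $D'\subset D$ whose filtration, induced intrinsically from $D_K$, matches the filtration one would reconstruct from $M'$ via Kisin's section $\xi$. This compatibility of filtrations is the subtle point, since the filtration on $D(M)$ is reconstructed from the position of $\xi(D(M))$ inside $M[1/\lambda]$ relative to the points $u=\pi_n$, and one must check that passing to the sub-object $M'$ preserves this relative position. Granting Kedlaya's slope filtration theorem over $\mathcal{R}$ and the full force of Thm. \ref{thmcomdiaa} (including uniqueness of $\xi$), this verification should go through.
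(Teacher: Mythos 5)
The paper does not give a proof of this statement; it is quoted from \cite[Thm.~1.3.8]{Kis06}, so there is no in-paper argument to compare against. Your overall plan --- reduce to a rank-one slope computation matching $t_N-t_H$ with the Kedlaya degree, then translate the inequalities defining weak admissibility into slope purity via a correspondence between $(\varphi,N)$-stable subspaces $D'\subset D$ and $\varphi$-stable sub-$\mathcal{R}$-modules of $\mathcal{R}\otimes_{\mathcal{O}} M(D)$ using the Harder--Narasimhan filtration --- is indeed the strategy of Kisin's proof. Two points, one minor and one a genuine gap.

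Minor: your rank-one calculation is internally inconsistent. If the $\mathcal{O}$-generator of $M(D)$ were $\lambda^{h}\otimes d$, then since $\varphi(\lambda)=\lambda\cdot(E(u)/E(0))^{-1}$ the operator $\varphi$ would act by $\alpha\cdot(E(u)/E(0))^{-h}$, not by $\alpha\cdot(E(u)/E(0))^{h}$. With the correct normalization (the defining condition \eqref{eqdefmd} \emph{permits} poles of order up to $h$ at the $\pi_n$'s, so the generator involves a negative power of $\lambda$), $\varphi$ acts by $\alpha\cdot(E(u)/E(0))^{h}$, whose degree is $v_p(\alpha)-h=t_N(D)-t_H(D)$, as you intended. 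You should also speak of the \emph{degree} rather than the slope of $\det M'$ when carrying this over to higher rank.

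Substantive: you correctly flag as the ``main obstacle'' the converse descent --- passing from a saturated $\varphi$-stable sub-$\mathcal{R}$-module of $\mathcal{R}\otimes M(D)$ to a saturated $(\varphi,N_\nabla)$-stable sub-$\mathcal{O}$-module of $M(D)$, and then, through the section $\xi$, to a sub-filtered $(\varphi,N)$-module $D'\subset D$ whose filtration agrees with the one induced from $D_K$ --- but you then assert it ``should go through.'' This is precisely the technical core of Kisin's proof and is not a routine verification: it rests on the structure theory of coherent $\varphi$-sheaves on the open unit disc, the Bezout property of $\mathcal{O}$, and a careful argument that the descended $\mathcal{O}$-submodule is $N_\nabla$-stable and induces the expected filtration at $u=0$ (Kisin's Lemma~1.3.9, Proposition~1.3.10, Lemma~1.3.13 and the surrounding material). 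Your remark that the HN filtration is automatically $N_\nabla$-stable is correct in outline (the induced $\mathcal{R}$-linear map $F\to(\mathcal{R}\otimes M(D))/F$ is $\varphi$-semi-equivariant with a slope-zero twist, hence vanishes by the HN slope ordering), but that observation alone does not produce the descent to $\mathcal{O}$ and to $D$, so the proposal as written is not yet a proof.
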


Let $\Mod_{\gs}^{\varphi, N}$ be the category where an object is a $\gm \in \Mod_{\gs}^{\varphi}$ together with a $K_0$-linear map $N: \gm/u\gm[1/p] \to \gm/u\gm[1/p]$ such that $N\varphi=p\varphi N$ over $\gm/u\gm[1/p]$. Let $\Mod_{\gs}^{\varphi, N}\otimes \Qp$ be   its isogeny category.

\begin{theorem}\label{thm539}
There exists a fully faithful $\otimes$-functor from $\MFwa$ to $\Mod_{\gs}^{\varphi, N}\otimes \Qp$. Furthermore, suppose $D \in \MFwa$ maps to $(\gm, \varphi, N)$, then
\begin{enumerate}
\item   there is a $\varphi$-equivariant isomorphism 
\begin{equation}\label{537}
\gm \otimes_\gs \mathcal{O} \simeq M(D);
\end{equation}
 
\item there is a canonical $G_\infty$-equivariant isomorphism 
\begin{equation}\label{538}
T_\gs(\gm)\otimes_{\Zp}\Qp \simeq V_\st(D)|_{G_\infty},
\end{equation}
where $V_\st(D)$ is the usual \emph{co-variant} Fontaine functor.
\end{enumerate} 
\end{theorem}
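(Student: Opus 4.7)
The plan is to combine Kisin's equivalence between $\MFwa$ and $\Mod_\mathcal{O}^{\varphi, N_\nabla, 0}$ (Thm.~1.3.8 of \cite{Kis06}) with Kedlaya's slope filtration theorem to associate a Galois representation to $D$, and then to descend to a Breuil-Kisin module using the resulting finite $E(u)$-height property.

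First, I would form $M := M(D) \in \Mod_\mathcal{O}^{\varphi, N_\nabla, 0}$ via Kisin. Since $\mathcal{R}\otimes_\mathcal{O} M$ is pure of slope $0$, Kedlaya's slope filtration theorem produces a unique \'etale $\varphi$-module $N$ over $\oedagger[1/p]$ with $\mathcal{R}\otimes_{\oedagger[1/p]} N \simeq \mathcal{R}\otimes_\mathcal{O} M$. The \'etale $\varphi$-module $\mathcal{O}_\mathcal{E}\otimes_{\oedagger} N$ corresponds, via Thm.~\ref{thmphitau}(1), to a $G_\infty$-representation $V$, which a period-ring comparison (essentially \cite[Prop.~2.1.5]{Kis06}) identifies with $V_\st(D)|_{G_\infty}$, establishing (\ref{538}) at the isogeny level.

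Next, I would choose a $G_\infty$-stable $\Zp$-lattice $T \subset V$ and let $\mathcal{M}\in \Mod_{\mathcal{O}_\mathcal{E}}^\varphi$ be the associated \'etale $\varphi$-module. The maximal overconvergent submodule $j^\dagger_\ast(\mathcal{M})$ from Prop.~\ref{propmaxoc}, after inverting $p$, canonically identifies with $N$. The existence of the $\mathcal{O}$-lattice $M$ of finite $E(u)$-height inside $\mathcal{R}\otimes_\mathcal{O} M$ should force $V$ to be of finite $E(u)$-height in the sense of Def.~\ref{deffinht}; appealing to Kisin's classification of finite-height lattices (\cite[Prop.~2.1.12]{Kis06}) will then yield $\gm \in \Mod_\gs^\varphi$, unique up to isogeny given $T$, with $T_\gs(\gm) \simeq T$. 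To verify (\ref{537}), I would observe that both $\gm\otimes_\gs \mathcal{O}$ and $M$ are finite free $\mathcal{O}$-modules of equal rank with compatible $\varphi$ of finite $E(u)$-height, and that inside their common scalar extension $\mathcal{R}\otimes_\mathcal{O} M$ they are determined by their $\varphi$-actions, which forces them to coincide.

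Finally, I would equip $\gm/u\gm[1/p]$ with $N$ via the canonical chain $\gm/u\gm[1/p] \simeq M/uM[1/p] \simeq D$, the last isomorphism coming from the inverse of Kisin's equivalence (realized by the section $\xi$ of \cite[Lem.~1.2.6]{Kis06}), and transport the monodromy from $D$. Full faithfulness should follow by composition: Kisin's functor $D \mapsto M$ is fully faithful, and $M \mapsto \gm$ is fully faithful because morphisms between finite-height Breuil-Kisin modules over $\gs[1/p]$ are determined by their base changes to $\mathcal{O}_\mathcal{E}[1/p]$, hence by the attached Galois representations, which by Colmez-Fontaine match morphisms of the underlying weakly admissible filtered $(\varphi,N)$-modules. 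Tensor compatibility is straightforward from each construction being tensor-compatible. The main obstacle, and the essential content of \cite[\S 1.6, \S 2.1]{Kis06}, is the descent from $\mathcal{O}$-modules to $\gs$-modules; without Kisin's classification of finite-height lattices, one would be forced to argue this directly via an intersection argument inside $\mathcal{R}\otimes_\mathcal{O} M$, which requires delicate control of the interaction between the finite $E(u)$-height condition on $M$ and the overconvergent structure on $\mathcal{M}$.
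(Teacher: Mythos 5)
Your route largely reconstructs the chain that the paper telescopes into two citations of Kisin, but there is a genuine gap at the decisive step: you never actually justify or cite the descent from $\mathcal{O}$-modules to $\gs$-modules. You write that the existence of the finite $E(u)$-height $\mathcal{O}$-lattice $M(D)$ inside $\mathcal{R}\otimes_\mathcal{O} M(D)$ ``should force $V$ to be of finite $E(u)$-height in the sense of Def.~\ref{deffinht}'' and then appeal to \cite[Prop.~2.1.12]{Kis06}. But Prop.~2.1.12 only gives \emph{uniqueness} of $\gm$ once one already knows $T$ is of finite $E(u)$-height (see how it is used in Def.~\ref{deffinht} of the paper); it does not produce the $\gs$-lattice. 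The actual content you need is \cite[Lem.~1.3.13]{Kis06}, which constructs a $\gs$-lattice $\gm \subset M(D)$ with $\gm\otimes_\gs\mathcal{O} \simeq M(D)$, and \cite[Cor.~1.3.15]{Kis06} for its uniqueness/functoriality --- exactly the two results the paper cites for Item (1). As written, your ``should force'' clause is asserting the conclusion of Lem.~1.3.13 without providing or citing its proof, and your closing paragraph (which correctly flags this as the main obstacle) does not resolve the issue --- it merely acknowledges it.

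A secondary looseness: your argument for \eqref{537} (that two finite free $\mathcal{O}$-lattices of equal rank inside $\mathcal{R}\otimes_\mathcal{O} M$, both $\varphi$-stable of finite $E(u)$-height, must coincide) is not automatic and also hides behind the uniqueness part of \cite[Cor.~1.3.15]{Kis06} (or \cite[Lem.~1.3.14]{Kis06}). Once you replace the ``should force'' step with the correct citation of Lem.~1.3.13 and Cor.~1.3.15, the construction of $\gm$ and the verification of \eqref{537} collapse into a single step, which is exactly what the paper does; the identification \eqref{538} is then \cite[Prop.~2.1.5]{Kis06} (covariant version), as you correctly observe. Your surrounding scaffolding (Kedlaya slope filtration, passing through $j^\dagger_\ast$, and the full-faithfulness argument via Colmez-Fontaine) is consistent with the intended proof but is somewhat redundant once Lem.~1.3.13 is invoked directly.
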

\begin{proof}
Item (1) follows from \cite[Lem. 1.3.13, Cor. 1.3.15]{Kis06}, and Item (2)   is the co-variant version of   \cite[Prop. 2.1.5]{Kis06}.
\end{proof}

\subsection{Coincidence of monodromy operators} \label{subseccoin}
In this subsection, we show that the monodromy operators in Kisin's construction  and in our construction  \emph{coincide} in the case of semi-stable representations.  

\begin{lemma} \label{lem541}
Let $D\in \MF$.
There is a  $(\varphi, N)$-equivariant isomorphism
\begin{equation}\label{541}
M(D)\otimes_{\mathcal{O}} \mathcal{O}[\ell_u, 1/\lambda] \simeq D\otimes_{K_0} \mathcal{O}[\ell_u, 1/\lambda].
\end{equation}
\end{lemma}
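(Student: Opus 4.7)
The plan is to take $i : M(D)\otimes_{\mathcal{O}}\mathcal{O}[\ell_u,1/\lambda]\to D\otimes_{K_0}\mathcal{O}[\ell_u,1/\lambda]$ to be the obvious $\mathcal{O}[\ell_u,1/\lambda]$-linear extension of the inclusion $M(D)\hookrightarrow \mathcal{O}[\ell_u,1/\lambda]\otimes_{K_0}D$ used to define $M(D)$ in \S\ref{subsubMD}, and to prove $i$ is a $(\varphi,N)$-equivariant bijection. The $\varphi$-equivariance is immediate since $M(D)$ is $\varphi$-stable in the ambient tensor product. For $N$: write $N_D$ for the monodromy on $D$ and let $N_{\mathrm{tot}} := N\otimes 1 + 1\otimes N_D$ be the ambient derivation on $\mathcal{O}[\ell_u,1/\lambda]\otimes_{K_0}D$ (using $N(\ell_u)=1$ on the first factor). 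By construction $M(D)\subset\ker N_{\mathrm{tot}}$, so the Leibniz rule forces the induced $N$-action on the LHS of $i$ to be $1\otimes N$, and a one-line check on pure tensors $i(m\otimes f)=fm$, using $N_{\mathrm{tot}}(m)=0$, shows that $i$ intertwines this with $N_{\mathrm{tot}}$ on the RHS.

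For bijectivity, both sides are finite free of the same rank $r:=\dim_{K_0}D$ over $\mathcal{O}[\ell_u,1/\lambda]$, using Kisin's theorem that $M(D)$ is free of rank $r$ over $\mathcal{O}$ (a key ingredient already recalled in the statement of Thm.~\ref{thmcomdiaa}). The main tool I would use is the nilpotent ``gauge transformation''
\[
\Phi := \exp(-\ell_u\cdot N_D),
\]
an $\mathcal{O}[\ell_u,1/\lambda]$-linear automorphism of $D\otimes_{K_0}\mathcal{O}[\ell_u,1/\lambda]$ (well defined because $N_D$ is nilpotent, from $N_D\varphi=p\varphi N_D$ together with $\dim_{K_0}D<\infty$). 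A direct computation expanding both sums and using $N(\ell_u)=1$ yields
\[
\Phi^{-1}\circ N_{\mathrm{tot}}\circ\Phi \;=\; 1\otimes N,
\]
so $\Phi^{-1}\circ i$ becomes a map whose $N$-action on both sides is the ``trivial'' $1\otimes N$. Taking $N$-invariants (on scalars this cuts $\mathcal{O}[\ell_u,1/\lambda]$ down to $\mathcal{O}[1/\lambda]$) then reduces the bijectivity of $i$ to the bijectivity of the $\varphi$-equivariant $\mathcal{O}[1/\lambda]$-linear inclusion
\[
\Phi^{-1}(M(D))\otimes_{\mathcal{O}}\mathcal{O}[1/\lambda] \;\longrightarrow\; D\otimes_{K_0}\mathcal{O}[1/\lambda].
\]

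The main obstacle, and the step I expect to require the most care, is this last $\mathcal{O}[1/\lambda]$-linear identification. It should follow from the $\varphi$-trivialization of $M(D)$ after inverting $\lambda$: since $\lambda=\prod_{n\ge 0}\varphi^n(E(u)/E(0))$ and $E(u)^h$ kills the cokernel of $1\otimes\varphi:\varphi^\ast M(D)\to M(D)$ for some $h\ge 0$, Frobenius becomes an isomorphism on $M(D)\otimes_{\mathcal{O}}\mathcal{O}[1/\lambda]$; matching $\varphi$-module structures via the embedding into $D\otimes_{K_0}\mathcal{O}[\ell_u,1/\lambda]$ (where $D\otimes_{K_0}\mathcal{O}[1/\lambda]$ is a manifestly trivial rank-$r$ $\varphi$-module over $\mathcal{O}[1/\lambda]$) then forces the two free $\mathcal{O}[1/\lambda]$-modules of rank $r$ to coincide. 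This identification is essentially built into Kisin's construction of $M(D)$ in \cite[\S 1.2]{Kis06}, and with it in hand the lemma follows.
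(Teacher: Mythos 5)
Your approach is essentially the paper's: the gauge transformation $\Phi=\exp(-\ell_u N_D)$ is precisely the explicit form of ``solving the differential equation'' that the paper invokes to show $\mathcal{D}_0:=(\mathcal{O}[\ell_u]\otimes_{K_0}D)^{N=0}$ is $\mathcal{O}$-free of rank $d$ with $\mathcal{D}_0\otimes_{\mathcal{O}}\mathcal{O}[\ell_u]\simeq D\otimes_{K_0}\mathcal{O}[\ell_u]$, and both proofs then reduce the lemma to the identification $M(D)\otimes_{\mathcal{O}}\mathcal{O}[1/\lambda]\simeq\mathcal{D}_0\otimes_{\mathcal{O}}\mathcal{O}[1/\lambda]$. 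One small caution on that final step: the paper reads it off directly from the definition \eqref{eqdefmd}, which gives the sandwich $\mathcal{D}_0\subset M(D)\subset\mathcal{D}_0\otimes_{\mathcal{O}}\mathcal{O}[1/\lambda]$ (the left inclusion using effectivity $\Fil^0 D_K=D_K$), whereas your alternative ``$\varphi$-trivialization forces equality'' sketch would need an additional argument (an inclusion of free rank-$r$ $\varphi$-modules over $\mathcal{O}[1/\lambda]$ with bijective Frobenii is not automatically an equality without, say, a zeros-of-determinant argument), so it is cleaner to invoke Kisin's construction directly as you also suggest.
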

\begin{proof}
Let $\mathcal{D}_0:=(\mathcal{O}[\ell_u] \otimes_{K_0} D )^{N=0}$ (also considered in the proof of \cite[Lem. 1.2.2]{Kis06}). Solving this differential equation  using the fact that $N_D$ is nilpotent (namely, after choosing a basis of $D$, we get an easy differential equation), we have that $\mathcal{D}_0$ is a finite free $\mathcal{O}$-module of rank $d$ and
\begin{equation}
\mathcal{D}_0 \otimes_{\mathcal{O}} \mathcal{O}[\ell_u] \simeq D \otimes_{K_0}   \mathcal{O}[\ell_u].
\end{equation}
Furthermore, by the construction in Eqn. \eqref{eqdefmd}, we have
\begin{equation}
M(D) \otimes_{\mathcal{O}} \mathcal{O}[1/\lambda] \simeq\mathcal{D}_0 \otimes_{\mathcal{O}} \mathcal{O}[1/\lambda].
\end{equation}
Hence both sides of \eqref{541} are isomorphic to $\mathcal{D}_0 \otimes_{\mathcal{O}} \mathcal{O}[\ell_u, 1/\lambda]$.
\end{proof}

We record some other comparisons between $M(D)$ and $D$  that will be useful later.
\begin{cor}\label{cor542} Let $D\in \MF$. 
We have  $(\varphi, N)$-equivariant isomorphisms
\begin{eqnarray}
\label{new544}
M(D) \otimes_{\mathcal{O}} \wtb_{\log}^+[1/t] &\simeq & D\otimes_{K_0}  \wtb_{\log}^+[1/t];\\
\label{new544add}
M(D) \otimes_{\mathcal{O}} \wtb_{\log}^\dagger[1/t] &\simeq & D\otimes_{K_0}  \wtb_{\log}^\dagger[1/t];\\
\label{new545}
M(D) \otimes_{\mathcal{O}} \wtB^{[0, \frac{r_0}{p}]}[\ell_u] &\simeq & D\otimes_{K_0}  \wtB^{[0, \frac{r_0}{p}]}[\ell_u].
\end{eqnarray}
These isomorphisms induce (compatible) $G_K$-actions on left hand side of these equations.
\end{cor}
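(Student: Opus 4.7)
My plan is to deduce each of the three isomorphisms by base-changing the already established isomorphism of Lemma \ref{lem541} to the respective larger ring. This reduces the problem to verifying that $\mathcal{O}[\ell_u, 1/\lambda]$ embeds naturally into each of the three target rings, after which the $(\varphi, N)$-equivariance and the $G_K$-structure come for free.

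For \eqref{new544} and \eqref{new544add}, the essential observation is that \eqref{eqfrakt} rewrites $\mathfrak{t} = t/(p\lambda)$, so $1/\lambda = p\mathfrak{t}/t$. Since $\mathfrak{t} \in \wtA^+ \subset \wtB^+_{\rig} \subset \wtb_{\log}^+ \subset \wtb_{\log}^{\dagger}$ and we are inverting $t$, the element $1/\lambda$ lives inside both $\wtb_{\log}^+[1/t]$ and $\wtb_{\log}^{\dagger}[1/t]$. The base subring $\mathcal{O} = \B^+_{\rig, \Kinfty}$ embeds into $\wtB^+_{\rig}$ as the $G_\infty$-invariant part, and $\ell_u = \log[\upi]$ lies in $\wtb_{\log}^+$ by Lemma \ref{lem274} applied to $\beta = \upi$ (which has $v_{\wtE}(\upi) = 1/e \neq 0$). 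Combining these yields embeddings $\mathcal{O}[\ell_u, 1/\lambda] \hookrightarrow \wtb_{\log}^+[1/t] \hookrightarrow \wtb_{\log}^{\dagger}[1/t]$, and tensoring \eqref{541} over $\mathcal{O}[\ell_u, 1/\lambda]$ gives \eqref{new544} and \eqref{new544add}.

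For \eqref{new545} the main obstacle is that $t$ is not available, so we must show that $\lambda$ is already a unit in $\wtB^{[0, r_0/p]}$. Writing $\lambda = \prod_{n \geq 0} \varphi^n(E(u)/E(0))$, the $n$-th factor vanishes only at $u$ with $v_p(u) = 1/(ep^n)$. By Lemma \ref{lem laurent series}, $\wtB^{[0, r_0/p]}_{\Kinfty}$ corresponds to Laurent series converging on $v_p(u) \in [p/e, +\infty]$, and since $1/(ep^n) < p/e$ for every $n \geq 0$, none of the factors of $\lambda$ vanishes on this disk. Hence $1/\lambda \in \wtB^{[0, r_0/p]}_{\Kinfty} \subset \wtB^{[0, r_0/p]}$, yielding $\mathcal{O}[\ell_u, 1/\lambda] \hookrightarrow \wtB^{[0, r_0/p]}[\ell_u]$; base-changing \eqref{541} then delivers \eqref{new545}. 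In each case the $(\varphi, N)$-equivariance is inherited from Lemma \ref{lem541}, and each target ring $R$ is $G_K$-stable, with the action on $\ell_u$ given as in \eqref{eq277new} by $g(\ell_u) = \ell_u + \sigma(g)t$ (well-defined since $t \in \wtB^+_{\rig} \subset R$ for every $R$ considered). The $G_K$-action on $M(D)\otimes_{\mathcal{O}} R$ is then transported from the right-hand side $D \otimes_{K_0} R$ through the isomorphism, and the compatibility of the three induced actions is an immediate consequence of the compatibility of the three ring embeddings $\mathcal{O}[\ell_u, 1/\lambda] \hookrightarrow R$.
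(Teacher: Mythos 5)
Your proof is correct and follows essentially the same strategy as the paper's: base-change the isomorphism of Lemma \ref{lem541} along the ring embeddings $\mathcal{O}[\ell_u,1/\lambda]\hookrightarrow R$, using $1/\lambda = p\mathfrak{t}/t$ for \eqref{new544}--\eqref{new544add} and the fact that $\lambda$ is a unit in $\wtB^{[0,r_0/p]}$ for \eqref{new545}. (One harmless imprecision: $\mathcal{O}=\B^{[0,+\infty)}_{\Kinfty}$ is contained in, but is not equal to, the full $G_\infty$-invariants $\wtB^{[0,+\infty)}_{\Kinfty}$; you only need the inclusion, so the argument is unaffected.)
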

\begin{proof}
The isomorphisms all follow from Lem. \ref{lem541} (for \eqref{new545}, note that $\lambda$ is a unit in $\wtB^{[0, \frac{r_0}{p}]}$). Note that one could change the ``$[1/t]$" in \eqref{new544add} to $[1/\lambda]$ since $t/\lambda$ is a unit in $\wtb_{\log}^\dagger$ (cf. \S \ref{subsecfrakt}); but one cannot change  ``$[1/t]$" in \eqref{new544} to $[1/\lambda]$.
They induce $G_K$-actions on the left hand side of these equations, because the right hand side of these equations are $G_K$-stable.
\end{proof}

\begin{prop}\label{prop543}
Let $V \in \Rep_{\Qp}^{\st, \geq 0}(G_K)$, and  let $D=D_\st(V)$. Then we have a $(\varphi, N, G_K)$-equivariant isomorphism
\begin{equation}
 \label{new550} D\otimes_{K_0}  \wtB_{\log}^\dagger [1/t] \simeq D_{\rig, \Kinfty}^\dagger(V) \otimes_{\B_{\rig, \Kinfty}^\dagger} \wtB_{\log}^\dagger [1/t]. 
\end{equation}
\end{prop}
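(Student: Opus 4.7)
Since $V$ is semi-stable with non-negative Hodge-Tate weights, Thm.~\ref{thm539} supplies a Breuil-Kisin module $\gm \in \Mod_\gs^\varphi$ satisfying $T_\gs(\gm) \otimes_{\Zp} \Qp \simeq V|_{G_\infty}$ and $\gm \otimes_\gs \mathcal{O} \simeq M(D)$. In particular $V|_{G_\infty}$ is of finite $E(u)$-height (Def.~\ref{deffinht}), so by Prop.~\ref{propfinoc} the maximal overconvergent sub-$\varphi$-module of the \'etale $\varphi$-module of $V|_{G_\infty}$ is $\gm \otimes_\gs \OE^\dagger$. Tensoring with $\Qp$ and then base-changing to the Robba ring $\mathcal{R} = \B^\dagger_{\rig, \Kinfty}$ gives a $\varphi$-equivariant identification
\[ D^\dagger_{\rig, \Kinfty}(V) \simeq \gm \otimes_\gs \mathcal{R} \simeq M(D) \otimes_{\mathcal{O}} \mathcal{R}. \]

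Extending scalars along the natural inclusion $\mathcal{R} \hookrightarrow \wtb_{\log}^\dagger[1/t]$ and invoking Cor.~\ref{cor542} Eqn.~\eqref{new544add} produces the chain of $(\varphi, N)$-equivariant isomorphisms
\[ D^\dagger_{\rig, \Kinfty}(V) \otimes_{\B^\dagger_{\rig, \Kinfty}} \wtb_{\log}^\dagger[1/t] \simeq M(D) \otimes_{\mathcal{O}} \wtb_{\log}^\dagger[1/t] \simeq D \otimes_{K_0} \wtb_{\log}^\dagger[1/t]. \]
The operator $N$ on the left is the $N_\nabla$ of Thm.~\ref{thmnnabla}, whose coincidence with Kisin's monodromy on $M(D)$ is to be established in \S\ref{subseccoin} below.

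The main obstacle is the $G_K$-equivariance, since $D^\dagger_{\rig, \Kinfty}(V)$ carries only an \emph{a priori} $G_\infty$-action and $D$ carries only the trivial $G_K$-action, so the relevant $G_K$-structures on the outer two terms are visible only after base change to $\wtb_{\log}^\dagger[1/t]$. The plan is to realize both sides as incarnations of $V \otimes_\Qp \wtb_{\log}^\dagger[1/t]$ endowed with the diagonal $G_K$-action. On the overconvergent side, the \'etale property built into Def.~\ref{def oc} yields a $(\varphi, G_\infty)$-equivariant identification $D^\dagger_{\rig, \Kinfty}(V) \otimes_{\B^\dagger_{\rig, \Kinfty}} \wt{\mathbf{B}}^\dagger_\rig \simeq V \otimes_\Qp \wt{\mathbf{B}}^\dagger_\rig$, and base change along $\wt{\mathbf{B}}^\dagger_\rig \hookrightarrow \wtb_{\log}^\dagger[1/t]$ transports the natural $G_K$-action on the right to the left. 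On the Fontaine side, the semi-stability of $V$ provides the $(\varphi, N, G_K)$-equivariant isomorphism $V \otimes_\Qp \Bst \simeq D \otimes_{K_0} \Bst$, which extends to $\wtb_{\log}^\dagger[1/t]$ via the natural embedding $\Bst \hookrightarrow \wtb_{\log}^\dagger[1/t]$ compatible with $\log[\underline{p}] \mapsto X$ in $\wtb_{\log}^\dagger = \wt{\mathbf{B}}^\dagger_\rig[X]$ (cf.~\S\ref{sslog}). Composing the two realizations of $V \otimes_\Qp \wtb_{\log}^\dagger[1/t]$ recovers the stated isomorphism with its full $(\varphi, N, G_K)$-equivariant structure.
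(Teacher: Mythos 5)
Your proof is structured in two halves, and, as you acknowledge, the first (the chain through $\gm$ and $M(D)$) cannot on its own deliver $G_K$-equivariance: Cor.~\ref{cor542} only \emph{defines} a $G_K$-action on $M(D)\otimes_{\mathcal{O}}\wtB_{\log}^\dagger[1/t]$ by transport of structure, and whether that action agrees with the one inherited from the $(\varphi,\tau)$-module $D^\dagger_{\rig,\Kinfty}(V)$ is precisely what this subsection (Prop.~\ref{prop543} itself and Thm.~\ref{thmcoinc}) is trying to establish, so appealing to a ``coincidence from \S\ref{subseccoin}'' here would be circular. Note also a misidentification: the $N$ in the statement of Prop.~\ref{prop543} is the $\wtB_{\log}^\dagger$-derivation $N$ of \S\ref{sslog} (paired with $N_D$ on $D$), not the differential operator $N_\nabla$ of Thm.~\ref{thmintmo}; the $(\varphi,N)$-equivariance of your chain is already supplied by Cor.~\ref{cor542} with no reference to Thm.~\ref{thmcoinc} needed.

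The genuine gap is in the second half. There is no embedding $\Bst\hookrightarrow\wtB_{\log}^\dagger[1/t]$: the containment between $\Bcris^+$ and the Robba-type rings runs the other way (one has $\wtB^+_\rig\subset\Bcris^+$), and $\Bcris^+$ contains divided-power series that converge in no $\wtB^{[r,+\infty)}$ even after inverting $t$. The identification $D\otimes_{K_0}\wtB_{\log}^\dagger[1/t]\simeq V\otimes_{\Qp}\wtB_{\log}^\dagger[1/t]$ for semi-stable $V$ is therefore \emph{not} a formal consequence of Fontaine's $\Bst$-comparison and a ring inclusion; it is a substantive theorem of Berger, namely \cite[Prop.~3.4, Prop.~3.5]{Ber02}, and this is exactly what the paper's proof invokes at that step. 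Replacing your ``natural embedding'' step by that citation repairs the argument, after which the rest of your second paragraph --- the overconvergence identification $D^\dagger_{\rig,\Kinfty}(V)\otimes_{\B^\dagger_{\rig,\Kinfty}}\wtB_\rig^\dagger\simeq V\otimes_{\Qp}\wtB_\rig^\dagger$, then base change to $\wtB_{\log}^\dagger[1/t]$ and composition --- coincides with the paper's proof.
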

\begin{proof} 
By \cite[Prop. 3.4, Prop. 3.5]{Ber02}, we have
\begin{equation}
\label{new548}
D  \otimes_{K_0} \wtB_{\log}^\dagger [1/t]    \simeq V\otimes_{\Qp} \wtB_{\log}^\dagger [1/t].
\end{equation}
By our overconvergence theorem in \S \ref{secmod}, we have
\begin{equation}
\label{549} V\otimes_{\Qp} \wtB_{\rig}^\dagger  \simeq D_{\rig, \Kinfty}^\dagger(V) \otimes_{\B_{\rig, \Kinfty}^\dagger} \wtB_{\rig}^\dagger.
\end{equation}
Hence Eqn. \eqref{new550}   holds by combining \eqref{new548} and  \eqref{549}.
 \end{proof}

\begin{thm}\label{thmcoinc}
Let $V \in \Rep_{\Qp}^{\st, \geq 0}(G_K)$, and  let $D=D_\st(V)$.
\begin{itemize}
\item Let $D^\dagger_{\rig, \Kinfty}(V)$ be the rigid-overconvergent $(\varphi, \tau)$-module attached to $V$, and let $N_\nabla^{\mathrm{la}}$ denote the monodromy operator defined in Thm. \ref{thmnnabla}.
\item Let $(M(D), \varphi, N_\nabla^{\mathrm{Kis}}) \in \textnormal{Mod}_{ \mathcal{O}}^{\varphi, N_\nabla, 0}$ be the module corresponding to $D$ constructed by Kisin.
Extend $N_\nabla^{\mathrm{Kis}}$ to $M(D)\otimes _{\mathcal{O}} \mathbf{B}^\dagger_{\rig, \Kinfty}$ by $N_\nabla^{\mathrm{Kis}}\otimes 1+1\otimes N_{\nabla, \mathbf{B}^\dagger_{\rig, \Kinfty}}$, which we still denote as $N_\nabla^{\mathrm{Kis}}$.
\end{itemize}
Then there is a $\varphi$-equivariant isomorphism
\begin{equation}\label{542}
M(D)\otimes _{\mathcal{O}} \mathbf{B}^\dagger_{\rig, \Kinfty} \simeq D^\dagger_{\rig, \Kinfty}(V).
\end{equation} 
Furthermore, with respect to this isomorphism, we have 
\[N_\nabla^{\mathrm{Kis}} =N_\nabla^{\mathrm{la}}.\]
\end{thm}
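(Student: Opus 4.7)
First, the isomorphism \eqref{542} follows by combining Theorem \ref{thm539} with Proposition \ref{propfinoc}. Applying Theorem \ref{thm539} to $D\in\MFwa$ produces $(\gm,\varphi,N)\in\Mod_\gs^{\varphi,N}\otimes\Qp$ with a $\varphi$-equivariant isomorphism $\gm\otimes_\gs\mathcal{O}\simeq M(D)$ and a $G_\infty$-equivariant isomorphism $T_\gs(\gm)\otimes_{\Zp}\Qp\simeq V|_{G_\infty}$; in particular $V|_{G_\infty}$ is of finite $E(u)$-height. Applying Proposition \ref{propfinoc} (on an integral lattice of $V$, then inverting $p$) identifies the maximal overconvergent submodule of $D_{\Kinfty}(V)$ with $\gm\otimes_\gs\oedagger[1/p]$; extending scalars to $\mathbf{B}^\dagger_{\rig,\Kinfty}$ gives
\[
D^\dagger_{\rig,\Kinfty}(V)\simeq\gm\otimes_\gs\mathbf{B}^\dagger_{\rig,\Kinfty}\simeq M(D)\otimes_\mathcal{O}\mathbf{B}^\dagger_{\rig,\Kinfty}.
\]

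The strategy for the coincidence of the two monodromy operators is to extend both $N_\nabla^\mathrm{Kis}$ and $N_\nabla^\mathrm{la}$ by the Leibniz formula $N^\bullet\otimes 1+1\otimes N_\nabla$ (with $N_\nabla$ the ring operator on $\wtB_{\log}^\dagger[1/t]$) to the larger module
\[
M(D)\otimes_\mathcal{O}\wtB_{\log}^\dagger[1/t]\simeq D^\dagger_{\rig,\Kinfty}(V)\otimes_{\mathbf{B}^\dagger_{\rig,\Kinfty}}\wtB_{\log}^\dagger[1/t].
\]
By Corollary \ref{cor542}\eqref{new544add} and Proposition \ref{prop543}, both sides become canonically identified with $D\otimes_{K_0}\wtB_{\log}^\dagger[1/t]$ compatibly with $\varphi$. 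It then suffices to verify that, under this common identification, both extended operators equal the canonical derivation $1\otimes N_\nabla$ on $D\otimes_{K_0}\wtB_{\log}^\dagger[1/t]$; after this, the injection $D^\dagger_{\rig,\Kinfty}(V)\hookrightarrow D^\dagger_{\rig,\Kinfty}(V)\otimes_{\mathbf{B}^\dagger_{\rig,\Kinfty}}\wtB_{\log}^\dagger[1/t]$ (valid since $D^\dagger_{\rig,\Kinfty}(V)$ is free over $\mathbf{B}^\dagger_{\rig,\Kinfty}$) yields $N_\nabla^\mathrm{Kis}=N_\nabla^\mathrm{la}$ on $D^\dagger_{\rig,\Kinfty}(V)$ itself.

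For Kisin's side, this is a direct unwinding: by construction in \S\ref{subsubMD} the operator $N_\nabla^\mathrm{Kis}$ on $M(D)\subset\mathcal{O}[\ell_u,1/\lambda]\otimes_{K_0}D$ is the restriction of $N_\nabla\otimes 1$, and a short computation using the iso of Lemma \ref{lem541} transports its Leibniz extension to $1\otimes N_\nabla$ on $D\otimes_{K_0}\mathcal{O}[\ell_u,1/\lambda]$, which persists after further base change to $\wtB_{\log}^\dagger[1/t]$. For the locally analytic side, the decisive input is that the iso of Proposition \ref{prop543} is $G_K$-equivariant and that $D=\dst(V)\subset(V\otimes_{\Qp}\bst)^{G_K}$ is by construction $G_K$-fixed, so $G_K$ acts only on the ring factor of $D\otimes_{K_0}\wtB_{\log}^\dagger[1/t]$. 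Consequently $\nabla_\tau$, and hence $N_\nabla^\mathrm{la}=\frac{1}{p\mathfrak{t}}\nabla_\tau$, acts only on the ring factor, producing exactly $1\otimes N_\nabla$ as required. The main subtlety lies at this last step: it is precisely the semi-stability of $V$ (placing $D$ in the $G_K$-fixed locus) that decouples $\nabla_\tau$ from $D$ and forces the locally analytic operator to agree with Kisin's purely algebraic construction.
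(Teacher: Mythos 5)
Your proof is correct and follows essentially the same route as the paper's: the isomorphism \eqref{542} via Theorem \ref{thm539} and Proposition \ref{propfinoc}, then base change to the common ring $\wtB_{\log}^\dagger[1/t]$ (in the paper $\wtB_{\log}^\dagger[1/\lambda]$, the same ring since $t/\lambda$ is a unit) where both monodromy operators are identified with $1\otimes N_\nabla$ on $D\otimes_{K_0}\wtB_{\log}^\dagger[1/t]$ using the $G_K$-fixedness of $D$. The only place the paper is more explicit is in verifying pro-analyticity of the relevant elements (via Lemma \ref{lem266} and \eqref{eqnjuly}) so that $\nabla_\tau$ actually makes sense on them before one asserts it "acts only on the ring factor" — you invoke this implicitly through the pro-analyticity of $D^\dagger_{\rig,\Kinfty}(V)$ built into Corollary \ref{corlamod}, which is an acceptable variant.
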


\begin{proof}
By \eqref{537}, there is a $\varphi$-equivariant isomorphism $\gm \otimes_\gs \mathcal{O} \simeq M(D)$, hence it suffices to show that there is a $\varphi$-equivariant isomorphism
\begin{equation}\label{eqgmot}
\gm \otimes_\gs  \mathbf{B}^\dagger_{\Kinfty} \simeq D^\dagger_{\Kinfty}(V),
\end{equation}
where $D^\dagger_{\Kinfty}(V)$ is the overconvergent $\varphi$-module associated to $V$. The isomorphism in \eqref{eqgmot} holds by using Prop. \ref{propfinoc} and \eqref{538}.

By using Lem. \ref{lem266} for $\ell_u$ and \cite[Lem. 3.1.2(2)]{GP21} for $1/\lambda$, we have
\begin{equation}\label{eqnjuly}
\mathcal{O}[\ell_u, 1/\lambda]\subset (  \wtB_{\log, L}^\dagger [1/\lambda])^{\tau\dpa, \gamma=1}.
\end{equation}
By the construction \eqref{eqdefmd} and use \eqref{eqnjuly} above, it is clear that 
\begin{equation}
M(D) \subset D\otimes_{K_0} \mathcal{O}[\ell_u, 1/\lambda] \subset
(D\otimes_{K_0} \wtB_{\log, L}^\dagger [1/\lambda])^{\tau\dpa, \gamma=1}
\end{equation}
 Hence the $N_\nabla^{\mathrm{Kis}}$-operator on $M(D)$, which was defined in an ``algebraic" fashion below \eqref{eqdefmd}, indeed is induced by the ``locally analytic" $N_\nabla$-operator constructed using the locally analytic $\tau$-action on  $(D\otimes_{K_0} \wtB_{\log, L}^\dagger [1/\lambda])^{\tau\dpa}$.

 By \eqref{new550}, we have $G_K$-equivariant isomorphisms
\begin{equation}
 D\otimes_{K_0} \wtb_{\log}^\dagger[1/\lambda]\simeq  D\otimes_{K_0} \wtb_{\log}^\dagger[1/t] \simeq
  D_{\rig, \Kinfty}^\dagger(V) \otimes_{\B_{\rig, \Kinfty}^\dagger} \wtB_{\log}^\dagger[1/t];
\end{equation}
  here, the first isomorphism follows from the fact that $t/\lambda$ is a unit in $\wtb_{\log}^\dagger$ (cf. \S \ref{subsecfrakt}). 
 Thus the $G_K$-action on   $D\otimes_{K_0} \wtb_{\log}^\dagger[1/\lambda]$ is ``compatible" with the $G_K$-action on the   rigid-overconvergent $(\varphi, \tau)$-module (which induces $N_\nabla^{\mathrm{la}}$). Hence we must have $N_\nabla^{\mathrm{Kis}} =N_\nabla^{\mathrm{la}}.$
\end{proof}




\section{Frobenius regularization and finite height representations}\label{secfinht}
In this section, we use our monodromy operator to study finite $E(u)$-height representations.
In \S \ref{subsfrobreg}, we show that the monodromy operator can be descended to the ring $\mathcal{O}$ for a finite $E(u)$-height representation; in \S \ref{subsecpst}, we show such representations are potentially semi-stable.
The  results will be used in \S \ref{secsimp} to construct the Breuil-Kisin $G_K$-modules.

\subsection{Frobenius regularization of the monodromy operator} \label{subsfrobreg}

\begin{prop}\label{propregfrob}
Suppose $T \in \Rep_{\Zp}(G_K)$ is of finite $E(u)$-height (with respect to the fixed choice  of $\vec{\pi}=\{\pi_n\}_{n \geq 0}$), and let $\gm \in \Mod^{\varphi}_{\gs}$ be the corresponding Breuil-Kisin module.
Let $N_\nabla$ be the monodromy operator constructed in Thm. \ref{thmnnabla}, then $N_\nabla(\huaM) \subset \huaM \otimes_{\gs} \mathcal{O}$.
\end{prop}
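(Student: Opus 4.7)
The plan is to use the commutation $\varphi N_\nabla = \tfrac{E(0)}{pE(u)} N_\nabla \varphi$ (Eqn.~\eqref{eqrelnphi}), together with the finite $E(u)$-height relation $AB = E(u)^h I$ (where $A \in \Mat_d(\gs)$ is the matrix of $\varphi$ on $\gm$ and $B \in \Mat_d(\gs)$), to carry out an iterative \emph{Frobenius regularization} and descend $N_\nabla$ from $\mathbf{B}^\dagger_{\rig,\Kinfty}$ down to $\mathcal O$.

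First, by Prop.~\ref{propfinoc} and the identification $D^\dagger_{\Kinfty}(V) = \gm \otimes_\gs \mathbf{B}^\dagger_{\Kinfty}$, tensoring with $\mathbf{B}^\dagger_{\rig,\Kinfty}$ gives a $\varphi$-equivariant isomorphism $\gm \otimes_\gs \mathbf{B}^\dagger_{\rig,\Kinfty} \simeq D^\dagger_{\rig,\Kinfty}(V)$ for $V = T[1/p]$. Thm.~\ref{thmnnabla} then gives $N_\nabla(\gm) \subset \gm \otimes_\gs \mathbf{B}^\dagger_{\rig,\Kinfty}$, and finite generation of $\gm$ supplies some $r_0 > 0$ with $N_\nabla(\gm) \subset \gm \otimes_\gs \mathbf{B}^{[r_0,+\infty)}_{\Kinfty}$. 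Fix a basis $\vec e$ of $\gm$, and let $P \in \Mat_d(\mathbf{B}^{[r_0,+\infty)}_{\Kinfty})$ be the matrix of $N_\nabla|_{\gm}$. Applying the commutation to $\vec e$, together with $N_\nabla|_{\gs} = u\lambda\tfrac{d}{du}$ (Lem.~\ref{lemnnring}), yields the matrix functional equation
\[
P \;=\; -u\lambda\, A^{-1} A' \;+\; \tfrac{pE(u)}{E(0)}\, A^{-1} \varphi(P)\, A, \qquad A^{-1} = B/E(u)^h.
\]

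Iterating this $n$ times and simplifying via $\lambda_k \varphi^k(\lambda) = \lambda$ produces $P = \sum_{k=0}^{n-1} T_k + R_n$, where $T_k = -p^k u^{p^k} \lambda \cdot M_{k+1}\, \varphi^k(A')\, N_k$ with $M_{k+1} = \prod_{j=0}^{k} \varphi^j(A^{-1})$ and $N_k = \prod_{j=0}^{k-1} \varphi^{k-1-j}(A)$, and $R_n = p^n \lambda_n A_{0:n-1}^{-1} \varphi^n(P) A_{0:n-1}$ is the tail. The key convergence input is that $\sum_{k \geq 0} T_k$ lies in $\Mat_d(\mathcal O)$: on any closed sub-disk $\{v_p(u) \geq \rho\}$ of the open unit disk, $|u^{p^k}| \leq p^{-p^k \rho}$ gives super-exponential decay that dominates any polynomial growth from $\lambda_k$, $\varphi^k(A')$, $N_k$; and the potential poles of $M_{k+1} = \prod \varphi^j(B)/\varphi^j(E(u))^h$ at $v_p(u) = 1/(ep^j)$ for $j \leq k$ all lie outside the sub-disk once $k$ is large enough that $1/(ep^k) < \rho$. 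This yields a candidate $P' \in \Mat_d(\mathcal O)$ satisfying the same functional equation as $P$.

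To conclude $P = P'$, set $Y := P - P' \in \Mat_d(\mathbf{B}^\dagger_{\rig,\Kinfty})$. It satisfies the homogeneous equation $Y = \tfrac{pE(u)}{E(0)} A^{-1} \varphi(Y) A$; setting $\tilde Y := \mathfrak{t} Y$ and using $\varphi(\mathfrak{t}) = \tfrac{pE(u)}{E(0)} \mathfrak{t}$ (\S~\ref{subsecfrakt}) produces $\varphi(\tilde Y) = A \tilde Y A^{-1}$. The scalar consequence $\det Y = (pE(u)/E(0))^d \varphi(\det Y)$ forces $\mathfrak{t}^d \det Y$ to be $\varphi$-invariant in a Frobenius-stable ambient ring (whose $\varphi$-invariants are $\Qp$), so $\det Y \in \Qp \cdot \mathfrak{t}^{-d}$; but $\mathfrak{t}^{-1} \notin \mathbf{B}^\dagger_{\rig,\Kinfty}$ (since $\mathfrak{t}$ is not $G_\infty$-invariant, transforming by $\chi_p$ under $\gal(L/\Kinfty) \subset G_\infty$, while $\mathbf{B}^\dagger_{\rig,\Kinfty}$ is $G_\infty$-invariant), so $\det Y = 0$. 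The same argument applied to each $\tr(Y^m)$ (which satisfies $\tr(Y^m) = (pE(u)/E(0))^m \varphi(\tr(Y^m))$) shows every power sum vanishes, so by Newton's identities $Y$ is nilpotent; a further descent on its Jordan structure (exploiting the $\chi_p$-twist on $\tilde Y$ to preclude non-trivial Jordan blocks) then forces $Y = 0$, whence $P = P' \in \Mat_d(\mathcal O)$. The main obstacle will be Step~3: precisely verifying that the super-exponential decay from $u^{p^k}$ overcomes the accumulated poles in Frobenius-iterated $A^{-1} = B/E(u)^h$, so that the limit genuinely lies in $\mathcal O$ rather than merely in $\mathbf{B}^\dagger_{\rig,\Kinfty}$, and ensuring uniform convergence in the Fr\'echet seminorms defining $\mathcal O$.
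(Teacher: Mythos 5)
Your proposal takes a genuinely different route from the paper, and it has real gaps that prevent it from being a proof.

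The paper's argument (Step~1 of the proof in \S\ref{subsfrobreg}) does \emph{not} attempt to sum an explicit series. Instead it multiplies $M$ by $D_N^{h+1}$ (where $D_N = \prod_{k=1}^N \varphi^{-k}(E(u))$ and $N$ is maximal with $E(u) \in K_0[u^{p^N}]$), which rewrites the functional equation so that the coefficient of $\varphi(\tilde M)$ is $\frac{p}{E(0)}\varphi^{-N}(E)^{h+1}$ with $\varphi^{-N}(E) \notin K_0[u^p]$. It then iterates Lem.~\ref{lemeu} to show $\tilde M \in \bigcap_n \Mat(\mathbf{B}^{[r/p^n,+\infty)}_{\Kinfty})$, and — crucially — combines explicit $W^{[s,s]}$-valuation estimates (Lem.~\ref{lemsseu}, showing the valuations of $\tilde M$ degrade only \emph{linearly} in $n$, as $-c - n(h+1)$) with the intersection criterion of Lem.~\ref{lemfrobreg}, Eq.~\eqref{eqcap4}, to conclude $\tilde M \in \Mat(\mathcal O)$. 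A separate step (Caruso's trick, Step~2) then removes the $D_N^{h+1}$ factor. Your approach tries instead to write $P$ as an explicit series $\sum_k T_k$ plus a remainder and to argue convergence in $\mathcal O$ directly. This never invokes anything like Lem.~\ref{lemeu} or Lem.~\ref{lemfrobreg}, and also never addresses the subtlety that $E(u)$ may lie in $K_0[u^p]$.

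The central gap is in your pole analysis. The term $T_k$ has denominator $\prod_{j=0}^{k}\varphi^j(E(u))^h$, whose zeros sit at $v_p(u) = 1/(ep^j)$ for $j = 0,\dots,k$. You claim these \emph{all} lie outside the closed sub-disk $\{v_p(u)\geq\rho\}$ once $k$ is large enough that $1/(ep^k) < \rho$ — but this is backwards. The pole at $j=0$, at $v_p(u)=1/e$, and more generally those at small $j$, are at \emph{fixed} positions deep inside the open unit disk, independent of $k$; they never exit the sub-disk as $k$ grows. The zero set of $\lambda$ (which appears in the numerator of $T_k$) can cancel \emph{one} order of pole, so the $T_k$ are genuinely non-holomorphic on the open disk whenever $h\geq 2$. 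Super-exponential decay of $u^{p^k}$ is irrelevant at a point where $T_k$ has a pole. So the series $\sum_k T_k$ cannot converge in $\mathcal O$ termwise; any cancellation of poles would have to happen across terms, and you have not argued for that. There is also a problem with the remainder $R_n$: it involves $\varphi^n(P)$, which for $P\in\mathbf{B}^{[r,+\infty)}_{\Kinfty}$ lives in $\mathbf{B}^{[p^n r,+\infty)}_{\Kinfty}$, i.e., converges only on a boundary annulus that shrinks with $n$. So $R_n$ cannot even be evaluated on a fixed sub-disk $\{v_p(u)\geq\rho\}$ for large $n$, and the claim that $P'$ satisfies the same functional equation as $P$ is unjustified.

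Even granting an appropriately defined $P' \in \Mat(\mathcal O)$, the uniqueness argument in your final step is not complete. That $(\text{ambient ring})^{\varphi=1}=\Qp$ is a nontrivial input that you should identify precisely (for which ring, and why); the trace/Newton-identity argument only gets you nilpotency of $Y$ over a ring that is not a field; and the ``further descent on its Jordan structure'' is entirely unspecified — Jordan form is not available over $\mathbf{B}^\dagger_{\rig,\Kinfty}$. By contrast the paper's intersection-of-annuli argument avoids needing any uniqueness step: it works directly with the actual matrix $M$ of $N_\nabla$ and tracks its valuation through the iteration.
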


We will use a ``Frobenius regularization" technique to prove
 Prop. \ref{propregfrob}. Roughly, by Thm. \ref{thmnnabla}, we already know that the coefficients of   the matrix for  $N_\nabla$  live near the boundary of the open unit disk; to show that they indeed live on the entire open unit disk in the finite height case, we use the Frobenius operator to ``extend" their range of convergence: this is where we critically use  the finite height condition for the Frobenius operator.
 Indeed, the proof relies on the following key lemma.

\begin{lemma} \label{lemfrobreg} Let $h \in \mathbb{R}^{>0}$ and $r \in \mathbb{Z}^{>0}$. For $s>0$, let $\wt{\mathbf{A}}^{[s, +\infty)}$ be the subset consisting of $x\in \wt{\mathbf{B}}^{[s, +\infty)}$ such that $W^{[s, s]}(x) \geq 0$, and let ${\mathbf{A}}^{[s, +\infty)} : =\wt{\mathbf{A}}^{[s, +\infty)} \cap {\mathbf{B}}^{[s, +\infty)}$. Then we have:
\begin{eqnarray}
\label{eqcap1}\bigcap_{n \geq 0} p^{-hn}\wt{\mathbf{A}}^{[r/p^n, +\infty]}&=&\wt{\mathbf{A}}^{[0, +\infty]}=\wt{\mathbf{A}}^+\\
\label{eqcap2}\bigcap_{n \geq 0} p^{-hn}\wt{\mathbf{A}}^{[r/p^n, +\infty)}&\subset&\wt{\mathbf{B}}^{[0, +\infty)}\\
\label{eqcap3}\bigcap_{n \geq 0}p^{-hn} {\mathbf{A}}_{\Kinfty}^{[r/p^n, +\infty]} &=& {\mathbf{A}}_{\Kinfty}^{[0, +\infty]}=\gs\\
\label{eqcap4}\bigcap_{n \geq 0}p^{-hn} {\mathbf{A}}_{\Kinfty}^{[r/p^n, +\infty)} &\subset& {\mathbf{B}}_{\Kinfty}^{[0, +\infty)}=\mathcal{O}
\end{eqnarray}
\end{lemma}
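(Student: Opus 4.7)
The plan is to reduce each of the four displayed identities to a coefficient-by-coefficient estimate on the Witt expansion $x = \sum_{i \geq i_0} p^i [x_i]$ of an arbitrary element of the intersection, using the valuation formula $W^{[s,s]}(x) = \inf_k\{k + \tfrac{p-1}{ps}\, w_k(x)\}$ of Definition \ref{defnew}. The decisive numerical input is that for $s = r/p^n$ the coefficient $\tfrac{p-1}{ps} = \tfrac{(p-1)p^{n-1}}{r}$ grows exponentially in $n$, while the required lower bound $-hn$ grows only linearly; passing to the limit $n \to \infty$ forces each Teichm\"uller coefficient $x_i$ to satisfy $v_{\wtE}(x_i) \geq 0$.

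I would treat \eqref{eqcap1} first. The inclusion $\wtA^+ \subseteq \bigcap_n p^{-hn}\wtA^{[r/p^n, +\infty]}$ is immediate since $\wtA^+ \subseteq \wtA^{[r/p^n, +\infty]}$ for every $n$. For the reverse, let $x$ lie in the intersection. By Lemma \ref{lemcompa} the condition $p^{hn} x \in \wtA^{[r/p^n, +\infty]}$ is equivalent to $W^{[r/p^n, r/p^n]}(x) \geq -hn$, which gives $k + \tfrac{(p-1)p^{n-1}}{r}\, w_k(x) \geq -hn$, i.e.\ $w_k(x) \geq r(-hn-k)/((p-1)p^{n-1})$ for every Witt index $k$. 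Fixing $k$ and letting $n \to \infty$, the exponential denominator kills the right-hand side, so $w_k(x) \geq 0$ and hence $v_{\wtE}(x_i) \geq 0$ for all $i \geq i_0$. Combining this with the observation that every element of $\wtA^{[r, +\infty]} = \wtA^+\{X\}/(X[\overline\pi]^r - p)$ has Witt index $i_0 \geq 0$ (apparent from the defining presentation, since neither $\wtA^+$ nor the generator $X = p/[\overline\pi]^r$ contributes negative powers of $p$), we conclude $x \in \wtA^+$. For \eqref{eqcap2} the same limit argument applies and yields $v_{\wtE}(x_i) \geq 0$ for every coefficient, placing $x$ inside $\wtA^+[1/p] \subseteq \wtB^{[0, +\infty)}$; the Witt index $i_0$ may now be negative, which is consistent with the statement asserting only an inclusion rather than equality. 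Finally, \eqref{eqcap3} and \eqref{eqcap4} follow from \eqref{eqcap1} and \eqref{eqcap2} by intersecting with $\mathbf{B}_{\Kinfty}$, using the identifications $\A_{\Kinfty}^{[r/p^n, +\infty]} = \A_{\Kinfty} \cap \wtA^{[r/p^n, +\infty]}$ (and its Fr\'echet analog) from Lemma \ref{lem inj}, together with $\wtA^+ \cap \A_{\Kinfty} = \gs$ and $\wtB^{[0,+\infty)} \cap \mathbf{B}_{\Kinfty} = \mathcal{O}$, both of which are immediate from the Laurent-series descriptions in Lemma \ref{lem laurent series}.

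The main obstacle, as I anticipate it, is not the analytic estimate, which is simply exponential growth beating linear growth, but rather the book-keeping required to distinguish the two definitions of $\wtA^{[s, +\infty]}$ in play: the ring of Definition \ref{defnew}, used in \eqref{eqcap1} and \eqref{eqcap3}, which automatically carries the bound $i_0 \geq 0$ and therefore yields equalities with $\wtA^+$ and $\gs$; versus the $W^{[s,s]}$-unit ball in the Fr\'echet space introduced at the start of the lemma, used in \eqref{eqcap2} and \eqref{eqcap4}, which carries no such bound and therefore yields only inclusions into $\wtB^{[0, +\infty)}$ and $\mathcal{O}$.
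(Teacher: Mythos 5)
Your arguments for \eqref{eqcap1} and \eqref{eqcap3} are sound: since the intersection in \eqref{eqcap1} sits inside $\wtA^{[r,+\infty]}\subset\wtA$, every element has a genuine Witt expansion, and letting $n\to\infty$ in $W^{[r/p^n,r/p^n]}(x)\geq -hn$ forces $w_k(x)\geq 0$ for every $k$; intersecting with $\mathbf{B}_{\Kinfty}$ then gives \eqref{eqcap3}.

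However, ``the same limit argument'' is not available for \eqref{eqcap2}, and that is where your proof breaks. Elements of the Fr\'echet rings $\wtA^{[r,+\infty)}\subset\wtB^{[r,+\infty)}$ do \emph{not} in general admit a Witt expansion $\sum_i p^i[x_i]$: these rings are intersections of completions $\wtB^{[r,s]}$ for the valuations $W^{[r,s]}$ and are not subrings of $\wtB$ (for instance $t=\log[\ueps]$ and $\lambda=\prod_{n\geq 0}\varphi^n(E(u)/E(0))$ live in $\wtB^{[0,+\infty)}$ but not in $\wtB$). So the coefficientwise passage to the limit, and the resulting conclusion ``$x\in\wtA^+[1/p]$,'' are both unavailable --- and that conclusion is in fact false: by Lemma \ref{lemsseu}(3) a suitable $p$-power multiple of $\lambda$ lies in $\bigcap_n p^{-hn}\wtA^{[r/p^n,+\infty)}$ but not in $\wtA^+[1/p]$, since the defining infinite product for $\lambda$ does not converge $p$-adically, only in $\wtB^{[0,+\infty)}$. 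The derivation of \eqref{eqcap4} inherits this problem, and moreover the intersection $\wtB^{[0,+\infty)}\cap\mathbf{B}_{\Kinfty}$ is not well-posed: $\mathbf{B}_{\Kinfty}$ sits inside $\wtB$ while $\wtB^{[0,+\infty)}$ does not, and in any case the elements of the left side of \eqref{eqcap4} live in $\mathbf{B}_{\Kinfty}^{[r,+\infty)}$, not in $\mathbf{B}_{\Kinfty}$. What the paper actually does for \eqref{eqcap4} (and what underlies the cited \cite[Lem.~3.1]{Ber02} proof of \eqref{eqcap2}) is a Mittag-Leffler decomposition: write $x=a_n+b_n$ with $a_n\in p^{-hn}\mathbf{A}_{\Kinfty}^{[r/p^n,+\infty]}$ (closed interval) and $b_n\in\mathcal{O}$, observe that $a_n-a_{n+1}\in\mathcal{O}$, modify the $a_n$ to a single element $a$ independent of $n$, apply \eqref{eqcap3} to conclude $a\in\gs$, and finish with $x=a+b_0\in\mathcal{O}$. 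This decomposition is exactly what substitutes for the missing Witt expansion in the half-open-interval cases; without it the argument does not go through.
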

\begin{proof}
The relations \eqref{eqcap1} and \eqref{eqcap2} are from \cite[Lem. 3.1]{Ber02}. We can intersect $\mathbf{B}_{\Kinfty}$ with \eqref{eqcap1} to get \eqref{eqcap3}.
For \eqref{eqcap4}, it follows from similar argument as in \cite[Lem. 3.1]{Ber02}. Indeed, suppose $x \in \text{LHS}$, then for each $n \geq 0$, we can write $x=a_n+b_n$ with $a_n \in p^{-hn} {\mathbf{A}}_{\Kinfty}^{[r/p^n, +\infty]}$ and $b_n \in {\mathbf{B}}_{\Kinfty}^{[0, +\infty)}$. Then
\[a_n-a_{n+1} \subset (p^{-h(n+1)} {\mathbf{A}}_{\Kinfty}^{[r/p^n, +\infty]})\cap {\mathbf{B}}_{\Kinfty}^{[0, +\infty)} = p^{-h(n+1)} {\mathbf{A}}_{\Kinfty}^{[0, +\infty]} \subset {\mathbf{B}}_{\Kinfty}^{[0, +\infty)}=\mathcal{O}.\]
By modifying $a_{n+1}$, we can assume $a_n=a$ for all $n \geq 0$, and hence $a\in \gs$ by \eqref{eqcap3}. Thus, $x=a+b_0 \in \mathcal{O}$.
\end{proof}

\begin{rem}
We use an example to illustrate the idea of \eqref{eqcap4}. Consider the element $1/u \in \cap_{n \geq 0}  {\mathbf{B}}_{\Kinfty}^{[r/p^n, +\infty)}$, it does not belong to the left hand side of \eqref{eqcap4} because the valuations $W^{[r/p^n,r/p^n]}(1/u)$ converge to $-\infty$ in an exponential rate, rather than the linear rate on the left hand side of \eqref{eqcap4}.
\end{rem}

By \eqref{eqcap4}, the proof of Prop.  \ref{propregfrob} will rely on some calculations of valuations and ranges of convergence; we first list two   lemmas.
For the reader's convenience, recall that for $x= \sum_{i \geq i_0} p^i[x_i] \in \wt{\mathbf{B}}^{[r, +\infty]}$ and for $s\geq r, s>0$ we have the formula
\begin{equation} \label{614ab}
W^{[s, s]}(x) :=\inf_{k \geq i_0} \{k+\frac{p-1}{ps}\cdot v_{\wt{\mathbf E}}(x_k)\} =   \inf_{k \geq i_0} \{ k+\frac{p-1}{ps}\cdot w_k(x)\}. 
\end{equation}

\begin{lemma}\label{lemsseu}
\begin{enumerate}
\item  Suppose $x \in \wt{\mathbf{B}}^+$, then
\begin{eqnarray}
\label{eqrrss}  W^{[r, r]}(x)& \geq &W^{[s, s]}(x),\quad \forall0 <r<s <+\infty.\\
\label{eqssphi} W^{[s, s]}(\varphi(x)) & \geq &W^{[s, s]}(x),\quad \forall s \in (0, +\infty)
\end{eqnarray}

\item We have
\begin{equation*}
\label{sseu} W^{[s, s]}(E(u)) \in (0, 1], \quad \forall s \in (0, +\infty)
\end{equation*}

\item Recall $\lambda =\prod_{n \geq 0} \varphi^n(\frac{E(u)}{E(0)})$. Let $\ell \in \mathbb{Z}^{\geq 0}$. Then
\begin{equation*}
\label{sslambda} W^{[s, s]}(\lambda) \geq -\ell, \quad \forall s \in (0, r_\ell].
\end{equation*}
\end{enumerate}
\end{lemma}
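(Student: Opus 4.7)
The plan is to extract all three parts from the explicit Teichm\"uller formula
\[
W^{[s,s]}(x) = \inf_{k}\bigl\{ k + \tfrac{p-1}{ps}\cdot w_k(x) \bigr\},
\]
together with the multiplicativity/sub-additivity of $W^{[s,s]}$ recorded in Lemma~\ref{lem W}, plus the scaling identity $W^{[s,s]}(\varphi(x)) = W^{[s/p,\,s/p]}(x)$ that reflects the fact that $\varphi$ induces a valuation-preserving isomorphism $\wt{\mathbf{B}}^{I}\simeq\wt{\mathbf{B}}^{pI}$. Part~(1) is then essentially bookkeeping: any $x\in\wt{\mathbf{B}}^+$ has all coefficients in $\wt{\mathbf{E}}^+$, so $w_k(x)\ge 0$; each summand $k+\tfrac{p-1}{ps}w_k(x)$ is non-increasing in $s$, giving (1a), and $w_k(\varphi(x)) = p\cdot w_k(x) \geq w_k(x)$ plugged back into the formula gives (1b).

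For Part~(2) I would pin down the first two coefficients $y_0,y_1$ in the Teichm\"uller expansion of $E(u) \in \wt{\mathbf{A}}^+$. Writing $E(u) = u^e + \sum_{i<e} a_i u^i$ with $v_p(a_0)=1$ and $v_p(a_i)\geq 1$, inside $\wt{\mathbf{A}}^+$ this becomes $[\upi]^e + p\, z$, where $z := \sum_{i<e}(a_i/p)[\upi]^i$. Since $a_0/p$ is a unit in $W(k)$ and $[\upi]^i = [\upi^i]$, the reduction $\bar z \in \wt{\mathbf{E}}^+$ has nonzero constant term, so $v_{\wt{\mathbf{E}}}(\bar z) = 0$. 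Hence $y_0 = \upi^e$ with $v_{\wt{\mathbf{E}}}(y_0)=1$ and $y_1 = \bar z$ with $v_{\wt{\mathbf{E}}}(y_1)=0$, which forces $w_0(E(u)) = 1$ and $w_k(E(u)) = 0$ for all $k\geq 1$. Plugging these into the formula yields
\[
W^{[s,s]}(E(u)) = \min\!\bigl(\tfrac{p-1}{ps},\,1\bigr) \in (0,1].
\]

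For Part~(3) I would combine Part~(2), multiplicativity of $W^{[s,s]}$, and the $\varphi$-scaling. Set $f := E(u)/E(0)$; since $W^{[t,t]}(E(0)) = v_p(E(0)) = 1$, Part~(2) gives $W^{[t,t]}(f) = \min(\tfrac{p-1}{pt},1) - 1$, which vanishes for $t\leq r_0 = (p-1)/p$ and otherwise lies in $(-1,0)$. Then
\[
W^{[s,s]}(\lambda) \;=\; \sum_{n\geq 0} W^{[s,s]}(\varphi^n(f)) \;=\; \sum_{n\geq 0} W^{[s/p^n,\,s/p^n]}(f),
\]
and the $n$-th summand vanishes unless $s/p^n > r_0$, i.e.\ $s > r_n = (p-1)p^{n-1}$. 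For $s\in (0, r_\ell]$ only the indices $n = 0, 1, \dots, \ell-1$ can contribute, each term strictly greater than $-1$, giving $W^{[s,s]}(\lambda) > -\ell$.

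The one genuinely non-formal step is the computation of $y_1$ in Part~(2): everything hinges on the Eisenstein condition $v_p(a_0) = 1$, which is precisely what makes $a_0/p$ a unit in $W(k)$ and thereby produces a nonzero constant term in $\bar z$. Without this, one would only get $w_1(E(u))\geq 0$ rather than $w_1(E(u))=0$, and the bound $W^{[s,s]}(E(u))\leq 1$ (needed for small $s$) could fail. The remaining verifications are routine manipulations with the Witt-vector identity $[x][y] = [xy]$ and with the sub- and multiplicativity of $W^{[s,s]}$ provided by Lemma~\ref{lem W}.
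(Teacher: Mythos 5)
Your proof is correct and establishes the same bounds, but takes a more explicit route than the paper's, particularly in Parts~(2) and~(3). For Part~(2) the paper decomposes $E(u)=(u^e+a_0)+\sum_{i=1}^{e-1}a_iu^i$, estimates $W^{[s,s]}$ of each piece separately for $s\le r_0$, and then extends to $s>r_0$ via Part~(1a); you instead read off the first two Teichm\"uller coefficients $y_0=\upi^e$ and $y_1=\bar z$ of $E(u)$ directly (the key point, as you note, being that $v_p(a_0)=1$ forces $v_{\wt{\mathbf{E}}}(\bar z)=0$, together with the observation that the summands $[\upi^e]$ and $p[\bar z]$ sit in disjoint Teichm\"uller degrees so there is no carry affecting the degree-$1$ coefficient). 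This gives $w_0=1$, $w_k=0$ for $k\ge 1$, and hence the \emph{exact} value $W^{[s,s]}(E(u))=\min\bigl(\tfrac{p-1}{ps},1\bigr)$, which the paper never writes down. Having that closed form makes Part~(3) a one-line summation over $n$, whereas the paper first reduces to $s=r_\ell$ via Part~(1a), then splits $\lambda=\varphi^\ell(\lambda)\cdot\prod_{i<\ell}\varphi^i(E(u)/E(0))$ and bounds the two factors separately, using Part~(1b) to control $W^{[r_0,r_0]}(\lambda)$. Both versions rest on the same underlying tools (multiplicativity of $W^{[s,s]}$, the scaling identity $W^{[s,s]}(\varphi(x))=W^{[s/p,\,s/p]}(x)$, and passing from the finite product to the infinite one defining $\lambda$, which needs the same short justification in either case), so your argument is not a repackaging of the paper's but a genuine, and somewhat cleaner, alternative.
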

\begin{proof}
Item (1) follows from definition.

For Item (2), first recall that $W^{[s, s]}$ is multiplicative.
Note that $E(u)= \sum_{i=0}^{e }a_iu^i$ where $a_e=1$, $p|a_i$ for $i>0$ and $p||a_0$. It is easy to see that $W^{[s, s]}(E(u))>0$ for all $s>0$.
When $s\leq r_0$,   $W^{[s, s]}(u^e+a_0)=1$ by Eqn. \eqref{614ab}, $W^{[s, s]}(\sum_{i=1}^{e-1 }a_iu^i)>1$, and hence $W^{[s, s]}(E(u))=1$ . Hence $W^{[s, s]}(E(u))\leq 1$ if $s>r_0$ by Item (1).

For Item (3), we have $W^{[s, s]}(\lambda) =\sum_{i \geq 0} W^{[s, s]}(\varphi^i(E(u)/E(0)))$.
 Using Item (1), it suffices to treat the case $s=r_\ell$. We have
 \begin{eqnarray*} 
   W^{[r_\ell, r_\ell]}(\lambda) &=&W^{[r_\ell, r_\ell]}(\varphi^\ell(\lambda)) +  W^{[r_\ell, r_\ell]}(\prod_{i=0}^{\ell -1} \varphi^i(\frac{E(u)}{E(0)}))\\
    &\geq &  W^{[r_0, r_0]}(\lambda) +W^{[r_\ell, r_\ell]}(\prod_{i=0}^{\ell -1} \varphi^i(\frac{1}{E(0)}))\\
    &\geq & -\ell
 \end{eqnarray*}
 where the last row uses that $W^{[r_0, r_0]}(\lambda)>0$ (note $W^{[r_0, r_0]}(\frac{E(u)}{E(0)})=0$ and apply \eqref{eqssphi}).
\end{proof}

\begin{lemma}\label{lemeu}
  Suppose $x \in \mathbf{B}_{\Kinfty}^{I}$.
  Let $g(u) \in K_0[u]$ be an irreducible polynomial such that $g(u) \notin K_0[u^p] =\varphi(K_0[u])$.
Suppose   $g(u)^{h}\varphi(x) \in \mathbf{B}_{\Kinfty}^I$ for some $h \geq 1$, then we have $x\in \mathbf{B}_{\Kinfty}^{I/p}$.
\end{lemma}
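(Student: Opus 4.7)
The plan is to show $\varphi(x) \in \mathbf{B}_{\Kinfty}^I$, from which Corollary \ref{corIpI} immediately yields $x \in \mathbf{B}_{\Kinfty}^{I/p}$. When $\inf I = 0$, this is trivial: by Lem. \ref{lem laurent series}, elements of $\mathbf{B}_{\Kinfty}^{[0,s]}$ are ordinary power series and the disk for $I/p$ is contained in the disk for $I$, so $x \in \mathbf{B}_{\Kinfty}^{I/p}$ holds without using the hypothesis. Assume henceforth $\inf I > 0$. Writing $x = \sum_i a_i u^i$ and $F := \varphi(x) = \sum_i \varphi(a_i)\, u^{pi}$, the given $x \in \mathbf{B}_{\Kinfty}^I$ makes the series $F$ converge on the annulus for $pI$, while the hypothesis $g^h F \in \mathbf{B}_{\Kinfty}^I$ lets us view $F = (g^h F)/g^h$ as a meromorphic function on the annulus for $I$ with possible poles only at zeros of $g$ lying in that annulus. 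These two descriptions agree on the (nonempty) overlap of the two annuli, so $F$ extends to a single meromorphic function on the annulus for $I$. It suffices to show $F$ has no poles there: once that is established, uniqueness of Laurent expansions on overlapping annuli forces the expansion of $F$ on the annulus for $I$ to coincide with $\sum_i \varphi(a_i) u^{pi}$, so this series converges on the annulus for $I$ and $\varphi(x) \in \mathbf{B}_{\Kinfty}^I$.

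Suppose for contradiction that $F$ has a pole at some $\alpha$ in the annulus for $I$. Then $\alpha \in \overline{K_0}\cap C_p$ is a zero of $g$, and $\alpha \neq 0$ because $0$ does not lie in the annulus. The pole set of $F$ in the annulus enjoys two key invariances. First, the $p$-sparsity of $F$ yields the functional equation $F(\zeta u) = F(u)$ for every $\zeta \in \mu_p$ (true for the series on the annulus for $pI$ and then extended by analytic continuation), so the pole set is stable under multiplication by $\mu_p$. Second, the Laurent coefficients $\varphi(a_i)$ lie in $K_0$, so the pole set is stable under $\Gal(\overline{K_0}/K_0)$. Since $g$ is irreducible, the Galois orbit of $\alpha$ exhausts the root set of $g$, so every root of $g$ is a pole of $F$; in particular, the root set of $g$ is itself $\mu_p$-stable. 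Because every root is nonzero, the $\mu_p$-orbits all have size exactly $p$, and choosing one representative $\alpha_j$ per orbit gives $g(u) = \prod_{j=1}^m (u^p - \alpha_j^p)$. Since $p$-th power commutes with Galois, the set $\{\alpha_j^p\}$ is $\Gal(\overline{K_0}/K_0)$-stable, so $h(v) := \prod_j (v - \alpha_j^p) \in K_0[v]$; whence $g(u) = h(u^p) \in K_0[u^p]$, contradicting the hypothesis on $g$.

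The main obstacle is this combined symmetry argument: one must marry $\mu_p$-invariance (coming from the form $\sum \varphi(a_i) u^{pi}$ of $\varphi(x)$) with $\Gal(\overline{K_0}/K_0)$-invariance (from $K_0$-rationality of the coefficients) and the irreducibility of $g$ in order to force $g \in K_0[u^p]$ from a single hypothetical pole. All other steps---glueing the series and meromorphic descriptions of $F$, matching Laurent expansions across the overlap, and the final appeal to Corollary \ref{corIpI}---are routine manipulations in the rings $\mathbf{B}_{\Kinfty}^I$.
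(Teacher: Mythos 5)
Your proof is correct in substance, and it takes a genuinely different route from the paper's.

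The paper's argument is algebraic and works directly with the coefficient-wise valuations: write $g^h = g_1 + g_2$ where $g_1$ collects the monomials with $p$-divisible exponents and $g_2$ the rest; since $\varphi(x)$ has only $p$-divisible exponents, the products $g_1\varphi(x)$ and $g_2\varphi(x)$ occupy disjoint degree classes, so the hypothesis $g^h\varphi(x)\in\mathbf{B}^I_{\Kinfty}$ forces $g_2\varphi(x)\in\mathbf{B}^I_{\Kinfty}$ by reading off the convergence condition term by term. Coprimality of $g$ and $g_2$ (which is exactly where $g\notin K_0[u^p]$ enters) and a B\'ezout relation then give $\varphi(x)\in\mathbf{B}^I_{\Kinfty}$, with a short induction for $h\geq 2$. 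This is self-contained, quantitative, and valid for every interval $I$ allowed by the paper's conventions without a glueing step; the only extra case split it needs is to set aside $g=au$ at the start.

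Your argument is instead geometric: realize $\varphi(x)$ as a meromorphic function on the annulus, observe the pole locus is simultaneously stable under $\mu_p$ (from $p$-sparsity) and under $\Gal(\overline{K_0}/K_0)$ (from $K_0$-rationality), and combine these with irreducibility of $g$ to force $g\in K_0[u^p]$ if a pole exists. This nicely explains \emph{why} the hypothesis $g\notin K_0[u^p]$ is the correct one and packages the exponent-parity observation as a $\mu_p$-symmetry. It also handles the $g=au$ case uniformly (the root $0$ is not in the annulus). The one small imprecision is the parenthetical ``(nonempty)'': the overlap of the annuli for $I$ and $pI$ is nonempty only when $\sup I\geq p\cdot\inf I$, which fails for single-point intervals $I=[r_\ell,r_\ell]$ permitted by the paper's conventions. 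This is easily patched---$g^h\varphi(x)$ converges at both extremes and hence on the whole hull annulus by convexity of Laurent-series convergence, so the meromorphic continuation is available even without direct overlap---but as written your glueing step silently assumes the overlap exists. In the applications in \S 6.1 the interval is always $[r,+\infty)$, where the overlap is automatic, so this does not affect the paper's use of the lemma.
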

\begin{proof}
We can suppose $\inf I \neq 0$ (otherwise the lemma is trivial).
Then we can further suppose $g(u) \neq au$ for some $a \in K_0$ (otherwise the lemma follows easily from Cor. \ref{corIpI}).
First we treat the case $h=1$.
Suppose $g=\sum_{i=0}^N b_iu^i$. Let $g_1:=\sum_{p\mid i} b_iu^i$ (i.e., $g_1$ contains all the $u$-powers with  $p$-divisible exponents, including the non-zero constant term) and let $g_2:=\sum_{p\nmid i} b_iu^i$; hence $g_2 \neq 0$ but $g_2 \neq g$.
Recall that if we write  $x =\sum_{i \in \mathbb Z}a_iu^i$ with $a_i \in K_0$, then it satisfies certain convergence conditions with respect to $I$ as described in Lem. \ref{lem laurent series}; similarly, the expansion of the product $g(u)\varphi(x)=(g_1+g_2) \cdot (\sum_{i \in \mathbb Z}\varphi(a_i)u^{pi})$ satisfies these convergence conditions. However, there is no ``intersection" between the two parts $g_1\varphi(x)$ and $g_2\varphi(x)$ as the former part contains exactly all the  $u$-powers with  $p$-divisible exponents.
Hence \emph{both} $g_1\varphi(x)$ and $g_2\varphi(x)$ satisfy the aforementioned convergence conditions, and hence \emph{both} $g_1\varphi(x), g_2\varphi(x) \in \mathbf{B}_{\Kinfty}^{I}$.
Since   $g$ and $g_2$ are co-prime in  $K_0[u]$,   we must have $\varphi(x) \in \mathbf{B}_{\Kinfty}^I$. Thus  $x\in \mathbf{B}_{\Kinfty}^{I/p}$ by Cor. \ref{corIpI}.

Suppose now $h\geq 2$, and write $g^h=g_1+g_2$ similarly as above. Similarly we have $g_2\varphi(x) \in \mathbf{B}_{\Kinfty}^I$. If $(g^h, g_2)=g^{k}$ with $k<h$, then $g^k \varphi(x) \in \mathbf{B}_{\Kinfty}^{I}$. This reduces the proof to an induction argument.
\end{proof}

\begin{rem}
The condition $g(u) \notin K_0[u^p]$ in Lem. \ref{lemeu} is necessary. For example, suppose $e=p$, let
$g(u)=u^p-p$, and let $x=$ ``$\frac{1}{u-p}$" $:= \frac{1}{u}(\sum_{i \geq 0}(\frac{p}{u})^i) \in \mathbf{B}_{\Kinfty}^{[r_0, +\infty)}$. We have $g(u)\varphi(x)=1$, but  $x \notin \mathbf{B}_{\Kinfty}^{[r_0/p, +\infty)}$.
\end{rem}

\begin{proof}[\textbf{Proof} of Prop.  \ref{propregfrob}]
Let $\vece$ be an $\gs$-basis of $\huaM$, and suppose
\[\varphi(\vece)=\vece A, N_\nabla(\vece)=\vece M, \text{ with } A \in \Mat(\gs), M\in \Mat(\mathbf{B}^\dagger_{\rig, \Kinfty}).\]
Since $N_\nabla \varphi =\frac{p}{E(0)}E(u)\varphi N_\nabla$, we have
\begin{equation} \label{eqphi1}
MA+N_\nabla(A) =\frac{p}{E(0)}E(u) A\varphi(M).
\end{equation}
Let $B \in \Mat(\gs)$ such that  $AB=E(u)^h\cdot Id$, then we get
\begin{equation} \label{eqB}
BMA+BN_\nabla(A) = \frac{p}{E(0)}E(u)^{h+1}\varphi(M).
\end{equation}

Suppose $N \geq 0$ is the maximal number such that $E(u) \in K_0[u^{p^N}]$. Denote
\[D_i(u): =  \prod_{k=1}^i \varphi^{-k}(E(u)), \quad \forall 0\leq i \leq N; \]
where we always let $D_0(u):=1$.
For brevity, we denote $E:=E(u)$ and $D_i:=D_i(u)$.

Let
\[\wt{M}:= D_N^{h+1}M,\]
then from \eqref{eqB}, we have
\begin{equation}\label{eqwtm}
B\wt{M}A+ D_N^{h+1}BN_\nabla(A)=\frac{p}{E(0)}\varphi^{-N}(E^{h+1})\varphi(\wt M).
\end{equation}
Let $\ell \gg 0$   so that
\[\wt{M}\in \Mat( \mathbf{B}_{\Kinfty}^{[r_\ell, +\infty)}).\]
Note that $A, B, D_i \in \Mat(\gs)$ and $N_\nabla(A) \in \lambda \cdot \Mat(\gs)$, hence
\[\LHS \text{ of \eqref{eqwtm} } \in \Mat( \mathbf{B}_{\Kinfty}^{[r_\ell, +\infty)}).\]
Since $\varphi^{-N}(E)$ satisfies the conditions in  Lem. \ref{lemeu}, we can iteratively use the lemma and \eqref{eqwtm} to conclude that
\begin{equation}\label{eqNminusall}
\wt{M}\in \Mat \left(\bigcap_{n \geq 0} \mathbf{B}_{\Kinfty}^{[r_\ell/p^{n}, +\infty)}\right).
\end{equation}
To show that $M\in \Mat(\mathcal{O})$, our proof proceeds in 2 steps: we first show  that $\wt{M} \in \Mat(\mathcal{O})$ using Lem. \ref{lemfrobreg}; then we show that $M\in \Mat(\mathcal{O})$ using a trick of Caruso.

\textbf{Step 1:}
Write $r=r_\ell.$
Choose $c\gg 0$ (in particular, we ask that $c>\ell$) such that
\[\wt M \in \Mat( p^{-c}\mathbf{A}_{\Kinfty}^{[r, +\infty)}).\]
Then we have
\begin{eqnarray*}
W^{[r, r]}(\text{LHS of \eqref{eqwtm}}) &\geq & \min \{W^{[r, r]}(\wt M), W^{[r, r]}(\lambda) \}, \text{ since } W^{[r, r]}   \text{ is multiplicative} \\
&\geq & \min \{ -c, -\ell\}, \text{ by Lem. \ref{lemsseu}(3) } \\
&= &  -c
\end{eqnarray*}
By Lem. \ref{lemsseu}(2), we have
\[W^{[r, r]}(\varphi^{-N}(E)) = W^{[p^Nr, p^Nr]}(E)  \leq 1.\]
Hence using \eqref{eqwtm}, we have
\[W^{[r/p, r/p]}(\wt M) =W^{[r, r]}(\varphi(\wt M)) \geq -c-(h+1).\]
Iterate the above argument, we get that for all $n$,
\begin{equation*}\label{eqrnn}
W^{[r/p^n, r/p^n]}(\wt M)  \geq -c-n(h+1).
\end{equation*}
By Lem. \ref{lemcompa}, we have
\[\wt M \in \Mat( p^{ -c-n(h+1)} \mathbf{A}^{[r/p^n, +\infty)}_{\Kinfty}), \quad \forall n. \]
Using Eqn. \eqref{eqcap4} in Lem. \ref{lemfrobreg}, we can conclude that
\[\wt M \in \Mat(\mathcal{O}).\]

\textbf{Step 2:} In this  step, we show that $M\in \Mat(\mathcal{O})$. If $N=0$, then there is nothing to prove. Suppose now $N \geq 1$.
So we have
\begin{equation}\label{eq31}
\wt M=D_N^{h+1}\cdot M = \varphi^{-N}(E^{h+1})\cdot D_{N-1}^{h+1}\cdot M  \in \Mat (\mathcal{O}).
\end{equation}
From \eqref{eqphi1}, we also have
\begin{equation*}
ME^h+N_\nabla(A)B =\frac{p}{E(0)}E A\varphi(M)B,
\end{equation*}
and hence
\begin{equation*}
E^{h+1} D_{N-1}^{h+1}\cdot (ME^h+N_\nabla(A)B) = \frac{p}{E(0)}E  A\varphi(\wt M)B.
\end{equation*}
So we have
\begin{equation}\label{eq32}
E^{h+1} D_{N-1}^{h+1} \cdot ME^h =E^{2h+1}\cdot D_{N-1}^{h+1} \cdot M \in \mat(\mathcal{O}).
\end{equation}
Note that both $\varphi^{-N}(E)$ and $E$ are irreducible in $K_0[u]$ and hence they are co-prime. Hence we can use \eqref{eq31} and \eqref{eq32} to conclude that
\begin{equation*}
D_{N-1}^{h+1} \cdot M\in \mat(\mathcal{O}).
\end{equation*}
If $N-1\geq 1$, we can repeat the above argument. (Note that in the argument of this Step 2, we do not use the fact $\varphi^{-N}(E) \notin K_0[u^p]$; indeed, this condition is used only  to conclude \eqref{eqNminusall}). Hence in the end we must have
\begin{equation*}
 M\in \mat(\mathcal{O}).
\end{equation*}
\end{proof}

\begin{rem}\label{remstep3}
The argument in Step 2 above is taken from the paragraph containing \cite[p. 2595, Eqn. (3.15)]{Car13}; in particular, the use of $D_N(u)$ is inspired by the argument in \emph{loc. cit.}. However, the argument before Step 2 are completely different from those in \cite{Car13}.
\end{rem}

\subsection{Potential  semi-stability of finite height representations}\label{subsecpst}
In this subsection, we show that finite height representations are potentially semi-stable; in fact, our result is more precise and stronger. Let us first recall two useful lemmas.

For any $K \subset X \subset \overline{K}$, let
\[m(X): =1+\max \{i \geq 1, \mu_{i} \in X\} .\]
Recall for each $n \geq 1$, we let $K_n=K(\pi_{n-1})$ (hence $K_1=K$). Note that
\begin{itemize}
\item (for $n\geq 2$), $K_n$ here $=``K_n"$ in \cite{Kis06} $=``K_{n-1}"$ in \cite{Liu10} and \cite{Oze17}.
\end{itemize}

\begin{lemma}\label{lemozfix}
Let $m:=m(K^{\mathrm{ur}})$  where $K^{\mathrm{ur}}$ is the maximal unramified extension of $K$ (contained in $\overline K$).
Suppose $V \in \Rep_{\Qp}(G_K)$ is semi-stable over $K_n$ for some $n \geq 1$, then $V$ is semi-stable over $K_m$.
\end{lemma}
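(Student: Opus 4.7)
My plan is to prove the statement by a descent argument, proceeding by decreasing induction on $n$. We may clearly assume $n > m$, since otherwise $K_m \supseteq K_n$ and $V|_{G_{K_m}}$ is automatically semi-stable as a restriction. Thus the essential step is to show: if $V|_{G_{K_n}}$ is semi-stable and $n > m$, then $V|_{G_{K_{n-1}}}$ is already semi-stable.

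Since semi-stability is detected on inertia (equivalently, it is preserved by and descends along unramified extensions of the base field, because $K_n$ and $K_n^{\mathrm{ur}}$ have the same residue field and $\bst^{G_F}$ is insensitive to unramified extension data beyond $F_0$), it suffices to prove descent along $L_n := K^{\mathrm{ur}}(\pi_{n-1}) \supseteq L_{n-1} := K^{\mathrm{ur}}(\pi_{n-2})$. The degree-$p$ Kummer-type extension $L_n/L_{n-1}$ is Galois precisely when $\mu_1 \in L_{n-1}$, i.e.\ when $\mu_1 \in K^{\mathrm{ur}}$, i.e.\ when $m \geq 2$. In that case one has a clean Galois descent: $D_\st^{L_{n-1}}(V) = D_\st^{L_n}(V)^{\Gal(L_n/L_{n-1})}$, and because the Galois action commutes with $\varphi$ and $N$ and the residue field is unchanged, the invariants retain full $K_0$-dimension $\dim_{\Qp} V$, giving semi-stability over $L_{n-1}$.

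The harder case is $m = 1$, where $\mu_1 \notin K^{\mathrm{ur}}$ and $L_n/L_{n-1}$ is genuinely non-Galois. Here my plan is to pass to the Galois closure $\widetilde L_n := L_n(\mu_1)$, noting that $V|_{G_{\widetilde L_n}}$ remains semi-stable (adjoining $\mu_1$ is a finite cyclotomic step, which preserves semi-stability via the standard compatibility of $\bst$ with such extensions), and then perform a Galois descent from $\widetilde L_n$ down to $L_{n-1}$. The key input is that $\Gal(\widetilde L_n/L_{n-1})$ is a semidirect product of $\Gal(\widetilde L_n/L_n) \subset (\Z/p)^\times$ with the degree-$p$ piece corresponding to $\pi_{n-1}$; the hypothesis $\mu_1 \notin K^{\mathrm{ur}}$ enforces that these two ramification contributions are essentially independent, and this independence is what allows the $\Gal(\widetilde L_n/L_{n-1})$-invariants of $D_\st^{\widetilde L_n}(V)$ to recover the full $K_0$-dimension.

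The main obstacle will be making this independence rigorous — in effect, a compatibility statement asserting that the cyclotomic tower adjoining $\mu_i$'s and the Kummer tower adjoining $\pi_i$'s interact trivially at the level of $\bst$-periods when $\mu_1 \notin K^{\mathrm{ur}}$. Given the label of the lemma, I expect the cleanest way to settle this step is to invoke Ozeki's results from \cite{Oze17} on the minimal field of semi-stability and maximal semi-stable subrepresentations, which handle exactly this type of non-Galois descent along $p$-power Kummer extensions.
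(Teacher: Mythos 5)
Your reduction to $n > m$ and the passage to $K^{\mathrm{ur}}$ are fine, and your plan to ultimately invoke \cite{Oze17} is exactly what the paper does: the paper's whole proof is the citation to \cite[Rem.~2.5]{Oze17}, the substantive point being that the constant must be $m(K^{\mathrm{ur}})$, not $m(K)$ as in \cite{Liu10}. But the ``clean Galois descent'' you propose for $m \geq 2$ has a real gap. It is true that $D_\st^{L_{n-1}}(V) = D_\st^{L_n}(V)^{\Gal(L_n/L_{n-1})}$, but nothing you list (``commutes with $\varphi, N$'', ``residue field unchanged'') forces these invariants to have full dimension. Since $L_n/L_{n-1}$ is \emph{totally} ramified, $\Gal(L_n/L_{n-1})$ acts $W(\overline k)[1/p]$-linearly rather than semilinearly, so there is no Hilbert~90 mechanism as in unramified descent; the invariants are a priori an arbitrary subspace, and full dimension is precisely what the lemma asserts. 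If your reasoning were valid, semi-stability would descend along every totally ramified Galois extension $F/K$ --- false, since a nontrivial finite-order character of $\Gal(F/K)$, inflated to $G_K$, is trivial on $G_F$ (hence semi-stable over $F$) but ramified and not crystalline over $K$. Moreover your induction would never stop at $m$: the Galois-ness of $L_n/L_{n-1}$ only needs $\mu_1 \in K^{\mathrm{ur}}$, which is independent of $n$, so you would descend all the way to $K_1 = K$, a conclusion that cannot be right.

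The missing input is that $V$ is a representation of the whole $G_K$. Because of this, the kernel of the $I_K$-action on $D_{\mathrm{pst}}(V)$ is \emph{normal in $G_K$}, so the minimal extension $M/K^{\mathrm{ur}}$ over which $V$ becomes semi-stable is Galois over $K^{\mathrm{ur}}$. The hypothesis gives $M \subseteq K^{\mathrm{ur}}(\pi_{n-1})$, and one then works inside $\Gal(K^{\mathrm{ur}}(\pi_{n-1},\mu_{n-1})/K^{\mathrm{ur}}) \cong \langle\tau\rangle \rtimes \Gamma$, where $\tau$ has order $p^{n-1}$ and the image of $\Gamma$ in $(\Z/p^{n-1}\Z)^\times$ is exactly $\{a \equiv 1 \pmod{p^{m-1}}\}$ (for $p$ odd). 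A normal subgroup containing $\Gamma$ contains $\tau^{1-a}$ for all such $a$, hence contains $\langle\tau^{p^{m-1}}\rangle$, whose fixed field is $K^{\mathrm{ur}}(\pi_{m-1})$; thus $M \subseteq K^{\mathrm{ur}}(\pi_{m-1})$. This argument is uniform in $m$; in fact the case $m = 1$, which you flagged as harder, is the \emph{easy} one, since then $\Gamma$ meets some $a \not\equiv 1 \pmod p$ and the normal closure is everything, giving $M = K^{\mathrm{ur}}$. If you want a self-contained proof rather than the citation to Ozeki, it is this group-theoretic computation exploiting the normality of $M/K^{\mathrm{ur}}$ that you must supply, not a dimension count on $\Gal$-invariants of $D_\st$.
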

\begin{proof}
This is proved in \cite[Rem. 2.5]{Oze17}.
Note that this fixes a gap in \cite[Thm. 4.2.2]{Liu10}, where Liu claims a similar statement using ``$m:=m(K)$" instead. Note that in general, it is possible that $m(K^{\mathrm{ur}})>m(K)$, cf. \cite[\S 5.4]{Oze17}. Let us mention that this lemma is completely about Galois theory of the fields $K_n$, and has nothing to do with the ``$(\varphi, \hat{G})$-modules" in \emph{loc. cit.}.
\end{proof}

\begin{lemma}\label{lemramres}
Let $K \subset  K^{(1)} \subset  K^{(2)}$ be finite extensions such that $K^{(2)}/K^{(1)}$ is totally ramified, then the restriction functor from semi-stable $G_{K^{(1)}}$-representations to semi-stable $G_{K^{(2)}}$-representations is fully faithful.
\end{lemma}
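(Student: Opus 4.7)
The plan is to use the Tannakian nature of semi-stable representations together with the explicit description of Galois invariants via filtered $(\varphi, N)$-modules. Faithfulness of the restriction functor is automatic, since morphisms between Galois representations are just $\mathbb{Q}_p$-linear maps with extra equivariance. For fullness, observing that semi-stable representations form a tensor subcategory closed under duals, it suffices to prove the following invariants statement: for every semi-stable $G_{K^{(1)}}$-representation $V$, one has $V^{G_{K^{(1)}}} = V^{G_{K^{(2)}}}$. Indeed, one recovers fullness by applying this to the internal Hom $V_1^\vee \otimes V_2$, which is again semi-stable.

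Next I would set up the relevant filtered module. Since $K^{(2)}/K^{(1)}$ is totally ramified, the two residue fields agree, and hence the maximal absolutely unramified subfields satisfy $K^{(1)}_0 = K^{(2)}_0 =: K'_0$. A semi-stable $G_{K^{(1)}}$-representation $V$ remains semi-stable upon restriction to $G_{K^{(2)}}$, and its associated filtered $(\varphi, N)$-module over $K'_0$ has the same underlying object $D := D_{\st}^{K^{(1)}}(V) = D_{\st}^{K^{(2)}}(V)$ equipped with the same Frobenius and monodromy operators; only the filtration changes, and it does so by base change, namely $\mathrm{Fil}^i(D \otimes_{K'_0} K^{(2)}) = \mathrm{Fil}^i(D \otimes_{K'_0} K^{(1)}) \otimes_{K^{(1)}} K^{(2)}$. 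This restriction/base-change compatibility is the standard input and is the one step I would just cite (e.g.\ from Brinon–Conrad or the original work of Fontaine).

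Now I would apply the Colmez–Fontaine equivalence (Thm.~\ref{thmcf}) to compute the Galois invariants on both sides. For any finite extension $F/K'_0$ and any semi-stable $G_F$-representation $W$ with associated filtered $(\varphi, N)$-module $D$, we have
\[
W^{G_F} \;=\; \bigl\{ d \in D : \varphi(d) = d,\ N(d) = 0,\ d \otimes 1 \in \mathrm{Fil}^0(D \otimes_{K'_0} F) \bigr\},
\]
since this describes morphisms from the unit object of $\MFwa$ to $D$. Applying this identity both with $F = K^{(1)}$ and $F = K^{(2)}$, the equality $V^{G_{K^{(1)}}} = V^{G_{K^{(2)}}}$ reduces to the purely algebraic statement: for $d \in D$ with $d\otimes 1 \in D_{K^{(1)}}$, one has $d\otimes 1 \in \mathrm{Fil}^0(D_{K^{(1)}})$ if and only if $d\otimes 1 \in \mathrm{Fil}^0(D_{K^{(2)}})$.

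The last step is then immediate from faithful flatness of $K^{(2)}/K^{(1)}$: since $\mathrm{Fil}^0(D_{K^{(2)}}) = \mathrm{Fil}^0(D_{K^{(1)}}) \otimes_{K^{(1)}} K^{(2)}$ inside $D_{K^{(1)}} \otimes_{K^{(1)}} K^{(2)}$, intersection with $D_{K^{(1)}}$ recovers $\mathrm{Fil}^0(D_{K^{(1)}})$. I expect no real obstacle here; the only genuine input is the base-change description of the filtration after restriction along a totally ramified extension, and everything else is bookkeeping. The argument also makes clear why the totally ramified hypothesis is essential: it is needed to guarantee $K^{(1)}_0 = K^{(2)}_0$, so that $D$ does not change and only the filtration is base-changed.
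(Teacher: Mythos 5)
Your proof is correct. The paper itself proves this lemma only by citation — it says ``This is \cite[Lem. 4.11]{Oze17}. (It is basically \cite[Prop. 3.4]{Car13}, but it is completely elementary)'' — so there is no in-paper argument to compare against, but your argument (reduce fullness to $V^{G_{K^{(1)}}} = V^{G_{K^{(2)}}}$ via internal $\Hom$; use $K^{(1)}_0 = K^{(2)}_0$ to identify $D_{\st}^{K^{(1)}}(V) = D_{\st}^{K^{(2)}}(V)$ as $(\varphi, N)$-modules with the filtration merely base-changed; conclude by faithful flatness of $K^{(2)}/K^{(1)}$) is precisely the standard mechanism behind those references. One minor remark: you invoke the full Colmez--Fontaine equivalence, but you only use that the two invariant spaces land in the same subset of $D$ under the natural map $w \mapsto w \otimes 1$ and that this map is bijective onto $\Hom_{\MF}(\unbf, D)$; for this you only need the (much easier) full faithfulness of $D_{\st}$ on semi-stable representations, not essential surjectivity.
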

\begin{proof}
This is \cite[Lem. 4.11]{Oze17}. (It is basically \cite[Prop. 3.4]{Car13}, but it is completely elementary).
\end{proof}

The following definition comes from \cite{Oze17}.

 \begin{defn}
 Fix a choice of $\vec{\pi}$.
Let $n \geq 1$.
we denote by  $\mathcal{C}_n(\vec{\pi})$
the category  of finite free $\Zp$-representations $T$ of $G_K$
such that we have an $G_{K_n}$-equivariant isomorphism
\[T[1/{p}] |_{G_{K_n}} \simeq W|_{G_{K_n}}\]
 for some $W\in \Rep_{\Qp}^{\st, \geq 0}(G_K)$; namely $T[1/{p}] |_{G_{K_n}}$ is semi-stable and can be extended to a semi-stable $G_K$-representation.
\end{defn}

\begin{thm}\label{thmmainpst}
Let $\Rep_{\Zp}^{E(u)\mbox{-}\mathrm{ht}}(G_K)$ be the category of finite $E(u)$-height $\Zp$-representations (with respect to the fixed choice of $\vec{\pi}$). Then 
\[\Rep_{\Zp}^{E(u)\mbox{-}\mathrm{ht}}(G_K) =\mathcal{C}_{m(K^{\mathrm{ur}})}(\vec{\pi}).\]
In particular, if $T \in \Rep_{\Zp}^{E(u)\mbox{-}\mathrm{ht}}(G_K)$, then $T[1/p]|_{G_{K_{{m(K^{\mathrm{ur}})}}}}$ is semi-stable.
\end{thm}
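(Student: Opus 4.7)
The plan is to establish the two inclusions separately, with the substantive direction being $\Rep_{\Zp}^{E(u)\mbox{-}\mathrm{ht}}(G_K) \subset \mathcal{C}_{m(K^{\mathrm{ur}})}(\vec{\pi})$. The easier inclusion $\mathcal{C}_{m(K^{\mathrm{ur}})}(\vec{\pi}) \subset \Rep_{\Zp}^{E(u)\mbox{-}\mathrm{ht}}(G_K)$ follows directly from Kisin's theory: if $V := T[1/p]$ satisfies $V|_{G_{K_m}} \simeq W|_{G_{K_m}}$ for some $W \in \Rep_{\Qp}^{\st, \geq 0}(G_K)$ (with $m = m(K^{\mathrm{ur}})$), then $K_m \subset K_\infty$ forces $G_\infty \subset G_{K_m}$ and hence $V|_{G_\infty} \simeq W|_{G_\infty}$; Thm. \ref{thm539} attaches to $W$ a Breuil-Kisin module $\gm$ of non-negative $E(u)$-height witnessing finite $E(u)$-height of $T$.

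For the reverse inclusion, let $T$ be of finite $E(u)$-height with associated Breuil-Kisin module $\gm$, set $V := T[1/p]$, and write $\mathcal{M} := \gm \otimes_\gs \mathcal{O}$. By Prop. \ref{propregfrob}, the monodromy $N_\nabla$ defined on $D^\dagger_{\rig, \Kinfty}(V) = \gm \otimes_\gs \mathcal{R}$ via Thm. \ref{thmnnabla} preserves $\mathcal{M}$. Since $\mathcal{M} \otimes_{\mathcal{O}} \mathcal{R}$ is the rigid-overconvergent $(\varphi,\tau)$-module of a genuine Galois representation and hence pure of slope zero, $(\mathcal{M}, \varphi, N_\nabla) \in \textnormal{Mod}_{\mathcal{O}}^{\varphi, N_\nabla, 0}$. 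Kisin's equivalence then produces $D \in \MFwa$ with $M(D) \simeq \mathcal{M}$, and I set $V' := V_\st(D) \in \Rep_{\Qp}^{\st, \geq 0}(G_K)$. By Thm. \ref{thm539} the Breuil-Kisin module of $V'$ is canonically identified with $\gm$, giving a $G_\infty$-equivariant isomorphism $\iota : V|_{G_\infty} \xrightarrow{\sim} V'|_{G_\infty}$.

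The crux is to upgrade $\iota$ to a $G_{K_{n+1}}$-equivariant isomorphism for $n$ sufficiently large. By construction the $N_\nabla$ on $\mathcal{M}$ coming from $V$ coincides with Kisin's $N_\nabla^{\mathrm{Kis}}$ on $M(D)$, and by Thm. \ref{thmcoinc} the latter in turn equals the locally analytic $N_\nabla$ arising from the $\tau$-action on $V'$. Hence the two analytic monodromy operators agree on $\mathcal{M}$. Because $\mathcal{M} \subset D^\dagger_{\rig, \Kinfty}(V)$ consists of $\tau$-locally analytic vectors (by Cor. \ref{corlamod}), the inverse exponential formula $\tau^{p^n} = \exp(p^{n+1}\mathfrak{t} \cdot N_\nabla)$ valid for $n$ large recovers the $\tau^{p^n}$-actions from $N_\nabla$, so the $\tau^{p^n}$-actions on the rigid-overconvergent $(\varphi,\tau)$-modules of $V$ and $V'$ coincide. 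Combined with $\iota$, and using that a lift of $\tau^{p^n}$ together with $G_\infty$ topologically generates (up to $G_L$) an open subgroup with fixed field $K_\infty \cap K_{p^\infty}(\pi_n) = K(\pi_n) = K_{n+1}$, this yields $V|_{G_{K_{n+1}}} \simeq V'|_{G_{K_{n+1}}}$. In particular $V|_{G_{K_{n+1}}}$ is semi-stable, so Lem. \ref{lemozfix} implies $V|_{G_{K_m}}$ is semi-stable, and Lem. \ref{lemramres} applied to the totally ramified extension $K_{n+1}/K_m$ descends the isomorphism to $V|_{G_{K_m}} \simeq V'|_{G_{K_m}}$, proving $T \in \mathcal{C}_{m(K^{\mathrm{ur}})}(\vec{\pi})$.

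The main obstacle I expect is the subgroup-level argument: rigorously transferring agreement of $N_\nabla$'s on $\mathcal{M}$ to equality of $\tau^{p^n}$-actions via the exponential, then extracting an honest $G_{K_{n+1}}$-equivariant isomorphism of representations rather than merely of modules over a big base ring. Additional care is also needed in the exceptional case $K_\infty \cap K_{p^\infty} = K(\pi_1)$ (possible only for $p = 2$), where the normalization \eqref{eqnnring} of $N_\nabla$ and the identification of the subfield fixed by $\langle G_\infty, \tau^{p^n}\rangle$ require minor adjustments.
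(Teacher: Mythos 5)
Your proposal is correct and follows essentially the same route as the paper's proof: the easy inclusion via Kisin plus $G_\infty \subset G_{K_m}$; the hard inclusion by descending $N_\nabla$ to $\mathcal{O}$ (Prop.~\ref{propregfrob}), feeding the slope-zero $\mathcal{O}$-module into Kisin's equivalence to produce a semi-stable $W$, matching the two locally analytic $N_\nabla$'s on the common underlying $\mathcal R$-module via Thm.~\ref{thmcoinc}, exponentiating to recover the $\tau^{p^n}$-actions, and then descending with Lem.~\ref{lemozfix} and Lem.~\ref{lemramres}. The only bookkeeping slip is the claimed equality $K_\infty \cap K_{p^\infty}(\pi_n) = K(\pi_n)$; the paper uses only the inclusion $\Kpinfty(\pi_a) \cap \Kinfty \subseteq K(\pi_{a+1})$ (which can be strict for $p=2$), but this shifts the index by one and does not affect the argument.
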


\begin{proof}
Suppose $T \in \mathcal{C}_{m(K^{\mathrm{ur}})}(\vec{\pi})$, and let $V=T[1/p]$. Let  $U\in \Rep_{\Qp}^{\st, \geq 0}(G_K)$ such that  \[V |_{G_{K_{m(K^{\mathrm{ur}})}}} \simeq U|_{G_{K_{m(K^{\mathrm{ur}})}}}.\] 
By Kisin's result, $U$ is of finite $E(u)$-height \emph{with respect to} $\vec{\pi}$, hence so are $U|_{G_\infty} \simeq V|_{G_\infty}$, and hence so are $V$ and $T$ (by Def. \ref{deffinht}). (Note that the semi-stability of $V |_{G_{K_{m(K^{\mathrm{ur}})}}}$ is \emph{not} enough to guarantee that $V$ is of finite height with respect to $\vec{\pi}$; it only guarantees finite heightness with respect to $\{\pi_n\}_{n \geq m(K^{\mathrm{ur}})}$.)


Conversely, let $V$ be a finite height representation.
By Prop. \ref{propregfrob},   we can construct a triple $(\gm \otimes_{\gs} {\mathcal{O}}, \varphi, N_\nabla) \in \textnormal{Mod}_{ \mathcal{O}}^{\varphi, N_\nabla, 0}$, which gives us a \emph{semi-stable} $G_K$-representation $W$.
Here $(\gm \otimes_{\gs} {\mathcal{O}}, \varphi)$ is pure of slope zero because $(\gm \otimes_{\gs} \mathbf{B}_{\Kinfty}, \varphi)$ is part of the \'etale $(\varphi, \tau)$-module associated to $V$.

Let $(D_{K_\infty}(V), \varphi, \hat{G}_V)$ be the \'etale $(\varphi, \tau)$-module associated to $V$, and let $D_{K_\infty}^\dagger(V)$ (resp. $D_{\rig, K_\infty}^\dagger(V)$) be the overconvergent (resp. rigid-overconvergent) module equipped with the induced $\varphi$ and $\hat{G}_V$. Let $(D_{K_\infty}(W), \varphi, \hat{G}_W)$, $D_{K_\infty}^\dagger(W)$, $D_{\rig, K_\infty}^\dagger(W)$ be similarly defined.
It is clear that
\begin{align*}
D_{K_\infty}(V) &= \gm \otimes_{\gs} \mathbf{B}_{K_\infty} = D_{K_\infty}(W)\\
D_{K_\infty}^\dagger(V) &= \gm \otimes_{\gs} \mathbf{B}_{K_\infty}^\dagger  =  D_{K_\infty}^\dagger(W)\\
D_{\rig, K_\infty}^\dagger(V) &= \gm \otimes_{\gs} \mathbf{B}_{\rig, K_\infty}  = D_{\rig, K_\infty}^\dagger(W)
\end{align*}
and they carry the same $\varphi$-action.

In the following, let $Q \in \{V, W\}$.
By the definition of  locally analytic actions, the $\tau_{Q}$-action  over $D_{\rig, K_\infty}^\dagger(Q)$ can be ``locally" recovered by 
\begin{equation*}
\nabla_{\tau, Q}=
\begin{cases} 
p\mathfrak{t}\cdot  N^{\la}_{\nabla, Q}, &  \text{if }  \Kinfty \cap \Kpinfty=K, \\
  4\mathfrak{t}\cdot  N^{\la}_{\nabla, Q}, & \text{if }  \Kinfty \cap \Kpinfty=K(\pi_1),
\end{cases}
\end{equation*}
 where $N^{\la}_{\nabla, Q}$ is the monodromy operator defined in Thm. \ref{thmnnabla}.
Indeed, let $\vec{e}$ be an $\gs$-basis of $\gm$, then there exists some $a\gg 0$ such that
\begin{equation}\label{eqtauvw}
\tau^{\alpha}_{Q}(\vec{e}) =\sum_{i=0}^{\infty} \alpha^{i}\cdot\frac{(\nabla_ {\tau, Q})^i(\vec{e})}{i!}, \quad \forall \alpha \in p^a \Zp.
\end{equation}
Note that \emph{a priori}, the series in \eqref{eqtauvw} converges to some element in $\wt{\mathbf{B}}_{\rig, L}^\dagger \otimes_{\mathbf{B}_{\rig, K_\infty}^\dagger} D_{\rig, K_\infty}^\dagger(Q)$;
however since $\vec{e}$ is also basis for $D_{K_\infty}(Q)$ and $D_{K_\infty}^\dagger(Q)$, the limit has to fall inside
\[\wt{\mathbf{B}}_{  L}^\dagger \otimes_{\mathbf{B}_{\Kinfty}^\dagger} D_{K_\infty}^\dagger(Q) \subset \wt{\mathbf{B}}_{ L} \otimes_{\mathbf{B}_{\Kinfty} }D_{K_\infty}(Q).\]
Now, $N^{\la}_{\nabla, V} =N_\nabla$ by construction, and
$N^{\la}_{\nabla, W} =N_\nabla$ by Thm. \ref{thmcoinc}.
Thus we have
\[N^{\la}_{\nabla, V}=N^{\la}_{\nabla, W}.\]
This implies that the $\gal(L/\Kpinfty(\pi_a))$-action on the two  $(\varphi, \tau)$-modules are the same. Since $\Kpinfty(\pi_a) \cap \Kinfty \subseteq K(\pi_{a+1})$ (possible equality only when $p=2$, cf. Notation \ref{nota hatG}).
Hence we must have
\[V|_{G_{K_{a+2}}}=W|_{G_{K_{a+2}}}.\]



We can always first choose $a \geq m(K^{\mathrm{ur}}) $, and hence by Lem. \ref{lemozfix}, $V$ is semi-stable over $K_{m(K^{\mathrm{ur}})}$. Thus by Lem. \ref{lemramres} (using the totally ramified extension $K_{a+2}/K_{m(K^{\mathrm{ur}})}$), we have:
\[V|_{G_{K_{m(K^{\mathrm{ur}})}}}=W|_{G_{K_{m(K^{\mathrm{ur}})}}}.\]
\end{proof}

\begin{rem}\label{remwrongstatement}
In \cite[Thm. 3]{Car13}, the statement there claims that $\Rep_{\Zp}^{E(u)\mbox{-}\mathrm{ht}}(G_K)$ is the same as $\mathcal{C}_{m(K)}(\vec{\pi})$. However, by our Thm. \ref{thmmainpst}, and by the  examples in \cite[Prop. 3.22(1)]{Oze17} which shows that $\mathcal{C}_{m(K)}(\vec{\pi}) \neq \mathcal{C}_{m(K^\ur)}(\vec{\pi})$ in general, Caruso's statement is in general false.
\end{rem}

\section{Breuil-Kisin $G_K$-modules} \label{secsimp}

\newcommand{\bx}{\wtB_{\log}^\dagger[1/t]}
\newcommand{\by}{\wtB^{[0, \frac{r_0}{p}]}[\ell_u]}
\newcommand{\bz}{\wtb_{\log}^+[1/t]}

In \S \ref{subssimple}, we construct the Breuil-Kisin $G_K$-modules and show that they classify integral semi-stable Galois representations.
In \S \ref{72},  we discuss the relation between our theory and Liu's $(\varphi, \hat{G})$-modules (only preliminarily here).
In \S \ref{73}, we discuss the relation between our theory and  some results of Gee and Liu.

\subsection{Breuil-Kisin $G_K$-modules} \label{subssimple}
In this subsection, we construct the Breuil-Kisin $G_K$-modules. 
Recall that in Notation \ref{114}, we defined the notations $R, W(R), \mathfrak{m}_R$; the ring $W(R)$ is precisely $\wtA^+$ in \S \ref{secring}, and is also denoted as $\Ainf$ in the literature. Let $\Fr R$ be the fractional field of $R$, and let $W(\Fr R)$ be the Witt vectors; this is precisely $\wtA$ in  \S \ref{secring}.

 
\begin{defn}\label{defwr}
Let $\textnormal{Mod}_{\gs, W(R)}^{\varphi, G_K}$
be the category consisting of triples $(\mathfrak{M}, \varphi_{\gm}, G_K)$ which we call the (effective) \emph{Breuil-Kisin $G_K$-modules}, where 
\begin{enumerate}
\item $(\mathfrak{M}, \varphi_\mathfrak{M})\in \text{Mod}_{\gs}^{\varphi}$;
\item  $G_K$ is a continuous $W(R)$-semi-linear   $G_K$-action on $\hM:= W(R)
\otimes_{ \gs} \mathfrak{M}$;
\item $G_K$ commutes  with $\varphi_{\widehat{\mathfrak{M}}}$ on $\widehat{\mathfrak{M}}$;

\item $\gm \subset \hM^{\gal(\overline{K}/\Kinfty)}$ via the embedding $\gm \hookrightarrow \hM$;
\item  $\gm/u\gm \subset (\hM/W(\mathfrak{m}_R)\hM)^{G_K}$ via the embedding $\gm/u\gm \hookrightarrow \hM/W(\mathfrak{m}_R)\hM$.
\end{enumerate}
  \end{defn}



We record an equivalent condition for Def. \ref{defwr}(5).
\begin{lemma}\label{lemeq5}
Let  $(\mathfrak{M}, \varphi_{\gm}, G_K)$ be a triple satisfying Items (1)-(4) of Def. \ref{defwr}, then the condition in Def. \ref{defwr}(5) is satisfied if and only if
$\hM/W(\mathfrak{m}_R)\hM$ is fixed by $G_{K^\ur}$.
\end{lemma}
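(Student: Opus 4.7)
The plan is to reduce both conditions to a single statement about the trivialization of a cocycle, and then exploit the factorization $G_K = G_{K^{\ur}} \cdot G_\infty$.

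First, I would set up the module $M := \hM/W(\mathfrak{m}_R)\hM$. Using that $W(R)/W(\mathfrak{m}_R) \simeq W(\overline{k})$ and that $u$ embeds as $[\underline{\pi}] \in W(\mathfrak{m}_R)$, one gets a canonical identification $M \simeq W(\overline{k}) \otimes_{W(k)} (\gm/u\gm)$, through which $\gm/u\gm$ sits inside $M$ as in Def. \ref{defwr}(5). Choose a $\gs$-basis $\{e_1,\dots,e_d\}$ of $\gm$, giving a $W(\overline{k})$-basis $\{\overline{e_i}\}$ of $M$. The continuous $W(R)$-semi-linear $G_K$-action on $\hM$ induces a continuous $W(\overline{k})$-semi-linear $G_K$-action on $M$, where $G_K$ acts on $W(\overline{k})$ via the surjection $G_K \twoheadrightarrow \Gal(\overline{k}/k)$. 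This is encoded by a continuous map $A \colon G_K \to GL_d(W(\overline{k}))$ defined by $g(\overline{e_i}) = \sum_j A(g)_{ji} \overline{e_j}$, and it satisfies the cocycle identity $A(gh) = A(g)\cdot g(A(h))$.

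Under this encoding, condition (5) of Def. \ref{defwr} is equivalent to $A \equiv I$. Moreover, since $G_{K^{\ur}}$ is the inertia subgroup and hence acts trivially on $\overline{k}$ (hence on $W(\overline{k})$), its action on $M$ agrees with multiplication by $A(\cdot)$; thus ``$M$ fixed by $G_{K^{\ur}}$'' is equivalent to $A|_{G_{K^{\ur}}} \equiv I$. The ``only if'' direction of the lemma is then immediate: if (5) holds, so $A \equiv I$, then \emph{a fortiori} $A|_{G_{K^{\ur}}} \equiv I$.

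For the converse, condition (4) of Def. \ref{defwr} (namely $\gm \subset \hM^{G_\infty}$) forces $A|_{G_\infty} \equiv I$, while the hypothesis gives $A|_{G_{K^{\ur}}} \equiv I$. To conclude $A \equiv I$, I would show $G_K = G_{K^{\ur}} \cdot G_\infty$ as a set. Since $K_\infty/K$ is totally ramified, $K_\infty \cap K^{\ur} = K$, and therefore $K_\infty$ and $K^{\ur}$ are linearly disjoint over $K$, giving
\[
\Gal(K_\infty K^{\ur}/K) \;\simeq\; \Gal(K_\infty/K) \times \Gal(K^{\ur}/K),
\]
with $G_{K^{\ur}}/G_{K_\infty K^{\ur}}$ mapping isomorphically onto the first factor and $G_\infty/G_{K_\infty K^{\ur}}$ onto the second. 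Combined with the obvious inclusion $G_{K_\infty K^{\ur}} \subset G_\infty$, this yields a factorization $g = h g'$ with $h \in G_{K^{\ur}}$ and $g' \in G_\infty$ for every $g \in G_K$. Applying the cocycle identity gives
\[
A(g) = A(h)\cdot h(A(g')) = I \cdot h(I) = I,
\]
which proves $A \equiv I$ and hence condition (5). The only mildly subtle point is the factorization $G_K = G_{K^{\ur}} \cdot G_\infty$, and this is essentially forced by the total ramification of $K_\infty/K$; I do not foresee any serious obstacle.
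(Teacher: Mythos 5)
Your overall strategy — reformulate condition (5) and the hypothesis as triviality of a single cocycle $A : G_K \to \GL_d(W(\overline{k}))$, observe that $A$ is trivial on $G_\infty$ (from condition (4)) and on $G_{K^\ur}$ (hypothesis), and combine — is sound and is a valid variant of what the paper does. The necessity direction, and the reduction of condition (5) to $A \equiv I$ and of ``$M$ fixed by $G_{K^\ur}$'' to $A|_{G_{K^\ur}} \equiv I$ (using that inertia acts trivially on $W(\overline{k})$), are all correct.

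However, the justification of the factorization $G_K = G_{K^\ur}\cdot G_\infty$ is wrong as written. You invoke an isomorphism $\Gal(K_\infty K^\ur/K) \simeq \Gal(K_\infty/K)\times\Gal(K^\ur/K)$, but $K_\infty/K$ is \emph{not} a Galois extension ($K(\pi_n)/K$ is not normal, since the conjugates $\pi_n\zeta$ for $\zeta\in\mu_{p^n}$ do not lie in $K(\pi_n)$ in general), so ``$\Gal(K_\infty/K)$'' does not exist; nor is $K_\infty K^\ur/K$ Galois in general. The desired factorization $G_K = G_{K^\ur}\cdot G_\infty$ is in fact true, but the correct reason is different: since $K^\ur/K$ is Galois and $K^\ur\cap K_\infty = K$, the restriction $G_\infty \to \Gal(K^\ur/K^\ur\cap K_\infty) = \Gal(K^\ur/K) = G_K/G_{K^\ur}$ is surjective, whence $G_K = G_{K^\ur}\cdot G_\infty$. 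Alternatively — and this is the route the paper takes, avoiding the factorization entirely — note that $\{g\in G_K : A(g)=I\}$ is a \emph{closed subgroup} (by the cocycle identity and continuity of the action), it contains $G_{K^\ur}$ and $G_\infty$, and the closed subgroup topologically generated by these two corresponds to the field $K^\ur\cap K_\infty = K$, so it is all of $G_K$. Either repair makes the argument complete, but as it stands the appeal to $\Gal(K_\infty/K)$ is a genuine error.
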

\begin{proof}
Necessity is obvious, and we prove sufficiency. If $\hM/W(\mathfrak{m}_R)\hM$ is fixed by $G_{K^\ur}$, then so is $\gm/u\gm$. But  $\gm/u\gm$ is also fixed by $G_{\Kinfty}$ by Def. \ref{defwr}(4), hence it is fixed by $G_K$ as $K^\ur \cap \Kinfty=K$.
\end{proof}

\begin{defn}\label{defwrweak}
  Let $\textnormal{wMod}_{\gs, W(R)}^{\varphi, G_K}$
denote the category consisting of triples $(\mathfrak{M}, \varphi_{\gm}, G_K)$ satisfying Items (1)-(4) of Def. \ref{defwr}. 
\end{defn}

For $\hM=  (\mathfrak{M}, \varphi_{\gm}, G_K)$ as in Def. \ref{defwrweak}, we can associate a $\Z_p[G_K]$-module:
\begin{equation}\label{weak1th}
T_{W(R)} (\hM)  :=  ( \hM \otimes_{W(R)} W(\Fr R))^{\varphi=1}.
\end{equation}

\begin{prop} \label{propweakff}
Eqn. \eqref{weak1th} induces a rank-preserving (i.e., $\mathrm{rk}_{\Zp} T_{W(R)} (\hM) =\mathrm{rk}_{W(R)}\hM$), exact, and fully faithful functor
$T_{W(R)}: \textnormal{wMod}_{\gs, W(R) }^{\varphi, G_K} \to \Rep_{\Zp}(G_K)$.
\end{prop}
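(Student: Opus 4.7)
The plan is to establish the three claims (rank preservation, exactness, and fully faithful) separately, factoring the functor through the classical equivalence between étale $\varphi$-modules over $\OE = \A_{\Kinfty}$ and $\Zp$-representations of $G_\infty$ recalled in Thm.~\ref{thmphitau}(1). The main obstacle will be fullness, for which we must descend a $G_K$-equivariant map on $T$ to a genuine morphism of Breuil-Kisin $G_K$-modules, and this is where the conditions defining $\textnormal{wMod}_{\gs, W(R)}^{\varphi, G_K}$ come crucially into play.

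For rank preservation, I would first observe that $\hM \otimes_{W(R)} W(\Fr R) \simeq \gm \otimes_{\gs} W(\Fr R) \simeq \cm \otimes_{\OE} W(\Fr R)$, where $\cm := \gm \otimes_{\gs} \OE$ is an étale $\varphi$-module over $\OE$ (since $E(u)$ becomes a unit in $\OE$ and $\gm$ is of finite $E(u)$-height). Because $W(\Fr R) = \wta$ contains the $p$-adic completion of the maximal unramified extension of $\OE$, Fontaine's theorem (Thm.~\ref{thmphitau}(1)) gives that $(\cm \otimes_{\OE} \wta)^{\varphi=1}$ is a free $\Zp$-module of rank $\rank_{\OE}\cm = \rank_{\gs}\gm = \rank_{W(R)}\hM$. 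Exactness then follows from the exactness of $\hM \mapsto \cm$ on short exact sequences of finite free objects (via $\gs$-freeness of $\gm$) composed with the exactness of the étale $\varphi$-module functor in Thm.~\ref{thmphitau}(1).

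Faithfulness is immediate: the natural map $\hM \hookrightarrow \hM \otimes_{W(R)} W(\Fr R) \simeq T_{W(R)}(\hM) \otimes_{\Zp} W(\Fr R)$ is injective (because $\hM$ is $W(R)$-free and $W(R) \hookrightarrow W(\Fr R)$), so any morphism $f : \hM_1 \to \hM_2$ is determined by $T_{W(R)}(f)$.

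The hard part is fullness. Given a $G_K$-equivariant map $g : T_{W(R)}(\hM_1) \to T_{W(R)}(\hM_2)$, I would first restrict to $G_\infty$. Condition~(4) of Def.~\ref{defwrweak} (namely $\gm_i \subset \hM_i^{G_\infty}$) is exactly what guarantees that, under the identification $\cm_i \otimes_{\OE} \wta \simeq \hM_i \otimes_{W(R)} W(\Fr R)$, the $G_\infty$-action on $T_{W(R)}(\hM_i)$ coincides with the $G_\infty$-action supplied by Fontaine's equivalence applied to $\cm_i$. Hence Thm.~\ref{thmphitau}(1) produces a unique $\varphi$-equivariant $\OE$-linear map $\tilde g : \cm_1 \to \cm_2$ inducing $g|_{G_\infty}$. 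By Kisin's result on the uniqueness of finite $E(u)$-height lattices inside an étale $\varphi$-module (\cite[Prop.~2.1.12]{Kis06}), $\tilde g$ restricts to a morphism $f : \gm_1 \to \gm_2$ in $\Mod_{\gs}^{\varphi}$, giving $\hat f := W(R) \otimes_{\gs} f : \hM_1 \to \hM_2$. To verify $\hat f$ is $G_K$-equivariant, note that under the canonical isomorphism $\hM_i \otimes_{W(R)} W(\Fr R) \simeq T_{W(R)}(\hM_i) \otimes_{\Zp} W(\Fr R)$ the map $\hat f \otimes 1$ corresponds to $g \otimes 1$, which is $G_K$-equivariant since $g$ is; combined with the $G_K$-equivariant injection $\hM_i \hookrightarrow \hM_i \otimes_{W(R)} W(\Fr R)$, this forces $\hat f$ to be $G_K$-equivariant, completing the argument.
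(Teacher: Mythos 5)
Your proof is correct, and it takes a genuinely more self-contained route than the paper's. The paper proves this proposition essentially by citation: it observes that the contravariant version (except exactness) is already established in Liu's work on $(\varphi, \hat G)$-modules around \cite[Rem.~3.1.5]{Liu10}, invokes \cite[Thm.~2.3.2]{GLAMJ} to identify the $\Zp$-dual of $T_{W(R)}(\hM)$ with the ``$\hat T(\hM)$'' used there, and then deduces exactness from Kisin's exactness result for the contravariant functor $\gm \mapsto \Hom_{\gs,\varphi}(\gm, \A)$ in \cite[Cor.~2.1.4]{Kis06} combined with the covariant translation in \cite[\S 2]{GLAMJ}. By contrast, you give a direct argument factoring the functor through Fontaine's equivalence for $G_\infty$-representations together with Kisin's full faithfulness for Breuil--Kisin lattices \cite[Prop.~2.1.12]{Kis06}; your observation that Condition~(4) of Def.~\ref{defwrweak} forces the $G_\infty$-action on $\hM\otimes_{W(R)}W(\Fr R)\simeq \cm\otimes_{\OE}W(\Fr R)$ (where $\cm:=\gm\otimes_\gs\OE$) to be the one coming purely from the coefficients $W(\Fr R)$ is precisely what is implicit in the reduction to \cite{Liu10}. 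One small point to flag: Thm.~\ref{thmphitau}(1) as stated is the rational version over $\mathbf{B}_{K_\infty}$; for the rank and exactness claims you actually want the integral version over $\OE=\mathbf{A}_{K_\infty}$, which is the content of the cited \cite[Prop.~A.1.2.6]{Fon90}, so the gap is purely one of citation granularity. What the paper's route buys is brevity and explicit compatibility with Liu's established results; what your route buys is an argument that does not presuppose the $(\varphi,\hat G)$-module formalism or the ring $\hR$ at all, which fits well with the paper's broader aim of minimizing that dependency.
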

\begin{proof}
The contra-variant version of this proposition (except exactness) is proved below  \cite[Rem. 3.1.5]{Liu10}. Note that the proof makes critical use of \cite[Lem. 3.1.2, Prop. 3.1.3]{Liu10}; but these results have nothing to do with the ring ``$\hR$" there (and are relatively easy). 
By the argument in the proof \cite[Thm. 2.3.2]{GL20}, the $\Zp$-dual of our $T_{W(R)} (\hM)$ is isomorphic to ``$\hat{T}(\hM)$" of \cite{Liu10}; hence the proposition (except exactness) follows.

It is shown in  \cite[Cor. 2.1.4]{Kis06} that the contra-variant functor $\gm \mapsto  \Hom_{\gs, \varphi} (\gm, \mathbf{A})$ is exact (note that ``$\gs^{\ur}$" there is precisely our $\mathbf A$ in \S \ref{subsecwte}). By the discussions in \cite[\S 2.1, 2.2, 2.3]{GL20} about co-variant versions of various functors in \cite{Kis06}, one deduces that the co-variant functor $\gm \mapsto  (\gm \otimes_\gs W(\Fr R))^{\varphi=1}$ in Def. \ref{d513} is exact. This implies that $T_{W(R)}$ is exact.
\end{proof}

\begin{theorem}\label{thmnoweak}
We have equivalences of categories
\begin{equation}
\textnormal{wMod}_{\gs, W(R)}^{\varphi, G_K }
\xrightarrow{T_{W(R)}}
\Rep_{\Zp}^{E(u)\mbox{-}\mathrm{ht}}(G_K)
 \xrightarrow{=} \mathcal{C}_{m(K^\ur)}(\vec{\pi}).
\end{equation}
\end{theorem}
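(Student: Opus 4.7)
The plan is as follows. The second equivalence in the statement is precisely Theorem \ref{thmmainpst}. For the first equivalence, full faithfulness of $T_{W(R)}$ is given by Proposition \ref{propweakff}. Hence the remaining content is (i) to verify $T_{W(R)}$ takes values in $\Rep_{\Zp}^{E(u)\mbox{-}\mathrm{ht}}(G_K)$, and (ii) to prove essential surjectivity.

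For (i), given $\hM = (\gm, \varphi, G_K) \in \textnormal{wMod}_{\gs, W(R)}^{\varphi, G_K}$, a direct computation gives $T_{W(R)}(\hM) = (\gm \otimes_\gs W(\Fr R))^{\varphi = 1} = T_\gs(\gm)$ as $\Zp$-modules (since $\wtA = W(\Fr R)$). Condition (4) of Definition \ref{defwr} ensures that the $G_\infty$-action on $T_{W(R)}(\hM)$ inherited from $\hM$ coincides with the natural $G_\infty$-action on $T_\gs(\gm)$; hence $T_{W(R)}(\hM)|_{G_\infty} \simeq T_\gs(\gm)$, so $T_{W(R)}(\hM) \in \Rep_{\Zp}^{E(u)\mbox{-}\mathrm{ht}}(G_K)$.

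For (ii), given $T \in \Rep_{\Zp}^{E(u)\mbox{-}\mathrm{ht}}(G_K)$ with Breuil-Kisin module $\gm$, I would try to construct the $G_K$-action on $\hM := W(R) \otimes_\gs \gm$ as follows. Use the tautological $\varphi$-equivariant isomorphism $T \otimes_{\Zp} W(\Fr R) \simeq \hM \otimes_{W(R)} W(\Fr R)$ (which is $G_\infty$-equivariant for the trivial $G_\infty$-action on $\gm$) to transport the natural $G_K$-action from the left to the right. Once it is known that this $G_K$-action preserves the integral lattice $\hM$ inside $\hM \otimes_{W(R)} W(\Fr R)$, conditions (2)-(4) of Definition \ref{defwr} follow automatically, and $T_{W(R)}(\hM) \simeq T$ holds by construction. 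The critical point is therefore the integral $G_K$-stability of $\hM$, which is the main obstacle.

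To establish integral $G_K$-stability, I would use Theorem \ref{thmmainpst} to pass to the semi-stable situation: by the identification $\Rep_{\Zp}^{E(u)\mbox{-}\mathrm{ht}}(G_K) = \mathcal{C}_{m(K^\ur)}(\vec{\pi})$, there exists a semi-stable $W \in \Rep_{\Qp}^{\st, \geq 0}(G_K)$ with $V|_{G_{K_m}} \simeq W|_{G_{K_m}}$, where $m = m(K^\ur)$; since $V|_{G_\infty} = W|_{G_\infty}$, the Breuil-Kisin module of $W$ equals $\gm$. For the semi-stable $W$, the existing integral theory (Liu's $(\varphi, \hat{G})$-modules together with the $W(R)$-integral refinement partly treated in Appendix B) provides a Breuil-Kisin $G_K$-module structure on the same underlying $\hM$; in particular, for a suitable lattice $T_W \subset W$, the $T_W$-action on $\hM \otimes W(\Fr R)$ preserves $\hM$. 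Since $T|_{G_{K_m}} \simeq T_W|_{G_{K_m}}$, the $G_{K_m}$-part of the $T$-action coincides with the $T_W$-action on $\hM \otimes W(\Fr R)$, which gives $G_{K_m}$-stability of $\hM$ for the $T$-action. The final step is to extend $G_{K_m}$-stability to full $G_K$-stability; I would attempt this by exploiting the finiteness of $G_K/G_{K_m}$ together with the rigidity of finite $E(u)$-height Breuil-Kisin modules (the collection $\{g\hM : g \in G_K\}$ consists of finitely many $G_{K_m}$-stable $W(R)$-lattices all sharing the same $\gm$-data), or alternatively via the locally analytic $\tau$-action governed by the monodromy operator $N_\nabla$ from Proposition \ref{propregfrob}, which controls the action of topologically dense elements of $G_K/G_{K_m}$ on $\hM$.
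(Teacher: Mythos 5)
Your first paragraph and the reduction to essential surjectivity match the paper. Where you diverge sharply from the paper---and where the argument breaks down---is in how you establish integral $G_K$-stability of $\hM = W(R)\otimes_\gs\gm$.

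First, you invoke ``the existing integral theory (Liu's $(\varphi,\hat G)$-modules together with the $W(R)$-integral refinement partly treated in Appendix B)'' to equip $\hM$ with a $G_K$-action in the semi-stable case. This is precisely what the paper refuses to do: Remark \ref{1110}(3) explicitly states that the proof of the main theorem is independent of the $(\varphi,\hat G)$-module theory and never uses $\hR$. More seriously, Appendix \ref{secappB} \emph{uses} the Breuil-Kisin $G_K$-module theory being proven here to repair a gap in the $\hR$-based literature, so appealing to it at this point is circular. The existence of a $W(R)$-integral $G_K$-structure on $\hM$ for semi-stable $W$ is essentially (half of) Theorem \ref{thmsimple}, and Part 2 of the proof of \ref{thmsimple} in turn relies on Theorem \ref{thmnoweak}. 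You cannot assume it.

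Second, the transfer of $G_{K_m}$-stability and the extension to full $G_K$-stability are both gaps. For the transfer: you need not merely $V|_{G_{K_m}}\simeq W|_{G_{K_m}}$ rationally, but an integral $G_{K_m}$-equivariant identification $T|_{G_{K_m}}\simeq T_W|_{G_{K_m}}$ that is compatible with the fixed $G_\infty$-equivariant identification through $\gm\otimes_\gs W(\Fr R)$; the rational isomorphism and the $G_\infty$-isomorphism via $\gm$ need not agree (they could differ by an element of $\End_{\Qp[G_\infty]}(V|_{G_\infty})$ that is not $G_{K_m}$-equivariant). For the extension: your ``finitely many lattices sharing the same $\gm$-data'' heuristic does not survive scrutiny, because for $g\notin G_\infty$ one has $g(\gs)\neq\gs$ (as $g([\upi])=[\upi][\ueps]^{\sigma(g)}$), so the $g\hM$ do not carry a canonical $\gs$-structure. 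Your alternative via $N_\nabla$ only controls the $\tau$-action after base change to $\mathcal O$ or $\mathbf B^\dagger_{\rig,K_\infty}$, which is not contained in $W(R)$; bringing this back to $W(R)$ is exactly the delicate point, and it requires the intersection $\wtB^\dagger\cap\wtB^+_{\log}=W(R)[1/p]$ together with a $p$-adic valuation estimate killing the $t^{-a}$-denominator. Those are the two key technical ingredients in the paper's argument (the diagram \eqref{diag714} culminating in \eqref{715}, and the final valuation computation with $A$, $B$, $M_g=t^{-a}M$), and neither appears in your proposal. In short, your high-level plan of reducing to the semi-stable case over $K^\ur(\pi_{m-1})$ is the same as the paper's, but the actual descent to $W(R)$ is missing and the ingredients you propose to substitute are either circular or insufficient.
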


\begin{proof}
  The last equivalence is Thm. \ref{thmmainpst}. $T_{W(R)}$ is fully faithful by Prop. \ref{propweakff}. Hence it suffices to show that $T_{W(R)}$ is essentially surjective.

Suppose $T \in \Rep_{\Zp}^{E(u)\mbox{-}\mathrm{ht}}(G_K)$, let $\gm$ be the   Breuil-Kisin module  attached to $T$, and let $V=T[1/p]$. 
By Thm. \ref{thmmainpst}, $V|_{G_{K_m}}$ is semi-stable where $m=m(K^\ur)$. Let $E$ be the $p$-adic completion of $K^{\ur}(\pi_{m-1})$; then $V|_{G_{K^{\ur}(\pi_{m-1})}}$ is a semi-stable representation of $G_E$.
Note that this $E$   satisfies the assumption in Notation \ref{notaEK}. Note furthermore that $K^{\ur}(\pi_{m-1})$ is Galois over $K$.

Since the $G_E$-representation is semi-stable, we can construct an  \emph{$\gs_E$-Breuil-Kisin module} (whose obvious definition we leave it to the reader)    $\gm_E$.  
Recall by Lem. \ref{lemidEK}, we have the $\varphi$-equivariant isomorphism
\begin{equation}\label{dmme}
D_{K_\infty}(V)\otimes_{\B_{\Kinfty}}  \B_{E_\infty}  \simeq   D_{E_\infty}(V|_{G_E}).
\end{equation}
We claim that 
\begin{equation}\label{mme}
\gm \otimes_{\gs}  \gs_E \simeq \gm_{E}.
\end{equation}
To prove the claim, note that under the map $u \mapsto u_E^{p^{m-1}}$ (cf. Notation \ref{notaEK}), $E(u)$ maps to the minimal polynomial  of $\pi_{m-1}$ over $W(\overline k)[1/p]$; hence $\gm \otimes_{\gs}  \gs_E$ is also a $\gs_E$-Breuil-Kisin module. The isomorphism \eqref{mme} follows because both sides give rise to the same $G_{E_\infty}$-representation $T|_{G_{E_\infty}}$.

Now we want to construct a $G_K$-action on $\gm \otimes_{\gs}W(R)$: indeed, we will show that $\gm \otimes_{\gs}W(R)$ is $G_K$-stable inside $D^\dagger_{\rig, \Kinfty}(V) \otimes \bx$.
By the isomorphism \eqref{mme}, it is natural to try to use the Galois actions related to $\gm_E$.
 Indeed, we will make use of the following commutative diagram. Here we omit all the subscripts of the tensor products for brevity; the $G_K$'s  over the arrows signify $G_K$-equivariances.
\begin{equation}\label{diag714}
 \xymatrix
{
V\otimes  \bx      \ar[r]^-\simeq_{G_K}  &  D_{\st}^E(V) \otimes \bx    \ar[r]^-\simeq_{G_K}   & D^\dagger_{\rig, E_\infty}(V) \otimes \bx   \ar[r]^-\simeq_{G_K}   & D^\dagger_{\rig, \Kinfty}(V) \otimes \bx \\
&  D_{\st}^E(V) \otimes \bz  \ar[r]^-\simeq_{ f}   \ar@{^{(}->}[u]_{G_K}^{i} &  \gm_E \otimes \bz  \ar@{^{(}->}[u]_{} \ar[r]^-\simeq_{ g}  & \gm \otimes \bz \ar@{^{(}->}[u]_{}
}
 \end{equation}
Let us explain the content of this diagram:
\begin{enumerate}
\item Before we even define the objects and maps in the diagram, let us mention that all maps are obviously $\varphi$- and $N$-equivariant; we will hence focus on Galois equivariances of these maps.

\item We define $D_\st^E(V):=(V\otimes \bst)^{G_E}$ (note that $G_K$ acts on it since $K^{\ur}(\pi_{m-1})/K$ is Galois).

\item By Prop. \ref{prop543}, we have $G_E$-equivariant  isomorphisms 
\[V\otimes  \bx \simeq  D_{\st}^E(V) \otimes \bx   \simeq D^\dagger_{\rig, E_\infty}(V) \otimes \bx;\]
 they are furthermore $G_K$-equivariant by the construction of  $D_{\st}^E(V)$ and $D^\dagger_{\rig, E_\infty}(V) $.

\item  The vertical embeddings are just sub-modules.  

\item The isomorphisms $f$ and $g$ follow  from Cor. \ref{cor542} and Eqn. \eqref{mme} respectively.  
Since $i$ is $G_K$-equivariant (namely, $\bz$ is $G_K$-stable), hence $\gm_E \otimes \bz$ and $\gm \otimes \bz$ are $G_K$-stable. Thus, the maps $f, g$, and hence all the vertical embeddings are  indeed $G_K$-equivariant. (Thus we can put $G_K$ over \emph{all} the arrows in the diagram)
\end{enumerate}

So now $\gm \otimes \wtB_{\log}^+[1/t]$ is $G_K$-stable. By overconvergence theorem, we also have
$\gm \otimes \wtB_{}^\dagger$ is $G_K$-stable.
We have
\begin{eqnarray}\label{715}
 \wtB_{}^\dagger \cap  \wtB_{\log}^+  &=&\wtB_{}^\dagger \cap  \wtB_{\rig}^+  \text{ by taking } N=0 \text{ part }\\
&=& W(R)[1/p], \text{ by \cite[Lem. 2.18]{Ber02}}.
\end{eqnarray}
Hence  $\gm \otimes W(R)[1/p][1/t]$ is $G_K$-stable.

Now fix a  basis $\vec{m}$ of $\gm$. Suppose $\varphi(\vec{m})=\vec{m} A$ and let $B \in \Mat(\gs)$ such that $AB=E(u)^h$ for some $h \geq 0$. For any
$g\in G_K$, suppose $g(\vec{m})=\vec{m}M_g$ where $M_g = t^{-a}M$ with $M \in \Mat(W(R)[1/p])$ for some $a \geq 0$.  
Since $\varphi$ and $g$ commutes, we have $
A\varphi(M) =p^aMg(A)$, and hence
\begin{equation}
\varphi(M)E(u)^{h}=p^aBMg(A)
\end{equation} 
For a matrix $X$ defined over $W(R)$, let $v_p(X)$ be the minimum of the $p$-adic valuation of all its entries.
We then have $v_p(\varphi(M)E(u)^{h})=v_p(M)$ and   hence we must have $a=0$. This shows that  $\gm \otimes W(R)[1/p]$ is $G_K$-stable. Since   $\gm \otimes W(\Fr R)$ is also $G_K$-stable, we finally have $\gm \otimes W(R)$ is $G_K$-stable.
\end{proof}

 We introduce some notations   before we prove our main theorem.

\subsubsection{} Let $\nu: W(R) \to W(R)/W(\mathfrak{m}_R) =W(\overline{k})$ be the reduction ring homomorphism. It naturally extends to $\nu: \wtB^{[0, \frac{r_0}{p}]}  \to W(\overline{k})[1/p]$, e.g., by using the explicit expression of $ \wtB^{[0, \frac{r_0}{p}]}$ using Lem. \ref{lemwta}. It then extends to 
\[\nu: \wtB^{[0, \frac{r_0}{p}]}[\ell_u] \to W(\overline{k})[1/p]\]
by setting $\nu(\ell_u) =0$.
The map $\nu$ is a $\varphi$-equivariant ring homomorphism; it is furthermore $G_K$-equivariant by using \eqref{eq277new} (and note $\nu(t)=0$). For any $A \subset \wtB^{[0, \frac{r_0}{p}]}[\ell_u]$ a subring, and $M$ a finite free $A$-module, the $\nu$-map extends to 
\begin{equation}\label{eqnu}
\nu: M \to W(\overline{k})[1/p]\otimes_A M.
\end{equation}
If $A$ is $G_K$-stable and $M$ is equipped with a $A$-semi-linear $G_K$-action, then the map $\nu$ in \eqref{eqnu} is $G_K$-equivariant.

The following is our main theorem.

\begin{theorem}\label{thmsimple} 
The functor $T_{W(R)}$ induces an
equivalence  of categories 
\begin{equation}
\label{eqstrong}
\textnormal{Mod}_{\gs, W(R)}^{\varphi, G_K}
\xrightarrow{ }   \Rep_{\Zp}^{\st,\geq 0}(G_K).
\end{equation}
\end{theorem}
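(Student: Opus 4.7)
The plan is to deduce Theorem~\ref{thmsimple} from Theorem~\ref{thmnoweak}, which already establishes an equivalence $T_{W(R)}\colon \textnormal{wMod}_{\gs, W(R)}^{\varphi, G_K} \xrightarrow{\sim} \Rep_{\Zp}^{E(u)\mbox{-}\mathrm{ht}}(G_K)$. Since $\textnormal{Mod}_{\gs, W(R)}^{\varphi, G_K}$ is the full subcategory of $\textnormal{wMod}_{\gs, W(R)}^{\varphi, G_K}$ cut out by condition~(5) of Definition~\ref{defwr}, and $\Rep_{\Zp}^{\st,\geq 0}(G_K)$ is the full subcategory of $\Rep_{\Zp}^{E(u)\mbox{-}\mathrm{ht}}(G_K)$ cut out by semi-stability, it suffices to prove, for $\hM \in \textnormal{wMod}_{\gs, W(R)}^{\varphi, G_K}$ with $V := T_{W(R)}(\hM)[1/p]$, the equivalence: condition~(5) holds for $\hM$ if and only if $V$ is semi-stable over $K$.

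For the direction ``$V$ semi-stable $\Rightarrow$ (5)'', set $D := D_{\st}(V)$, which is a $(\varphi,N)$-module over $K_0$ with trivial $G_K$-action. By Theorem~\ref{thm539}, Equation~\eqref{537}, one has a $\varphi$-equivariant isomorphism $\gm \otimes_\gs \mathcal{O} \simeq M(D)$, and Corollary~\ref{cor542}, Equation~\eqref{new545}, upgrades this to a $(\varphi, N, G_K)$-equivariant isomorphism
\[
\gm \otimes_\gs \wtB^{[0, r_0/p]}[\ell_u] \simeq D \otimes_{K_0} \wtB^{[0, r_0/p]}[\ell_u].
\]
Applying the $G_K$-equivariant ring homomorphism $\nu\colon \wtB^{[0, r_0/p]}[\ell_u] \to W(\overline k)[1/p]$ (extending $W(R) \twoheadrightarrow W(\overline k)$ by sending $u, \ell_u \mapsto 0$) described around Equation~\eqref{eqnu} yields a $G_K$-equivariant isomorphism
\[
\gm/u\gm[1/p] \otimes_{W(k)} W(\overline k)[1/p] \simeq D \otimes_{K_0} W(\overline k)[1/p],
\]
where $G_K$ acts trivially on $D$ and via Galois on $W(\overline k)[1/p]$. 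Galois descent gives $(D \otimes_{K_0} W(\overline k)[1/p])^{G_K} = D$; transporting across the isomorphism identifies the corresponding $K_0$-subspace of the left-hand side with $\gm/u\gm[1/p]\otimes 1$, and intersecting with the lattice $\gm/u\gm \otimes_{W(k)} W(\overline k)$ yields $\gm/u\gm \subset (\hM/W(\mathfrak m_R)\hM)^{G_K}$, i.e.\ condition~(5).

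For the reverse direction, assume (5). By Proposition~\ref{propregfrob}, $N_\nabla$ preserves $\gm \otimes_\gs \mathcal{O}$; since $\lambda|_{u=0} = 1$, reducing $N_\nabla$ modulo $u$ produces a $K_0$-linear operator $N$ on $D_0 := \gm/u\gm[1/p]$ satisfying $N\varphi = p\varphi N$. Because $(\gm,\varphi)$ is of finite height and \'etale of slope zero, Kisin's Theorem~\ref{thm539} equips $D_0$ with a unique weakly admissible filtered $(\varphi, N)$-module structure $\widetilde D$ such that $\gm \otimes_\gs \mathcal{O} \simeq M(\widetilde D)$. Colmez--Fontaine (Theorem~\ref{thmcf}) then produces a semi-stable $G_K$-representation $V_0$ with $D_{\st}(V_0) = \widetilde D$. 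By the direction already proven, $V_0$ corresponds to some $\hM_0 \in \textnormal{Mod}_{\gs, W(R)}^{\varphi, G_K}$ whose underlying Breuil-Kisin module agrees with $\gm$ (since $V_0|_{G_\infty} \simeq V|_{G_\infty}$, both recovered from the \'etale $\varphi$-module $\gm \otimes_\gs \OE$). By Proposition~\ref{propweakff} it suffices to identify the two $G_K$-actions on $\gm \otimes_\gs W(R)$ coming from $V$ and $V_0$. Writing $\alpha_g := \rho_V(g)^{-1} \circ \rho_{V_0}(g)$, each $\alpha_g$ is a $W(R)$-linear, $\varphi$-equivariant endomorphism of $\gm \otimes_\gs W(R)$ satisfying $\alpha_g \equiv \mathrm{Id} \pmod{W(\mathfrak m_R)}$ (both $\hM, \hM_0$ satisfy (5)) and $\alpha_g = \mathrm{Id}$ for $g \in G_\infty$. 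To conclude $\alpha_g = \mathrm{Id}$, one passes via $\gm\otimes_\gs W(R) \hookrightarrow T_{W(R)}(\hM)\otimes_{\Zp} W(\Fr R)$ to identify $\alpha_g$ with a $\Zp$-linear endomorphism of $T_{W(R)}(\hM)$ (using $W(\Fr R)^{\varphi=1} = \Zp$), and exploits $\Zp \cap W(\mathfrak m_R) = 0$ to force $\alpha_g - \mathrm{Id} = 0$.

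The main obstacle is the final rigidity step in the second direction, namely promoting the mod-$W(\mathfrak m_R)$ triviality of $\alpha_g$ to honest triviality. The expected mechanism is that $\varphi$-equivariant $W(R)$-linear endomorphisms of a finite-height Breuil-Kisin-Fargues module form a $\Zp$-structure (arising from $W(R)^{\varphi=1}=\Zp$), so that congruence to the identity modulo the ``maximal ideal'' ideal $W(\mathfrak m_R)$ forces equality; extracting this compatibility cleanly in higher rank, and checking that it interacts well with the passage between $\gm \otimes_\gs W(R)$ and $T_{W(R)}(\hM) \otimes_{\Zp} W(\Fr R)$, is the delicate technical point.
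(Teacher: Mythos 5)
Your direction ``$V$ semi-stable $\Rightarrow$ condition (5)'' is essentially the paper's Part~2: both combine the $\varphi$-equivariant identification $\gm\otimes_\gs\mathcal O\simeq M(D)$ from \eqref{537} with the $(\varphi,N,G_K)$-equivariant isomorphism \eqref{new545}, reduce via the $\nu$-map, and invoke Lem.~\ref{lemeq5}. That half is fine and takes the same route as the paper.

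Your direction ``condition (5) $\Rightarrow V$ semi-stable'' is a genuinely different route, and it has a real gap. The paper's Part~1 does not attempt to manufacture a semi-stable comparison representation $V_0$ over $K$ directly. Instead it uses that $V|_{G_{K_m}}$ is \emph{already known} to be semi-stable (Thm.~\ref{thmmainpst}), passes to $E=\widehat{K^\ur(\pi_{m-1})}$, forms $D_\st^E(V)$, and establishes the $G_K$-equivariant chain $D_\st^E(V)\otimes_{E_0}\by\simeq\gm_E\otimes_{\gs_E}\by\simeq\gm\otimes_\gs\by$; applying $\nu$ together with condition~(5) shows $D_\st^E(V)$ is fixed by $G_{K^\ur}$, and an unramified-descent argument finishes. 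Your proposal instead builds $\widetilde D$ from $(\gm,\varphi,N_\nabla)$, feeds it into Colmez--Fontaine to get $V_0$, and tries to prove $V=V_0$ by showing the two $G_K$-actions on $\hM$ agree. The two strategies diverge exactly at the point where yours needs the rigidity assertion.

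The rigidity step is the gap, and it does not follow from the ingredients you cite. The statement needed is: if $\alpha$ is a $W(R)$-linear, $\varphi$-equivariant endomorphism of $\hM=\gm\otimes_\gs W(R)$ with $\alpha\equiv\mathrm{Id}\pmod{W(\mathfrak m_R)}$ (and $\alpha|_{G_\infty}=\mathrm{Id}$ as a cocycle, though note $\alpha_g$ is only a \emph{twisted} cocycle, not a homomorphism $G_K\to\mathrm{Aut}(\hM)$), then $\alpha=\mathrm{Id}$. Passing via $\hM\otimes_{W(R)}W(\Fr R)=T\otimes_{\Zp}W(\Fr R)$ does give a $\Zp$-linear $\bar\alpha\in\End_{\Zp}(T)$, but the hypothesis ``$\alpha-\mathrm{Id}$ maps $\hM$ into $W(\mathfrak m_R)\hM$'' does \emph{not} become ``$\bar\alpha-\mathrm{Id}$ has entries in $W(\mathfrak m_R)$'' in any basis: $T$ is not a $\Zp$-direct factor of $\hM$, the $W(R)$-lattice $\hM\subset T\otimes_{\Zp}W(\Fr R)$ is governed by the nontrivial $\varphi$-structure (Frobenius eigenvalues involving $E(u)$), and the relation between reduction mod $W(\mathfrak m_R)$ of $\hM$ and the $\Zp$-structure on $T$ is precisely what is at issue. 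So $\Zp\cap W(\mathfrak m_R)=0$ by itself cannot close the argument, and you correctly flag this as unresolved. Without an independent proof of this rigidity (which would itself be a nontrivial lemma about Breuil--Kisin--Fargues $G_K$-modules), the second direction as you wrote it does not go through; the paper's detour through $E$ is what sidesteps exactly this difficulty.
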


\begin{proof}
 \textbf{Part  1.}
We first show that if $\hM \in \textnormal{Mod}_{\gs, W(R)}^{\varphi, G_K }$, then $V:=T_{W(R)}(\hM)[1/p]$ is semi-stable, and hence $T_{W(R)}$ indeed induces the functor in \eqref{eqstrong}. 
Note that $V$ is of finite $E(u)$-height. Let $E, \gs_E, \gm_E$ etc. be exactly the same as in the proof of Thm. \ref{thmnoweak}. 
Similar to the  bottom row of \eqref{diag714}, we have   isomorphisms (where $E_0=W(\overline{k})[1/p]$):
\begin{equation}\label{714new}
D_{\st}^E(V) \otimes_{E_0} \by \simeq \gm_E \otimes_{\gs_E} \by \simeq \gm \otimes_\gs \by,
\end{equation}
using Cor. \ref{cor542} and \eqref{mme} respectively; again, they are $G_E$-equivariant \emph{a priori}, but are indeed $G_K$-equivariant since $D_{\st}^E(V) \otimes_{E_0} \by \subset D_{\st}^E(V)\otimes_{E_0} \bx$ is $G_K$-stable.
Apply the $G_K$-equivariant map $\nu$ to \eqref{714new}, we get $G_K$-equivariant isomorphisms:
\begin{equation}
D_{\st}^E(V) \simeq \gm_E/u_E\gm_E \otimes_{W(\overline{k})}W(\overline{k})[1/p] \simeq \gm/u\gm \otimes_{W(k)}W(\overline{k})[1/p].
\end{equation}
Hence $D_{\st}^E(V)$ is fixed by $G_{K^\ur}$, and hence $V$ is a semi-stable representation of $G_K$.

\textbf{Part 2.} Note that $T_{W(R)}$ is fully faithful by Prop. \ref{propweakff}.
We now show that $T_{W(R)}$ is essentially surjective. Let $T \in \Rep_{\Zp}^{\st,\geq 0}(G_K)$; it is of finite height by \cite{Kis06}. Hence by Thm. \ref{thmnoweak}, we can already get a unique $(\gm, \varphi, G_K) \in \textnormal{wMod}_{\gs, W(R)}^{\varphi, G_K}$. It suffices to show that $\gm/u\gm$ is fixed by $G_K$.
By \eqref{new545}, we have the $G_K$-equivariant isomorphism:
\[D_{\st}(T[1/p]) \otimes_{K_0} \by \simeq  \gm \otimes_\gs \by.\]
Applying the $\nu$-map shows that $\gm/u\gm\otimes_{W(k)} W(\overline{k})[1/p]$ is fixed by $G_{K^\ur}$, hence we can conclude using Lem. \ref{lemeq5}.
\end{proof}
 
 \begin{rem}
  By Prop. \ref{propweakff}, the functor $T_{W(R)}$ is exact. However, its quasi-inverse in Thm. \ref{thmsimple}  is in general only left exact; cf. the discussions in \cite[Lem. 2.19, Ex. 2.21]{Liu12}.
 \end{rem}


We now give some results related to the theory of Breuil-Kisin $G_K$-modules. In Prop. \ref{propcryscrit}, we give a crystallinity criterion; in Prop. \ref{cor710}, we prove the ``algebraic avatar" of the de Rham comparison \eqref{compadr} (cf.  Rem. \ref{rem1111}). We start with a lemma.

\begin{lemma} \label{lemtt}
We have $(\varphi(\mathfrak{t})  \cdot  \wtB^{[0, {r_0} ]}) \cap W(R) =\varphi(\mathfrak{t})  \cdot W(R)$.
\end{lemma}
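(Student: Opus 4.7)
The containment $\varphi(\mathfrak{t}) W(R) \subseteq (\varphi(\mathfrak{t}) \wtB^{[0,r_0]}) \cap W(R)$ is immediate from $\varphi(\mathfrak{t}) \in W(R) \subset \wtB^{[0,r_0]}$, so the content of the lemma is the reverse inclusion. My plan is to exploit the factorization $\varphi(\mathfrak{t}) = v^{-1} E(u)\, \mathfrak{t}$, where $v := E(0)/p \in W(k)^\times$ (from the defining identity $\varphi(\mathfrak{t}) = pE(u)\mathfrak{t}/E(0)$ together with the Eisenstein form of $E(u)$), and to peel off the two factors $E(u)$ and $\mathfrak{t}$ in turn.

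Suppose $y = \varphi(\mathfrak{t}) x$ with $y \in W(R)$ and $x \in \wtB^{[0,r_0]}$; rewrite as $vy = E(u)\mathfrak{t} x$. First I would show $E(u) \mid y$ in $W(R)$. For this, note that the Fontaine map $\theta : W(R) \to \mathcal{O}_{C_p}$ extends continuously to $\theta : \wtB^{[0,r_0]} \to C_p$ because $[0,r_0]$ corresponds to the closed disk $\{v_p(u) \geq 1/e\}$ whose boundary contains the $\theta$-point. Applying this extension to $vy = E(u)\mathfrak{t} x$ and using $\theta(E(u)) = E(\pi) = 0$ gives $\theta(y) = 0$, hence $y \in \ker(\theta|_{W(R)}) = E(u) W(R)$. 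Writing $y = E(u) y_1$ with $y_1 \in W(R)$ and cancelling $E(u)$ (a non-zero-divisor in $\wtB^{[0,r_0]}$) leaves the equation $v y_1 = \mathfrak{t} x$ in $\wtB^{[0,r_0]}$ with $vy_1 \in W(R)$.

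The remaining step is to show $\mathfrak{t} \mid vy_1$ in $W(R)$; a direct $\theta$-argument is unavailable since $\theta(\mathfrak{t})$ need not vanish. My plan is a mod-$p$ descent inside $\wtA^{[0,r_0]}$: clear denominators to write $x = x_0/p^N$ with $x_0 \in \wtA^{[0,r_0]}$ and $N \geq 0$ minimal, so $p^N v y_1 = \mathfrak{t} x_0$. The quotient $\wtA^{[0,r_0]}/p$ is isomorphic to $(R/[\underline{\pi}]^e)[X]$ (with $X$ the image of $u^e/p$, by Lem.~\ref{lemwta}), and an explicit computation in $R$ shows $v_R(\overline{\mathfrak{t}}) = 1/(p-1)$ (obtained by reducing $\varphi(\mathfrak{t}) = v^{-1}E(u)\mathfrak{t}$ mod $p$, which gives $\overline{\mathfrak{t}}^p = \overline{v}^{-1}\underline{\pi}^e\overline{\mathfrak{t}}$, and solving $p\alpha = 1 + \alpha$ in the domain $R$). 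Combined with iteration via the Frobenius identity $\varphi^n(\mathfrak{t}) = v^{-n}\mathfrak{t}\prod_{i=0}^{n-1}\varphi^i(E(u))$, this should force $N=0$ and eventually $vy_1 = \mathfrak{t} z$ for some $z \in W(R)$; cancelling $\mathfrak{t}$ then yields $x = z \in W(R)$.

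The main obstacle lies in the second peeling step: multiplication by $\overline{\mathfrak{t}}$ on $(R/[\underline{\pi}]^e)[X]$ is \emph{not} injective, since any element of $R$ whose valuation lies in $[(p-2)/(p-1),\,1)$ is annihilated after multiplication by $\overline{\mathfrak{t}}$ in the quotient by $[\underline{\pi}]^e$. Thus the naive single mod-$p$ reduction is insufficient; the Frobenius iteration—which augments the order of vanishing at the $\theta$-point through the cumulative product $\prod_{i=0}^{n-1}\varphi^i(E(u))$—is the essential input that rescues the argument, allowing one to upgrade a weak mod-$p^n$ divisibility into the desired divisibility by $\mathfrak{t}$ in $W(R)$.
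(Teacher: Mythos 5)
Your first peeling step is fine and is in fact one instance of what the paper does: since $\theta\circ\iota_0$ on $\wtB^{[0,r_0]}$ restricts to $\theta$ on $W(R)$ and $\theta(\varphi(\mathfrak t))=0$, one gets $\theta(y)=0$ and hence $E(u)\mid y$ in $W(R)$. The problem is the second peeling step. You correctly observe that $\bar{\mathfrak t}$ has $R$-valuation $1/(p-1)$ and that multiplication by $\bar{\mathfrak t}$ on $\wtA^{[0,r_0]}/p$ is not injective, so a single mod-$p$ reduction cannot detect $\mathfrak t$-divisibility; but the ``Frobenius iteration rescue'' is asserted rather than carried out, and it is exactly here that all the difficulty lives. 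Showing that $\mathfrak t x\in W(R)$ with $x\in\wtB^{[0,r_0]}$ forces $\mathfrak t\mid\mathfrak t x$ \emph{in} $W(R)$ is a genuinely non-trivial fact about $W(R)$; it is not a consequence of the annulus geometry of $\wtB^{[0,r_0]}$ alone, and the cancellation ``$x=z\in W(R)$'' at the end of your sketch is not justified by anything preceding it. As written the proposal contains a gap precisely where you flag it.

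The paper takes a shorter, structural route. It introduces the ideal
\[
I^{[1]}W(R)=\{a\in W(R):\varphi^n(a)\in\Ker\theta\ \text{for all }n\ge 0\},
\]
and cites Fontaine's identification (via \cite[Lem.~3.2.2]{Liu10}) $I^{[1]}W(R)=\varphi(\mathfrak t)\,W(R)$. The only thing left to check is the containment $(\varphi(\mathfrak t)\,\wtB^{[0,r_0]})\cap W(R)\subset I^{[1]}W(R)$, which is elementary: if $y=\varphi(\mathfrak t)x$ with $x\in\wtB^{[0,r_0]}$, then $\varphi^n(y)=\varphi^{n+1}(\mathfrak t)\,\varphi^n(x)$, with $\varphi^{n+1}(\mathfrak t)\in E(u)\cdot W(R)\subset\Ker\theta$ and $\varphi^n(x)\in\wtB^{[0,r_n]}\subset\wtB^{[0,r_0]}$; applying $\theta\circ\iota_0$ kills the product for every $n$. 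In other words, the Frobenius iteration you reach for is already present in the paper's argument, but it is used only to place $y$ in $I^{[1]}W(R)$, after which the hard ``divide by $\mathfrak t$'' problem is delegated to the cited result rather than re-derived by hand. If you want to keep your elementary route, the missing step is essentially a re-proof of $I^{[1]}W(R)=\varphi(\mathfrak t)W(R)$, for which the relevant input is the explicit structure of the ideals $I^{[r]}W(R)$ in Fontaine's work (or \cite[\S 3.2]{Liu10}); without that, the mod-$p$ descent does not close.
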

\begin{proof}
Let $I^{[1]}W(R):= \{a \in W(R), \varphi^n(a) \in \Ker \theta, \forall n \geq 0\}.$
By the proof of \cite[Lem. 3.2.2]{Liu10} (using a result of Fontaine), we have
\begin{equation}\label{eqi1}
I^{[1]}W(R)=\varphi(\mathfrak{t})  \cdot W(R).
\end{equation}
 Note that the map $\theta \circ \iota_0: \wtB^{[0, {r_0} ]} \to \bdr^+ \to C_p$ (cf. \cite{Ber02} for the map $\iota_0$) induces the $\theta$ map on $W(R)$. Thus one easily checks that $(\varphi(\mathfrak{t})  \cdot  \wtB^{[0, {r_0} ]}) \cap W(R) \subset I^{[1]}W(R).$ 
\end{proof}

\begin{prop}\label{propcryscrit}
  Let $\hM \in \textnormal{Mod}_{\gs, W(R)}^{\varphi, G_K }$,
then $V=T_{W(R)}(\hM)[1/p]$ is a crystalline representation if and only if
\begin{equation}\label{taucris}
(\tau -1) (\gm) \subset  \mathfrak{t}W(\mathfrak{m}_R)\otimes_{\gs}\gm.
\end{equation}
\end{prop}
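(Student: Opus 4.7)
The plan is to translate the condition \eqref{taucris} into the vanishing of the Kisin monodromy operator $N$ on $D_\st(V)$, and then invoke the standard criterion that a semi-stable representation is crystalline precisely when its monodromy vanishes. By Part~1 of the proof of Thm.~\ref{thmsimple}, $V$ is semi-stable with $D_\st(V) \simeq \gm/u\gm \otimes_{W(k)} W(\overline{k})[1/p]$, and by Thm.~\ref{thmcoinc} our locally analytic $N_\nabla$ on $\gm \otimes_\gs \mathcal O$ agrees with Kisin's operator. Since $\lambda \equiv 1 \pmod u$, the induced $N$ on $D_\st(V)$ is just the reduction of $N_\nabla$ modulo $u$; combined with Prop.~\ref{propregfrob}, crystallinity of $V$ is thus equivalent to
\begin{equation*}
N_\nabla(\gm) \subset \gm \otimes_{\gs} u\mathcal O. \qquad (\ast)
\end{equation*}

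The bridge between \eqref{taucris} and $(\ast)$ is the defining identity $\nabla_\tau = p\mathfrak{t}\,N_\nabla$ (respectively $\nabla_\tau = 4\mathfrak{t}\,N_\nabla$ when $\Kinfty \cap \Kpinfty = K(\pi_1)$) from \S\ref{secmono}, together with the formal identities $\nabla_\tau = \log\tau$ and $\tau - 1 = \exp(\nabla_\tau) - 1$, valid on locally analytic vectors after possibly replacing $\tau$ by $\tau^{p^n}$ for $n \gg 0$ (and noting that \eqref{taucris} is unchanged by such a replacement since $\mathfrak{t} W(\mathfrak{m}_R)\otimes\gm$ is stable under iteration of $\tau-1$). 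For the ``\eqref{taucris} $\Rightarrow V$ crystalline'' direction, iterating \eqref{taucris} gives $(\tau-1)^k(\gm) \subset \mathfrak{t}^k W(\mathfrak{m}_R)^k \otimes \gm \subset \mathfrak{t} W(\mathfrak{m}_R)\otimes \gm$ for all $k \ge 1$, so the logarithm series yields $\nabla_\tau(\gm) \subset \mathfrak{t} W(\mathfrak{m}_R) \otimes \gm$, whence $N_\nabla(\gm) \subset \tfrac{1}{p}W(\mathfrak{m}_R)\otimes \gm$. Intersecting with $\gm \otimes \mathcal O$ and applying the reduction map $\nu$ from \eqref{eqnu} (which kills both $W(\mathfrak{m}_R)$ and $u$) then forces $(\ast)$.

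For the converse direction ``$V$ crystalline $\Rightarrow$ \eqref{taucris}'', assume $(\ast)$ and extend $N_\nabla$ to $\gm \otimes_\gs \mathcal O$ via the Leibniz rule; since $N_\nabla(u) = u\lambda \in u\mathcal O$, the extended operator preserves $\gm \otimes u\mathcal O$. Iterating, $N_\nabla^k(\gm) \subset \gm \otimes u\mathcal O$ for all $k \ge 1$, so the exponential series
\begin{equation*}
\tau - 1 = \sum_{k\ge 1} \frac{(p\mathfrak{t})^k N_\nabla^k}{k!}
\end{equation*}
places $(\tau-1)(\gm)$ inside $\mathfrak{t}\cdot pu\mathcal O \otimes \gm$. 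Combined with the automatic inclusion $(\tau-1)(\gm) \subset \gm \otimes W(R)$ coming from the Breuil-Kisin $G_K$-module structure, we obtain $(\tau-1)(\gm) \subset \gm \otimes \bigl(\mathfrak{t}\cdot pu\mathcal O \cap W(R)\bigr)$, and an intersection lemma in the spirit of Lem.~\ref{lemtt} identifies the latter with $\mathfrak{t} W(\mathfrak{m}_R)$.

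The main obstacle will be establishing the intersection $\mathfrak{t}\cdot pu\mathcal O \cap W(R) \subset \mathfrak{t} W(\mathfrak{m}_R)$; this is an analogue of Lem.~\ref{lemtt} with $(pu\mathcal O, W(\mathfrak{m}_R))$ replacing $(\wtB^{[0,r_0]}, W(R))$, and proving it will likely require adapting the argument of that lemma (which uses the Fontaine relation $I^{[1]} W(R) = \varphi(\mathfrak{t})W(R)$) together with a Frobenius-regularization style trick as in \S\ref{subsfrobreg}. Secondary technicalities include treating the $p=2$ case uniformly (where the restricted radius of convergence of $\exp$ and the coefficient $4$ force one to pass to $\tau^{p^n}$ for $n$ large) and verifying that the formal series manipulations are legitimate inside the pro-analytic vectors of $\wtB^\dagger_{\rig, L} \otimes_{\B^\dagger_{\rig, \Kinfty}} D^\dagger_{\rig, \Kinfty}(V)$, where $N_\nabla$ and $\nabla_\tau$ are defined in \S\ref{secmono}.
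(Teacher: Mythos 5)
Your overall strategy is correct and parallels the paper's: by Thm.~\ref{thmcoinc} the operator $N_\nabla$ restricted to $\gm \otimes_\gs \mathcal{O}$ is Kisin's, so crystallinity of $V$ is equivalent to $\nu(N_\nabla(\gm))=0$, i.e.\ to $N_\nabla(\gm)\subset \gm\otimes_\gs u\mathcal{O}$ given Prop.~\ref{propregfrob}; both directions then amount to translating between this and \eqref{taucris} via the formal series for $\exp$ and $\log$. The ``\eqref{taucris} $\Rightarrow$ crystalline'' direction is essentially the paper's Step~3, and goes through --- though the inductive claim $(\tau-1)^k(\gm)\subset\mathfrak{t}^kW(\mathfrak{m}_R)^k\otimes\gm$ is an overclaim (the cross-term $(\tau-1)(a)\otimes m$ for $a\in\mathfrak{t}W(\mathfrak{m}_R)$ has no obvious reason to acquire an extra $\mathfrak{t}W(\mathfrak{m}_R)$); what you actually need and what does hold is just $(\tau^{p^a}-1)^k(\gm)\subset\mathfrak{t}W(\mathfrak{m}_R)\otimes\gm$, using $G_K$-stability of $\mathfrak{t}W(\mathfrak{m}_R)$ (which the paper gets from \eqref{eqi1}).

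The genuine gap is in the ``$V$ crystalline $\Rightarrow$ \eqref{taucris}'' direction. First, the displayed identity $\tau-1=\sum_{k\ge1}(p\mathfrak{t})^kN_\nabla^k/k!$ is false: $\nabla_\tau=p\mathfrak{t}\,N_\nabla$ as operators, but $\mathfrak{t}$ does not commute with $N_\nabla$ (since $N_\nabla(\mathfrak{t})\neq 0$), so $\nabla_\tau^k\neq(p\mathfrak{t})^kN_\nabla^k$; the correct series is $\tau-1=\sum_{k\ge1}\nabla_\tau^k/k!$. Second, and more seriously, the ``intersection lemma'' $\mathfrak{t}\cdot pu\mathcal{O}\cap W(R)\subset\mathfrak{t}W(\mathfrak{m}_R)$, which you flag as the main obstacle, is indeed where your plan stalls --- you have not supplied an argument, and simply invoking ``a Frobenius-regularization style trick'' is not a proof. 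The paper circumvents the need for any such intersection lemma by a two-step argument: Step~1 of the paper's proof establishes the \emph{unconditional} inclusion $(\tau-1)(\gm)\subset\mathfrak{t}W(R)\otimes_\gs\gm$ for \emph{every} Breuil--Kisin $G_K$-module (crystalline or not), via the inductive bound $\nabla_\tau^i(\gm)\subset\gm\otimes_\gs\mathfrak{t}^i\mathcal{O}$ (Eqn.~\eqref{eq7119new}) and Lem.~\ref{lemtt}. With that in hand, one only needs to show $\nu\bigl(\tfrac{\tau-1}{\mathfrak{t}}(\gm)\bigr)=0$; writing $\tfrac{\tau-1}{\mathfrak{t}}=\tfrac{\nabla_\tau}{\mathfrak{t}}+\sum_{i\ge2}\tfrac{1}{\mathfrak{t}}\cdot\tfrac{\nabla_\tau^i}{i!}$, the terms with $i\ge 2$ die under $\nu$ automatically (again by \eqref{eq7119new}, since $\nu(\mathfrak{t})=0$), and only the linear term $\nu(N_\nabla(\gm))=0$ requires crystallinity. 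This avoids controlling the precise module-theoretic image of $\tau-1$ inside $W(R)$, which is what your proposal was attempting and did not complete.
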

\begin{proof}
\textbf{Step 1}: we first show that for any $\hM \in \textnormal{Mod}_{\gs, W(R)}^{\varphi, G_K }$, we always have \begin{equation}\label{taust}
(\tau -1) (\gm) \subset  \mathfrak{t}W(R)\otimes_{\gs}\gm.
\end{equation}
Indeed, by \eqref{541}, we have 
\begin{equation}
\gm \otimes_\gs \mathcal{O}[\ell_u, 1/\lambda] =D\otimes_{K_0}  \mathcal{O}[\ell_u, 1/\lambda] \subset D \otimes_{K_0} \wtB^{[0, \frac{r_0}{p}]}[\ell_u].
\end{equation}
Note that by Thm. \ref{thmLAVmain}(1),
\[ \mathcal{O}[1/\lambda]  \subset \B_{\Kinfty}^{[0, \frac{r_0}{p}]} = \left( \wtB_L^{[0, \frac{r_0}{p}]} \right)^{\tau\dan, \gamma=1};\]
also by Lem. \ref{lem266}, $\ell_u^k$ are $\tau$-\emph{analytic} vectors. 
Hence for some $k \gg 0$, we have
\begin{equation}\label{7112}
\gm \subset D\otimes_{K_0}  \left(\oplus_{i=0}^k  \mathcal{O}[1/\lambda] \cdot\ell_u^i \right)
 \subset \left(D_{K_0} \otimes \left(\oplus_{i=0}^k  \wtB_L^{[0, \frac{r_0}{p}]} \cdot\ell_u^i \right) \right)^{\tau\dan, \gamma=1} =(D\otimes Q)^{\tau\dan, \gamma=1};
\end{equation}
where for brevity, we denote
\[Q:= \oplus_{i=0}^k  \wtB_L^{[0, \frac{r_0}{p}]} \cdot\ell_u^i. \]
This means that over $\gm$, we have
 \begin{equation*}
 \tau = \sum_{i=0}^\infty \frac{ \nabla_\tau  ^i}{i!},
 \end{equation*}
  where $\nabla_\tau$ equals to 
 $p\mathfrak t N_\nabla$ or  $p^2\mathfrak t N_\nabla$.
Since $N_{\nabla}(\gm) \subset \gm \otimes_\gs \mathcal{O}$, the map $\nabla_\tau$ induces
\begin{equation*}
\nabla_\tau :\gm    \to \gm \otimes_\gs \mathfrak{t} \cdot\mathcal{O}.
\end{equation*} 
One easily checks that $\nabla_\tau(\mathfrak{t}^k) \in \mathfrak{t}^{k+1}\cdot \mathcal{O}$; also note $\nabla_\tau(\mathcal{O}) \in \mathfrak{t} \cdot \mathcal{O}$. Hence inductively, we can show that for all $i \geq 1$,
 \begin{equation}\label{eq7119new}
\nabla_\tau^i :\gm   \to \gm \otimes_\gs \mathfrak{t}^i\cdot\mathcal{O}.
\end{equation} 
Hence if we choose a basis of $\gm$, then for any $m \in \gm$, the coefficient of $(\tau -1)(m)$ inside $\gm \otimes Q$, expressed using that basis, lies in $\mathfrak{t}\cdot  \wtB^{[0, \frac{r_0}{p}]}  \cap W(R) = \mathfrak{t}W(R)$  by Lem. \ref{lemtt}.
This finishes the proof of \eqref{taust}.

\textbf{Step 2}: suppose now $V$ is  crystalline.  To show \eqref{taucris}, it   suffices to show that
\begin{equation} \label{eq7120}
\nu  \left( \frac{\tau-1}{\mathfrak{t}} (\gm)\right) =0.
\end{equation}
Here, the expression \eqref{eq7120} (and similar expressions below) means that the image of $ \frac{\tau-1}{\mathfrak{t}} (\gm)$ is zero under the map 
\[\nu: \gm \otimes_\gs \wtB^{[0, \frac{r_0}{p}]}[\ell_u] \to \gm\otimes_\gs W(\overline{k})[1/p].\]
Note that 
\begin{equation*}
\frac{\tau-1}{\mathfrak{t}} = \frac{\nabla_\tau}{\mathfrak{t}}+\sum_{i \geq 2}\frac{1}{\mathfrak{t}} \cdot \frac{\nabla_\tau^{i }}{i!}.
\end{equation*}
Note that $\nu (N_\nabla(\gm))=0$ (since $N_\nabla/uN_\nabla=N_{D_{\st}(V)}=0$ as $V$ is crystalline), hence $\nu (\frac{\nabla_\tau}{\mathfrak{t}}(\gm))=0$. For each $i \geq 2$, $\nu(\frac{1}{\mathfrak{t}} \cdot \frac{\nabla_\tau^{i }}{i!}(\gm))=0$ by Eqn. \eqref{eq7119new} since $\nu(\mathfrak{t})=0$.
  This concludes the proof of \eqref{eq7120}.

\textbf{Step 3}:
Conversely, suppose now \eqref{taucris} is satisfied.
Note that \eqref{eqi1} implies that $\mathfrak{t}W(R)$ and hence $\mathfrak{t}W(\mathfrak{m}_R)$ are $G_K$-stable.
Hence for any $a \geq 0$, we also have 
\[(\tau^{p^a} -1) (\gm) \subset  \mathfrak{t}W(\mathfrak{m}_R)\otimes_{\gs}\gm.\]
Using the definition $N_\nabla=\frac{\nabla_\tau}{p\mathfrak t}$ (or $\frac{\nabla_\tau}{p^2\mathfrak t}$),  one can easily show $\nu(N_\nabla(\gm))=0$ and hence $N_{D_\st(V)}=0$. 
\end{proof}

\begin{prop} \label{cor710}
Let $\hM$ be an object   in $\textnormal{Mod}_{\gs, W(R)}^{\varphi, G_K }$. Let $\varphi^\ast \gm: =\gs \otimes_{\varphi, \gs} \gm$ and $\varphi^\ast \hM: =W(R) \otimes_{\varphi, W(R)} \hM$, then 
\begin{equation}\label{taudr}
\varphi^\ast \gm/E(u)\varphi^\ast \gm \subset (\varphi^\ast\hM/E(u)\varphi^\ast \hM)^{G_K}.
\end{equation} 
\end{prop}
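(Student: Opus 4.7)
The plan is to check directly that $g(a \otimes m) - a \otimes m \in E(u) \varphi^\ast \hM$ for every $g \in G_K$, $a \in \gs$, and $m \in \gm$, via the decomposition
\[
g(a \otimes m) - a \otimes m = (g(a) - a) \otimes g(m) + a \otimes (g(m) - m).
\]
The first summand is trivially controlled: since $\gs/E(u)\gs = \mathcal{O}_K \hookrightarrow W(R)/E(u)W(R) = \mathcal{O}_{C_p}$ is $G_K$-equivariant and $G_K$ fixes $\mathcal{O}_K$ pointwise, we have $g(a) - a \in E(u) W(R)$ for every $a \in \gs$, and hence $(g(a) - a) \otimes g(m) \in E(u) \varphi^\ast \hM$ for free.

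For the second summand I would exploit the identity $\varphi(\mathfrak t) = (p/E(0)) E(u) \mathfrak t$ with $p/E(0) \in W(k)^\times$ (Eisenstein), so that for any $y \in \hM$,
\[
a \otimes (\mathfrak t y) = a\,\varphi(\mathfrak t) \otimes y \;=\; \tfrac{p}{E(0)}\, a\, E(u)\, \mathfrak t \otimes y \;\in\; E(u)\,\varphi^\ast \hM.
\]
The proposition therefore reduces to the \emph{reduced claim} that $(g-1)(\gm) \subset \mathfrak t \hM$ inside $\hM$ for every $g \in G_K$.

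I would verify the reduced claim in stages. For $g \in G_\infty$ it is immediate from $\gm \subset \hM^{G_\infty}$. For $g = \tau$ it is precisely the content of Step 1 of the proof of Proposition \ref{propcryscrit}, whose argument invokes only Definition \ref{defwr}, not crystallinity. Since $\tau(\mathfrak t) = \chi_p(\tau) \mathfrak t$, the submodule $\mathfrak t \hM$ is $\tau$-stable, so $(\tau - 1)^k(\gm) \subset \mathfrak t \hM$ for all $k \geq 1$, and a binomial expansion yields $(\tau^n - 1)(\gm) \subset \mathfrak t \hM$ for every $n \in \Z$. Because $G_L \subset G_\infty$ acts trivially on $\gm$, the $G_{p^\infty}$-action on $\gm$ factors through $\gal(L/\Kpinfty) = \overline{\langle \tau \rangle}$, and $p$-adic continuity together with the closedness of $\mathfrak t \hM \subset \hM$ extends the integer-power statement to arbitrary $\tau^\alpha$, $\alpha \in \Zp$, hence to all $g \in G_{p^\infty}$. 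Combined with the factorization $G_K = G_\infty \cdot G_{p^\infty}$ (valid whenever $\Kinfty \cap \Kpinfty = K$, in particular always for $p > 2$) and the $G_\infty$-case, this covers every $g \in G_K$.

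The main nuisance will be the exceptional $p = 2$ case $\Kinfty \cap \Kpinfty = K(\pi_1)$, where $G_\infty \cdot G_{p^\infty}$ is only of index two in $G_K$; there one has to run the same continuity argument on one additional explicit generator as described in Notation \ref{nota hatG}. Apart from this mild technicality, the computational heart of the proof is simply the divisibility $\varphi(\mathfrak t) \in E(u) W(R)$ together with the already-established control $(\tau - 1)(\gm) \subset \mathfrak t W(R) \otimes_\gs \gm$ from Proposition \ref{propcryscrit}.
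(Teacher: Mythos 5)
Your overall route coincides with the paper's: the proposition reduces to the containment $(\tau-1)(\gm)\subset\mathfrak{t}\hM$ from Step 1 of the proof of Proposition \ref{propcryscrit} (equation \eqref{taust}) together with the divisibility $\varphi(\mathfrak{t})W(R)\subset E(u)W(R)$, and the ``coefficient'' piece is handled by the observation that $\gs$ lands in the $G_K$-fixed ring $\mathcal{O}_K$ modulo $\Ker\theta$. You spell out the decomposition and the binomial/continuity extension that the paper leaves implicit, so the architecture is the right one.

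Two points need repair, though. First, the claim ``$\tau(\mathfrak{t})=\chi_p(\tau)\mathfrak{t}$'' is false: $\tau$ fixes $\Kpinfty$, so $\chi_p(\tau)=1$, and you are asserting $\tau(\mathfrak{t})=\mathfrak{t}$; but from $p\lambda\mathfrak{t}=t$ and $\tau(t)=t$ one gets $\tau(\mathfrak{t})/\mathfrak{t}=\lambda/\tau(\lambda)$, and $\tau$ does not fix $\lambda$ since $\tau(u)\neq u$. The eigenvalue formula $g(\mathfrak{t})=\chi_p(g)\mathfrak{t}$ holds only for $g\in\gal(L/\Kinfty)$, not for $\tau$. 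The conclusion you need, namely that $\mathfrak{t}\hM$ is $G_K$-stable, is nonetheless true, but for a different reason: by \eqref{eqi1} one has $I^{[1]}W(R)=\varphi(\mathfrak{t})W(R)$, and $I^{[1]}W(R)$ is manifestly $G_K$-stable; applying the bijection $\varphi^{-1}$ then shows $\mathfrak{t}W(R)$ is $G_K$-stable. This is exactly the argument used in Step 3 of the proof of Proposition \ref{propcryscrit}, which you should cite in place of the incorrect formula. Second, your treatment of the $p=2$ case $\Kinfty\cap\Kpinfty=K(\pi_1)$ is too optimistic: the input $(\tau-1)(\gm)\subset\mathfrak{t}\hM$ only controls the $\gal(L/\Kpinfty)$-direction, so there is no ``same continuity argument'' to run on the missing index-two coset. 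The paper instead invokes Theorem \ref{thmf11} to see that the submodule $\varphi^\ast\gm/E(u)\varphi^\ast\gm$ of $\varphi^\ast\hM/E(u)\varphi^\ast\hM$ is independent of the choice of $\Kinfty$, and then reduces to a compatible system with $\Kinfty\cap\Kpinfty=K$; you should either adopt that change-of-$\vec{\pi}$ device or produce a genuinely new bound on the extra coset.
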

\begin{proof}
By \eqref{taust}, $ \varphi^\ast \gm/E(u)\varphi^ \ast \gm  $ is fixed by $\tau$ since $\varphi(\mathfrak{t})\cdot W(R) \subset E(u)\cdot W(R)$. This proves \eqref{taudr} when $\Kinfty \cap \Kpinfty=K$. In Thm. \ref{thmf11}, we will  show (no circular reasoning here) that the subset $\varphi^\ast \gm/E(u)\varphi^\ast \gm$ is indeed independent of    choices of $\Kinfty$, and hence \eqref{taudr} holds in general (as there exists \emph{some} choice such that $\Kinfty \cap \Kpinfty=K$; cf. Notation \ref{nota hatG}). 
\end{proof}

\subsection{Specialization to Liu's $(\varphi, \hat{G})$-modules}\label{72}
In \cite{GaoLAV}, we will show that using some ``specialization" maps, our argument and results  recover that of Liu's theory  of  $(\varphi, \hat{G})$-modules. (We only give a quick review of the $(\varphi, \hat{G})$-modules later in  Appendix \ref{secappB}, as we do not really use them here.)
The proof in \cite{GaoLAV}   makes systematic use of locally analytic vectors, and makes the link between the different theories completely transparent.
 Here, let us give some hint about what we mean by ``specialization"; cf. \emph{loc. cit.} for more details.

Recall that if $r_n \in I$, then there are
  continuous embeddings (cf. \cite{Ber02})
\begin{equation*}
\iota_n: \wt{\mathbf{B}}^{I} \hookrightarrow \mathbf{B}_\dR^+;
\end{equation*}  which we call \emph{de Rham specialization  maps}.
One easily checks that  
the image of the embedding
$\iota_0: \wt{\mathbf{B}}^{[0, r_1]}  \hookrightarrow \bdr^{+} $
lands inside $\bcris^{+}$. Furthermore, the induced embedding
\begin{equation}\label{eqembcris}
 \iota_0: \wt{\mathbf{B}}^{[0, r_1]}  \hookrightarrow  \bcris^{+}
\end{equation}
 is \emph{continuous}.
 The map \eqref{eqembcris} also induces the continuous  composites:
 \begin{equation} \label{eqspcr}
 \varphi:  \wtb^{[0, r_0]}  \xrightarrow{\varphi} \wtb^{[0, r_1]}  \xrightarrow{\iota_0}  \bcris^{+}.
 \end{equation}
 We call the maps in \eqref{eqembcris} and \eqref{eqspcr} the \emph{crystalline specialization maps}.
By adjoining $\ell_u$, we also get the continuous   \emph{semi-stable specialization maps} 
\begin{eqnarray}
\iota_0 &:& \wt{\mathbf{B}}^{[0, r_1]}[\ell_u]  \hookrightarrow  \bst^{+},\\
\varphi &:& \wtb^{[0, r_0]}[\ell_u] \hookrightarrow  \bst^{+}.
\end{eqnarray}

Note that in \eqref{new545}, we have (for a semi-stable representation):
\[\gm \otimes_{\gs} \wtB^{[0, \frac{r_0}{p}]}[\ell_u] = D\otimes_{K_0}  \wtB^{[0, \frac{r_0}{p}]}[\ell_u];\]
we are using $\wtB^{[0, \frac{r_0}{p}]}$ because it has the advantage that $\lambda$ is a unit in it, and indeed $1/\lambda$ is an $\tau$-\emph{analytic} vector in it.
But from \eqref{541}, we also have
\begin{equation}\label{nn541}
\gm \otimes_{\gs} \wtB^{[0, r_0]}[1/\lambda, \ell_u] = D\otimes_{K_0}  \wtB^{[0, r_0]}[1/\lambda, \ell_u];
\end{equation}
note however these modules are not $G_K$-stable!
However, using the semi-stable specialization map (note here $\varphi(\lambda)$ is a unit in $\bcris^+$, but $\lambda$ is not)
\begin{equation*}
\varphi: \wtB^{[0, r_0]}[1/\lambda, \ell_u] \to      \bst^+,
\end{equation*}
we get a $G_K$-equivariant identification
\begin{equation}\label{new727}
\gm \otimes_{\varphi, \gs} \bst^+ = D\otimes_{\varphi, K_0}  \bst^+.
\end{equation}
Let \begin{equation} \label{defS}
S := \{\sum_{n=0}^{\infty}a_{n}\frac{E(u)^n}{n!}, a_{n}\in \gs, a_{n} \to 0 \: p\text{-adically} \} \subset \Acris.
\end{equation} 
Then it is easy to check that elements of $S$ are all $\tau$-\emph{analytic} vectors in $\mathbf{B}_{\cris, L}^+:=(\bcris^+)^{G_L}$. Hence by \eqref{new727} and \eqref{7112}, we have that for $k \gg 0$, 
\begin{equation}
\gm \otimes_{\varphi, \gs} S \subset \left(  D\otimes_{K_0} \left(   \oplus_{i=0}^k {\mathbf{B}_{\cris, L}^+}\cdot \ell_u^i   \right)\right)^{\tau\dan, \gamma=1}.
\end{equation}
  Hence $\tau = \sum_{i=0}^\infty \frac{ \nabla_\tau  ^i}{i!}$ again holds over $\gm \otimes_{\varphi, \gs} S$. Using some results about ``filtered $(\varphi, N)$-modules over $S$" in \cite{Bre97}, this easily recovers the Galois action on $\gm \otimes_{\varphi, \gs} \bcris^+$ as in \cite[\S 5.1]{Liu08}. In particular, rather than ``defining" the Galois action in an \emph{ad hoc} fashion as in \emph{loc. cit.} and then showing it is \emph{compatible} with Galois representations (cf. \cite[Lem. 5.2.1]{Liu08}, our specialization obviously implies the compatibility. Let us mention that this Galois action is indeed one of the keys in Liu's theory of $(\varphi, \hat{G})$-modules.

\subsection{Relation with some results of  Gee and Liu}\label{73}
In this subsection, we discuss the relation between our results and some recent results of  Toby Gee and Tong Liu. 
 
The statement and idea of proof of the following Thm. \ref{thmtwou} is due to Toby Gee. As we learnt from Gee, this result is inspired by Caruso's result, and was originally used to construct the semi-stable substack inside the stack of \'etale $(\varphi, \Gamma)$-modules, cf. \cite[Appendix F]{EG23}. We thank Toby Gee for allowing us to include it here.

\begin{thm}(Gee) \label{thmtwou}
Let $T \in \Rep_{\Zp}(G_K)$.
Then $T  \in \Rep_{\Zp}^{\st, \geq 0}(G_K)$ \emph{if and only if} $T$  is of finite height with respect to \emph{all} choices of $\vec{\pi}$.
\end{thm}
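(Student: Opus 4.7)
The forward direction follows from Kisin's theorem: for any $\vec{\pi}$, a $\Zp$-lattice in a semi-stable representation with non-negative Hodge--Tate weights admits a Breuil--Kisin module of finite $E(u)$-height (cf.\ Thm.~\ref{thm539}); hence every $T \in \Rep_{\Zp}^{\st, \geq 0}(G_K)$ is of finite $E(u)$-height with respect to every $\vec{\pi}$.

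For the converse, suppose $T$ is of finite $E(u)$-height with respect to every $\vec{\pi}$. Fix one $\vec{\pi}$ and let $\gm = \gm_{\vec{\pi}}$ be the corresponding Breuil--Kisin module. By Thm.~\ref{thmnoweak}, the module $\hM := W(R)\otimes_{\gs}\gm$ already carries a continuous $W(R)$-semi-linear $\varphi$-commuting $G_K$-action that makes $(\gm,\varphi,G_K)$ a \emph{weak} Breuil--Kisin $G_K$-module (Def.~\ref{defwrweak}), i.e., satisfying axioms (1)--(4) of Def.~\ref{defwr}. Via Thm.~\ref{thmsimple}, to conclude $T \in \Rep_{\Zp}^{\st, \geq 0}(G_K)$ it is enough to verify the remaining axiom (5), namely that $\gm/u\gm \subset (\hM/W(\mathfrak{m}_R)\hM)^{G_K}$, equivalently (by Lem.~\ref{lemeq5}) that $\hM/W(\mathfrak{m}_R)\hM$ is $G_{K^\ur}$-fixed.

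The new input is the $G_K$-action on the set of choices of $\vec{\pi}$: for every $g \in G_K$ one has $g\pi = \pi$, so $g\vec{\pi} := (g\pi_n)_{n\geq 0}$ is again a valid choice, and by hypothesis there is a Breuil--Kisin module $\gm_{g\vec{\pi}}$ for $T$ relative to $g\vec{\pi}$ (a $W(k)[\![ g[\upi] ]\!]$-lattice inside $\wt{\mathbf A}\otimes_{\Zp}T$). By the uniqueness of finite-height sublattices inside an \'etale $\varphi$-module (\cite[Prop.~2.1.12]{Kis06}) and the naturality of Kisin's construction under $g:(T,\vec{\pi})\simeq(T,g\vec{\pi})$, one has $g(\gm_{\vec{\pi}}) = \gm_{g\vec{\pi}}$ inside $\wt{\mathbf A}\otimes_{\Zp}T$. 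Applying the diagram~\eqref{diag714} from the proof of Thm.~\ref{thmnoweak} in parallel for $\vec{\pi}$ and $g\vec{\pi}$ identifies, $G_K$-equivariantly, both $\gm_{\vec{\pi}}\otimes_{\gs_{\vec{\pi}}}\wt{\mathbf B}_{\log}^{\dagger}[1/t]$ and $\gm_{g\vec{\pi}}\otimes_{\gs_{g\vec{\pi}}}\wt{\mathbf B}_{\log}^{\dagger}[1/t]$ with the common module $D^E_{\st}(T[1/p])\otimes_{E_0}\wt{\mathbf B}_{\log}^{\dagger}[1/t]$, where $E$ denotes the $p$-adic completion of $K^{\ur}(\pi_{m-1})$ with $m = m(K^\ur)$.

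The main obstacle is to descend this identification to the integral level and extract axiom (5). The plan is: first, use the matrix-valuation argument from the last paragraph of the proof of Thm.~\ref{thmnoweak} (invoking $\varphi g = g\varphi$ together with the finite-height relation $AB = E(u)^h\cdot\mathrm{Id}$) to remove $t$-denominators and obtain $W(R)\otimes_{\gs_{\vec{\pi}}}\gm_{\vec{\pi}} = W(R)\otimes_{\gs_{g\vec{\pi}}}\gm_{g\vec{\pi}}$ as $G_K$-stable $W(R)$-submodules of $W(\Fr R)\otimes_{\Zp}T$, showing $\hM$ is intrinsic to $T$ (independent of $\vec{\pi}$). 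Then, applying the $G_K$-equivariant $\nu$-map of \S\ref{secsimp} to the identification $\hM\otimes_{W(R)}\wt{\mathbf B}_{\log}^{\dagger}[1/t] \simeq D^E_{\st}(T[1/p])\otimes_{E_0}\wt{\mathbf B}_{\log}^{\dagger}[1/t]$, and further exploiting that varying $\vec{\pi}$ over its $\gal(E/K^\ur)$-orbit forces the residual $\gal(E/K^\ur)$-action on $D^E_{\st}(T[1/p])$ (combined with the trivial $G_{K^\ur}$-action on $W(\overline k)$) to preserve the image of $\gm/u\gm$ in $\hM/W(\mathfrak{m}_R)\hM$, yields the desired $G_{K^\ur}$-triviality. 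Thm.~\ref{thmsimple} then concludes $T \in \Rep_{\Zp}^{\st, \geq 0}(G_K)$.
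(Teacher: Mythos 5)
Your forward direction and the general setup (reduce to verifying axiom~(5) of Def.~\ref{defwr}, or equivalently that $\hM/W(\mathfrak{m}_R)\hM$ is $G_{K^\ur}$-fixed) are fine, but the sufficiency argument has a gap that I don't see how to close.

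The core difficulty is that your argument only uses the hypothesis for the choices $g\vec{\pi}$ with $g\in G_K$, and these carry no extra information: if $T$ is of finite $E(u)$-height with respect to $\vec{\pi}$, then applying $g$ to the lattice $\gm_{\vec{\pi}}$ already produces a finite-height $\gs_{g\vec{\pi}}$-lattice, so ``finite height w.r.t.\ $g\vec{\pi}$'' is \emph{automatic}. In particular, the full strength of the ``all $\vec{\pi}$'' hypothesis is never invoked, and yet Thm.~\ref{thmmainpst} shows that finite height with respect to a single $\vec{\pi}$ only yields semi-stability over $K_{m(K^{\ur})}$, not over $K$. The last step of your sketch is then circular: once you have the $G_K$-equivariant identification $\hM/W(\mathfrak{m}_R)\hM \simeq D^E_\st(V)\otimes W(\overline k)[1/p]$, proving $\gal(E/K^\ur)$ acts \emph{trivially} on $D^E_\st(V)$ (rather than merely preserving it, which is what stability of $\hM$ gives you) is \emph{equivalent} to the semi-stability of $V|_{G_{K^\ur}}$ that you are trying to establish; varying $\vec{\pi}$ over a $G_K$-orbit cannot supply it.

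The paper's argument supplies the missing ingredient by choosing a \emph{genuinely different} uniformizer $\pi'$ with $\pi/\pi' \in \mathcal{O}_{\wh{K^\ur}}^\times \setminus (\mathcal{O}_{\wh{K^\ur}}^\times)^p$ (so $\vec{\pi'}$ is \emph{not} in the $G_K$-orbit of $\vec{\pi}$), using Kummer theory and \cite[Lem.~4.1.3]{Liu10} to show $\wh{K^\ur}(\pi_{m-1}) \cap \wh{K^\ur}(\pi'_{m-1}) = \wh{K^\ur}$, and then applying Thm.~\ref{thmmainpst} for each of $\vec{\pi}$ and $\vec{\pi'}$ to get semi-stability over both $\wh{K^\ur}(\pi_{m-1})$ and $\wh{K^\ur}(\pi'_{m-1})$; the elementary Lem.~\ref{lemdisj} about disjoint totally ramified Galois extensions then forces semi-stability over $\wh{K^\ur}$. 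This is where the ``all choices of $\vec{\pi}$'' hypothesis is actually used, and it is the step absent from your proposal.
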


We first introduce an elementary lemma.
\begin{lemma}\label{lemdisj}
Let $K \subset M_1, M_2$ be finite extensions, and let $M=M_1M_2$. Suppose $M/K$ is Galois and totally ramified, and suppose $M_1\cap M_2=K$. Let $V$ be a $G_K$-representation and suppose it is semi-stable over both $M_1$ and $M_2$, then $V$ is semi-stable over $K$.
\end{lemma}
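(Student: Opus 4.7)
The plan is to work with the $K_0$-vector space $D := D_\st^M(V) = (V \otimes_{\Qp} \Bst)^{G_M}$ and show directly that $\Gal(M/K)$ acts trivially on it. Since $V|_{G_{M_1}}$ is semi-stable and $M \supset M_1$, the representation $V|_{G_M}$ is also semi-stable, so $\dim_{M_0} D = \dim_{\Qp} V$. The hypothesis that $M/K$ is totally ramified forces $M_0 = (M_1)_0 = (M_2)_0 = K_0$, which is the key simplification: $\Gal(M/K)$ acts $K_0$-linearly on $D$, and $V$ is semi-stable over $K$ iff this action is trivial (equivalently, iff $D^{\Gal(M/K)}$ has $K_0$-dimension equal to $\dim_{\Qp} V$).

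Next I would invoke the standard base-change compatibility of $D_\st$ along the totally ramified extension $M/M_1$: it yields a $(\varphi,N)$-equivariant equality $D_\st^{M_1}(V) = D_\st^M(V)$ of $K_0$-vector spaces (both being of maximal dimension $\dim_{\Qp} V$, with one contained in the other). Because elements of $D_\st^{M_1}(V)$ are by definition fixed by $G_{M_1}$, the subgroup $H_1 := \Gal(M/M_1) \subset \Gal(M/K)$ acts trivially on $D$. The same argument applied to $M_2$ shows that $H_2 := \Gal(M/M_2)$ also acts trivially on $D$.

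Finally, I would close by a Galois-correspondence argument. Under the Galois correspondence for the Galois extension $M/K$, the subgroup $\langle H_1, H_2 \rangle \subset \Gal(M/K)$ corresponds to the intersection $M_1 \cap M_2$, which equals $K$ by hypothesis; hence $\langle H_1, H_2\rangle = \Gal(M/K)$. Combining this with the two triviality statements gives that $\Gal(M/K)$ acts trivially on $D$, so
\[
D_\st^K(V) \;=\; \bigl((V \otimes_{\Qp} \Bst)^{G_M}\bigr)^{\Gal(M/K)} \;=\; D^{\Gal(M/K)} \;=\; D,
\]
which has $K_0$-dimension $\dim_{\Qp} V$. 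Therefore $V$ is semi-stable over $K$.

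There is no real obstacle here: the only subtlety is making sure the base-change identity $D_\st^{M_1}(V) = D_\st^M(V)$ is used as an equality (not just an inclusion with a Galois-descent to perform), which is precisely what the total ramification of $M/K$ buys us. Without this hypothesis one would need a genuine descent argument for filtered $(\varphi,N)$-modules, accounting for a nontrivial residue extension.
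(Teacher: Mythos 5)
Your proof is correct and is precisely the argument the paper compresses into one sentence ("It is easy to see that $\Gal(M/K)$ acts trivially on $D_\st^M(V)$"): you note that $\Gal(M/M_1)$ and $\Gal(M/M_2)$ act trivially (by base change along the totally ramified extensions $M/M_i$, using that $M_0=K_0$), and that these two subgroups generate $\Gal(M/K)$ because $M_1\cap M_2=K$. Same approach, with the details spelled out.
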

\begin{proof}
It is easy to see that $\gal(M/K)$ acts trivially on $D_{\st}^M(V)=(V\otimes_{\Qp}\bst)^{G_M}$, and hence $D_{\st}^M(V)=D_{\st}^K(V)$.
\end{proof}

\begin{proof}[Proof of Thm. \ref{thmtwou}]
Necessity is proved in \cite{Kis06}. We prove sufficiency.
Let $V:=T[1/p]$, let $U:=V|_{G_{K^{\ur}}}$, and let $\wh{K^{\ur}}$ denote the completion of $K^{\ur}$.
It suffices to show that the $G_{\wh{K^{\ur}}}$-representation $U$ is semi-stable.
First fix one $\vec{\pi}$. Choose another unifomizer $\pi'$ of  $K$  such that $\pi/\pi' \in \mathcal{O}_{\wh{K^{\ur}}}^\times  \backslash (\mathcal{O}_{\wh{K^{\ur}}}^\times)^p$, and choose any compatible system  $\vec{\pi'}=\{\pi_n'\}_{n \geq 0}$.
By Kummer theory, for each $i \geq 1$, the two fields $\wh{K^{\ur}}(\pi_i)$ and $\wh{K^{\ur}}(\pi'_i)$ are different.
Combined with \cite[Lem. 4.1.3]{Liu10}, it is easy to show that
\[\wh{K^{\ur}}(\pi_i) \cap \wh{K^{\ur}}(\pi'_i)=\wh{K^{\ur}}, \forall i \geq 1.\]
Let $m:=m(K^{\mathrm{ur}})$. Consider the 3-step extensions
\[\wh{K^{\ur}} \subset  \wh{K^{\ur}}(\pi_{m-1}),  \wh{K^{\ur}}(\pi'_{m-1})   \subset M=\wh{K^{\ur}}(\pi_{m-1}, \pi'_{m-1}).\]
Since $\mu_{m-1}\in \wh{K^{\ur}}$, $M/\wh{K^{\ur}}$ is Galois and totally ramified.
By Thm. \ref{thmmainpst}, $V$ is semi-stable over $K(\pi_{m-1})$, hence $U$ is semi-stable over $\wh{K^{\ur}}(\pi_{m-1})$;  similarly $U$ is semi-stable over $\wh{K^{\ur}}(\pi_{m-1}')$. Thus  $U$ is semi-stable over $\wh{K^{\ur}}$ by Lem. \ref{lemdisj}.
\end{proof}

Before the author proves Thm. \ref{thmmainpst} which makes Thm. \ref{thmtwou} possible, Gee and Liu proved the following weaker Thm. \ref{thmf11} which is sufficient for the construction of the semi-stable sub-stack mentioned above. 
 In the following Def. \ref{defcss}, the notation $\mathcal C_{ss}$ comes from ``$\mathcal C_{d,ss,h}$" in \cite[Def. 4.5.1]{EG23}; here, we  omit  the rank $d$ and the height $h$. For each choice $\vec{\pi}=\{\pi_n\}_{n \geq 0}$, we let $K_{\vec{\pi}}:=\cup_{n \geq 0}K(\pi_n)$. We can regard $\vec{\pi}$ as an element in $R$, and let $[\vec{\pi}] \in W(R)$ be its Teichm\"uller lift. Let $\gs_{\vec{\pi}}$ be the image of $W(k)[\![X]\!] \hookrightarrow W(R), X \mapsto [\vec{\pi}]$. Let $E(X)$ be the minimal polynomial of $\pi_0$ over $K_0$, and let $E_{\vec{\pi}}:=E([\vec{\pi}]) \in \gs_{\vec{\pi}}$.

\begin{defn} \label{defcss}  \cite[Def. F.7]{EG23}
Let $\mathcal C_{ss}(\Zp)$ be the category consisting of the following data, which are called ``\emph{Breuil-Kisin-Fargues $G_K$-modules admitting all descents}":
\begin{enumerate}
\item  $\gm^{\inf}$ is a finite free Breuil-Kisin-Fargues module with $W(R)$-semi-linear $\varphi$-commuting $G_K$-action;
\item For each $\vec{\pi}$, $\gm_{\vec{\pi}} \in (\gm^{\inf})^{\gal(\overline{K}/K_{\vec{\pi}})}$ is a finite free Breuil-Kisin module  over $\gs_{\vec{\pi}}$, such that the induced morphism  $\gm_{\vec{\pi}} \otimes_{\gs_{\vec{\pi}}} W(R) \to \gm^{\inf}$ is a $\varphi$-equivariant isomorphism;
\item The $W(k)$-mod $\gm_{\vec{\pi}}/[\vec{\pi}]\gm \subset W(\overline k)\otimes_{W(R)} \gm^\inf $ is independent of $\vec{\pi}$.
\item The $\mathcal O_K$-mod $\varphi^\ast \gm_{\vec{\pi}}/ E_{\vec{\pi}}\varphi^\ast\gm   _{\vec{\pi}} \subset \mathcal O_C\otimes_{W(R)} \varphi^\ast \gm^\inf $  is independent of $\vec{\pi}$.
\end{enumerate}
 \end{defn}

\begin{theorem} (Gee-Liu) \cite[Thm. F.11]{EG23} \label{thmf11}
The functor $T_{W(R)} (\gm^{\inf})  :=  ( \gm^{\inf} \otimes W(\Fr R))^{\varphi=1}$  induces an equivalence between $\mathcal C_{ss}(\Zp)$ and $\Rep_{\Zp}^{\st, \geq 0}(G_K)$.
\end{theorem}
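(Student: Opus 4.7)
The plan is to factor the equivalence through Theorem~\ref{thmsimple} (one choice of $\vec{\pi}$ at a time), then assemble the data over all $\vec{\pi}$ using Theorem~\ref{thmtwou}. First I would define the forward functor $F:\mathcal{C}_{ss}(\Zp)\to\Rep_{\Zp}^{\st,\geq 0}(G_K)$ by $(\gm^{\inf},\{\gm_{\vec{\pi}}\})\mapsto T_{W(R)}(\gm^{\inf})$. To show $F$ lands in semi-stable representations, observe that the datum of $\gm_{\vec{\pi}}\subset(\gm^{\inf})^{G_{K_{\vec{\pi}}}}$ (condition (2) of Def.~\ref{defcss}) witnesses that $T:=T_{W(R)}(\gm^{\inf})$ restricted to $G_{K_{\vec{\pi}}}$ is of finite $E_{\vec{\pi}}$-height with respect to $\vec{\pi}$ in the sense of Def.~\ref{deffinht}. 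Since this holds for \emph{every} choice $\vec{\pi}$, Theorem~\ref{thmtwou} forces $T$ to be semi-stable with non-negative Hodge--Tate weights.

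Next I would handle essential surjectivity. Given $T\in\Rep_{\Zp}^{\st,\geq 0}(G_K)$, Kisin's theorem supplies, for each $\vec{\pi}$, a finite free Breuil--Kisin module $\gm_{\vec{\pi}}\in\Mod_{\gs_{\vec{\pi}}}^{\varphi}$ with $T_{\gs_{\vec{\pi}}}(\gm_{\vec{\pi}})\simeq T|_{G_{K_{\vec{\pi}}}}$. By Theorem~\ref{thmsimple} applied for this $\vec{\pi}$, the triple $(\gm_{\vec{\pi}},\varphi,G_K)$ canonically upgrades to a Breuil--Kisin $G_K$-module, so $\hM_{\vec{\pi}}:=W(R)\otimes_{\gs_{\vec{\pi}}}\gm_{\vec{\pi}}$ carries a continuous $\varphi$-commuting $W(R)$-semi-linear $G_K$-action satisfying $T_{W(R)}(\hM_{\vec{\pi}})\simeq T$. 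The candidate $\gm^{\inf}$ is then any one of these $\hM_{\vec{\pi}_0}$, and the compatibility conditions (3), (4) of Def.~\ref{defcss} for this single $\vec{\pi}_0$ are built into Def.~\ref{defwr}(5) (for (3)) and Prop.~\ref{cor710} (for (4)).

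The main obstacle, and the crux of the argument, is to exhibit, for any two choices $\vec{\pi},\vec{\pi}'$, a canonical identification $\hM_{\vec{\pi}}\simeq\hM_{\vec{\pi}'}$ as $W(R)$-modules with $\varphi$ and $G_K$, so that one obtains a single $\gm^{\inf}$ containing \emph{all} the $\gm_{\vec{\pi}}$ as $G_{K_{\vec{\pi}}}$-invariants. Using the canonical isomorphism $T_{W(R)}(\hM_{\vec{\pi}})\simeq T$, each $\hM_{\vec{\pi}}$ sits inside $T\otimes_{\Zp}W(\Fr R)$ as a finite free $\varphi$-stable $W(R)$-submodule. To match them, I would compare both to the intrinsic object $D_\st(V)\otimes_{K_0}\bst^+$ using the identifications of \S\ref{72} (namely \eqref{new727}), which hold independently of $\vec{\pi}$ after applying the semi-stable specialization $\varphi:\wtB^{[0,r_0]}[1/\lambda,\ell_u]\to\bst^+$. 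Specifically, inside $D\otimes_{K_0}\bst^+$ the image of $\gm_{\vec{\pi}}\otimes_{\varphi,\gs_{\vec{\pi}}}\bst^+$ is intrinsic to $V$, and an intersection argument (in the spirit of \eqref{715} and Lem.~\ref{lemtt}) pins down the $\varphi$-pullback of $\hM_{\vec{\pi}}$ in $T\otimes W(R)$ independently of $\vec{\pi}$; since $\varphi$ is an automorphism on $W(R)$, this identifies the $\hM_{\vec{\pi}}$ themselves. Verifying (3) and (4) for all $\vec{\pi}$ simultaneously then reduces to the same facts for a single $\vec{\pi}$ via this canonical identification.

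Finally, full faithfulness of $F$ follows from the full faithfulness of $T_{W(R)}$ established in Prop.~\ref{propweakff}: a morphism $T\to T'$ of semi-stable representations induces, via the inverses of Theorem~\ref{thmsimple} at each $\vec{\pi}$, a compatible family of morphisms $\gm_{\vec{\pi}}\to\gm_{\vec{\pi}}'$ whose base changes glue to the unique morphism $\gm^{\inf}\to(\gm^{\inf})'$, and this determines and is determined by the map on underlying representations.
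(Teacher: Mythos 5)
Your overall plan does track the paper's: both directions go through Theorem~\ref{thmtwou} for semi-stability, and essential surjectivity is proved by taking Kisin's $\gm_{\vec{\pi}}$ and identifying the $W(R)$-base changes $\hM_{\vec{\pi}}$ inside a common overring. But two parts need repair.

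First, the comparison ring. You propose routing through $\bst^+$ via the specialization map and \eqref{new727}, and then doing an intersection "in the spirit of \eqref{715}." The difficulty is that the other piece you need to intersect with, namely $\gm_{\vec{\pi}}\otimes\wtB^\dagger = V\otimes\wtB^\dagger$, does not sit naturally inside $D\otimes\bst^+$: the (crystalline or semi-stable) specialization maps in \S\ref{72} are only defined on the bounded rings $\wtB^{[0,r_1]}$ (or $\wtB^{[0,r_0]}$ via $\varphi$), not on $\wtB^\dagger = \cup_{r>0}\wtB^{[r,+\infty]}$, so there is no common overring of $\wtB^\dagger$ and $\bst^+$ in which to perform the intersection. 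The paper's choice of $\bz = \wtB_{\log}^+[1/t]$ avoids this: both $\wtB_{\log}^+$ and $\wtB^\dagger$ sit inside $\wtB^\dagger_{\log}$, where Berger's lemma \eqref{715} gives $\wtB^\dagger\cap\wtB_{\rig}^+ = W(R)[1/p]$. One then compares $\gm_{\vec{\pi}}\otimes\bz\simeq D\otimes\bz$ (Cor.~\ref{cor542}, intrinsic in $\vec{\pi}$) with $\gm_{\vec{\pi}}\otimes\wtB^\dagger\simeq V\otimes\wtB^\dagger$ (intrinsic) inside $V\otimes\bx$, concludes $\gm_{\vec{\pi}}\otimes W(R)[1/p][1/t]$ is independent of $\vec{\pi}$, and removes $[1/t]$ and $[1/p]$ by the Frobenius and finite-height trick used right after \eqref{715} in the proof of Thm.~\ref{thmnoweak}. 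You should use this route.

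Second, your treatment of Conditions (3) and (4) is a genuine gap. These conditions assert that the $W(k)$-submodule $\gm_{\vec{\pi}}/[\vec{\pi}]\gm_{\vec{\pi}}\subset W(\overline{k})\otimes_{W(R)}\gm^{\inf}$ and the $\mathcal{O}_K$-submodule $\varphi^\ast\gm_{\vec{\pi}}/E_{\vec{\pi}}\varphi^\ast\gm_{\vec{\pi}}\subset\mathcal{O}_C\otimes_{W(R)}\varphi^\ast\gm^{\inf}$ are \emph{the same submodule} for every $\vec{\pi}$. There is no sense in which they "hold for a single $\vec{\pi}_0$," and identifying the ambient modules $\hM_{\vec{\pi}}$ does not by itself make the finer-level submodules coincide, because the $\gm_{\vec{\pi}}$ over the various $\gs_{\vec{\pi}}$ remain genuinely different subsets of $\gm^{\inf}$. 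What is actually needed is to exhibit a description of these reductions that is manifestly $\vec{\pi}$-independent. The paper does this for (3) by observing that it suffices to show $\gm_{\vec{\pi}}/[\vec{\pi}]\gm_{\vec{\pi}}[1/p] = M_{\vec{\pi}}/[\vec{\pi}]M_{\vec{\pi}}$ is independent of $\vec{\pi}$ (since the $W(\overline{k})$-lattice is already known to be so), and then Lem.~\ref{lem541} identifies each $M_{\vec{\pi}}/[\vec{\pi}]M_{\vec{\pi}}$ with the intrinsic $D_\st(V)$. Condition (4) is verified analogously using the $\varphi$-twist of \eqref{new545}. You should incorporate this step rather than assert the conditions are automatic once the $\hM_{\vec{\pi}}$ are matched.
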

The proof of Gee-Liu makes use of a result of Heng Du (cf. \cite[Prop. F.13]{EG23}), which shows that the Conditions (1), (2), and (4) are enough to guarantee the attached representation is \emph{de Rham}; then they use Condition (3) to show semi-stability. 
Here, we  give a very brief sketch of proof using our results in order to illustrate the relation between the different approaches.

\begin{proof}
Apparently, given a module in $\mathcal C_{ss}(\Zp)$, the associated representation is semi-stable by Thm. \ref{thmtwou}. Conversely, give a semi-stable representation, the $\gm_{\vec{\pi}}$'s are constructed by Kisin. To verify Condition (2) (in Def. \ref{defcss}), it suffices to check that the various tensor products ``$\gm_{\vec{\pi}} \otimes_{\gs_{\vec{\pi}}} W(R)$" can all be \emph{identified} inside $V\otimes \bx$. Note $\gm_{\vec{\pi}} \otimes_{\gs_{\vec{\pi}}} \bz$ can be identified with $D\otimes \bz$, and $\gm_{\vec{\pi}} \otimes \wtb^\dagger$ can be identified with $V\otimes \wtb^\dagger$; 
hence by \eqref{715}, $\gm_{\vec{\pi}} \otimes W(R)[1/p][1/t]$ can all be identified with each other. Then one can follow similar strategy as below \eqref{715} to show that $\gm_{\vec{\pi}} \otimes W(R)$ can all be identified. (We leave the details to the readers).
For Condition (3): note that the independence of $\gm_{\vec{\pi}} \otimes W(R)$ implies the independence of $\gm_{\vec{\pi}}/[\vec{\pi}]\gm_{\vec{\pi}} \otimes W(\overline{k})$, and hence it suffices to show the independence of $\gm_{\vec{\pi}}/[\vec{\pi}]\gm_{\vec{\pi}}[1/p] =M_{\vec{\pi}}/[\vec{\pi}]M_{\vec{\pi}}$ where $M_{\vec{\pi}}$ are the $\mathcal{O}_{\vec{\pi}}$-modules as constructed in \S \ref{subsecKis}. Then Lem. \ref{lem541} implies that $M_{\vec{\pi}}/[\vec{\pi}]M_{\vec{\pi}}$ can all be identified with $D$ (which is well-known to be independent of $\vec{\pi}$). The verification of Condition (4) is similar, by using the $\varphi$-twist of \eqref{new545}. 
\end{proof}



\subsection{Independence from Caruso's work}
\begin{rem}\label{remend}
 \begin{enumerate}
   \item The only place in the current paper where we actually \emph{use} results from \cite{Car13} is in Thm. \ref{thmphitau} (also Def. \ref{defphitaumod}) which is \cite[Thm. 1]{Car13}. (Lem. \ref{lemramres} is also from \cite{Car13}, but it is completely elementary).

   \item Besides those in \S \ref{secintro} and in Thm. \ref{thmphitau}, the only places where we \emph{make references to} \cite{Car13} is in Rem. \ref{remstep3}, where we mention  some  argument    from \cite{Car13}.
   \item The results in  \cite{Car13} are not  used  in any of the cited papers in our bibliography, except \cite[Thm. 1]{Car13}, which is used in \cite{GL20, GP21}.
       \item Hence,  the current paper is independent of the work \cite{Car13} (except \cite[Thm. 1]{Car13}).
 \end{enumerate}
  \end{rem}

\appendix

\section{A gap in Caruso's work, by Yoshiyasu Ozeki} \label{secapp}
The statement of \cite[Prop. 3.7]{Car13} is false.
Here is a counter-example:
\begin{itemize}
\item Let $A=\mathbb{Q}_p$ with $p=3$, let $m=4$ and let $\Lambda=p^{-4}\cdot \mathbb{Z}_p$. Let $a=b=0$.
\end{itemize}
The mistake in the proof of \cite[Prop. 3.7]{Car13} is rather hidden. Indeed, on \cite[p. 2580]{Car13}, Caruso states the formula:
\[\log_m(ab) - (\log_m a + \log_m b) \in \sum_{j=1}^{p^m-1}
\frac{p^{\ell(p^m-j) + \ell(j)}} j \cdot \Lambda.\]
As far as we understand, Caruso is implicitly using that $\Lambda$ is a \emph{ring} (namely, a $\mathbb{Z}_p$-algebra). But in fact, $\Lambda$ is only a $\mathbb{Z}_p$-module.

Prop. 3.7 of \cite{Car13} is a key proposition in the paper. Indeed, it is used to prove Prop. 3.8, Prop. 3.9 in \emph{loc. cit.}. These propositions are then  repeatedly used in later argument of that paper.

\section{Errata for \cite{Liu07} and \cite{Gao18limit}, by Hui Gao and Tong Liu} \label{secappB}

In this appendix, using the Breuil-Kisin  $G_K$-modules,  we fix a gap in the proof of the main theorems 
in \cite{Liu07} and \cite{Gao18limit} where we studied the limit of torsion semi-stable representations. Let us point out at the start that the gap arises because we   only recently realized that we \emph{do not know} if the ring $\hR$ (recalled below) is $p$-adically complete or not. (The gap is discussed in detail  in Step 2 of \S \ref{b04}.)

We first recall the main results of \cite{Liu07} and \cite{Gao18limit}.
Recall that a (finite) $p$-power torsion representation $T$ of $G_K$ is called \emph{torsion semi-stable (resp. crystalline) of weight $r$}, if there exists a $G_K$-stable $\Zp$-lattice $\wt L$ in a semi-stable (resp. crystalline) representation with Hodge-Tate weights  in $[0, r]$, and such that there exists a $G_K$-equivariant surjection $\wt L \twoheadrightarrow T$ (which is called a \emph{loose semi-stable (resp. crystalline) lift}).
The following is the main theorem of \cite{Gao18limit}:

\begin{thm}  \label{thmGao}
Let $T$ be a finite free $\Zp$-representation of $G_K$ of   rank $d$. For each $n \ge 1$, suppose $T_n:=T/p^nT$ is torsion semi-stable (resp. crystalline) of weight $h(n)$.
If \[h(n) < \frac{1}{2d}\log_{16} n, \forall n \gg 0,\]
 then $T\otimes_{\Zp}\Qp$ is semi-stable (resp. crystalline).
\end{thm}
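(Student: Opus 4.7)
The plan is to follow the strategy of \cite{Gao18limit} while systematically replacing the ring $\hR$ (whose $p$-adic completeness is what is now unknown) by $W(R)$ through the formalism of Breuil-Kisin $G_K$-modules developed in the body of this paper.

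First I would recall that the argument of \cite{Gao18limit} uses only Kisin's theory of Breuil-Kisin modules and a diagonal-subsequence extraction controlled by the growth bound $h(n) < \frac{1}{2d}\log_{16} n$; it produces a finite free Breuil-Kisin module $\gm \in \Mod_\gs^\varphi$ of finite $E(u)$-height together with a $G_\infty$-equivariant isomorphism $T_\gs(\gm) \simeq T|_{G_\infty}$. This step nowhere touches $\hR$, and therefore remains valid.

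Next I would promote $\gm$ to a Breuil-Kisin $G_K$-module in the sense of Def.~\ref{defwr}. For each $n$, fix a loose semi-stable lift $\wt L_n \twoheadrightarrow T_n$ and let $\gm_n^\flat$ be the Breuil-Kisin module of $\wt L_n$ produced by \cite{Kis06}. By our main theorem, Thm.~\ref{thmsimple}, $\gm_n^\flat$ naturally carries a continuous, $\varphi$-commuting, $W(R)$-semi-linear $G_K$-action on $\hM_n^\flat := W(R)\otimes_\gs \gm_n^\flat$ satisfying conditions (4) and (5) of Def.~\ref{defwr}. Reducing modulo $p^n$ and using the surjection $\gm_n^\flat \twoheadrightarrow \gm/p^n\gm$ arising from the subsequence extraction of Step~1 yields a compatible family of continuous $\varphi$-commuting $G_K$-actions on $(W(R)\otimes_\gs \gm)/p^n$. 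Since $W(R)$ is $p$-adically complete, the projective limit is a well-defined continuous $W(R)$-semi-linear $\varphi$-commuting $G_K$-action on $\hM := W(R)\otimes_\gs \gm$; conditions (4) and (5) of Def.~\ref{defwr} hold at every finite level and hence also on the limit. Thus $(\gm, \varphi, G_K) \in \textnormal{Mod}^{\varphi, G_K}_{\gs, W(R)}$.

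Finally, Thm.~\ref{thmsimple} furnishes a canonical $G_K$-equivariant isomorphism $T_{W(R)}(\gm, \varphi, G_K) \simeq T$ and identifies the left-hand side as a lattice in a semi-stable representation, proving that $T\otimes_{\Zp}\Qp$ is semi-stable. In the crystalline case, the condition $(\tau - 1)(\gm) \subset \mathfrak{t}\,W(\mathfrak{m}_R) \otimes_\gs \gm$ of Prop.~\ref{propcryscrit} is inherited mod $p^n$ from the analogous condition on each $\gm_n^\flat$ (which holds by Prop.~\ref{propcryscrit} applied to the crystalline lift $\wt L_n$) and passes to the limit. The main obstacle is the bookkeeping in Step~2, namely verifying that the subsequence-extraction of \cite[\S 4]{Gao18limit} can be carried out compatibly with the Galois actions on the $\hM_n^\flat$, not only with the underlying $\varphi$-module structures; however, the growth hypothesis on $h(n)$ was designed precisely to guarantee such compatibility on the $\gs$-side, and once this is in place the required limit on the $W(R)$-side is automatic from $p$-adic completeness, circumventing the obstruction that plagued the $\hR$-approach.
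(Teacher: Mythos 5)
Your proposal is correct and follows essentially the same route as the paper's own fix in Appendix~B: reuse the diagonal-subsequence argument of \cite{Gao18limit} to produce the compatible system of torsion Breuil-Kisin models, then replace $\hR$ by $W(R)$ (i.e.\ pass from $(\varphi,\hat G)$-modules to Breuil-Kisin $G_K$-modules) so that $p$-adic completeness of $W(R)$ and of $W(k)$ lets the Galois data (conditions (4),(5) of Def.~\ref{defwr}, and Prop.~\ref{propcryscrit} in the crystalline case) pass to the inverse limit. Your phrasing of ``the surjection $\gm_n^\flat\twoheadrightarrow\gm/p^n\gm$'' slightly misdescribes the mechanism --- the modified system $\gm_n'$ does not literally arise as quotients of the lifts' Breuil-Kisin modules, but inherits its $G_K$-action from the $\hM_n^\flat$ via the modification process of \cite{Gao18limit} --- but you flag precisely this bookkeeping issue, and the essential idea (circumvent the unknown completeness of $\hR$ by working over $W(R)$) is exactly the paper's.
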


When $h(n)$ is a constant, then this is precisely the main theorem of \cite{Liu07}, which confirms a conjecture of Fontaine. 

Unfortunately, there is a gap in the proof of Thm. \ref{thmGao}; there is a (practically the same) gap in \cite{Liu07} as well, cf. Rem. \ref{remliu}.
We now focus on discussing and fixing the gap in the proof of Thm. \ref{thmGao} (which utilizes a very similar strategy as in \cite{Liu07}).

\subsubsection{}
First, we quickly recall the theory of $(\varphi, \hat{G})$-modules, cf. \cite{Liu10} for more details.
Define a subring inside $\bcris^+$:
\[\mathcal R_{K_0}: = \{x = \sum_{i=0 }^\infty f_i t^{\{i\}}, f_i \in S_{K_0} \text{
and } f_i \to 0\text{ as }i \to +\infty  \}, \] where $t ^{\{i\}}= \frac{t^i}{p^{\tilde q(i)}\tilde q(i)!}$ and $\tilde q(i)$ satisfies $i = \tilde q(i)(p-1) +r(i)$ with $0 \leq r(i )< p-1$.
 Define \[\hR := W(R)\cap \mathcal{R}_{K_0}.\] The rings $\mathcal R_{K_0}$ and $\hR$ are stable under the $G_K$-action and the $G_K$-action factors through  $\hat G$.
Let $I_+ \hR = W(\mathfrak{m}_R) \cap \hR$, then one has
$\hR / I_+\hR\simeq \gs/ u \gs  = W(k)$. 

\begin{defn}\label{defptau}
Let $\textnormal{Mod}_{\gs, \hR}^{\varphi, \hat{G} }$
be the category consisting of triples $(\mathfrak{M} , \varphi_{\gm}, \hat G)$ which are called \emph{$(\varphi, \hat G)$-modules}, where
\begin{enumerate}
\item $(\mathfrak{M}, \varphi_\mathfrak{M})\in \text{Mod}_{\gs}^{\varphi}$;
\item $\hat G$ is a continuous $\hR$-semi-linear $\hat G$-action on $\widehat{\mathfrak{M}}: =\hR
\otimes_{\varphi, \gs} \mathfrak{M}$;
\item $\hat G$ commutes with $\varphi_{\widehat{\mathfrak{M}}}$ on $\widehat{\mathfrak{M}}$;
\item regarding $\mathfrak{M}$ as a $\varphi(\gs)$-submodule in $ \widehat{\mathfrak{M}} $, then $\mathfrak{M}
\subset \widehat{\mathfrak{M}} ^{\gal(L/\Kinfty)}$;
\item $\hat G$ acts on the $W(k)$-module $ \widehat{\mathfrak{M}}/I_+\hR\widehat{\mathfrak{M}}$ trivially.
\end{enumerate}
\end{defn}

Then (the co-variant version of) the main theorem of \cite{Liu10} says that the functor $T_{W(R)}(\hM):=(\hM \otimes_{\hR} W(\Fr R))^{\varphi=1}$ induces an equivalence between $\textnormal{Mod}_{\gs, \hR}^{\varphi, \hat{G} }$ and $\Rep_{\Zp}^{\st, \geq 0}(G_K)$.

\subsubsection{} \label{b04}
We now sketch the  proof of Thm. \ref{thmGao} in \cite{Gao18limit}  in two steps; it is in Step 2 that the gap arises.

\textbf{Step 1}:  we first show that $T$ is of finite $E(u)$-height. As we need the constructions in our Step 2 (where the gap arises), let us sketch the argument.

Since $T_n$ is torsion semi-stable, one can construct a (not necessarily unique) \emph{$p$-power torsion $(\varphi, \hat{G})$-module} (the definition of which is obvious) $\wh{\gm}_n$, simply by projecting down the finite free  $(\varphi, \hat{G})$-module associated to some loose semi-stable lift of $T_n$. 

Note that \emph{a priori}, these   $\wh{\gm}_n$'s for different $n$ have no direct relations.
The technical heart in the proof of 
\cite[Thm. 3.1]{Gao18limit} is that we can \emph{modify} the
 torsion Breuil-Kisin modules inside these torsion $(\varphi, \hat{G})$-modules  so that in the end we can obtain a \emph{compatible} system of $\gm_n'$ for $n \gg 0$, such that
 \begin{itemize}
\item $\gm_n'$ is finite free over $\gs_n:=\gs/p^n\gs$; 
\item  Let $\OE_n:=\OE/p^n\OE$, and let $M_n$ be the torsion \'etale $\varphi$-module associated to $T_n|_{G_\infty}$, which is a finite free $\OE_n$-module, then $\gm_n' \otimes_{\gs_n} \OE_n \simeq M_n$, i.e., $\gm_n'$ is a ``\emph{Breuil-Kisin model}" of $M_n$;
\item $\gm_{n+1}'/p^n =\gm_n'$.
\end{itemize}
Hence we can form the inverse limit $\wt{\gm}: =\projlim_{n \gg 0} \gm_n'$, which is a finite free $\gs$-module. Using some techniques related with Weirestrass preparation theorem, we can in fact show that $\wt{\gm}$ is of finite height (this is easy when $h(n)$ is constant, but more difficult in the general case). Furthermore, we obviously have $T_{\gs}(\gm) \simeq T|_{G_\infty}$.

\textbf{Step 2}: Now it remains to show that the $\hat{G}$-action on $\wt{\gm} \otimes_{\varphi, \gs}
\hR$ is stable, and hence $T$ indeed comes from a $(\varphi, \hat{G})$-module and hence is semi-stable.

This is where the \emph{gap} arises. Indeed, we know that $\hat{G}$-action on $\gm_n' \otimes_{\varphi, \gs}
\hR$ is stable, because it comes from projection from the $\hat{G}$-action of a finite free $(\varphi, \hat{G})$-module. However, unfortunately, we only recently realized that we \emph{do not know} if $\hR$ is $p$-adically complete or not! Hence we cannot directly conclude that $\wt{\gm} \otimes_{\varphi, \gs} \hR$ is $\hat{G}$-stable!
Indeed, recall that $\hR =W(R) \cap \mathcal R_{K_0}$. In fact, it is not even clear \emph{which} ``$p$-adic topology" we should use here: should it be the one induced from $\bcris^+$, or the one induced from $W(R)$? In either of these choices, it is very difficult to actually compute the $p$-adic valuations of elements in $\hR$.

\begin{rem}\label{remliu}
Note that in the proof of \cite[Prop. 6.1.1]{Liu07}, it is implicitly assumed that 
$\mathcal R_{K_0} \cap \Acris$ is $p$-adically complete; again, it is not clear how to actually prove this: the difficulty is the same as that for $\hR$.
\end{rem}

\subsubsection{Fixing the gap using Breuil-Kisin $G_K$-modules}  To fix the gap, we can simply replace all the mentioning of ``$(\varphi, \hat{G})$-modules" above by ``Breuil-Kisin $G_K$-modules",  all the $\hR$ by $W(R)$, and all the $\hat{G}$ by $G_K$. Since $W(R)$ is $p$-adically complete, we readily conclude that $\wt{\gm} \otimes_{\gs} W(R)$ is $G_K$-stable! Furthermore, in each torsion level for $n \gg 0$, we know $\gm_n'/u\gm_n'$ is fixed by $G_K$ (again because the $G_K$-action come from that on a finite free Breuil-Kisin $G_K$-module); hence $\wt{\gm}/u\wt{\gm}$ is also fixed by $G_K$, simply because $W(k)$ is $p$-adically complete. Hence we have shown that $T$ indeed comes from a finite free Breuil-Kisin $G_K$-module, and hence is semi-stable. If all the $T_n$ are furthermore torsion crystalline, then the torsion version of the condition  in Prop. \ref{propcryscrit} holds, and hence the condition also holds for $\wt{\gm}$, again because all the rings (and ideals) in \emph{loc. cit.} are $p$-adically complete.

\begin{rem}
As we can observe from the above paragraph, in order to fix the gap in \cite{Liu07, Gao18limit}, it suffices to use some ``integral $p$-adic linear-algebra category" where all the rings involved are $p$-adic complete. Thus, one can also fix the gap in \cite{Liu07, Gao18limit} using Thm. \ref{thmtwou} or Thm. \ref{thmf11}; we leave the details to the reader.
\end{rem}

 \bibliographystyle{alpha}

\end{document}